\setlist{itemsep=4pt, topsep=4pt}
\def\chaptermark#1{}
\def\chapter{%
  \if@openright\cleardoublepage\else\clearpage\fi
  \thispagestyle{plain}\global\@topnum\z@
  \@afterindenttrue \secdef\@chapter\@schapter}
\def\@chapter[#1]#2{\refstepcounter{chapter}%
  \ifnum\c@secnumdepth<\z@ \let\@secnumber\@empty
  \else \let\@secnumber\thechapter \fi
  \typeout{\chaptername\space\@secnumber}%
  \def\@toclevel{0}%
  \ifx\chaptername\appendixname \@tocwriteb\tocappendix{chapter}{#2}%
  \else \@tocwriteb\tocchapter{chapter}{#2}\fi
  \chaptermark{#1}%
  \addtocontents{lof}{\protect\addvspace{10\p@}}%
  \addtocontents{lot}{\protect\addvspace{10\p@}}%
  \@makechapterhead{#2}\@afterheading}
\def\@schapter#1{\typeout{#1}%
  \let\@secnumber\@empty
  \def\@toclevel{0}%
  \ifx\chaptername\appendixname \@tocwriteb\tocappendix{chapter}{#1}%
  \else \@tocwriteb\tocchapter{chapter}{#1}\fi
  \chaptermark{#1}%
  \addtocontents{lof}{\protect\addvspace{10\p@}}%
  \addtocontents{lot}{\protect\addvspace{10\p@}}%
  \@makeschapterhead{#1}\@afterheading}
\newcommand\chaptername{Chapter}
\def\@makechapterhead#1{\global\topskip 7.5pc\relax
  \begingroup
  \fontsize{\@xivpt}{18}\bfseries\centering
    \ifnum\c@secnumdepth>\m@ne
      \leavevmode \hskip-\leftskip
      \rlap{\vbox to\z@{\vss
          \centerline{\normalsize\mdseries
              \uppercase\@xp{\chaptername}\enspace\thechapter}
          \vskip 3pc}}\hskip\leftskip\fi
     #1\par \endgroup
  \skip@34\p@ \advance\skip@-\normalbaselineskip
  \vskip\skip@ }
\def\@makeschapterhead#1{\global\topskip 7.5pc\relax
  \begingroup
  \fontsize{\@xivpt}{18}\bfseries\centering
  #1\par \endgroup
  \skip@34\p@ \advance\skip@-\normalbaselineskip
  \vskip\skip@ }
\def\appendix{\par
  \c@chapter\z@ \c@section\z@
  \let\chaptername\appendixname
  \def\thechapter{\@Alph\c@chapter}}
\newcounter{chapter}
\newif\if@openright
\def\@cite#1#2{{\m@th\upshape\bfseries%
[{#1\if@tempswa{\m@th\upshape\mdseries, #2}\fi}]}}
\theoremstyle{plain}
\newtheorem{thm}{Theorem}[section]
\newtheorem{cor}[thm]{Corollary}
\newtheorem{ass}[thm]{Assumption}
\newtheorem{prop}[thm]{Proposition}
\newtheorem{lem}[thm]{Lemma}
\newtheorem{sublem}[thm]{Sublemma}
\theoremstyle{definition}
\newtheorem{defn}[thm]{Definition}
\newtheorem{war}[thm]{Warning}
\newtheorem{ex}[thm]{Example}
\newtheorem{prob}[thm]{Problem}
\newtheorem{conj}[thm]{Conjecture}
\theoremstyle{remark}
\newtheorem{rem}[thm]{Remark}
\numberwithin{equation}{subsection}
\renewcommand{\bold}[1]{\medskip \noindent {\bf #1 }\nopagebreak}
\newcommand{\nc}{\newcommand}
\newcommand{\rnc}{\renewcommand}
\nc\bA{\mathbb{A}}
\nc\bB{\mathbb{B}}
\nc\bC{\mathbb{C}}
\nc\bD{\mathbb{D}}
\nc\bE{\mathbb{E}}
\nc\bF{\mathbb{F}}
\nc\bG{\mathbb{G}}
\nc\bH{\mathbb{H}}
\nc\bI{\mathbb{I}}
\nc{\bJ}{\mathbb{J}} 
\nc\bK{\mathbb{K}}
\nc\bL{\mathbb{L}}
\nc\bM{\mathbb{M}}
\nc\bN{\mathbb{N}}
\nc\bO{\mathbb{O}}
\nc\bP{\mathbb{P}}
\nc\bQ{\mathbb{Q}}
\nc\bR{\mathbb{R}}
\nc\bS{\mathbb{S}}
\nc\bT{\mathbb{T}}
\nc\bU{\mathbb{U}}
\nc\bV{\mathbb{V}}
\nc\bW{\mathbb{W}}
\nc\bY{\mathbb{Y}}
\nc\bX{\mathbb{X}}
\nc\bZ{\mathbb{Z}}
\nc\cA{\mathcal{A}}
\nc\cB{\mathcal{B}}
\nc\cC{\mathcal{C}}
\rnc\cD{\mathcal{D}}
\nc\cE{\mathcal{E}}
\nc\cF{\mathcal{F}}
\nc\cG{\mathcal{G}}
\rnc\cH{\mathcal{H}}
\nc\cI{\mathcal{I}}
\nc{\cJ}{\mathcal{J}} 
\nc\cK{\mathcal{K}}
\rnc\cL{\mathcal{L}}
\nc\cM{\mathcal{M}}
\nc\cN{\mathcal{N}}
\nc\cO{\mathcal{O}}
\nc\cP{\mathcal{P}}
\nc\cQ{\mathcal{Q}}
\rnc\cR{\mathcal{R}}
\nc\cS{\mathcal{S}}
\nc\cT{\mathcal{T}}
\nc\cU{\mathcal{U}}
\nc\cV{\mathcal{V}}
\nc\cW{\mathcal{W}}
\nc\cY{\mathcal{Y}}
\nc\cX{\mathcal{X}}
\nc\cZ{\mathcal{Z}}
\nc\bfA{\mathbf{A}}
\nc\bfB{\mathbf{B}}
\nc\bfC{\mathbf{C}}
\nc\bfD{\mathbf{D}}
\nc\bfE{\mathbf{E}}
\nc\bfF{\mathbf{F}}
\nc\bfG{\mathbf{G}}
\nc\bfH{\mathbf{H}}
\nc\bfI{\mathbf{I}}
\nc{\bfJ}{\mathbf{J}} 
\nc\bfK{\mathbf{K}}
\nc\bfL{\mathbf{L}}
\nc\bfM{\mathbf{M}}
\nc\bfN{\mathbf{N}}
\nc\bfO{\mathbf{O}}
\nc\bfP{\mathbf{P}}
\nc\bfQ{\mathbf{Q}}
\nc\bfR{\mathbf{R}}
\nc\bfS{\mathbf{S}}
\nc\bfT{\mathbf{T}}
\nc\bfU{\mathbf{U}}
\nc\bfV{\mathbf{V}}
\nc\bfW{\mathbf{W}}
\nc\bfY{\mathbf{Y}}
\nc\bfX{\mathbf{X}}
\nc\bfZ{\mathbf{Z}}
\newcommand{\bk}{{\mathbf{k}}}
\nc{\dmo}{\DeclareMathOperator}
\nc{\wt}{\widetilde}
\rnc{\Re}{\operatorname{Re}}
\rnc{\Im}{\operatorname{Im}}
\rnc{\span}{\operatorname{span}}
\dmo{\rank}{rank}
\dmo{\End}{End}
\dmo{\Hom}{Hom}
\dmo{\Jac}{Jac}
\dmo{\Id}{Id}
\dmo{\Ann}{Ann}
\dmo{\Area}{Area}
\dmo{\CP}{\bC P^1}
\dmo{\rk}{rk}
\dmo{\rel}{rel}
\dmo{\ra}{\rightarrow}
\rnc{\Col}{\operatorname{Col}}
\nc{\ColOne}{\Col_{\bfC_1}}
\nc{\ColOneX}{\ColOne(X,\omega)}
\nc{\ColTwo}{\Col_{\bfC_2}}
\nc{\ColTwoX}{\ColTwo(X,\omega)}
\nc{\ColThree}{\Col_{\bfC_3}}
\nc{\ColThreeX}{\ColThree(X,\omega)}
\nc{\ColOneTwo}{\Col_{\bfC_1, \bfC_2}}
\nc{\ColOneTwoX}{\ColOneTwo(X,\omega)}
\nc{\ColOneThree}{\Col_{\bfC_1, \bfC_3}}
\nc{\ColOneThreeX}{\ColOneThree(X,\omega)}
\nc{\MOne}{\cM_{\bfC_1}}
\nc{\MTwo}{\cM_{\bfC_2}}
\nc{\MOneTwo}{\cM_{\bfC_1, \bfC_2}}
\nc{\MThree}{\cM_{\bfC_3}}
\nc{\MOneThree}{\cM_{\bfC_1, \bfC_3}}
\dmo{\For}{\cF}
\nc{\GL}{\mathrm{GL}^+(2, \bR)}
\renewcommand{\color}[1]{\unskip}
\title{Generalizations of the Eierlegende-Wollmilchsau}
\author[Apisa]{Paul~Apisa}
\author[Wright]{Alex~Wright}
\begin{document}
\maketitle
\thispagestyle{empty}


\begin{abstract}
We  classify a natural collection of $GL(2,\bR)$-invariant subvarieties, which includes loci of double covers, the orbits of the Eierlegende-Wollmilchsau, Ornithorynque, and Matheus-Yoccoz surfaces, and loci appearing naturally in the study of the complex geometry of Teichm\"uller space. 
This classification  is the key input in subsequent work of the authors that classifies ``high rank" invariant subvarieties, and in subsequent work of the first author that classifies certain invariant subvarieties with ``Lyapunov spectrum as degenerate as possible". 
We also derive applications to the complex geometry of Teichm\"uller space and construct new examples, which negatively resolve two questions of Mirzakhani and Wright and illustrate previously unobserved phenomena for the finite blocking problem. 
\end{abstract}


\setcounter{tocdepth}{1} 
\tableofcontents
\newpage
 \section{Introduction}\label{S:intro}

\subsection{New exceptional surfaces}
The Eierlegende-Wollmilchsau\footnote{German for egg-laying wool-milk-sow. Colloquially, an all-in-one device that can do the work of several specialized tools.}  square-tiled surface, independently studied by Forni \cite{Fsur} and Herrlich-Schmith\"{u}sen \cite{HS}, is a perpetual counterexample in the study of translation surfaces and the $GL(2,\bR)$--action. On this surface each cylinder is parallel to exactly one other cylinder, and these pairs of cylinders are homologous and isometric.  (Two cylinders are called homologous if their core curves are homologous.) Its relatives, namely the Ornithorynque\footnote{French for Platypus (the egg laying mammal).} studied by Forni-Matheus \cite{FM}, and the infinite sequence of surfaces studied by Matheus-Yoccoz \cite{MY}, share these properties, and are also continuing sources of insight. 

In this paper we consider generalizations of these surfaces that have higher dimensional $GL(2,\bR)$--orbit closures.  We give examples of orbit closures such that on almost every surface in the orbit closure, each cylinder is parallel to exactly one other cylinder, and these pairs of cylinders are homologous and isometric. As is the case for the Ornithorynque, in many of our examples there is moreover an involution exchanging the cylinders and negating the Abelian differential. Quotienting by this involution, we arrive at quadratic differentials with at most one cylinder in each direction, but whose orbit closure $\cM$ is not a connected component of a stratum.  

Our examples, which arise as loci of cyclic covers, negatively resolve two questions of Mirzakhani and Wright \cite[Questions 1.7 and 1.8]{MirWri2} (Section \ref{SS:Fake}), clarify the complex geometry of Teichm\"uller space (Section \ref{SS:Retracts}), and illustrate new behavior relevant to the finite blocking problem (Section \ref{SS:Blocking}). 

We now turn to our main result, whose investigation led directly to the examples above.  

\subsection{Main result} Our main result classifies orbit closures in which cylinders always occur individually or in isometric pairs. More precisely, let $\cM$ be a $GL(2,\bR)$-invariant subvariety of a stratum of translation surfaces. We say $\cM$ is \emph{geminal}, from the Latin for twins, if, for any cylinder $C$ on any $(X,\omega)\in \cM$, either 
\begin{itemize}
\item any cylinder deformation of $C$ remains in $\cM$, or 
\item there is a cylinder $C'$ such that $C$ and $C'$ are parallel and have the same height and circumference on $(X,\omega)$ as well as on all small deformations of $(X,\omega)$ in $\cM$, and any cylinder deformation that deforms $C$ and $C'$ equally remains in $\cM$. 
\end{itemize}

In the first case we say that $C$ is free, and in the second case we say $C$ and $C'$ are twins.  Section \ref{S:Pre} contains details on cylinder deformations, but we note for the moment that a cylinder deformation that deforms $C$ and $C'$ equally can be defined concretely as a deformation $(X', \omega')$ for which there exists a PL-homeomorphism $(X,\omega)\to (X', \omega')$ whose derivative is the identity off $C\cup C'$ and whose derivative is constant on $C \cup C'$. Our generalizations of the Eierlegende-Wollmilchsau give examples of geminal orbit closures (see Definition \ref{D:EW} and the subsequent examples). 

There are a few obvious examples of geminal orbit closures arising from loci of double covers, which we introduce after a few preliminary remarks. To start, it is important to note that we allow our surfaces to have marked points, which we treat as ``zeros of order zero".

For any cover, and any small deformation of the base, one obtains a deformation of the cover. Let $\cM$ and $\cN$ be invariant subvarieties of Abelian or quadratic differentials. We say that $\cM$ is a \emph{full} locus of covers of $\cN$ if every surface in $\cM$ is a  cover of a surface in $\cN$ in such a way that all deformations  in $\cN$  of the codomain  translation surface give rise to covers in $\cM$. We recall the definition of (branched) covering maps between Abelian and quadratic differentials in Definition \ref{D:Covering}.

We define an \emph{Abelian double} to be a full locus of covers of a component of a stratum of Abelian differentials such that the covering maps have degree two, and all preimages of marked points are either singularities or marked points. 

We define a \emph{quadratic double} to be a full locus of covers of a component of a stratum of quadratic differentials such that the covering maps are the holonomy double cover and all preimages of marked points are marked points. The preimage of a pole may be marked or unmarked. 

In both Abelian and quadratic doubles, we assume that the double covers are connected and do not have any marked points that map to unmarked points. Note that there may be more than one double associated to a component of stratum, since in the Abelian case there may be more than one choice of degree two cover, and in the quadratic case there may be more than one choice of which preimages of poles to mark.  

Abelian and quadratic doubles are both examples of geminal orbit closures. Our main result almost completely classifies geminal orbit closures.

\begin{thm}\label{T:geminal}
Any geminal orbit closure that doesn't consist of branched covers of tori is one of the following. 
\begin{enumerate}
\item A  component of a stratum of Abelian differentials. 
\item An Abelian or quadratic double.
\item A full locus of covers of a quadratic double of a genus zero stratum.
\end{enumerate}
\end{thm}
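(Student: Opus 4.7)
I would prove the theorem by induction on a complexity measure of $(X,\omega)\in\cM$ (for instance $\dim_\bC \cM$, or genus plus number of singularities), with the base cases being strata components and low-genus doubles. The primary tools are Wright's cylinder deformation theorem, the Mirzakhani--Wright analysis of cylinder equivalence classes, and Filip's algebraicity theorem used to convert local twin-pair data into a global covering morphism.

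\textbf{Local analysis.} Pick a generic $(X,\omega)\in\cM$ that is completely periodic in some direction; such directions are dense by Smillie--Weiss and genericity of $\cM$. By the geminal hypothesis each cylinder in this direction is either free or belongs to a twin pair. The cylinder deformation theorem then implies that the tangent space $T_{(X,\omega)}\cM \subset H^1(X,\Sigma;\bC)$ contains the twists and shears of every free cylinder and the joint twists/shears of each twin pair. Dually, the annihilator of $T_{(X,\omega)}\cM$ contains relations of the form $[C]^\ast \pm [C']^\ast$ for each twin pair $(C,C')$, with the sign determined by relative orientations. These relations are defined over a number field by Wright.

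\textbf{Globalizing twins and inducting.} If no cylinder on any surface in $\cM$ has a twin, then cylinder deformations combined with the $GL(2,\bR)$-action and density of periodic directions force $T\cM = H^1(X,\Sigma;\bC)$, so $\cM$ is a component of a stratum, giving case (1). Otherwise, I would globalize the twin pair data, using Filip's algebraicity of invariant subvarieties applied to the analytically continued annihilator relations, into a covering structure: a degree-two Abelian quotient $(X,\omega)\to(Y,\eta)$ when twin signs are positive, or a holonomy double cover of a quadratic differential $(Y,q)$ when the signs are negative. The quotient carries an induced $GL(2,\bR)$-invariant subvariety $\cN$, and the geminal property descends: a twin pair upstairs projects to a single cylinder downstairs whose deformations all lift, and a free cylinder upstairs projects to a free (or involution-fixed and hence free) cylinder. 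By induction $\cN$ is a stratum component, a double, or a cover of a quadratic double over a genus zero base; lifting back through the produced cover places $\cM$ in case (2) or case (3). The exclusion of branched covers of tori is needed to block the induction from terminating on a genus-one base, where additional rel deformations on the torus produce geminal orbit closures not of the listed types.

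\textbf{Main obstacle.} The decisive step is converting the collection of local twin-pair identifications into a bona fide global covering morphism. The twin pairing in a single periodic direction does not by itself determine an involution of $X$; compatibility across different periodic directions must be established. I expect this to require transporting twin pairs by cylinder shears and $GL(2,\bR)$ rotations, using Filip's algebraicity to express the resulting identifications as an algebraic subvariety of the relative fiber product $X\times_\cM X$, and then showing that this correspondence is the graph of a morphism rather than a multi-valued relation. A secondary subtlety is controlling how marked points behave under the produced covers, which is why the definitions of Abelian and quadratic doubles carefully distinguish marked from unmarked preimages.
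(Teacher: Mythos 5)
Your high-level structure (induction, base case when there are no twins giving a stratum, careful bookkeeping of marked points) is sound, and you correctly identify the crux: converting local twin-pair data into a global covering map. But the mechanism you propose for this crux has a fundamental gap, and the actual proof in the paper proceeds quite differently.

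The gap is your assumption that twin pairs globalize to a \emph{degree-two} quotient $(X,\omega)\to(Y,\eta)$ (or a holonomy double cover). This is false in general, and the paper's own generalized Eierlegende-Wollmilchsau examples (Section \ref{S:NewEW}) make the failure concrete. There, the optimal cover $\pi_{opt}:(X,\omega)\to(Y,\eta)$ has degree $2\ell$ with cyclic deck group $\bZ/(2\ell)$; each cylinder downstairs has exactly two preimages, giving the twin pairs. But for each periodic direction the twin pairing is ``being the other preimage of the same cylinder,'' and the element of the deck group carrying $C$ to its twin $C'$ depends on $C$ and need not have order $2$. Indeed, Theorem \ref{T:NewExamples} shows an involution of $X$ exchanging twins exists only when $\ell$ is odd. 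When $\ell$ is even there is no involution of $X$ whose graph in $X\times_\cM X$ records all the twin pairings simultaneously, so the correspondence you want to extract via Filip's theorem is genuinely multi-valued and cannot be the graph of a degree-two quotient. Your induction therefore cannot terminate: you cannot peel off a degree-two cover and apply the inductive hypothesis, because no such cover exists.

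The paper sidesteps this by never attempting to quotient. Instead it \emph{degenerates}: it finds two disjoint, non-parallel subequivalence classes $\bfC_1, \bfC_2$ (a ``diamond''), passes to the WYSIWYG boundary strata $\cM_{\bfC_1}$ and $\cM_{\bfC_2}$ which are geminal of strictly smaller dimension, applies the inductive hypothesis there to obtain good covering maps $\pi_i$ on the degenerations, verifies via part \eqref{I:geminal2:PiOptDegeneratesToPiOpt} of the strengthened statement (Theorem \ref{T:geminal2}) that the $\pi_i$ agree on the common further degeneration $\ColOneTwoX$, and then invokes the Diamond Lemma (Lemma \ref{L:diamond}) to stitch the two boundary covering maps into one covering map on $(X,\omega)$ itself. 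The whole analytical engine is the boundary theory from \cite{MirWri, ChenWright} and the diamond machinery from \cite{ApisaWrightDiamonds}, not Filip's algebraicity. This also explains why the paper proves the more elaborate Theorem \ref{T:geminal2} rather than Theorem \ref{T:geminal} directly: the inductive step needs the statement about how optimal maps behave under degeneration, which your formulation does not supply. A secondary issue is that the paper must also deal with disconnected degenerations (Lemma \ref{L:Disconnect}, Lemma \ref{L:DisconnectedDegenerationMaps}) and with the possibility that the boundary is a $T\times T$ locus (Section \ref{S:TT}), neither of which appears in your outline.
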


See Theorem \ref{T:geminal2} for a more detailed statement. Our generalizations of the Eierlegende-Wollmilchsau give many examples of the final type. However, we do not classify which loci of covers of quadratic doubles of genus zero strata  give geminal orbits closures. That remaining problem seems largely group theoretic (see the conjecture and open problem in Section \ref{SS:Open}) and is not required for our applications.

We also give a similar result for loci of branched covers of tori, where there are additional possibilities associated to loci of surfaces with $\bZ/2 \times \bZ/2$ symmetry. 

Our proof of Theorem \ref{T:geminal} is inductive. Absolutely key to the induction is that we prove a rather involved stronger statement, namely Theorem \ref{T:geminal2}, so that the inductive hypothesis is more powerful. The main tool in the proof is the theory of ``diamonds" developed in \cite{ApisaWrightDiamonds}, which allow us to prove that $\cM$ is a locus of covers by simultaneously using two degenerations. The proof also makes use of restrictions on the structure of the boundary developed in \cite{ChenWright}. This is the first paper to apply these results from \cite{ApisaWrightDiamonds, ChenWright}. 

\subsection{Main applications}
Theorem \ref{T:geminal} plays a major role in the classification of high rank invariant subvarieties \cite{ApisaWrightHighRank}.  There we desire to conclude that a  high rank invariant subvariety is a double or a component of a stratum, and Theorem \ref{T:geminal} resolves the difficulty that other invariant subvarieties might share many properties with a double from the point of view of cylinder deformations. 

Theorem \ref{T:geminal} also plays a major role in the first author's study of invariant subvarieties whose Lyapunov spectrum contains as many zero exponents as possible \cite{Apisa-MHD}. A key element of that study is the following result, which provides further evidence for the surprisingly broad applicability of geminal subvarieties. 

\begin{thm}[Apisa]
Suppose $\cM$ is an invariant subvariety that does not consist entirely of branched covers of tori. If, on generic surfaces in $\cM$,  all parallel cylinders are homologous, then $\cM$ can be obtained from a geminal subvariety by forgetting some marked points. 
\end{thm}

This result combines \cite[Theorems 1.5 and 1.7]{Apisa-MHD}, and relates to zero exponents via results of Forni \cite{F, Fcriterion}.

\subsection{Fake strata}\label{SS:Fake}
An invariant subvariety $\cM$ is called \emph{free} if every cylinder on every surface can be deformed without leaving $\cM$. In other words, $\cM$ is free if it is geminal and no cylinder  has a twin. The only obvious examples are components of strata, and any other such $\cM$ might be called a ``fake stratum", in that the flexibility to deform cylinders in $\cM$ matches that of a stratum. 

Mirzakhani and the second author previously proved that every free invariant subvariety of translation surfaces is a connected component of a stratum, and used this in their classification of invariant subvarieties of maximal rank \cite{MirWri2}. 

However, our examples show that there are many invariant subvarieties of quadratic differentials that are not a connected component of a stratum and in which every cylinder is free. This resolves \cite[Question 1.8]{MirWri2}, which  asked if such an invariant subvariety with rank bigger than 1 exists.  

\begin{figure}[h]
\includegraphics[width=\linewidth]{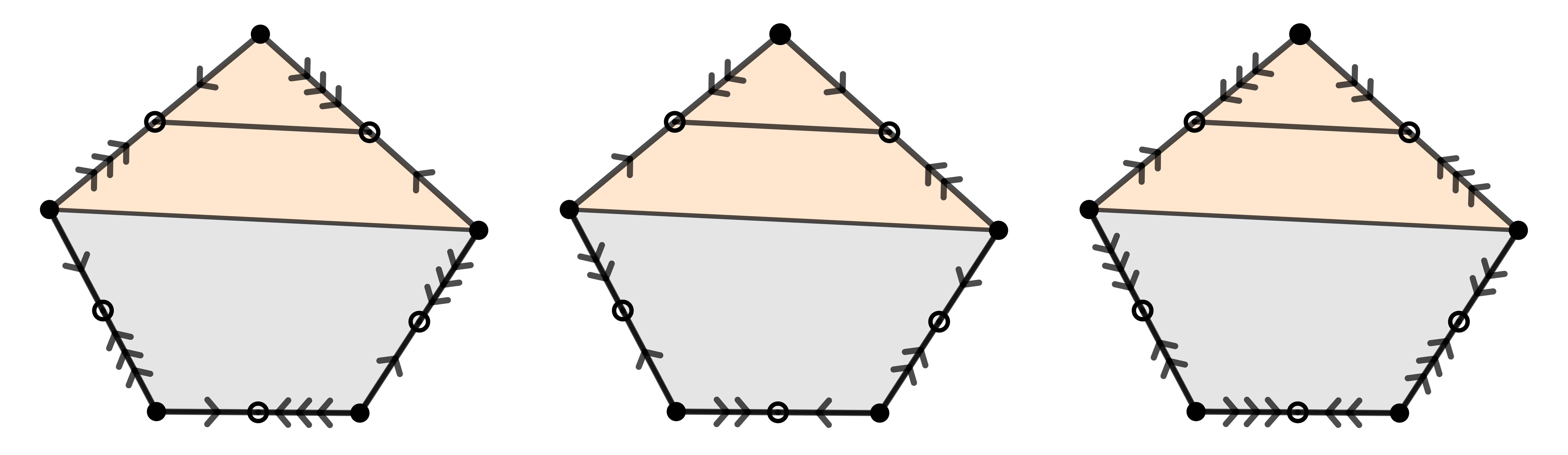}
\caption{A surface in a 4-dimensional invariant subvariety of genus 4 quadratic differentials in which every cylinder is free, contained in the 12-dimensional stratum $\cQ(7, 1^5)$.  The region shaded orange is a single cylinder.  }
\label{F:Deg3Rk2EW}
\end{figure}

An example of a non-trivial invariant subvariety of quadratic differentials of rank 2 in which every cylinder is free is obtained as the loci of three-fold covers of surfaces in $\cQ(1,-1^5)$ illustrated in Figure \ref{F:Deg3Rk2EW}.  The corresponding locus of holonomy double covers of these quadratic differentials is one of the smallest examples of a rank two generalization of the Eierlegende-Wollmilchsau constructed in this paper. (Using the notation of Section \ref{S:NewEW}, it is $\cE(\kappa, a)$ with $\kappa=(1, -1^5)$ and $a=(1^6)$.)

\subsection{The complex geometry of Teichm\"uller space}\label{SS:Retracts}  Markovic \cite{Markovic18} recently proved a longstanding conjecture of Siu \cite[Conjecture 3.7]{Siu} by showing that the Teichm\"uller space of a closed surface of genus $g\geq2$ is not biholomorphic to a bounded convex subset of  $\bC^{3g-3}$. This was accomplished by producing Teichm\"uller disks that are not holomorphic retracts of Teichm\"uller space.\footnote{Markovic showed that the  Caratheodory and  Kobayashi metrics are not equal on Teichm\"uller space. As Markovic has pointed out to us, these metrics are known to be equal on any convex domain in $\bC^{3g-3}$, even unbounded ones \cite[Lemma 3.1]{BracciSaracco}, so the boundedness assumption is not required in Siu's conjecture.}

Given a component $\cQ$ of a stratum of quadratic differentials, let $\cM_{\cQ}$ denote the collection of quadratic differentials $(X, q) \in \cQ$ so that the Teichm\"uller disk generated by $(X, q)$ is a holomorphic retract of Teichm\"uller space. By \cite[Lemma 2.2, Section 2.4]{GekhtmanMarkovic}, $\cM_{\cQ}$ is a union of invariant subvarieties. Moreover, Gekhtman and Markovic show that $\cM_{\cQ}$ is free \cite[Corollary 4.4]{GekhtmanMarkovic}\footnote{The exact statement of \cite[Corollary 4.4]{GekhtmanMarkovic} is that every horizontal cylinder is free on every horizontally periodic surface in $\cM_{\cQ}$. Given a cylinder on a surface in $\cM_{\cQ}$, one can rotate so that it is horizontal, and use the work of Minsky, Smillie, and Weiss \cite{MinW, SW2} discussed in \cite[Section 2.4]{GekhtmanMarkovic} to pass to a horizontally periodic surface. Using \cite[Corollary 4.4]{GekhtmanMarkovic}, this implies the original cylinder was free, as in \cite[Proof of Theorem 5.1]{Wcyl}.}; so the associated locus of holonomy double covers are geminal. The following is therefore a direct consequence of Theorem \ref{T:geminal}.

\begin{cor}\label{C:ComplexGeometry}
$\cM_{\cQ}$ is one of the following: empty, $\cQ$, or a union of full loci of covers of strata of genus zero quadratic differentials. 
\end{cor}

When all of the zeros on quadratic differentials in $\cQ$ have even order, $\cM_{\cQ} = \cQ$ by work of Kra \cite{KraCaratheodory}, see also McMullen \cite[Theorem 4.1]{Mc}. Gekhtman and Markovic conjecture \cite[Conjecture 1.1]{GekhtmanMarkovic} that $\cM_{\cQ}$ is empty when the quadratic differentials in $\cQ$ have an odd order zero. Corollary \ref{C:ComplexGeometry} reduces this conjecture to checking strata and loci of covers of genus zero quadratic differentials.

\bold{Acknowledgments.}  We are grateful to David Aulicino, Johannes Flake, and Andrea Thevis for helpful conversations relating to Section \ref{SS:Open},  Dmitri Gekhtman  and Vlad Markovic for helpful conversations relating to Section \ref{SS:Retracts},  and the referee for detailed and helpful comments.  During the preparation of this paper, the first author was partially supported by NSF Postdoctoral Fellowship DMS 1803625, and the second author was partially supported by a Clay Research Fellowship,  NSF Grant DMS 1856155, and a Sloan Research Fellowship.

\section{New Eierlegende-Wollmilchsaus}\label{S:NewEW}

\subsection{Exceptional loci of cyclic covers}\label{SS:Cyclic} For $\kappa=(\kappa_1, \ldots, \kappa_s)$, we denote by $\cQ_P(\kappa)$ the finite cover of the stratum $\cQ(\kappa)$ where the zeros are labeled $1, \ldots, s$ in such a way that smaller order zeros get smaller labels. (The ``P" stands for ``Pure", in analogy to the pure braid group.) 

Let $\cQ_P(\kappa)$ be a genus $0$ stratum, and let $k>1$. For any $$a=(a_1, \ldots, a_s)\in (\bZ/k)^s$$ such that $\sum a_i=0$ and such that the $a_i$ generate $\bZ/k$, define the locus of cyclic covers $\cE(\kappa,a)$ to be the set of $(X,q)$ arising as $(\bZ/k)$ covers of surfaces in $\cQ_P(\kappa)$, branched over zeros and poles, with local monodromy given by the $a_i$. 

These covers can be described as follows. First, puncture the genus 0 surface $\bP^1$ at the set $\Sigma$ of zeros and poles of the quadratic differential. Then, define a map $\pi_1(\bP^1\setminus \Sigma)\to \bZ/k$ by mapping the loop around the $i$-th puncture to the corresponding $a_i$. This map defines an unbranched cover $X'\to \bP^1\setminus \Sigma$. Filling in punctures gives a branched cover $X\to \bP^1$, and $\cE(\kappa,a)$ is defined to be the locus of $X$ arising in this way endowed with the pull back quadratic differentials. 

\begin{lem}\label{L:Hol}
Let $k=2\ell$ for some integer $\ell$, and assume $\kappa_i$ and $a_i$ have the same parity for each $i$. Then all the quadratic differentials in $\cE(\kappa,a)$ are squares of Abelian differentials. 
\end{lem}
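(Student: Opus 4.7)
Since $k = 2\ell$ is even, the group $\bZ/k$ has a unique index-two subgroup $H = 2\bZ/k$, giving rise to a factorization of the cyclic cover
\[
X \xrightarrow{\pi_1} Y \xrightarrow{\pi_2} \bP^1,
\]
where $Y = X / H$ and $\pi_2 \colon Y \to \bP^1$ is a double cover. I would start by identifying $Y \to \bP^1$ explicitly as the holonomy double cover of $(\bP^1, q)$.

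To do this, I would use the description of the cover via monodromy. The cover $X \to \bP^1$ corresponds to a homomorphism $\rho \colon \pi_1(\bP^1 \setminus \Sigma) \to \bZ/k$ sending the loop around the $i$-th marked point to $a_i$. Composing with the surjection $\bZ/k \to \bZ/2$ (reduction mod $2$, well-defined since $k$ is even), one obtains the homomorphism defining $\pi_2 \colon Y \to \bP^1$; it sends a loop around the $i$-th marked point to $a_i \bmod 2$. By hypothesis, $a_i \bmod 2 = \kappa_i \bmod 2$, so $\pi_2$ is branched precisely at the points where $\kappa_i$ is odd. This is exactly the defining property of the holonomy double cover of $(\bP^1, q)$. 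Note that $Y$ is connected since the $a_i$ generate $\bZ/k$ and hence their reductions generate $\bZ/2$, making $\rho \bmod 2$ surjective.

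Next, by the defining property of the holonomy double cover, there exists an Abelian differential $\omega_Y$ on $Y$ with $\pi_2^* q = \omega_Y^2$. Setting $\omega_X := \pi_1^* \omega_Y$ on $X$, the quadratic differential on $X$ is
\[
q_X \;=\; \pi_1^*\pi_2^* q \;=\; \pi_1^*(\omega_Y^2) \;=\; (\pi_1^* \omega_Y)^2 \;=\; \omega_X^2,
\]
as desired.

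The only delicate step is the verification that the intermediate cover $Y \to \bP^1$ really is the holonomy double cover, which is where the parity hypothesis enters; once that is in place the rest is formal. A minor bookkeeping point is to confirm that $Y$ is connected and that the argument goes through when some $\kappa_i = -1$ (poles), but both follow from the generation hypothesis on the $a_i$ and from the fact that the holonomy double cover construction makes sense with simple poles branched (or not) in exactly the same way.
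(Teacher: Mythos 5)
Your proof is correct and follows essentially the same route as the paper: both identify the mod-$2$ reduction of the monodromy representation $\pi_1(\bP^1\setminus\Sigma)\to\bZ/(2\ell)\to\bZ/2$ with the holonomy representation of the genus-zero quadratic differential via the parity hypothesis, and conclude that the cyclic cover factors through the holonomy double cover, on which the pullback is a square. Your write-up is just more explicit about the intermediate quotient $Y$ and the final pullback computation, which the paper leaves implicit.
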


Thus, in this situation we can consider $\cE(\kappa,a)$ as a locus of Abelian differentials.

\begin{proof}
The holonomy representation of a genus zero quadratic differential is the unique map from $\pi_1$ of the punctured sphere to $\bZ/2$ such that the loop around each odd order zero maps to $1$ and the loop around each even order zero maps to $0$. The given conditions thus imply that the natural map 
$$\pi_1(\bP^1\setminus \Sigma)\to \bZ/(2\ell)\to \bZ/2$$
is the holonomy representation, so the cyclic covers factor through the holonomy double cover. 
\end{proof}

\begin{rem}
$\cE(\kappa,a)$ is $GL(2,\bR)$ invariant since $\cQ_P(\kappa)$ is. The rank and dimension are the same as $\cQ_P(\kappa)$; the rank is $\frac12 m_{odd}-1$, where $m_{odd}$ is the number of $i$ for which $\kappa_i$ is odd; and the dimension is $s-2$. See, for example, \cite[Lemma 4.2]{ApisaWright} or \cite[Lemma 4.4]{ApisaWrightDiamonds} for the formulas for the rank of a stratum, and see Section \ref{S:Pre} for the definition of rank. 
\end{rem}

\begin{defn}\label{D:EW} A generalized Eierlegende-Wollmilchsau is a locus $\cE(\kappa,a)$, where 
\begin{itemize}
\item $k=2\ell$ for some $\ell>1$, 
\item $\kappa_i$ and $a_i$ have the same parity for each $i$,  and
\item for each  $I\subset\{1, \ldots, s\}$ such that $\sum_{i\in I} \kappa_i=-2$, the sum $\sum_{i\in I} a_i$ is a generator for $\bZ/\ell\subset \bZ/(2\ell)$.
\end{itemize}
\end{defn}

 Keeping in mind Lemma \ref{L:Hol}, each generalized Eierlegende-Wollmilchsau is viewed as a locus of Abelian differentials.  

\begin{rem}
Since $\sum \kappa_i$ is even and $\sum a_i$ is even, it suffices to check the parity condition for all but one $i$. 
\end{rem}

%

\begin{ex}
Assume $\kappa_i=-1$ for $i>1$. (Up to permuting the $\kappa_i$, this is equivalent to a quadratic double of $\cQ(\kappa)$ being a hyperelliptic connected component of a stratum of Abelian differentials, so this can be called the hyperelliptic case.) 
In this case, the second condition is that $a_i+a_j$ is a generator for $\bZ/\ell\subset \bZ/(2\ell)$ for each $i>j>1$. We can take $a_i=1$ for $i>1$ and $a_1=2\ell-(s-1)$. 
\end{ex}

\begin{ex}
For any $\kappa$, let $a_i$ be equal to $1$ if $i>1$ and $\kappa_i$ is odd, and $0$ if $i>1$ and $\kappa_i$ is even. Let $\ell>s$ be prime, and let $a_1=2\ell-(\sum_{i>1}a_i)$. Note that since $\sum \kappa_i = -4$, there must be at least one $\kappa_i$ that is $-1$, so in particular there must be at least one $\kappa_i$ that is odd. 
\end{ex}

\begin{ex}
Using $\kappa=(-1,-1,-1,-1)$ gives (orbits of) square-tiled surfaces. 
\begin{itemize}
\item The original Eierlegende-Wollmilchsau corresponds to $\ell=2$ and $a=(1,1,1,1)$ \cite[Equation (59)]{Fsur}, \cite[Proposition 1.5]{HS}, \cite[Equation (1.2)]{MY}. 
\item The Ornithorynque corresponds to $\ell=3$ and $a=(3,1,1,1)$ \cite[Proof of Theorem 1.3]{FM}, \cite[Equation (1.3)]{MY}. 
\item The Matheus-Yoccoz examples correspond to $\ell\geq 3$ odd, and $a=(\ell-2,1,1,\ell)$ \cite[Remark 3.1]{MY}. 
\end{itemize}
Our result gives many more (orbits of) square-tiled surface that haven't been previously studied. 
\end{ex}

The main result of this section is the following. 

\begin{thm}\label{T:NewExamples}
For each cylinder $C$ on each $(X,\omega)$ in a generalized Eierlegende-Wollmilchsau $\cE(\kappa,a)$, there is a cylinder $C'$  isometric to $C$ such that all cylinder deformations that equally deform $C$ and $C'$ remain in $\cE(\kappa,a)$. Furthermore, each pair $C$ and $C'$ are homologous.

Additionally, we have the following phenomena in special cases.
\begin{itemize}
\item If $\ell$ is odd, there is an involution of $X$ negating $\omega$ and interchanging each pair $C$ and $C'$. In the locus of quadratic differentials arising as quotients of these involutions, every cylinder is free. 
\item In the hyperelliptic case ($\kappa_i=-1$ for $i>1$), for every pair $C, C'$, one boundary circle of $C$ is glued to a boundary circle of $C'$ along a collection of saddle connections.
\end{itemize}
\end{thm}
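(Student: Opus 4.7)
The plan is to exploit the deck group $G := \bZ/(2\ell)$ of the covering $\pi\colon X \to \bP^1$, with chosen generator $\tau$.

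First I would identify which deck transformations preserve $\omega$: by Lemma~\ref{L:Hol}, the cover factors through the holonomy double cover of the base quadratic differential, so the subgroup of $G$ preserving $\omega$ is the unique index-two subgroup, namely the even subgroup $\bZ/\ell$. Equivalently, $\tau^j$ preserves $\omega$ iff $j$ is even.

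Next, for any cylinder $C$ on $X$, its image $\bar C := \pi(C)$ is a cylinder on the base (the covering is a local flat isometry away from branch points, and $\omega^2 = \pi^* q$). Let $\bar\gamma$ be the core curve of $\bar C$, separating $\bP^1$ into two topological disks, one of which, $R_I$, contains the singularities indexed by some $I \subset \{1, \ldots, s\}$. Applying Gauss--Bonnet to the topological disk $R_I$ with smooth geodesic boundary $\bar\gamma$ yields $\sum_{i \in I} \kappa_i = -2$, matching the hypothesis in the definition of a generalized Eierlegende-Wollmilchsau. This hypothesis ensures $\sum_{i \in I} a_i$ generates $\bZ/\ell \subset \bZ/(2\ell)$, so has order exactly $\ell$. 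By covering space theory, $\pi^{-1}(\bar C)$ consists of exactly two cylinders, namely $C$ and $C' := \tau(C)$, each an $\ell$-fold isometric cover of $\bar C$, so they are isometric. For the twin property, any cylinder deformation of $\bar C$ stays in $\cQ_P(\kappa)$ and lifts uniquely to an equal deformation of $C, C'$ in $\cE(\kappa,a)$. For the homology claim, $R_I$ with boundary $\bar\gamma$ pulls back to a (possibly branched) two-chain $\pi^{-1}(R_I)$ in $X$ whose boundary, with suitable orientation, is $\gamma + \gamma'$, giving $[\gamma] + [\gamma'] = 0$ in $H_1(X)$.

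For the case $\ell$ odd, the element $\tau^\ell$ has odd exponent, hence negates $\omega$ and is an involution; moreover it is not in the stabilizer $\bZ/\ell$ of $C$, so $\tau^\ell(C) = C'$. The quotient $X/\tau^\ell$ inherits a quadratic differential, and each twin pair $\{C,C'\}$ descends to a single cylinder whose deformations correspond to unconstrained cylinder deformations of $\bar C$ in $\cQ_P(\kappa)$; hence these cylinders are free.

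For the hyperelliptic case, $\sum_{i\in I}\kappa_i = -2$ together with $\kappa_i = -1$ for $i > 1$ forces either $|I|=2$ or $|I^c|=2$; swapping sides if necessary, we may take $I = \{i,j\}$ with $\kappa_i = \kappa_j = -1$ both simple poles. I would then argue that $R_I^+ := R_I \setminus \bar C$ is one-dimensional by applying Gauss--Bonnet to $R_I^+$: the closed geodesic condition on $\partial_+ \bar C$ forces the interior angles of $\bar C$ along $\partial_+ \bar C$ to sum to $2\pi$, and since each simple pole has total cone angle $\pi$, both poles must lie on $\partial_+ \bar C$ with $\bar C$ occupying the full cone angle there, leaving no room for $R_I^+$ to extend in two dimensions. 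Because $\kappa_i, \kappa_j$ are odd, the parity condition makes $a_i, a_j$ odd; together with $\langle a_i + a_j \rangle = \bZ/\ell$, this forces $\langle a_i, a_j \rangle = \bZ/(2\ell)$ by index, so $\pi^{-1}(R_I)$ is connected. Combined with the fact that $\pi^{-1}(\bar C \cap R_I)$ has two components (the positive halves of $C$ and $C'$), this forces $\pi^{-1}(R_I^+)$ to connect the positive boundary of $C$ to that of $C'$. Since $R_I^+$ is one-dimensional, so is its preimage, giving the claimed gluing along saddle connections.

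The main obstacle will be the Gauss--Bonnet/angle analysis in the hyperelliptic step, which requires careful bookkeeping of interior angles and cone angles to conclude that $R_I^+$ is truly one-dimensional. The homology step also requires care to correctly handle orientations and multiplicities in the branched pullback, though it is conceptually a direct chain-level computation.
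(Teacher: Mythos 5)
Your approach matches the paper's: reduce to cylinders on the genus-zero base, use the monodromy of the core curve (which generates $\bZ/\ell\subset\bZ/(2\ell)$ by the $\sum_{i\in I}\kappa_i=-2$ computation and the defining condition) to get exactly two isometric lifts, lift deformations, identify $\tau^\ell$ as the $\omega$-negating involution when $\ell$ is odd, and analyze the pole-side boundary in the hyperelliptic case. Your Gauss--Bonnet derivation of $\sum_{i\in I}\kappa_i=-2$ is a fine substitute for the paper's cut-and-reglue argument.

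There is, however, a real gap. You assert both that $\pi(C)$ is a cylinder on the base and that ``$\pi^{-1}(\bar C)$ consists of exactly two cylinders,'' but covering-space theory only gives that the preimage of the \emph{open} interior of $\bar C$ is two open flat annuli, each an $\ell$-fold cover. For these annuli to be maximal cylinders of $X$ (and for $\pi$ of a maximal cylinder to again be a cylinder rather than wrap around a saddle connection) one must also check that each boundary circle of a lift contains a genuine cone point, i.e.\ a point of cone angle strictly greater than $2\pi$, rather than only regular points of angle $2\pi$. Otherwise the two annuli could merge across a lifted saddle connection whose endpoints are regular. This is precisely the ``subtle detail'' the paper's proof addresses at the end: the only danger is for a side of $\bar C$ consisting of two poles $p_i,p_j$; if both lifted to regular points then $a_i=a_j=\ell$, so $a_i+a_j=2\ell=0$ is not a generator of $\bZ/\ell$, contradicting the definition. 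Your proof never checks this, and it is needed both for the main twin claim and for the hyperelliptic claim (where you implicitly assume the lifts are cylinders whose pole-side boundaries consist of saddle connections with genuine zeros at their ends). Secondarily, your connectivity argument in the hyperelliptic case only shows that $C$ and $C'$ share \emph{some} boundary saddle connection, whereas the statement asserts the full boundary circles are identified; to get that, one should use that $a_i,a_j$ are both odd and the $\bZ/(2\ell)$-equivariance to show every lift of $\sigma$ has $C$ on one side and $C'$ on the other.
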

\begin{proof}
To prove the main claim, it suffices to show that each cylinder on each surface in $\cQ_P(\kappa)$ lifts to exactly 2 cylinders on the cover in $\cE(\kappa,a)$. Since each cylinder can be deformed in $\cQ_P(\kappa)$, the lifts of the cylinder can all be deformed equally in $\cE(\kappa,a)$. 

Recall that for any cylinder on a genus zero quadratic differentials, the core curve is separating, and the sum of the orders of the zeros on each side is $-2$. This can be proven by cutting the cylinder, and gluing each boundary circle to itself to create two poles, and using that the sum of the orders of zeros is $-4$ on each of the two resulting genus zero quadratic differentials.

\begin{rem}\label{R:subtle}
Consider the $i$-th singularity of the base quadratic differential, which has order $\kappa_i$. Since the cover is regular, all preimages of this point have the same cone angle. If $\kappa_i>0$, then all the preimages are singular points, since they have cone angle greater than $2\pi$. Similarly, if $\kappa_i=-1$ and $a_i$ has order greater than 2 in $\bZ/(2\ell)$, then all  lifts will again be singular points. However, if $\kappa_i=-1$ and $a_i$ has order 2, then all lifts will be non-singular points. For a generalized Eierlegende-Wollmilchsau this happens if and only if $\ell$ is odd and $a_i=\ell$. As illustrated in \cite[Figure 4.1]{ApisaWrightDiamonds}, it is possible for  a cylinder on the base quadratic differential to have only poles and no zeros on one side of its boundary. In this case this boundary has two poles and we call the cylinder an envelope. If the two poles correspond to $\kappa_i=-1$ and $\kappa_j=-1$ note that because $a_i+a_j$ must be a generator for $\bZ/\ell\subset \bZ/(2\ell)$ it cannot be the case that both of these poles lift to non-singular points. The point of this somewhat subtle remark is that each lift of an open cylinder on the base gives an open cylinder on the cover with a zero on each boundary component, ruling out the possibility that the lift might be a strict subset of a cylinder because it didn't have singularities on one boundary component. 
\end{rem}

To see that each cylinder has  two lifts, by basic covering space theory it suffices to show that the monodromy representation maps the core curve of each cylinder to a generator of $\bZ/\ell\subset\bZ/2\ell$. Fix one side of the cylinder in question, and say the zeros on that side correspond to $I\subset \{1,\ldots, n\}$. We thus get $\sum_{i\in I} \kappa_i=-2$. The core curve of the cylinder has monodromy $\sum_{i\in I} a_i$, and the definition of a generalized Eierlegende-Wollmilchsau gives that this is a generator for  $\bZ/\ell\subset \bZ/(2\ell)$. Thus, the main claim is proven. 

To see that each pair $C, C'$ are homologous, note that cutting the core curves of these cylinders disconnects the surface into two halves, each of which covers one half of the genus zero surface minus the core curve.

We now check the first additional claim. If $\ell$ is odd, then $\bZ/(2\ell)=\bZ/2 \times \bZ/\ell$. As in the proof of Lemma \ref{L:Hol}, we see that the holonomy representation is given by the map to $\bZ/2$. So the generator of $\bZ/2$ negates the Abelian differentials in $\cE(\kappa,a)$. This involution must interchange each pair $C, C'$, since the stabilizer of $C$ is  $\bZ/\ell\subset \bZ/(2\ell)$.

We now check the final claim. This follows because, in the hyperelliptic case, the condition $\sum_{i\in I} \kappa_i=-2$ implies that  that either $I$ or its complement has size two and $\kappa_i=-1$ for both $i$ in whichever has size two. In other words,  each cylinder goes around a pair of poles.  (In the language of \cite[Section 4]{ApisaWrightDiamonds}, we have proven that all cylinders on genus zero quadratic differentials with only one zero are envelopes.) A saddle connection joining those two poles lifts to a collection of saddle connections joining the two lifts of the cylinder. 
\end{proof}

We close this subsection with some (incomplete) remarks on the history and context. 

\begin{rem}
The families of algebraic curves underlying the orbits of the Eierlegende-Wollmilchsau and Ornithorynque have a long history preceding their appearance in dynamics, and are related to Schwarz's 1873 list of hypergeometric functions that can be expressed algebraically \cite{Schwarz}. See, for example, \cite[Sections 5,6]{W1} for an exposition of how  hypergeometric differential equations arise in the study of cyclic (or Abelian) covers. A characterization of when the integrals solving the differential equations are expressible as algebraic functions appears in \cite[Section 9]{McM:braid}; note that the Eierlegende-Wollmilchsau corresponds to the  $(n,d)=(3,4)$ and $(n,d)=(4,4)$ cases in \cite[Table 10]{McM:braid}. 

These families also arose in the study of Shimura varieties; see for example \cite{Moonen}, where the Eierlegende-Wollmilchsau and Ornithorynque families correspond to items $(7)$ and $(12)$ in Table 1, and see also the references therein. The Ornithorynque family corresponds to one of a family of counterexamples to Coleman's Conjecture, which asserted that, for each $g\geq 4$, there are only finitely many points of $\cM_g$ whose Jacobian has complex multiplication. (The Eierlegende-Wollmilchsau family also fits seamlessly into discussions of Coleman's Conjecture, but does not correspond to a counterexample  since its genus is $3$.) 

These families also appear in \cite[Section 6.3]{Rohde} (see the table at the bottom of page 136), and elsewhere.
\end{rem}

\begin{rem}\label{R:Lyap}
Much of the study of the Eierlegende-Wollmilchsau has been driven by the study of Lyapunov exponents and Shimura curves \cite{Fsur, M5}. As noted in the introduction, geminal invariant subvarieties are also important objects for the study of Lyapunov exponents \cite{Apisa-MHD}. 


See \cite{EKZsmall, FMZ} for the Lyapunov spectrum of cyclic covers of the pillowcase (corresponding to $\kappa=(-1,-1,-1,-1)$ above) and \cite{W1} for a generalization to Abelian covers. More generally, Filip showed that the number of zero exponents is determined by the Zariski closure of monodromy \cite{FiZero}, which is well studied for families of cyclic covers \cite[Theorems 3.3.4, 5.1.1, Proposition 5.5.1]{Rohde}, see also \cite[Section 4]{Loo}, \cite[Corollary 5.3]{McM:braid}. 

If the restriction of the symplectic form to an eigenbundle has signature $(p,q)$, one expects this bundle to contribute $|p-q|$ zero exponents. The signature is well known and easy to compute, see for example \cite[Lemma 6.1]{MirWri}. 
\end{rem}

\begin{rem}
It would be interesting to compare our generalizations of the Eierlegende-Wollmilchsau to work in a more topological setting on ``simple closed curve homology", see for example \cite{MalesteinPutman} and the references therein. Cyclic covers and the study of lifts of simple closed curves are relevant in many contexts, and there may also be connections to work such as  \cite{PutmanWieland,AvilaMatheusYoccoz}.
\end{rem}

\subsection{The finite blocking problem}\label{SS:Blocking}

\begin{defn}
Given a half-translation surface $(X, q)$, two points $p$ and $p'$ are said to be \emph{finitely blocked} if there is a finite collection $B \subseteq (X, q) - \{p, p'\}$ such that all line segments from $p$ to $p'$ pass through a point of $B$. If $p=p'$, we say $p$ is finitely blocked from itself. 
\end{defn}

\begin{lem}\label{L:blocking}
Consider a generalized Eierlegende-Wollmilchsau $\cE(\kappa, a)$. Let $(X,\omega)\in \cE(\kappa, a)$, and let $\pi$ be the quotient map by the cyclic deck group. Then every regular point $p$ is finitely blocked from itself by $B=\pi^{-1}(\pi(p)) - \{ p \}$.
\end{lem}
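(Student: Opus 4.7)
The plan is to show that any geodesic loop $\gamma\colon[0,L]\to X$ based at $p$ is forced to hit some other preimage of $\pi(p)$, because it winds $\ell$ times around the corresponding base cylinder before closing up.

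First I would set up the cylinder picture. Since $p$ is regular, $\pi(p)\notin\Sigma$, and $\gamma$ is a regular closed orbit lying in a unique maximal cylinder $C\subset X$ through $p$; its image $\bar C:=\pi(C)$ is a cylinder on $(\bP^1,q)$ in the corresponding direction. Next, I would invoke the key numerical input already established in the proof of Theorem \ref{T:NewExamples}: the monodromy $\sigma\in\bZ/(2\ell)$ around the core curve of $\bar C$ generates $\bZ/\ell\subset\bZ/(2\ell)$, so $\sigma$ has order exactly $\ell$, and hence the restricted covering $\pi|_C\colon C\to\bar C$ is cyclic of degree $\ell$, with circumference $|C|=\ell\,|\bar C|$.

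The last step is a path-lifting calculation. Parametrize $\gamma$ by arclength; since $L$ is a positive integer multiple of $|C|=\ell\,|\bar C|$, the time $t=|\bar C|$ lies in $[0,L]$. At this time $\bar\gamma(t)=\pi(p)$ (as $\bar\gamma$ has gone once around $\bar C$), and lifting the sub-path $\bar\gamma|_{[0,|\bar C|]}$ starting from $p$, the standard monodromy correspondence identifies the endpoint as the deck transform $\sigma(p)$. Thus $\gamma(|\bar C|)=\sigma(p)$, and since $\sigma$ has order $\ell\ge 2$ this point lies in $B=\pi^{-1}(\pi(p))\setminus\{p\}$. Finiteness of $B$ is automatic since $\pi$ has degree $k$ and is unramified over $\pi(p)$, so $|B|\le k-1$.

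The only step that requires any real care is the identification $\gamma(|\bar C|)=\sigma(p)$: one must check that the abstract monodromy $\sigma$ governing lifts of cylinders in the proof of Theorem \ref{T:NewExamples} is the same as the deck transformation computed by lifting the path $\bar\gamma|_{[0,|\bar C|]}$ from $p$. This is the standard identification for $G$-Galois covers and is the only place where the combinatorics of the cyclic cover actually enter the argument, so I would spell it out carefully but do not anticipate any deeper obstacle.
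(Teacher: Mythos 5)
Your proposal takes the same approach as the paper: identify the cylinder $C\subset X$ containing the geodesic loop $\gamma$, use that $\pi|_C\colon C\to\bar C$ is a degree--$\ell$ cyclic cover, and conclude by path-lifting that $\gamma$ must meet another preimage of $\pi(p)$. The path-lifting step you single out as requiring care is fine. However, there is a gap earlier, at the line \emph{``its image $\bar C:=\pi(C)$ is a cylinder on $(\bP^1,q)$''} --- this is exactly the ``subtle detail'' that the paper's proof explicitly flags, and you do not address it.

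The issue is that a priori $\pi(\gamma)$ could be a saddle connection joining two poles, traversed back and forth, rather than the core curve of a genuine cylinder on $(\bP^1,q)$. This would happen if $\gamma$ passed through preimages of poles $z_i$ with $a_i=\ell\in\bZ/(2\ell)$: such preimages have cone angle $2\pi$, hence are not zeros of $\omega$ and a line segment can run straight through them. In that degenerate case $\pi(C)$ is not a cylinder, the degree-$\ell$ count fails, and the monodromy argument does not directly apply. The paper rules this out using the last paragraph of the proof of Theorem~\ref{T:NewExamples}: if both poles bounding such a saddle connection satisfied $a_i=a_j=\ell$, then $a_i+a_j=0$ would not be a generator of $\bZ/\ell\subset\bZ/(2\ell)$, contradicting the definition of a generalized Eierlegende-Wollmilchsau; hence at least one of the two poles lifts to a true zero of $\omega$, which $\gamma$ cannot cross. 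You need this observation to justify that $\bar C$ is a cylinder whose core curve has monodromy generating $\bZ/\ell$; without it, the key claim $|C|=\ell\,|\bar C|$ is unsupported.
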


For surfaces that are not covers of tori, Lemma \ref{L:blocking} gives the first examples where every regular point is finitely blocked from itself; in the same context it also gives the first example of finitely blocked points where the blocking set does not include periodic points (defined in \cite{ApisaWright}). 

\begin{proof}
Let $L$ be a line segment from $p$ to itself; so $L$ is the core curve of a cylinder $C$. Since $C$ maps to $\pi(C)$ via an $\ell$-to-1 map, and since $\ell>1$, $L$ must contain at least one other point of $\pi^{-1}(\pi(p))$.

A subtle detail is that $\pi(L)$ cannot be a saddle connection joining two poles; see  Remark \ref{R:subtle}. 
\end{proof}


\begin{rem}
If $\ell$ is odd and $\bZ/\ell$ has a non-trivial subgroup $\bZ/\ell'$, then any two points in the same $\bZ/\ell'$ orbit are finitely blocked from each other by the other points in the same $\bZ/\ell$ orbit. This gives an example where the finitely blocked points (and the blocking set) have slope 1 in the sense of \cite{ApisaWright}. 
\end{rem}

\section{Preliminaries on orbit closures}\label{S:Pre}

We will briefly review some facts about invariant subvarieties. 

\subsection{Rank, rel, and covers}\label{SS:RankRelAndCovers}
The tangent space $T_{(X, \omega)} \cM$ of $\cM$ at a point $(X, \omega) \in \cM$ is naturally identified with a subspace of $H^1(X, \Sigma; \mathbb{C})$, where $\Sigma$ denotes the set of zeros and marked points of $\omega$ on $X$. 

Let $p: H^1(X, \Sigma; \mathbb{C}) \ra H^1(X; \mathbb{C})$ denote the projection from relative to absolute cohomology. The \emph{rank} of $\cM$ is defined to be half the complex-dimension of $p\left( T_{(X, \omega)} \cM \right)$, which is independent of the choice of $(X, \omega) \in \cM$. Rank is an integer by \cite{AEM}.

A tangent direction to $\cM$ is called rel if it is in $\ker(p)$, and we define the rel of $\cM$ to be the complex dimension of $\ker(p) \cap T_{(X, \omega)} (\cM)$. We note the following immediate consequence of the definitions. 

\begin{lem}\label{L:R1Deformations}
If $\cM$ has rank 1, then every element of $T_{(X, \omega)} \cM$ can be written as a element of $\ker(p) \cap T_{(X, \omega)} (\cM)$ plus a tangent vector to the  $\mathrm{GL}(2, \bR)$ orbit of $(X,\omega)$.
\end{lem}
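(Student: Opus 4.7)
The plan is to observe that the $\mathrm{GL}(2,\bR)$ orbit of $(X,\omega)$ already contributes a $2$-complex-dimensional subspace to $p(T_{(X,\omega)}\cM)$, and that rank $1$ forces this contribution to exhaust $p(T_{(X,\omega)}\cM)$.

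First, I would recall that the tangent space $T_{(X,\omega)}\mathrm{GL}(2,\bR)\cdot(X,\omega)$ sits inside $T_{(X,\omega)}\cM$, since the orbit is contained in $\cM$. Under period coordinates, the infinitesimal action of $\mathrm{GL}(2,\bR)$ corresponds to applying real-linear endomorphisms of $\bR^2\cong\bC$ to the period map. Concretely, the image of this tangent space under $p$ is exactly the complex span $\span_{\bC}([\Re\omega],[\Im\omega])\subset H^1(X;\bC)$, a subspace of complex dimension $2$ (the two classes $[\Re\omega]$ and $[\Im\omega]$ are $\bR$-linearly independent because $\omega$ is a nonzero holomorphic $1$-form).

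Next, by the definition of rank, $p(T_{(X,\omega)}\cM)$ has complex dimension $2\cdot\mathrm{rank}(\cM)=2$. Combining with the previous paragraph, we conclude that
\[
p\bigl(T_{(X,\omega)}\mathrm{GL}(2,\bR)\cdot(X,\omega)\bigr)=p\bigl(T_{(X,\omega)}\cM\bigr).
\]
Now given any $v\in T_{(X,\omega)}\cM$, choose $w\in T_{(X,\omega)}\mathrm{GL}(2,\bR)\cdot(X,\omega)$ with $p(w)=p(v)$. Then $v-w\in\ker(p)\cap T_{(X,\omega)}\cM$, and $v=(v-w)+w$ is the desired decomposition.

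There is no real obstacle; the only subtlety worth double-checking is the claim that the $\mathrm{GL}(2,\bR)$-tangent space surjects onto $\span_{\bC}([\Re\omega],[\Im\omega])$, which is immediate from writing the action in period coordinates. The argument is entirely formal, consistent with the lemma being labeled an immediate consequence of the definitions.
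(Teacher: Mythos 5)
Your proof is correct. The paper itself gives no proof, stating only that the lemma is an immediate consequence of the definitions, and your argument is exactly the formal verification that is being alluded to: the $\mathrm{GL}(2,\bR)$-orbit tangent space maps under $p$ onto $\span_{\bC}(p[\Re\omega],p[\Im\omega])$, which has complex dimension $2$ by positivity of area, while rank $1$ forces $p(T_{(X,\omega)}\cM)$ to also have complex dimension $2$, so the two coincide and the decomposition follows by subtracting.
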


The following definition is central to our analysis.

\begin{defn}\label{D:Covering}
A \emph{translation covering} from $(X, \omega)$ to $(Y, \eta)$ is defined to be a holomorphic map $f: X \ra Y$ branched only over singularities and marked points such that:
\begin{itemize}
\item $f^* \eta = \omega$, 
\item all marked points on $(X,\omega)$ map to marked points on $(Y,\eta)$, 
\item and each marked point on $(Y,\eta)$ has at least one pre-image on $(X,\omega)$ that is a singular or marked point. 
\end{itemize}
A \emph{half-translation surface covering}, from a translation surface or half-translation surface to a half-translation surface, is defined similarly, with the additional stipulations that:
\begin{itemize} 
\item a marked point may map to a simple pole, 
\item but poles need not have  pre-images that are singular or marked. 
\end{itemize}
\end{defn}

The \emph{field of definition} $\bk(\cM)$ of an invariant subvariety $\cM$ is the smallest subfield of $\mathbb{R}$ so that $\cM$ can be defined by equations in $\bk(\cM)$ in any local period coordinate chart. 

A proof of the following well known fact is sketched in \cite[Lemma 3.4]{ApisaWrightDiamonds}.

\begin{lem}\label{L:R1Arithmetic}
If $\cM$ is a rank one invariant subvariety with $\bk(\cM) = \mathbb{Q}$, then $\cM$ is a locus of torus covers. 
\end{lem}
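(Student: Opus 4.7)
The plan is to use the rationality of the field of definition together with the rank one condition to show that the absolute periods of $\omega$ take values in a rank two lattice in $\bC$, and then construct the torus covering by integration of $\omega$.

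First, I would use the hypothesis $\bk(\cM)=\bQ$ to extract a rational structure on the absolute periods. Because $\cM$ has rank $1$, the image $p(T_{(X,\omega)}\cM)\subset H^1(X;\bC)$ is two complex dimensional, and because $\cM$ is locally cut out in period coordinates by linear equations with rational coefficients, this image is the complexification of a two dimensional $\bQ$-subspace $V\subset H^1(X;\bQ)$ that is locally constant on $\cM$. Let $V^\perp\subset H_1(X;\bQ)$ be its annihilator; then $\Lambda:=H_1(X;\bZ)/(V^\perp\cap H_1(X;\bZ))$ is a finitely generated abelian group of rank $2$, and since $[\omega]\in V\otimes\bC$, the period homomorphism $\gamma\mapsto \int_\gamma \omega$ factors through a map $\phi\colon \Lambda\to\bC$.

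Next, I would check that $\phi(\Lambda)$ is a lattice in $\bC$. The cohomology classes $[\operatorname{Re}\omega]$ and $[\operatorname{Im}\omega]$ both lie in the real two dimensional subspace $V\otimes\bR$; I would verify they are $\bR$-linearly independent there by the standard fact that on a compact Riemann surface a nonzero holomorphic one form cannot satisfy $\operatorname{Re}\omega=c\operatorname{Im}\omega$ for any real $c$ (otherwise $\omega$ would be a real multiple of a holomorphic form equal to its own imaginary part, hence zero). This linear independence makes the induced $\bR$-linear map $\Lambda\otimes\bR\to\bC$ an isomorphism, so $\phi(\Lambda)$ is a rank two discrete subgroup of $\bC$, and $T:=\bC/\phi(\Lambda)$ is a torus equipped with the Abelian differential $dz$.

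Once the target torus exists, I would construct the covering map by $f\colon X\to T$, $f(x):=\int_{x_0}^x\omega\pmod{\phi(\Lambda)}$, which is well defined because the periods of $\omega$ lie in $\phi(\Lambda)$ by construction. It is holomorphic with $f^*dz=\omega$, hence it is unramified away from the zeros and marked points of $\omega$ and ramified exactly there; compactness of $X$ and $T$ makes it a finite branched covering, fitting the template of Definition~\ref{D:Covering}. Finally, since $V$ is locally constant in period coordinates on $\cM$, the same construction produces a family of torus covers over a neighborhood of $(X,\omega)$ in $\cM$, and by connectedness and analyticity over all of $\cM$, so $\cM$ is a locus of torus covers.

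The step I expect to require the most care is verifying that the periods really do land in a full rank lattice rather than a dense subgroup; this is where the argument ruling out $\bR$-linear dependence between $[\operatorname{Re}\omega]$ and $[\operatorname{Im}\omega]$ matters and is the only place one uses something beyond pure linear algebra. Everything else is a direct consequence of the rationality of $V$ and standard facts about translation coverings.
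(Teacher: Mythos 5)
Your argument is correct and is essentially the standard proof (the paper itself only cites a sketch in an earlier reference, but the route --- use $\bk(\cM)=\bQ$ to get a $2$-dimensional rational subspace $V=p(T\cM)\cap H^1(X;\bQ)$, deduce that absolute periods land in a rank-two lattice, and integrate $\omega$ to build the torus cover --- is exactly what is intended). One small imprecision worth fixing: to conclude that $[\operatorname{Re}\omega]$ and $[\operatorname{Im}\omega]$ span $V\otimes\bR$ you need their \emph{cohomology classes}, not the forms themselves, to be $\bR$-independent; your stated reason (no nonzero holomorphic form has $\operatorname{Re}\omega=c\operatorname{Im}\omega$ pointwise) only gives form-level independence. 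The clean argument is $\int_X \operatorname{Re}\omega\wedge\operatorname{Im}\omega = \tfrac{i}{2}\int_X\omega\wedge\bar\omega = \operatorname{Area}(X,\omega)>0$, which by Poincar\'e duality gives independence in $H^1(X;\bR)$. Also, to literally match Definition~\ref{D:Covering} the target torus should be equipped with marked points at the images of $\Sigma$ (so that branch points of $f$ lie over marked points, and marked points of $(X,\omega)$ map to marked points); this is implicit in your construction but should be stated.
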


Recall that a translation surface $(X,\omega)$ is a translation cover of a torus if and only if its $\bZ$-module $\Lambda_{abs}\subset \bC$ of absolute periods is a lattice  (rank two $\bZ$-submodule of $\bC$), in which case every translation map to a torus is given by $$p\mapsto \int_{\gamma_p} \omega + \Lambda\in \bC/\Lambda,$$ where $\Lambda$ is a lattice  containing $\Lambda_{abs}$, and $\gamma_p$ is a path from a fixed basepoint to $p$. In particular, if $\pi_{abs}$ is the map defined by $\Lambda=\Lambda_{abs}$, then $\pi_{abs}$ is a map to a torus, and all other maps to a torus have $\pi_{abs}$ as a factor.

We now consider covers of higher genus surfaces. The following result is a combination of a result of M\"oller \cite[Theorem 2.6]{M2} and a slight extension found in \cite[Lemma 3.3]{ApisaWright}.   

\begin{thm}\label{T:MinimalCover}
Suppose that $(X, \omega)$ is not a torus cover. There is a unique translation surface $(X_{min}, \omega_{min})$ and a translation covering $$\pi_{X_{min}}: (X, \omega) \rightarrow (X_{min}, \omega_{min})$$ such that any translation cover from $(X, \omega)$ to translation surface is a factor of $\pi_{X_{min}}$. 

Additionally, there is a quadratic differential $(Q_{min}, q_{min})$ with a degree
1 or 2 map $(X_{min}, \omega_{min}) \rightarrow (Q_{min}, q_{min})$ such that any map from $(X, \omega)$ to a quadratic differential is a factor of the composite map $\pi_{Q_{min}} :
(X, \omega) \rightarrow (Q_{min}, q_{min})$.
\end{thm}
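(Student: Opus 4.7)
The plan is to follow M\"oller's argument in \cite{M2}: construct $(X_{min}, \omega_{min})$ as the universal translation quotient of $(X, \omega)$, i.e.\ the one through which every translation cover factors, and then obtain $(Q_{min}, q_{min})$ via the holonomy double cover correspondence.

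First I would establish a common refinement construction for translation quotients. Given two translation covers $f_i : (X, \omega) \to (Y_i, \eta_i)$ for $i = 1, 2$, form the equivalence relation $\sim$ on $X$ generated by $x \sim y$ whenever $f_1(x) = f_1(y)$ or $f_2(x) = f_2(y)$, and set $Z = X/\sim$. Since each $f_i$ is a local translation isometry away from singularities, the translation charts on $X$ descend coherently to $Z$, making $X \to Z$ a translation cover with induced translation covers $Y_i \to Z$ such that $X \to Y_i \to Z$ agrees with $X \to Z$. So any two translation quotients admit a common further translation quotient.

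The main technical obstacle is to show that the equivalence classes of $\sim$ are finite, so that $X \to Z$ is actually a \emph{finite} translation cover. Chains alternating between $\sim_1$ and $\sim_2$ produce self-translations of $(X, \omega)$ built from the well-defined displacement vectors of the fibers of the $f_i$. An infinite equivalence class would therefore exhibit an infinite subgroup of the translation automorphism group of $(X, \omega)$, which by a classical result forces $(X, \omega)$ to be a torus cover---contrary to hypothesis. With finiteness of fibers, the degrees of translation covers of $(X, \omega)$ are uniformly bounded (e.g.\ by Riemann-Hurwitz). A cover of maximal such degree is $\pi_{X_{min}}$: combining it with any other translation cover via the common-refinement construction produces a cover of at least as large degree, which by maximality must coincide with $\pi_{X_{min}}$, so every translation cover factors through $\pi_{X_{min}}$.

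For the quadratic statement, I would observe that any half-translation cover $p : (X, \omega) \to (Q, q)$ satisfies $p^* q = \omega^2$, which is a global square; hence $p$ factors through the holonomy double cover $(\hat Q, \hat \omega) \to (Q, q)$ via a genuine translation cover $(X, \omega) \to (\hat Q, \hat \omega)$, which in turn factors through $\pi_{X_{min}}$. Applying the common-refinement argument inside the half-translation category produces the minimal $(Q_{min}, q_{min})$ together with a map $(X_{min}, \omega_{min}) \to (Q_{min}, q_{min})$. To see that its degree is $1$ or $2$: a non-trivial fiber corresponds to an involution $\sigma$ of $X_{min}$ with $\sigma^* \omega_{min} = -\omega_{min}$, and two such involutions would compose to a non-trivial translation automorphism $\tau$ of $(X_{min}, \omega_{min})$. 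But then $X_{min} \to X_{min}/\langle \tau \rangle$ would be a strictly smaller translation quotient, violating the minimality of $X_{min}$. So at most one such $\sigma$ exists, and the degree of $(X_{min}, \omega_{min}) \to (Q_{min}, q_{min})$ is $1$ or $2$ accordingly.
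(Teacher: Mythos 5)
Note first that the paper does not prove this theorem: it is quoted verbatim as a combination of M\"oller's Theorem~2.6 in \textbf{[M2]} and Lemma~3.3 of \textbf{[ApisaWright]}, so there is no in-paper argument to compare your proposal against. Your overall architecture---construct the universal translation quotient, bound degrees, then pass to the holonomy double cover---is the right one and is essentially what those references do. However, there are two genuine gaps.

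The serious one is in the finiteness claim for the common refinement. You assert that an infinite chain alternating between $\sim_1$ and $\sim_2$ ``would exhibit an infinite subgroup of the translation automorphism group of $(X,\omega)$.'' This conflates two different things. For a point $y$ in the fiber $f_i^{-1}(f_i(x))$, there is a germ of a local translation $(f_i|_{\mathrm{near}\,y})^{-1}\circ(f_i|_{\mathrm{near}\,x})$ taking $x$ to $y$, but when $f_i$ is not Galois this germ does \emph{not} extend to a global translation automorphism of $(X,\omega)$, and in general there is no automorphism of $X$ carrying $x$ to $y$ at all. So the chain of germs does not produce elements of $\mathrm{Aut}(X,\omega)$, and the ``classical result'' you invoke does not apply. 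Separately, even granting finite classes, you still need to justify that $Z=X/\!\sim$ carries a well-defined translation structure for which $X\to Z$ is a translation covering (with branching only over singular and marked points, respecting the marked-point conditions of Definition~\ref{D:Covering}); this is not automatic from ``charts descend.'' Finally, your Riemann--Hurwitz degree bound is correct only after one knows that every translation quotient $(Y,\eta)$ has $g_Y\ge 2$; that follows from the hypothesis that $(X,\omega)$ is not a torus cover (otherwise $\Lambda_{\mathrm{abs}}(\omega)$ would be contained in a rank-two lattice), not from ``finiteness of fibers.''

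The second issue is the degree-$\le 2$ argument for $X_{min}\to Q_{min}$. You assume a nontrivial fiber yields an involution of $X_{min}$ negating $\omega_{min}$, which already presupposes the covering is Galois of degree two---exactly what you are trying to show. The clean argument avoids this entirely: since $\omega_{min}$ is a square of an Abelian differential, the half-translation cover $X_{min}\to Q_{min}$ factors through the holonomy double cover $\hat Q_{min}\to Q_{min}$, and the factor $X_{min}\to \hat Q_{min}$ is a genuine translation cover; by the universal property of $X_{min}$ it must have degree one, so $X_{min}\to Q_{min}$ has degree $1$ (if $Q_{min}$ has trivial linear holonomy) or $2$ (if not), with no case analysis of involutions required.
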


We will use Theorem \ref{T:MinimalCover} in conjunction with the following result \cite[Lemma 4.5]{ApisaWright}. 

\begin{defn}\label{D:F}
Suppose that $(X, \omega)$ is an Abelian or quadratic differential with marked points that belongs to an invariant subvariety $\cM$. Then $\For(X, \omega)$ will denote $(X, \omega)$ once marked points are forgotten. Similarly, we will define $\For(\cM)$ to be the invariant subvariety that is the closure of $\{ \For(Y, \eta) : (Y, \eta) \in \cM \}$.
\end{defn}

\begin{lem}\label{L:InvolutionImpliesHyp-background}
The generic element of a component $\cS$ of a stratum of
Abelian or quadratic differentials admits a non-bijective half-translation
cover to another translation or half-translation surface if and only if $\cF(\cS)$ is a hyperelliptic component.  For hyperelliptic strata of rank at least 2, the only such cover is the quotient by the hyperelliptic involution. 
\end{lem}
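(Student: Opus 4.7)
The plan is to handle the \textbf{if} direction quickly and then use Theorem \ref{T:MinimalCover} to push through the \textbf{only if} direction. For the if direction, when $\cF(\cS)$ is a hyperelliptic component, every $(X,\omega)\in\cS$ carries the hyperelliptic involution $\tau$, and since $X/\tau$ has genus zero we have $\tau^*\omega=-\omega$. Then $X\to X/\tau$ is a non-bijective degree two half-translation cover (with marked points on the quotient placed so as to satisfy Definition \ref{D:Covering}). The quadratic case is parallel, using hyperelliptic components of quadratic strata.

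For the only if direction, I would apply Theorem \ref{T:MinimalCover} to the generic $(X,\omega)\in\cS$ (after forgetting marked points) to obtain a unique minimal cover $\pi_{Q_{min}}\colon (X,\omega)\to (Q_{min},q_{min})$, which is non-trivial by hypothesis. Factoring $\pi_{Q_{min}}=\pi_{half}\circ\pi_{X_{min}}$ with $\pi_{half}$ of degree one or two, the first step is to rule out $\deg\pi_{X_{min}}>1$: by uniqueness in Theorem \ref{T:MinimalCover} combined with $GL(2,\bR)$-invariance, this would force every surface in $\cF(\cS)$ to be a degree $d>1$ translation cover of a surface in some fixed stratum $\cH(\kappa_{min})$, so the tangent space to $\cF(\cS)$ would lie inside $\pi_{X_{min}}^{*}H^1(X_{min},\Sigma_{min};\bC)$, the $\mathrm{Deck}(\pi_{X_{min}})$-invariant subspace of $H^1(X,\Sigma;\bC)$, which is proper whenever $d>1$; this contradicts that $\cF(\cS)$ fills a stratum component.

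Hence $\pi_{X_{min}}$ is a bijection, $\pi_{half}$ has degree two, and every $(X,\omega)$ admits an involution $\tau$ with $\tau^*\omega=-\omega$. The tangent space to $\cF(\cS)$ must then equal $H^1(X,\Sigma;\bC)^{-}$; writing $\Sigma$ as the union of $\tau$-fixed points and $\tau$-orbits of size two, with $a$ fixed points and $b$ two-orbits, a Riemann--Hurwitz calculation gives $\dim H^1(X,\Sigma;\bC)^{+}=2g_Y+a+b-1$, where $g_Y$ is the genus of $X/\tau$. Requiring this to vanish forces $g_Y=0$ together with $(a,b)\in\{(1,0),(0,1)\}$, corresponding respectively to the hyperelliptic components $\cH^{hyp}(2g-2)$ and $\cH^{hyp}(g-1,g-1)$. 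I expect the main obstacle to be executing this dimension count cleanly in the presence of marked points, and bridging from \emph{generic} elements having a cover to \emph{every} element having one; the latter is handled by observing that for each fixed topological type of cover the corresponding locus is closed and $GL(2,\bR)$-invariant, and that the minimal cover has bounded combinatorial type on $\cS$.

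For the uniqueness assertion when $\cF(\cS)$ is hyperelliptic of rank at least two, since rank at least two rules out torus covers, the analysis above shows $\pi_{X_{min}}$ is the identity and $\pi_{Q_{min}}$ is the hyperelliptic quotient of degree two. Any non-bijective half-translation cover factors through $\pi_{Q_{min}}$, and since $\pi_{Q_{min}}$ has degree two, the only non-bijective factor is $\pi_{Q_{min}}$ itself.
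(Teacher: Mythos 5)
This lemma is quoted in the paper from the authors' earlier work (\cite[Lemma 4.5]{ApisaWright}) and is not proved here, so there is no in-paper argument to compare your proposal against; I will evaluate it on its own terms. Your ``if'' direction and the shape of your ``only if'' dimension count are reasonable for the Abelian case, and the computation $\dim H^1(X,\Sigma;\bC)^{+}=2g_Y+a+b-1$ is correct when $\Sigma$ consists only of zeros. However, there are several genuine gaps.

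First, the ``only if'' direction is only argued for Abelian strata. When $\cS$ is a component of a stratum of quadratic differentials, a non-bijective half-translation cover $(X,q)\to(Y,q')$ need not be the quotient by an involution with genus-zero quotient, and the hyperelliptic components of quadratic strata (Lanneau's list, e.g.\ $\cQ^{hyp}(k,-1^{k+4})$, $\cQ^{hyp}(k,k,-1^{2k+4})$, $\cQ^{hyp}(2j+2,2k+2)$) have a different combinatorial shape than the two Abelian cases your computation produces. One would need to re-do the analysis either directly on $(X,q)$ or on its holonomy double cover, and it will not come out ``parallel'' without work. Second, the crux of the dimension argument is that the (minimal) cover persists over an open dense subset of $\cS$, so that the pullback tangent space fills $H^1(X,\Sigma;\bC)$; you flag this (``bounded combinatorial type'') but do not supply an argument, and this is precisely the non-trivial content of the lemma. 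Third, your appeal to Theorem \ref{T:MinimalCover} to rule out $\deg\pi_{X_{min}}>1$ has two problems: that theorem is stated only for $(X,\omega)$ that are not torus covers (an assumption you never verify or discharge), and the dimension inequality $\dim\pi_{X_{min}}^{*}H^1(X_{min},\Sigma_{min};\bC) < \dim H^1(X,\Sigma;\bC)$ can fail for unbranched covers of tori, where genus and the number of marked points need not increase. Without addressing these, the only-if direction is incomplete.
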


Combining the preceding two results gives the following.

\begin{cor}\label{C:MinimalCover}
Suppose that $(X, \omega)$ is a generic surface in either a component of a stratum of Abelian differentials that does not consist of flat tori or a quadratic double of a non-hyperelliptic component of a stratum of quadratic differentials. Then $\pi_{X_{min}}$ is the identity.
\end{cor}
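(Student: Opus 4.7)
The plan is to analyze $\pi_{X_{min}}$ via Theorem \ref{T:MinimalCover} and Lemma \ref{L:InvolutionImpliesHyp-background}, arguing by contradiction in each of the two cases.

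In the Abelian stratum case, if $\pi_{X_{min}}$ were not the identity, then $\pi_{X_{min}} \colon (X, \omega) \to (X_{min}, \omega_{min})$ would be a non-bijective translation cover, and in particular a non-bijective half-translation cover. By Lemma \ref{L:InvolutionImpliesHyp-background}, $\cF(\cS)$ would then be hyperelliptic; and since excluding flat tori forces the rank (which equals the genus for a stratum component) to be at least $2$, the only such cover would be the quotient by the hyperelliptic involution $\iota$. But $\iota^\ast \omega = -\omega$, so the quotient acquires a quadratic rather than an Abelian differential, meaning it is not a translation cover---contradicting the assumption that $\pi_{X_{min}}$ is a nontrivial translation cover.

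In the quadratic double case, let $f \colon (X,\omega) \to (Y, q)$ denote the connected holonomy double cover, where $(Y, q)$ is a generic surface in a non-hyperelliptic component $\cN$ of a stratum of quadratic differentials. By Theorem \ref{T:MinimalCover}, $f$ is a factor of $\pi_{Q_{min}}$, so there exists a half-translation cover $g \colon (Y, q) \to (Q_{min}, q_{min})$ with $\pi_{Q_{min}} = g \circ f$. Applying Lemma \ref{L:InvolutionImpliesHyp-background} to $(Y, q)$ shows that $g$ is bijective, giving $(Q_{min}, q_{min}) = (Y, q)$. Since the composite $\pi_{Q_{min}} = (X_{min} \to Q_{min}) \circ \pi_{X_{min}}$ has total degree $2$, and the factor $X_{min} \to Q_{min}$ has degree $1$ or $2$ by Theorem \ref{T:MinimalCover}, the possibilities are $(\deg \pi_{X_{min}}, \deg(X_{min} \to Q_{min})) = (1,2)$ or $(2,1)$. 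In the latter, $(X_{min}, \omega_{min})$ is isomorphic to $(Y, q)$ as half-translation surfaces, so $q$ is the global square of the Abelian differential $\omega_{min}$; but then the holonomy double cover of $(Y, q)$ is disconnected, contradicting the definition of a quadratic double. Hence $\deg \pi_{X_{min}} = 1$, so $\pi_{X_{min}}$ is the identity.

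The main obstacle is the quadratic double case, particularly identifying $Q_{min}$ with $Y$ by applying Lemma \ref{L:InvolutionImpliesHyp-background} to $(Y,q)$ (the source of the auxiliary cover $g$), and then ruling out the $\deg \pi_{X_{min}} = 2$ branch using the connectedness of the holonomy double cover, which is the crucial input from the definition of a quadratic double.
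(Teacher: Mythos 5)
Your proof is correct and follows the same approach as the paper's, which simply declares the corollary to be \emph{immediate} from Theorem \ref{T:MinimalCover} and Lemma \ref{L:InvolutionImpliesHyp-background}. You have supplied the details the paper leaves implicit, and in particular your degree bookkeeping in the quadratic double case, together with the use of connectedness of the holonomy double cover to rule out $\deg \pi_{X_{min}} = 2$, correctly fleshes out the intended argument.
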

\begin{proof}
This is immediate from Theorem \ref{T:MinimalCover} and Lemma \ref{L:InvolutionImpliesHyp-background}.
\end{proof}
\subsection{Cylinder deformations}\label{SS:CDT}

In this section we recall some definitions and results from \cite{Wcyl}.

\begin{defn}\label{D:CylAndBoundary}
A \emph{cylinder} on a  translation or half-translation surface is an isometric embedding of $\bR/(c \bZ) \times (0,h)$ into the surface, which is not the restriction of an isometric embedding of a larger cylinder. The circumference of the cylinder is defined to be $c$, and its height is defined to be $h$. 
\end{defn}


Two parallel cylinders on  $(X, \omega)\in\cM$ are called \emph{$\cM$-parallel} or \emph{$\cM$-equivalent} if they remain parallel on all nearby surfaces in $\cM$. A maximal collection of $\cM$-parallel cylinders on $(X, \omega)$ is called an \emph{$\cM$-equivalence class}.

If $\bfC = \{C_1, \hdots, C_n\}$ is a collection of parallel cylinders on a flat surface $(X, \omega)$ with core curves $\{ \gamma_1, \hdots, \gamma_n\}$, then we will say that the core curves are consistently oriented if their holonomy vectors are positive real multiples of each other.  

If $\bfC = \{C_1, \hdots, C_n\}$ is an $\cM$-equivalence class of cylinders on $(X, \omega) \in \cM$ with consistently oriented core curves $\{\gamma_1, \hdots, \gamma_n\}$, then the \emph{standard shear} in $\bfC$ is defined to be  the element of $H^1(X, \Sigma; \bC)$ given by 
$$\sigma_{\bfC} := \sum_{i=1}^n h_i \gamma_i^*$$ 
where $h_i$ denotes the height of cylinder $C_i$ and $\gamma_i^*$ is the intersection number with $\gamma_i$.  The following is the main theorem of \cite{Wcyl}, restated in a form closer to \cite[Theorem 4.1]{MirWri}.

\begin{thm}[The Cylinder Deformation Theorem]\label{T:CDT}
If $\bfC$ is an $\cM$-equivalence on $(X,\omega)$, then $\sigma_{\bfC}\in T_{(X,\omega)}\cM$.
\end{thm}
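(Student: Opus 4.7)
The plan is to derive the standard shear $\sigma_{\bfC}$ as a tangent direction to $\cM$ by combining the invariance of $\cM$ under the horocycle subgroup $U = \{u_t\}_{t \in \bR} \subset GL(2,\bR)$ with a separation argument that exploits the very definition of $\cM$-equivalence.

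After rotating by an element of $SO(2) \subset GL(2,\bR)$, which preserves $\cM$, we may assume the core curves of $\bfC$ lie in the horizontal direction. Let $H = \{C_1, \ldots, C_N\}$ be the set of all horizontal cylinders on $(X,\omega)$, with heights $h_i$ and core curves $\gamma_i$. A direct computation (tracking how the horizontal part of each saddle connection changes) shows that the tangent vector of the horocycle flow at $(X,\omega)$ equals $\sum_{i=1}^{N} h_i \gamma_i^*$, and this lies in $T_{(X,\omega)}\cM$ by $U$-invariance. If $\bfC = H$, we are done.

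Otherwise, write $H = \bfC \sqcup \bfC_2 \sqcup \cdots \sqcup \bfC_k$ as a disjoint union of $\cM$-equivalence classes of horizontal cylinders. The key point is that, by the definition of $\cM$-equivalence, cylinders in distinct equivalence classes can be made non-parallel by arbitrarily small deformations within $\cM$, while cylinders in a single class cannot. Exploiting this, the plan is to produce a family of deformations $(X_s, \omega_s) \in \cM$ with $(X_0, \omega_0) = (X,\omega)$ such that, for $s \neq 0$, the cylinders of $\bfC$ remain horizontal while the cylinders of $\bfC_2 \cup \cdots \cup \bfC_k$ acquire nonzero slopes. Applying the horocycle argument to $(X_s, \omega_s)$ then produces a tangent vector of the form $\sigma_{\bfC} + O(s)$, supported only on $\bfC$ in the limit, and passing to the limit $s \to 0$ using closedness of the tangent bundle yields $\sigma_{\bfC} \in T_{(X,\omega)}\cM$.

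The main obstacle is the rigorous execution of the separation step: horizontal cylinders may appear, disappear, or merge under perturbation, and one must ensure that the tilting deformation really removes all of $\bfC_2 \cup \cdots \cup \bfC_k$ from the horizontal direction without disturbing $\bfC$. A robust implementation routes this through recurrence of the horocycle flow (Minsky-Weiss) and the Eskin-Mirzakhani-Mohammadi equidistribution theorem: one identifies a $U$-invariant affine submanifold of $\cM$ containing $(X,\omega)$ and analyzes its tangent space at $(X,\omega)$, showing that it decomposes along the $\cM$-equivalence classes of horizontal cylinders with each $\sigma_{\bfC_j}$ appearing as a distinct tangent direction (the class $\bfC$ being the one of interest). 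The closedness of $T_{(X,\omega)}\cM$ as a subspace of $H^1(X, \Sigma; \bC)$ then makes the limit extraction rigorous and isolates $\sigma_{\bfC}$ from the combined horocycle vector.
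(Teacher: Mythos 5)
The paper does not prove Theorem \ref{T:CDT}; it is quoted verbatim as the main theorem of \cite{Wcyl} (see also \cite[Theorem 4.1]{MirWri}), so there is no in-paper proof against which to compare. I will therefore assess your sketch against the known argument.

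Your sketch correctly begins with the observation that, after rotating, the full horocycle derivative $\sigma_H=\sum_{i}h_i\gamma_i^*$ (summed over \emph{all} horizontal cylinders) lies in $T_{(X,\omega)}\cM$ by $U$-invariance, and correctly identifies the remaining task as isolating $\sigma_{\bfC}$ from $\sigma_H$. However, the separation step as you describe it does not work. If you tilt the other equivalence classes $\bfC_2,\dots,\bfC_k$ to nonzero slope while keeping $\bfC$ horizontal, the resulting surface $(X_s,\omega_s)$ is no longer horizontally periodic, since the cylinders of $\bfC$ do not fill the surface. For a non--horizontally-periodic surface, the horocycle derivative is \emph{not} a sum of cylinder classes at all; it is simply the cohomology class $\operatorname{Im}(\omega_s)$ (the horizontal part of $\omega_s$, in the appropriate convention), and as $s\to 0$ this converges to $\operatorname{Im}(\omega)=\sigma_H$, not to $\sigma_{\bfC}$. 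So the ``tangent vector of the form $\sigma_{\bfC}+O(s)$'' claimed in your third step is not produced; the limit you extract is again the full $\sigma_H$ and you have made no progress. The instantaneous horocycle derivative cannot, by itself, distinguish one $\cM$-equivalence class from another.

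Your final paragraph gestures at the right tools (horocycle recurrence in the sense of Minsky--Weiss and Smillie--Weiss, and the structure theory of $\cM$) but does not supply the argument. The genuine mechanism in \cite{Wcyl} is not the instantaneous derivative at perturbed surfaces, but the long-time behavior of the horocycle flow: as $t\to\infty$, each horizontal cylinder $C_i$ twists at rate $1/m_i$ where $m_i$ is its modulus, so the closure of $\{u_t(X,\omega)\}_{t\in\bR}$ is controlled by a subtorus of the twist torus determined by the rational relations among the $1/m_i$. The cylinder-proportion result (ratios of moduli within an $\cM$-equivalence class are locally constant) forces cylinders within $\bfC$ to twist in lockstep, while by perturbing inside $\cM$ one can make the twist rates of $\bfC$ rationally independent from those of the other classes. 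Recurrence then lets one realize a path in $\cM$ that twists only $\bfC$, whose derivative is $\sigma_{\bfC}$. This is the ingredient missing from your sketch, and it cannot be replaced by the naive tilting-and-limiting argument you propose.
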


 Here we keep in mind that $T_{(X, \omega)} \cM$ can be identified with a subspace of $H^1(X, \Sigma; \bC)$. 

Notice that when $\bfC$ is a collection of horizontal cylinders, the straight line path in $\cM$ determined by the tangent direction $\sigma_\bfC$ at $(X, \omega)$ determines a family of translation surfaces formed from $(X, \omega)$ by applying $$\begin{pmatrix} 1 & t \\ 0 & 1 \end{pmatrix}$$ for $t \in \bR$ to the cylinders in $\bfC$ on $(X, \omega)$ while fixing the rest of the surface. 
Similarly, if $\bfC$ is a collection of horizontal cylinders the straight line path in $\cM$ determined by the tangent direction $i \sigma_\bfC$ at $(X, \omega)$ determines a family of translation surfaces formed from $(X, \omega)$ by applying $$\begin{pmatrix} 1 & 0 \\ 0 & e^t \end{pmatrix}$$ for $t \in \bR$ to the cylinders in $\bfC$ on $(X, \omega)$ while fixing the rest of the surface. 


\begin{defn}\label{D:SE} 
In \cite{ApisaWrightDiamonds}, we defined a collection of cylinders $\bfC = \{C_1, \hdots, C_n\}$ on $(X, \omega)$ to be an $\cM$-\emph{subequivalence class} if all of the cylinders in $\bfC$ are $\cM$-parallel and if $\bfC$ is a minimal collection of cylinders such that the standard shear $\sigma_{\bfC}$ belongs to $T_{(X, \omega)} \cM$. So every equivalence class is a disjoint union of subequivalence classes. 

For a geminal subvariety, the subequivalence classes of cylinders are either singletons or contain two isometric cylinders (twins).

We will say that a cylinder $C$ on a surface in $\cM$ is $\cM$-generic, or simply generic if $\cM$ is clear from context, if all saddle connections on the boundary of $C$ are $\cM$-parallel to $C$. Thus a subequivalence class of generic cylinders will refer to a subequivalence class all of whose cylinders are generic.
\end{defn}

See the comments after the statement of \cite[Lemma 3.27]{ApisaWrightDiamonds} for some warnings concerning the definition of ``subequivalence class". (The subtleties remarked on there do not occur for geminal orbit closures.)

\subsection{The boundary of an invariant subvariety}
In this paper, we use the What You See Is What You Get (WYSIWYG) partial compactification of invariant subvarieties, as introduced in \cite{MirWri} and further developed in \cite{ChenWright}; see \cite[Section 3.5]{ApisaWrightDiamonds} for a concise and self-contained summary that will suffice for our purposes. 

We use the same notation as \cite{ApisaWrightDiamonds}. In particular, if $\bfC$ is a collection of parallel cylinders on $(X,\omega)$  whose closure $\overline\bfC$ when viewed as a subset of  $(X,\omega)$ is not all of $(X,\omega)$, we define the \emph{cylinder collapse}
$$\Col_{\bfC}(X,\omega) := \lim_{t\to -\infty} a_t^\bfC(X,\omega),$$
where the \emph{standard dilation} $a_t^\bfC(X,\omega)$ is defined to be the result of rotating the surface so the cylinders are horizontal, applying 
$$a_t=\begin{pmatrix} 1 & 0 \\ 0 & e^t \end{pmatrix}$$
only to the cylinders in $\bfC$, and then applying the inverse rotation. 

Thus $\Col_{\bfC}(X,\omega)$ is the result of collapsing the cylinders in $\bfC$ in the direction perpendicular to their core curves, while keeping their circumferences constant and leaving the rest of the surface otherwise unchanged. We will be almost exclusively interested in the case when the collapse causes the surface to degenerate.

 Collapsing $\bfC$ gives rise to a  collection $\Col_{\bfC}(\bfC)$ of parallel saddle connections on $\Col_{\bfC}(X, \omega)$, which is discussed in detail in \cite[Remark 2.5]{ApisaWrightDiamonds} and the beginning of the proof of \cite[Sublemma 2.6]{ApisaWrightDiamonds}. 

If $(X,\omega)$ is contained in an invariant subvariety $\cM$, and $\bfC$ is a subequivalence class of cylinders, then $\Col_{\bfC}(X,\omega)$ is contained in a component of the boundary of $\cM$ that will be denoted by $\cM_{\bfC}$.

We will frequently make use of the following result from \cite[Lemma 2.2]{ApisaWrightDiamonds}.

\begin{lem}\label{L:ColExists}
Suppose that $f: (X, \omega) \ra (Y, \eta)$ is a half-translation covering. Let $\bfC\subset (X,\omega)$ be a collection of cylinders such that $$f^{-1}(f(\overline\bfC)) = \overline\bfC$$  and $\overline\bfC\neq (X,\omega)$. Then there is a half-translation surface covering map $$\Col_{\bfC}(f): \Col_{\bfC}(X, \omega) \ra \Col_{f(\bfC)}(Y, \eta)$$ of the same degree.
\end{lem}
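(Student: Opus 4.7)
The plan is to exploit the equivariance of the standard dilation $a_t^{\bfC}$ under $f$, and then pass to the limit $t \to -\infty$. The hypothesis $f^{-1}(f(\overline\bfC)) = \overline\bfC$ says exactly that $\bfC$ is saturated with respect to the cover; this saturation is the key ingredient.

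First I would observe that because $f$ is a half-translation covering with $f^*\eta = \omega$, it is a local isometry away from branch points. Hence $f$ sends any cylinder in $(X,\omega)$ to a cylinder in $(Y,\eta)$ of the same height. Combined with the saturation hypothesis, this produces a surjection $\bfC \to f(\bfC)$ in which each cylinder in $f(\bfC)$ is covered by its full preimage, and all cylinders in $\bfC \cup f(\bfC)$ share a common direction with matched heights across the two sides. This yields the commutation relation
$$f \circ a_t^{\bfC} = a_t^{f(\bfC)} \circ f$$
for every $t$, essentially by unpacking definitions: $a_t^{\bfC}$ rotates so that $\bfC$ is horizontal, applies the matrix $a_t$ vertically only on the cylinders in $\bfC$, and rotates back; the saturation hypothesis ensures this operation on $(X,\omega)$ corresponds precisely to $a_t^{f(\bfC)}$ on $(Y,\eta)$. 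Thus for each $t$, $f$ itself defines a half-translation covering $a_t^{\bfC}(X,\omega) \to a_t^{f(\bfC)}(Y,\eta)$ of the same degree as $f$.

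Next I would pass to the limit. In the WYSIWYG partial compactification, $a_t^{\bfC}(X,\omega)$ converges to $\Col_{\bfC}(X,\omega)$ and $a_t^{f(\bfC)}(Y,\eta)$ converges to $\Col_{f(\bfC)}(Y,\eta)$ as $t \to -\infty$. The commuting family of coverings passes to this limit, producing a map $\Col_{\bfC}(f) : \Col_{\bfC}(X,\omega) \to \Col_{f(\bfC)}(Y,\eta)$ that pulls back the limit quadratic differential to the limit quadratic differential, and is therefore a half-translation covering.

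The main obstacle is showing that the degree does not drop in the limit, or equivalently, that no sheets of the cover become identified under the degeneration. Here saturation is essential: because $\bfC$ is the full $f$-preimage of $f(\bfC)$, collapsing $\bfC$ on the source exactly mirrors collapsing $f(\bfC)$ on the target, so a regular point of $\Col_{f(\bfC)}(Y,\eta)$ has exactly $\deg(f)$ preimages in $\Col_{\bfC}(X,\omega)$. A careful analysis near the zeros, marked points, and poles of the limit surfaces — using saturation to rule out anomalous coincidences of sheets coming from different points or cylinders — then completes the verification that $\Col_{\bfC}(f)$ is a half-translation cover of the same degree as $f$.
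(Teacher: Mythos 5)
The paper does not prove this lemma; it is quoted from \cite[Lemma 2.2]{ApisaWrightDiamonds}, so there is no in-text argument to compare against. Your strategy --- establish $f \circ a_t^{\bfC} = a_t^{f(\bfC)} \circ f$ from the saturation hypothesis and pass to the limit $t \to -\infty$ in the WYSIWYG compactification --- is the natural one and is almost certainly what the cited proof does.

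One claim in your write-up is not quite right and is worth flagging. You assert that $f$ sends each cylinder of $(X,\omega)$ to a cylinder of $(Y,\eta)$ of the same height. This can fail when a cylinder $C \in \bfC$ has image containing a marked point of $(Y,\eta)$ in its interior: the hypothesis $f^{-1}(f(\overline\bfC)) = \overline\bfC$ does not preclude this, since a marked point of $Y$ may have a non-marked preimage lying inside $C$ (Definition~\ref{D:Covering} only requires that \emph{some} preimage be singular or marked). In that case $f(C)$ is a flat annular region subdivided by marked points into several maximal cylinders of $Y$, each of height strictly less than that of $C$. What actually saves the equivariance is not matched heights but precisely what you invoke in the next sentence: $a_t$ is applied to the $f$-saturated closed set $\overline\bfC$ upstairs and to $f(\overline\bfC)$ downstairs, and $f$ intertwines the vertical shrink on these two sets regardless of how marked points cut $f(\overline\bfC)$ into maximal cylinders. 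For the degree count, an area bookkeeping argument is cleaner than the informal ``sheets don't collide'': since $a_t^{\bfC}$ preserves the area of $X \setminus \overline\bfC$, one has $\Area(\Col_{\bfC}(X,\omega)) = \Area(X\setminus\bfC) = \deg(f)\cdot\Area(Y\setminus f(\bfC)) = \deg(f)\cdot\Area(\Col_{f(\bfC)}(Y,\eta))$, where the middle equality uses saturation; together with the fact that the limit map is a branched cover (the genuinely delicate point, which your last paragraph gestures at), this pins the degree.
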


\section{Properties of geminal orbit closures}

In this section we establish some general facts about geminal orbit closures which can be proved directly without inductive arguments. 

\subsection{Foundational results} We start with some basic observations and move on to some quite non-trivial statements. 

\begin{lem}\label{L:GeminalField}
If $\cM$ is geminal then $\bold{k}(\cM) = \mathbb{Q}$. 
\end{lem}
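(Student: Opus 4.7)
The plan is to use the cylinder deformation theorem to extract rational tangent vectors at a nicely-chosen point, and then invoke a known characterization of the field of definition in terms of cylinder data.

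First I would pick a horizontally periodic surface $(X,\omega)\in\cM$; such surfaces are dense in any invariant subvariety by work of Smillie--Weiss. By the geminal hypothesis, the horizontal cylinders decompose into subequivalence classes, and each such class is either a singleton $\{C_i\}$ with $C_i$ free, or a twin pair $\{C_i,C_k\}$ with common height $h$ and common circumference. By Theorem \ref{T:CDT}, each standard shear $\sigma_{\bfC_j}$ lies in $T_{(X,\omega)}\cM$. Since $T_{(X,\omega)}\cM$ is a complex subspace of $H^1(X,\Sigma;\bC)$, we may divide $\sigma_{\bfC_j}$ by the positive real number $h_i$ (in the singleton case) or $h$ (in the twin case) to obtain the rational tangent classes
$$\gamma_i^* \;\in\; T_{(X,\omega)}\cM \cap H^1(X,\Sigma;\bQ) \qquad \text{or}\qquad \gamma_i^* + \gamma_k^* \;\in\; T_{(X,\omega)}\cM \cap H^1(X,\Sigma;\bQ).$$

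The second step is to apply the standard fact (from Wright's cylinder deformations framework \cite{Wcyl}) that $\bk(\cM)$ is generated over $\bQ$ by ratios of heights of cylinders within the same subequivalence class on horizontally periodic surfaces in $\cM$. In a geminal orbit closure each such ratio equals $1$ (trivially for singletons, by the isometric-twin condition for pairs), so $\bk(\cM)=\bQ$.

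The main obstacle is pinpointing the exact formulation of the field-of-definition statement to cite: the relevant principle is that the only constraints $\cM$ imposes on cylinder circumferences and heights within an equivalence class in the geminal case are the $\bQ$-linear equalities coming from the twin structure, so no irrational constants enter the defining equations. If a direct reference is not available in the form we need, the fallback is to argue directly, using the rational tangent vectors constructed above together with $GL(2,\bR)$-invariance and a density argument, that the linear equations cutting out $\cM$ in period coordinates can be rewritten with $\bQ$-coefficients.
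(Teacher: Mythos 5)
Your first step is essentially the paper's: the paper also observes that deforming a free cylinder or a pair of twins gives, after rescaling by the (common) height, a rational vector in $T_{(X,\omega)}(\cM)$. Where your proposal runs into trouble is the second step.

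The ``standard fact'' you invoke --- that $\bk(\cM)$ is generated over $\bQ$ by ratios of heights of cylinders within the same \emph{subequivalence} class --- is not a theorem I can locate, and I do not believe it is correct as stated: subequivalence classes are a notion from \cite{ApisaWrightDiamonds}, not \cite{Wcyl}, and field-of-definition statements in terms of cylinder ratios in the literature involve $\cM$-\emph{equivalence} classes (which in a geminal orbit closure can contain several subequivalence classes whose heights need not agree). Moreover, even if all twist-space ratios were rational, that alone would not control the coefficients of the rel part of $T_{(X,\omega)}\cM$, so the proposed statement does not obviously imply $\bk(\cM)=\bQ$. Your acknowledged ``fallback'' is too vague to close this gap.

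The tool the paper actually uses is \cite[Theorem 5.1]{Wfield}, which (via the Galois semisimplicity of $p(T_{(X,\omega)}\cM)$) shows that if $T_{(X,\omega)}\cM$ contains a rational tangent vector \emph{not in $\ker(p)$}, then $T_{(X,\omega)}\cM$ is defined over $\bQ$. The condition ``not in $\ker(p)$'' is essential --- $\ker(p)$ is always a $\bQ$-subspace, so a rational vector inside it carries no information about the field of definition --- and your proposal never records that $\gamma_i^*$ (respectively $\gamma_i^*+\gamma_k^*$) has nonzero image in absolute cohomology. That observation, together with the correct citation, is what finishes the proof; without it your second step has a genuine gap.
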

\begin{proof}
This follows from the proof of \cite[Theorem 1.9]{Wcyl}  as follows. Consider a surface $(X, \omega)\in\cM$, and a collection $\bfC$ of cylinders on $(X,\omega)$ that is either a single free cylinder or a pair of twins.  In the free case, the standard deformation $\sigma_{\bfC}\in H^1(X,\Sigma, \bC)$ is, up to scale, the dual of the core curve; and otherwise it is, up to scale, the sum of the duals of the two core curves. In particular, $\sigma_{\bfC}$ is a multiple of a rational vector. Because $\cM$ is geminal we have that $\sigma_{\bfC}\in T_{(X,\omega)}(\cM)$. Since $\sigma_{\bfC}$ is not contained in $\ker(p)$, \cite[Theorem 5.1]{Wfield} then gives that $T_{(X,\omega)}(\cM)$ is defined over $\bQ$.
\end{proof}

\begin{lem}\label{L:GeminalBoundary}
The boundary of a geminal orbit closure is geminal.
\end{lem}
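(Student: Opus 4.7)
The plan is to take an arbitrary surface $(Y,\eta)$ in a component of the boundary of $\cM$ together with a cylinder $C$ on $(Y,\eta)$, and show that $C$ is either free in that boundary component or has a twin there. By the WYSIWYG partial compactification, any boundary component of $\cM$ arises as $\cM_{\bfD}$ for some $\cM$-subequivalence class $\bfD$ of cylinders on a nearby surface $(X,\omega)\in \cM$, with $(Y,\eta)$ obtained as $\Col_{\bfD}(X,\omega)$ up to a small perturbation. Since $\cM$ is geminal, $\bfD$ is either a single free cylinder or a pair of twins. The cylinder $C$ persists under the near-collapse to a cylinder $\tilde C$ on $(X,\omega)$ with $\tilde C \notin \bfD$, since $C$ survives the collapse.

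Now apply the geminal hypothesis to $\tilde C$ in $\cM$. If $\tilde C$ is free in $\cM$, then the standard shear and standard dilation of $\{\tilde C\}$ lie in $T_{(X,\omega)}\cM$. Because subequivalence classes of an equivalence class are disjoint, the subequivalence class containing $\tilde C$ is disjoint from $\bfD$, so these deformations preserve $\bfD$ setwise and, via Lemma \ref{L:ColExists} (applied to the standard dilations that implement the collapse), descend through $\Col_{\bfD}$ to the standard shear and dilation of $C$ on $(Y,\eta)$, both of which then lie in $T_{(Y,\eta)} \cM_{\bfD}$. This shows $C$ is free in $\cM_{\bfD}$.

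If instead $\tilde C$ has a twin $\tilde C'$ in $\cM$, then $\{\tilde C, \tilde C'\}$ is a subequivalence class of $\cM$ distinct from, and hence disjoint from, $\bfD$. In particular $\tilde C'$ also survives the collapse, producing a cylinder $C'$ on $(Y,\eta)$. Parallelism of $C$ and $C'$ together with the equality of their heights and circumferences, both on $(Y,\eta)$ and on small deformations in $\cM_{\bfD}$, transfer from the corresponding statements for $\tilde C$ and $\tilde C'$ by the continuity of the period coordinates at the boundary. The descent of joint cylinder deformations of $\tilde C$ and $\tilde C'$ in $\cM$ through $\Col_{\bfD}$ gives joint cylinder deformations of $C$ and $C'$ in $\cM_{\bfD}$, showing that $C$ and $C'$ are twins.

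The main obstacle is the rigorous lifting of $C$ to $\tilde C$ on surfaces in $\cM$ near the boundary, and the rigorous descent of tangent directions to the boundary component. These are not difficulties we need to overcome from scratch: they are packaged in the WYSIWYG technology of \cite{MirWri, ChenWright} and the cylinder-collapse/covering framework summarized in \cite{ApisaWrightDiamonds} (in particular Lemma \ref{L:ColExists}), together with the basic structural fact that $\cM$-subequivalence classes partition an $\cM$-equivalence class, which is what guarantees that $\bfD$ is disjoint from the subequivalence class of $\tilde C$ in both cases.
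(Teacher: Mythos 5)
Your overall strategy is the same as the paper's: take a cylinder on a boundary surface, relate it to a nearby cylinder on a surface in $\cM$, apply the geminal hypothesis upstairs, and push the conclusion down. The paper does this directly by taking a sequence $(X_n,\omega_n)\to (X',\omega')$ in $\cM$ together with cylinders $C_n\to C$, and then simply citing the foundational results of \cite{MirWri, ChenWright} for (a) the persistence of freeness and (b) the persistence of twins; your version replaces the convergent sequence by an explicit cylinder collapse $\cM_\bfD$, which is more elaborate than needed and quietly relies on the claim that every boundary component is realized as $\cM_\bfD$ for a single subequivalence class $\bfD$, a fact you neither prove nor reference.

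More importantly, your invocation of Lemma \ref{L:ColExists} is a misapplication. That lemma concerns half-translation covering maps $f\colon (X,\omega)\to (Y,\eta)$: it asserts that if $\bfC$ is the full preimage of its image under $f$, then $\Col_\bfC(f)$ is again a covering map of the same degree. It says nothing about descending tangent directions, standard shears, or standard dilations through a cylinder collapse, which is what you need in the passage beginning ``via Lemma \ref{L:ColExists} (applied to the standard dilations that implement the collapse), descend through $\Col_{\bfD}$ to the standard shear and dilation of $C$ on $(Y,\eta)$.'' The correct references are precisely the boundary tangent space results of \cite{MirWri, ChenWright} (which you do mention at the end), not Lemma \ref{L:ColExists}. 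The substance of your argument is recoverable once the citation is corrected, but as written the key step points to a result that does not do what you claim.
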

\begin{proof}
Let $C$ be any cylinder on a surface $(X', \omega')$ in a boundary orbit closure $\cM'$ of a geminal orbit closure $\cM$. It is possible to find a sequence surfaces $(X_n, \omega_n)$ in $\cM$ that degenerate to $(X', \omega')$, with cylinders $C_n$ that limit to $C$.

If infinitely many of the $C_n$ are free, then it follows from  \cite[Theorem 2.9]{MirWri} and \cite[Theorem 1.2]{ChenWright} that $C$ is free.  (Referring to the terminology of those papers, we note that the space $V$ of vanishing cycles is generated by a collection of classes not crossing the core curve of $C_n$, so the standard deformation in $C_n$ is in $\Ann(V)$.)  

If infinitely many of the $C_n$ have twins $C_n'$, then these twins $C_n'$ limit to a cylinder $C'$ isometric to $C$  (as in  \cite[Lemma 2.15]{MirWri}), and again it follows from  \cite[Theorem 2.9]{MirWri} and \cite[Theorem 1.2]{ChenWright} that $C$ and $C'$ are twins. 
\end{proof}

\begin{defn}\label{D:hGeminal}
An invariant subvariety $\cM$ will be called $h$-geminal if, for every $(X, \omega)$ in $\cM$, every cylinder $C$ is either free or possesses a twin $C'$ such that  $C$ and  $C'$ have homologous core curves.
\end{defn}

\begin{lem}\label{L:HGeminalBoundary}
If $\cM$ is $h$-geminal and $\cM'$ is in its boundary, then $\cM'$ is $h$-geminal.
\end{lem}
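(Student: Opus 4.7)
The plan is to imitate the proof of Lemma \ref{L:GeminalBoundary}, adding the observation that the homology relation between core curves of twins is preserved under degeneration in the WYSIWYG partial compactification.

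Given a cylinder $C$ on a surface $(X',\omega')\in \cM'$, I would first choose a sequence $(X_n,\omega_n)\in \cM$ converging to $(X',\omega')$ together with cylinders $C_n \subset (X_n,\omega_n)$ converging to $C$, exactly as in the proof of Lemma \ref{L:GeminalBoundary}. Applying $h$-geminality of $\cM$ and passing to a subsequence, I obtain one of two cases: either every $C_n$ is free in $\cM$, or every $C_n$ admits a twin $C_n'$ whose core curve $\gamma_{C_n'}$ is homologous to $\gamma_{C_n}$. In the first case, the conclusion of Lemma \ref{L:GeminalBoundary} already shows that $C$ is free in $\cM'$. In the second case, the same argument produces a cylinder $C'$ in $\cM'$, obtained as the limit of the $C_n'$, that is isometric to $C$ and forms a twin of $C$.

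The only additional step is to verify $[\gamma_C]=[\gamma_{C'}]$ in $H_1(X';\bZ)$. Since the cylinders $C$ and $C'$ survive as genuine cylinders on the limit, the core curves $\gamma_{C_n}$ and $\gamma_{C_n'}$ can be chosen uniformly away from the collapsing region and hence determine well-defined limit classes $\gamma_C$ and $\gamma_{C'}$ on the (possibly nodal) surface $X'$. Because $[\gamma_{C_n}-\gamma_{C_n'}]=0$ in $H_1(X_n;\bZ)$ for every $n$, and because homology classes represented by cycles disjoint from the collapsing region vary continuously with the surface in the WYSIWYG partial compactification, the equality persists in the limit, giving $[\gamma_C]=[\gamma_{C'}]$.

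The main obstacle is this last continuity statement for homology under WYSIWYG degenerations. I expect it to follow either from representing the null-homology by a 2-chain that can be chosen disjoint from the pinching (by continuously transporting such a chain in a neighborhood within $\cM$), or, when the bounding 2-chain is forced to cross a collapsing cylinder, by observing that the collapsing cylinder degenerates to a zero-area piece in $X'$ so that the bounding chain persists on the nodal limit as a witness that $\gamma_C-\gamma_{C'}$ is null-homologous. Either approach is in the spirit of the foundational results of \cite{MirWri, ChenWright} already invoked in the proof of Lemma \ref{L:GeminalBoundary}.
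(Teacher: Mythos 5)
Your proposal takes essentially the same approach as the paper: run the argument of Lemma \ref{L:GeminalBoundary} to produce the limiting twin $C'$, and then observe that the homology relation between core curves persists under WYSIWYG degeneration. The paper states this last step as a one-line observation (``homologous cylinders degenerate to homologous cylinders''), and the extra detail you supply about transporting the bounding $2$-chain is consistent with the intended justification from the foundational results of \cite{MirWri, ChenWright}.
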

\begin{proof}
This follows as in the proof of Lemma \ref{L:GeminalBoundary}, with the additional observation that  homologous cylinders  degenerate to homologous cylinders.  (If $(X_n, \omega_n) \to (X,\omega)$, then there are ``collapse maps" $f_n : (X_n,\omega_n)\to (X,\omega)$ which induce maps on relative homology, as discussed in \cite[Proposition 2.4]{MirWri}.)   
\end{proof}

\begin{lem}\label{L:StableTwins}
In any geminal subvariety $\cM$, a pair of cylinders $C, C'$ with core curves $\gamma, \gamma'$ are twins if and only if $$\gamma+\gamma' \in H_1(X\setminus \Sigma) \simeq H^1(X,\Sigma)$$
is in the tangent space to $\cM$ and $\gamma$ is not in the tangent space. 
\end{lem}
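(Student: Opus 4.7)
Proof proposal.

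Both directions are short, but they require different ingredients, so I would handle them separately.

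For the forward direction, suppose $C$ and $C'$ are twins. By the definition of twins they have a common height $h$, and ``any cylinder deformation that deforms $C$ and $C'$ equally remains in $\cM$'' translates directly (via the Cylinder Deformation Theorem, Theorem~\ref{T:CDT}) into the statement that the standard shear $h(\gamma^* + \gamma'^*)$ lies in $T_{(X,\omega)}\cM$. Under the Poincar\'e--Lefschetz identification $H_1(X\setminus\Sigma) \cong H^1(X,\Sigma)$, this is exactly $\gamma+\gamma'\in T\cM$. To get $\gamma\notin T\cM$, I would argue by contrapositive: if $\gamma\in T\cM$, then since $T\cM$ is a complex subspace we also have $i\gamma\in T\cM$, so both the standard shear and the standard dilation of $\{C\}$ lie in $T\cM$; together with the $GL(2,\bR)$ action this shows that every cylinder deformation of $C$ remains in $\cM$, so $C$ would be free, contradicting the fact that ``twins'' refers to case (2) of the geminal definition rather than case (1).

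For the backward direction, suppose $\gamma+\gamma'\in T\cM$ and $\gamma\notin T\cM$. First, since $\gamma' = (\gamma+\gamma') - \gamma$, we also have $\gamma'\notin T\cM$. Next, by the forward contrapositive $\gamma\notin T\cM$ forces $C$ to be non-free, so the geminal hypothesis supplies a twin $\widetilde C$ of $C$ with core curve $\widetilde\gamma$; by the already-proven forward direction, $\gamma+\widetilde\gamma\in T\cM$, and hence $\gamma'-\widetilde\gamma \in T\cM$. The plan is then to identify $C'=\widetilde C$; once this is done, the twin properties (parallelism, equality of heights and circumferences on $(X,\omega)$ and on small deformations, closure under equal cylinder deformations) are inherited from the twin pair $(C,\widetilde C)$.

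The main obstacle is exactly this identification $C'=\widetilde C$. The approach I have in mind is to establish uniqueness of the twin of a given non-free cylinder in a geminal subvariety. Concretely, if $\widetilde C_1,\widetilde C_2$ were two distinct twins of $C$, then subtracting gives $\widetilde\gamma_1 - \widetilde\gamma_2 \in T\cM$; since $\widetilde C_i$ must remain parallel and isometric to $C$ on every small deformation in $\cM$, this ``opposite shear'' deformation would violate the stable isometry/parallelism of at least one of the pairs $(C,\widetilde C_1),(C,\widetilde C_2)$ unless the two twins coincide. Applying the same uniqueness to $C'$ (which is also non-free, by the observation that $\gamma'\notin T\cM$) shows that the twin of $C'$ is $C$, and that $C'$ is the twin of $C$, completing the identification $C'=\widetilde C$ and hence the proof.
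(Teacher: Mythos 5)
Your forward direction matches the paper's in substance: $\gamma \in T_{(X,\omega)}\cM$ would (since the tangent space is complex-linear) put both the shear and the dilation of $\{C\}$ in $T_{(X,\omega)}\cM$, making $C$ free, and a free cylinder cannot have a twin because its height can be deformed independently of $C'$. The phrase ``contradicting the fact that `twins' refers to case (2) rather than case (1)'' is a little loose --- the geminal dichotomy is not stated as exclusive --- but the substantive reason (an independent deformation of $C$'s height would break the requirement that $C$ and $C'$ stay isometric on nearby surfaces) is what the paper uses directly, and your argument contains it implicitly.

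The backward direction has a gap at the final identification $C'=\widetilde C$. You correctly reach the point where $\widetilde C$ is a geminal-supplied twin of $C$ and $\gamma'-\widetilde\gamma\in T_{(X,\omega)}\cM$, and you correctly prove that a non-free cylinder has a unique twin. But ``applying the same uniqueness to $C'$ shows that the twin of $C'$ is $C$'' does not follow: uniqueness applied to $C'$ only shows that $C'$'s twin, whatever it is, is unique; it does not identify that twin as $C$. Moreover your uniqueness lemma is stated under the hypothesis that both $\widetilde C_1,\widetilde C_2$ are twins of $C$, whereas $C'$ is precisely what you are trying to show is a twin. The fix is to apply the \emph{content} (not the statement) of your uniqueness argument to the pair $(\widetilde C, C')$: since $\widetilde C$ is a twin of $C$ and $\gamma'-\widetilde\gamma\in T_{(X,\omega)}\cM$, if $C'\neq\widetilde C$ then the deformation $i(\gamma'-\widetilde\gamma)$ changes $\widetilde C$'s height while fixing $C$'s, contradicting that $C$ and $\widetilde C$ are twins. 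The paper avoids the detour through $\widetilde C$ altogether: $i(\gamma+\gamma')$ changes the moduli of $C$ and $C'$ without changing any other parallel cylinder, so no cylinder other than $C'$ can be a twin of $C$ (and vice versa); since $\gamma\notin T_{(X,\omega)}\cM$ rules out $C$ being free, the geminal dichotomy forces $C$ and $C'$ to be twins. This direct route is both shorter and avoids the step you flagged as the main obstacle.
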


This lemma will be implicit in our analysis. It rules out the subtle possibility that a pair of twins might cease to be twins following a large deformation after which the cylinders persist but are bounded by ``different" saddle connections and zeros.  

\begin{proof}
If $C, C'$ are twins, then the cylinder deformation that deforms them equally has derivative $\gamma+\gamma'$. And $\gamma$ cannot be in the tangent space, because deforming in $i$ times this direction would give a surface in $\cM$ where $C$, $C'$ are not isometric. 

Conversely, if $\gamma+\gamma'$ is in the tangent space, then deforming in $i$ times this direction changes the modulus $C$ and $C'$ without changing any other parallel cylinder. Hence we see that $C$ cannot be twinned with any cylinder except possibly $C'$, because twins must have the same modulus. 
Hence if $C$ and $C'$ are not twins with each other, then each must be free, but this cannot be so because $\gamma$ is not in the tangent space. 
\end{proof}

Recall that subequivalence classes were defined in Definition \ref{D:SE}, and for geminal subvarieties they are either a single free cylinder or a pair of twins. 

\begin{lem}\label{L:Disconnect}
Suppose that $\bfC$ is a subequivalence class of generic cylinders on a surface $(X, \omega)$  in a geminal orbit closure $\cM$. 


If $\Col_{\bfC}(X, \omega)$ is disconnected, then $\cM_{\bfC}$ is a subset of $\cH_1 \times \cH_2$ where $\cH_1$ and $\cH_2$ are components of strata of Abelian differentials. Moreover, the projection from $\cM_{\bfC}$ to $\cH_i$ is a local  diffeomorphism for $i \in \{1, 2\}$ and $\cH_1$ and $\cH_2$ have the same rank and dimension. 
\end{lem}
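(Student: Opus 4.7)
The plan is to prove three things in order: (i) $\cM_\bfC$ sits inside a product $\cH_1 \times \cH_2$ of stratum components of Abelian differentials, (ii) each projection $\cM_\bfC \to \cH_i$ is a local isomorphism, and (iii) consequently $\cH_1$ and $\cH_2$ share dimension and rank.

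For (i), since $\cM$ consists of Abelian differentials, so do the two components $(X_1,\omega_1)$ and $(X_2,\omega_2)$ of $\Col_\bfC(X,\omega)$; take $\cH_i$ to be the stratum component containing $(X_i,\omega_i)$, whereupon $\cM_\bfC \subseteq \cH_1 \times \cH_2$ is immediate from the construction of the WYSIWYG boundary. Because $\cM$ is geminal, the subequivalence class $\bfC$ is either a single free cylinder or a twin pair. A single cylinder with separating core curve is impossible on an Abelian differential---the core would be null-homologous yet have nonzero holonomy---so the disconnection of the collapse forces $\bfC$ to be a twin pair $\{C, C'\}$ whose core curves $\gamma, \gamma'$ together bound the two subsurfaces.

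The decisive step is (ii); I focus on the projection to $\cH_1$, the other case being symmetric. Suppose for contradiction that some nonzero $v \in T_{(Y_1, Y_2)}\cM_\bfC$ has trivial $\cH_1$-component. Using the local product structure of $\cM$ over $\cM_\bfC$ provided by the WYSIWYG compactification, lift $v$ to a tangent vector $\tilde v \in T_{(X, \omega')}\cM$ (for some nearby $(X,\omega') \in \cM$) supported on the $X_2$-side and fixing $\bfC$. The objective is then to combine $\tilde v$ with shears of cylinders on $X_2$ drawn from a generic periodic decomposition, using the geminal dichotomy (each such cylinder is free or possesses a twin), to realize in $T\cM$ the standard shear of a proper subset of $\bfC$. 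This would contradict the minimality defining $\bfC$ as a subequivalence class. Once local injectivity is established, surjectivity is automatic from dimension: the image $\pi_i(\cM_\bfC) \subseteq \cH_i$ is a closed $\GL$-invariant subvariety of full dimension, hence all of $\cH_i$.

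Assertion (iii) then follows from the two local isomorphisms, giving $\dim \cH_1 = \dim \cM_\bfC = \dim \cH_2$; the ranks agree because the projection $p$ to absolute cohomology respects the splitting $H^1(X_1, \Sigma_1) \oplus H^1(X_2, \Sigma_2)$ compatibly with the local isomorphisms. The main obstacle is step (ii): the subtle point is that a cylinder on $X_2$ parallel to $\bfC$ can merge with $\bfC$ upon uncollapse, blurring which shears can be cleanly lifted back to $T_{(X,\omega)}\cM$. I would handle this by first reducing to cylinder decompositions of $X_2$ in directions transverse to $\bfC$, where the uncollapse is unambiguous, and then invoking geminality to construct the proscribed deformation of a proper subset of $\bfC$.
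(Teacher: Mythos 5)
The key structural input you never invoke is \cite[Theorem 1.3]{ChenWright}, which the paper uses to establish that $\cM_{\bfC}$ is ``prime'' (not a nontrivial product) because the generic cylinders in $\bfC$ leave $\cM_\bfC$-parallel saddle connections on each component of the degeneration. From primeness the paper extracts the crucial consequence that the absolute periods of one component of $\Col_\bfC(X,\omega)$ locally determine those of the other. This period correlation is what forces every cylinder on one component to have its twin on the other component, and it is precisely this twin-across-components structure that drives both the identification $\pi_i(\cM_\bfC) = \cH_i$ (every cylinder in $\cM_i$ is free, so \cite[Theorem 1.5]{MirWri2} applies) and the local isomorphism (a deformation supported on one component would change some cylinder's dimensions there while fixing its twin on the other component, which is impossible).

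Your argument for step (ii) is where the gap lies, and you are right to flag it. You lift a tangent vector $v$ killed by $\pi_1$ to a vector $\tilde v$ in $T\cM$ ``supported on the $X_2$-side'' and aim to produce the standard shear of a proper subset of $\bfC$ in $T\cM$. But each of $C$ and $C'$ has one boundary circle on the $X_1$ side and one on the $X_2$ side, so a deformation supported on $X_2$ affects both $C$ and $C'$; it is not at all clear how combining $\tilde v$ with shears on $X_2$ isolates one of them. More fundamentally, you never establish the assertion that cylinders on one component must have twins on the other; without the period correlation from \cite{ChenWright}, geminality alone permits a cylinder on $X_2$ whose twin also lies on $X_2$, and deforming such a pair produces exactly the kind of tangent vector killed by $\pi_1$ that you are trying to rule out. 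Your claim that $\pi_i(\cM_\bfC) = \cH_i$ by a full-dimension argument is also circular, since full dimension is exactly what the local isomorphism would supply. The paper's proof is considerably simpler once \cite[Theorem 1.3]{ChenWright} is in hand: correlated periods give two components with twins split across them, \cite[Theorem 1.5]{MirWri2} gives $\cM_i = \cH_i$, and a nudge to a square-tiled surface gives the local isomorphism.
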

\begin{rem}
It is tempting to claim that the conclusions imply that $\cH_1=\cH_2$ and $\cM_{\bfC}$ is a diagonal, but this is not quite true. One can start with the diagonal in $\cH_1\times \cH_1$, and then rotate the second surface by $\pi$. If this stratum is not hyperelliptic, then this gives an orbit closure that isn't the diagonal. Furthermore, if $\cH_1$ is genus 1, then one can apply the times $k$ map to one factor and the times $\ell$ map to the other, for any $k, \ell \in \bZ$ (after picking a point on each side to serve as the origin of the elliptic curve). We do not know if there are more exotic non-diagonal examples; see \cite[Conjecture 8.35]{ApisaWrightDiamonds}. 
\end{rem}
\begin{proof}
The proof will use and require familiarity with the statement of \cite[Theorem 1.3]{ChenWright}. 

Our first claim is that $\cM_{\bfC}$ is prime in the sense of \cite[Theorem 1.3]{ChenWright}, which means precisely that it is not a product.  Indeed, because of the assumption that all saddle connection on the boundary of $\bfC$ are $\cM$-parallel to $\bfC$,  \cite[Theorem 2.9]{MirWri} and \cite[Theorem 1.2]{ChenWright} give that all the saddle connections in $\Col_\bfC(\bfC)$ are $\cM_{\bfC}$-parallel to each other;  see Apisa-Wright \cite[Lemma 6.5]{ApisaWrightHighRank} for a detailed proof. Since there is at least one saddle connection of $\Col_\bfC(\bfC)$ on each component, this gives the claim.   See also \cite[Lemma 9.1.]{ApisaWrightHighRank} for a generalization of this claim. 

It follows from \cite[Theorem 1.3]{ChenWright} that in $\cM_\bfC$, the absolute periods of any component of $\Col_{\bfC}(X, \omega)$ locally determine the absolute periods of any other component. Our second  claim is that this implies that there are exactly two components, and each cylinder on one component has a twin on another component. 

Indeed, if $\bfC'$ is a subequivalence class on a surface in $\cM_\bfC$, it is either a single free cylinder or a pair of twins. Deforming $\bfC'$ changes the absolute periods of the one or two components which contain cylinders of $\bfC'$. Hence we get that there must be exactly two components, each with one cylinder of $\bfC'$, and the second claim is proved. 

Now we have that $\cM_\bfC$ is contained in a product $\cH_1\times \cH_2$, and we can consider the projections $\pi_i(\cM_{\bfC})\subset \cH_i$. By \cite[Theorem 1.3]{ChenWright}, the closure $\cM_i$ of $\pi_i(\cM_{\bfC})$ is an invariant subvariety, and $\pi_i(\cM_{\bfC})$ contains an open dense subset of $\cM_i$. 

In this  $\cM_i$, every cylinder is free. Hence \cite[Theorem 1.5]{MirWri2} gives that $\cM_i=\cH_i$. 

It remains only to show that the projections are local  diffeomorphisms. If not, there is deformation of a surface in $\cM_\bfC$ that only affects one of the two components. Nudging the surface if necessary, we can assume it is square-tiled. Any deformation of a square-tiled surface must change the dimensions of one of the horizontal or vertical cylinders, so we see that our deformation contradicts the fact that every cylinder has a twin on the other component. 
\end{proof}

\begin{lem}\label{L:GeminalTwinAdjacency}
Let $C$ and $C'$ be twins on a surface $(X, \omega)$ in a geminal subvariety $\cM$. If $\bfD$ is a subequivalence class of cylinders, then $C$ shares a boundary saddle connection with a cylinder in $\bfD$ if and only if $C'$ does.
\end{lem}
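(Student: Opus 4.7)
My plan is to argue by contradiction: suppose $C$ shares a boundary saddle connection with some $D \in \bfD$ while $C'$ shares none with any cylinder in $\bfD$. Since subequivalence classes of a geminal orbit closure are disjoint and are either singletons or twin pairs, either $\bfD = \{C, C'\}$ (in which case the statement is trivial) or $\bfD \cap \{C, C'\} = \emptyset$; assume the latter. Moreover, since the boundary saddle connections of a cylinder are parallel to its core, the adjacency of $C$ with $D$ forces $C \parallel D$, so $\bfD$ lies in the same $\cM$-equivalence class as $\{C, C'\}$, and we may orient everything so that all these cylinders are horizontal.

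The approach is to degenerate via the cylinder collapse $\Col_\bfD$, producing $(\tilde X, \tilde \omega) := \Col_\bfD(X, \omega) \in \cM_\bfD$. By Lemma \ref{L:GeminalBoundary}, $\cM_\bfD$ is geminal. The cylinders $\tilde C$ and $\tilde C'$ descending from $C$ and $C'$ persist on $(\tilde X, \tilde \omega)$, and by Lemma \ref{L:StableTwins} applied in $\cM_\bfD$ (using that the characterizing conditions $\gamma+\gamma' \in T\cM$ and $\gamma \notin T\cM$ descend through the WYSIWYG partial compactification of \cite{ChenWright}), $\tilde C$ and $\tilde C'$ remain twins on the boundary surface and must therefore have equal heights and circumferences.

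The point is then to exploit the asymmetry introduced by the collapse: identifying the top and bottom of each $D \in \bfD$ modifies the boundary structure of $\tilde C$ (which inherits the adjacency with $\bfD$) but leaves $\tilde C'$ untouched. In the favorable case this merges $\tilde C$ with a neighboring cylinder across $\bfD$, producing a descendant of strictly greater height than $\tilde C' = C'$ and immediately contradicting twin-preservation. The main obstacle is the remaining case, where the offsets do not line up to produce a merger and the heights of $\tilde C$, $\tilde C'$ remain equal to those of $C$, $C'$. Here the contradiction must come from the finer combinatorial structure: the new saddle connections on $\partial \tilde C$ (with no counterpart on $\partial \tilde C'$) should force the existence of a subequivalence class in $(\tilde X, \tilde \omega)$ that is adjacent to $\tilde C$ but not to $\tilde C'$, at which point one concludes by induction on the complexity of the orbit closure (e.g., on $\dim \cM$), the base cases being handled by a direct dimension count or by reduction to an orbit closure with no nontrivial subequivalence classes outside $\{C, C'\}$.
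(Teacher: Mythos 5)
Your proposal departs significantly from the paper's argument and, by your own admission, leaves the key case unresolved. The paper does not pass to the boundary stratum $\cM_{\bfD}$ at all. Instead it uses an ``over-collapse'' that stays inside $\cM$: after rotating so $C$ is horizontal, one first shears $\bfD$ so that $\overline{\bfD}$ contains \emph{no vertical saddle connection}; then the vertical collapse of $\bfD$ to height zero does not degenerate the surface, and one can continue the deformation past zero height. This over-collapse pushes the singularities on the saddle connection shared by $C$ and $D$ into the interior of $C$, strictly changing the height of $C$, while the whole deformation is a cylinder deformation of $\bfD$ lying in $\cM$. Lemma~\ref{L:StableTwins} then says $C$ and $C'$ are still twins after this deformation, so the height of $C'$ must change too, forcing $C'$ to border $\bfD$. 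No passage to the boundary, no induction on dimension, and no case analysis on whether cylinders merge is required.

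Your version instead collapses to $\Col_{\bfD}(X,\omega) \in \cM_{\bfD}$ and tries to compare heights there. As you note yourself, the ``main obstacle'' is precisely the generic situation: when the top and bottom identifications of $D$ do not line up, the collapse degenerates and the resulting $\tilde C$, $\tilde C'$ can perfectly well have equal heights on the boundary surface — the adjacency information is not visible in a height comparison after the limit. Your fallback (that the saddle connections appearing on $\partial\tilde C$ force a distinguishing subequivalence class, then induct on $\dim\cM$) is just a restatement of the original problem one dimension down, with no base case or reduction supplied, so it does not close the argument. There is also a small incorrect side remark: sharing a boundary saddle connection does make $C$ and $D$ parallel on $(X,\omega)$, but it does not imply $\bfD$ lies in the same $\cM$-equivalence class as $\{C,C'\}$; fortunately nothing in the paper's proof needs that, since $\bfD$ is a subequivalence class and can be sheared/collapsed in $\cM$ regardless.
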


 The proof will ``over-collapse $\bfD$ to attack $C$", following a technique used in \cite{ApisaWrightDiamonds}. Other arguments using over-collapses were used in  \cite[Section 4.2, Figure 7]{AN}, \cite[Figure 3.1]{Apisa-Rk1Hyp}, \cite[Figure 2.14]{Apisa-PeriodicPointsG=2}, \cite[Figure 15]{AN2}, \cite[Figure 5.7]{ApisaWright}.

\begin{proof}
First rotate so that $C$ is a horizontal cylinder. Suppose that $C$ shares a boundary saddle connection with a cylinder $D$ in $\bfD$. Next,  shear $\bfD$ so that it does not contain a vertical saddle connection, then vertically collapse it to make the height of the cylinders in $\bfD$ zero while ensuring that a zero lands on the saddle connection shared by $C$ and $D$. Since no zeros have collided at this point, we may continue this vertical collapse deformation  by continuing linearly in period coordinates; this moves the singularities on the saddle connection shared by $C$ and $D$ into the interior of $C$. This changes the height of $C$. 

By Lemma \ref{L:StableTwins}, $C$ and $C'$ are twins even after this deformation, so it follows that the height of $C'$ must change too, implying that it also borders a cylinder in $\bfD$.
\end{proof}

\begin{cor}\label{C:DisconnectR1}
Suppose that $\bfC$ is a subequivalence class of generic cylinders on a  surface $(X, \omega)$ in a geminal invariant subvariety $\cM$. Suppose that $\cM_{\bfC}$ has rank one and that $\Col_{\bfC}(X, \omega)$ is disconnected. Then there is a stratum $\cH$ of flat tori with marked points such that $\cM_{\bfC}$ is the diagonal or anti-diagonal embedding of $\cH$ into $\cH \times \cH$.
\end{cor}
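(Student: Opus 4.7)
The plan is to use Lemma \ref{L:Disconnect} to reduce to analyzing a rank one, rationally defined subvariety of a product of two strata of flat tori with marked points, then use cylinder twinning to constrain the absolute period relation to an integer matrix, and finally to identify marked points across the two factors.

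First, by Lemma \ref{L:Disconnect}, $\cM_\bfC \subseteq \cH_1 \times \cH_2$ where the $\cH_i$ are components of strata of Abelian differentials of equal rank and dimension, and the coordinate projections are local isomorphisms. Since the rank is one, each $\cH_i$ is a stratum of flat tori, say $\cH(0^{n_i})$, and matching dimensions force $n_1 = n_2 = n$, so we may write $\cH_1 = \cH_2 =: \cH$.

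Next, since $\cM_\bfC$ is geminal by Lemma \ref{L:GeminalBoundary}, Lemma \ref{L:GeminalField} gives that its field of definition is $\bQ$. Fix bases $\alpha_i, \beta_i$ of $H_1(X_i; \bZ)$ with periods $a_i, b_i$. The image of $T_{\cM_\bfC}$ in absolute cohomology is a complex two-dimensional, $\bQ$-defined subspace of $H^1(X_1; \bC) \oplus H^1(X_2; \bC)$, which by the local isomorphism property of Lemma \ref{L:Disconnect} surjects onto each factor. Hence this image is the graph of a $\bQ$-linear isomorphism, yielding a relation $(a_2, b_2)^T = A(a_1, b_1)^T$ for some $A \in GL_2(\bQ)$.

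The main step will be to show $A \in GL_2(\bZ)$ by exploiting cylinder twinning. First, no cylinder on $X_1$ can be free, since a free cylinder would admit a cylinder shear that alters the absolute periods of $X_1$ but leaves those of $X_2$ unchanged, contradicting the invertible relation $(a_2, b_2) = A(a_1, b_1)$. Next, every primitive $(m, n) \in \bZ^2$ is the homology class of a core curve of some cylinder on $X_1$, since $X_1$ is a flat torus with at least one marked point (inherited from the collapse of $\bfC$); its twin on $X_2$ has core curve class $(m', n')$ and equal-up-to-sign holonomy. Substituting $(a_2, b_2) = A(a_1, b_1)$ into the holonomy equation forces $(m', n') = \pm (m, n) A^{-1}$ to be a primitive integer vector for every primitive $(m, n)$, so $A^{-1} \in M_2(\bZ)$; by symmetry $A \in GL_2(\bZ)$.

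The final step will be to pin down the identification of the two factors. After a $GL_2(\bZ)$ change of basis on $H_1(X_2; \bZ)$ we may take $A = I$, so $(X_1, \omega_1)$ and $(X_2, \omega_2)$ become isometric as flat tori via the chosen basis identification. The main obstacle is to check that the marked points on $X_2$ then coincide with those on $X_1$ under this isometry. I expect this to follow from the facts that the rel part of $T_{\cM_\bfC}$ projects isomorphically onto the rel part of each $T_{\cH_i}$ and respects the $\bQ$-structure, together with the matching of cylinder heights in every rational direction (which encode the perpendicular positions of the marked points); the only global ambiguity is $(X, \omega) \mapsto (X, -\omega)$, which accounts for the diagonal-versus-anti-diagonal dichotomy stated in the corollary.
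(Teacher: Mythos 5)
Your proposal takes a genuinely different route in its middle portion: where the paper works directly with cylinder heights, circumferences, and the combinatorics of adjacency, you instead argue through absolute periods, the rationality of the field of definition, and a $GL_2(\bQ)$-to-$GL_2(\bZ)$ argument. That step is correct (and nicely explains \emph{why} the isogeny possibilities flagged in the remark after Lemma~\ref{L:Disconnect} cannot arise in the geminal setting), though it is more machinery than is strictly needed: since twins are isometric, the circumference of the unique cylinder in every rational direction on the two tori already agrees, which pins down the two lattices.

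The genuine gap is in your final step, the identification of marked points, and you flag it yourself (``I expect this to follow from\ldots''). Knowing that the multisets of cylinder heights match in every rational direction is not by itself enough, because the twinning furnishes a specific \emph{bijection} between the $n$ horizontal cylinders on the two components, and nothing in your argument so far forces that bijection to be a cyclic rotation or a cyclic rotation composed with a reversal; a priori it could be an arbitrary permutation, in which case the two marked-point configurations would not be related by $\pm\mathrm{Id}$ plus translation. The appeal to the rel subspace projecting isomorphically is a linear-algebraic statement that does not, on its own, constrain this permutation. This is precisely where the paper invokes Lemma~\ref{L:GeminalTwinAdjacency}: because twinning respects adjacency to a subequivalence class, the twin bijection must carry the cyclic adjacency graph of horizontal cylinders on one component to that on the other, forcing the cyclic orders to be equal or reversed. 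The paper also simplifies by perturbing so that each component has a single vertical cylinder, so that the horizontal positions of marked points are automatically matched and only the vertical cyclic order needs to be controlled. Without Lemma~\ref{L:GeminalTwinAdjacency} (or an equivalent argument controlling the twin permutation, e.g.\ a worked-out tomography argument that addresses the consistency of the bijections across directions), the conclusion that $\cM_{\bfC}$ is a diagonal or anti-diagonal does not yet follow.
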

\begin{proof}
By Lemma \ref{L:Disconnect}, $\cM_{\bfC}$ is a subset of $\cH_1 \times \cH_2$ where $\cH_1$ and $\cH_2$ are components of strata of Abelian differentials. Moreover, the projection from $\cM_{\bfC}$ to $\cH_i$ is a local diffeomorphism for $i \in \{1, 2\}$ and $\cH_1$ and $\cH_2$ have the same rank and dimension. 

The only rank one strata of Abelian differentials are those of the form $\cH(0^n)$ for some positive integer $n$. These strata have dimension $n+1$. Since $\cH_1$ and $\cH_2$ are both rank one and have identical dimension, it follows that $\cH_1 = \cH_2 = \cH(0^n)$.

  We may deform $\Col_{\bfC}(X, \omega)$ to obtain a surface in $\cM_{\bfC}$ each of whose components has $n$ horizontal cylinders and one vertical one.
These cylinders come in twin pairs since $\cM_{\bfC}$ is geminal.  

\begin{figure}[h!]
\includegraphics[width=0.9\linewidth]{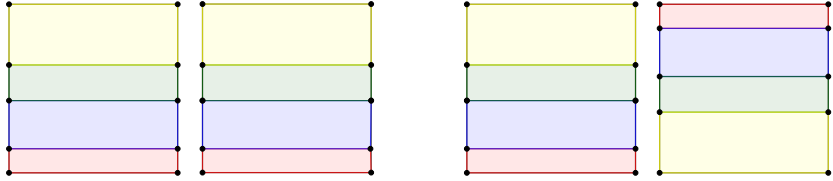}
\caption{The proof of Corollary \ref{C:DisconnectR1}. Left: A diagonal. Right: An anti-diagonal.}
\label{F:DiagAntiDiag}
\end{figure}

Lemma \ref{L:GeminalTwinAdjacency} implies that the cyclic order of the horizontal cylinders on each component of  the deformed surface is the same or opposite as the cyclic order of the twin cylinders on the other component; see Figure \ref{F:DiagAntiDiag}. If the cyclic order is the same, $\cM$ is a diagonal, and if the cyclic order is opposite, $\cM$ is an anti-diagonal. 
\end{proof}

\subsection{Good and optimal maps} A useful tool to study geminal subvarieties will be the notion of ``good" and ``optimal" maps. Recall from Definition \ref{D:CylAndBoundary} that cylinders do not contain their boundary. 

\begin{defn}\label{D:GoodAndOptimal}
Let $(X, \omega)$ be a translation surface. A translation cover $f: (X, \omega) \rightarrow (X', \omega')$ will be called \emph{good} if every cylinder $C$ on $(X, \omega)$ is the preimage of its image under $f$. The cover will be called \emph{optimal} if it is good and any other good map is a factor of it.
\end{defn} 

\begin{war}\label{W:OptNeqOpt}
After completing this preprint, we discovered that M\"oller \cite{M5} and Aulicino-Norton \cite{AulicinoNorton} use ``optimal map" to mean the minimal degree map from a torus cover to a torus. Our definition applies more broadly, even to covers of higher genus surfaces, and, even in the case where both definitions apply, they typically do not  agree.\footnote{ In the torus cover case, we will see that if the orbit closure of a torus cover $(X, \omega)$ is geminal, but not $h$-geminal, then the optimal map is the identity (see Lemma \ref{L:nhGeminal}). When the orbit closure is $h$-geminal, the optimal map is the minimal degree map to the torus (see Lemma \ref{L:HGeminalOptimal}).}  
\end{war}

\begin{ex}\label{E:SubtleCyls}
Consider the example in Figure \ref{F:SubtleCyls}, and keep in mind that according to Definition \ref{D:CylAndBoundary} cylinders are open and maximal. The image of $C_1$ is not a cylinder because it contains the marked point $p$. The image of $C_2$ is a cylinder, but the preimage of the image of $C_2$ is not a union of cylinders because it contains a non-maximal subset of $C_1$. 
\begin{figure}[h!]
\includegraphics[width=0.6\linewidth]{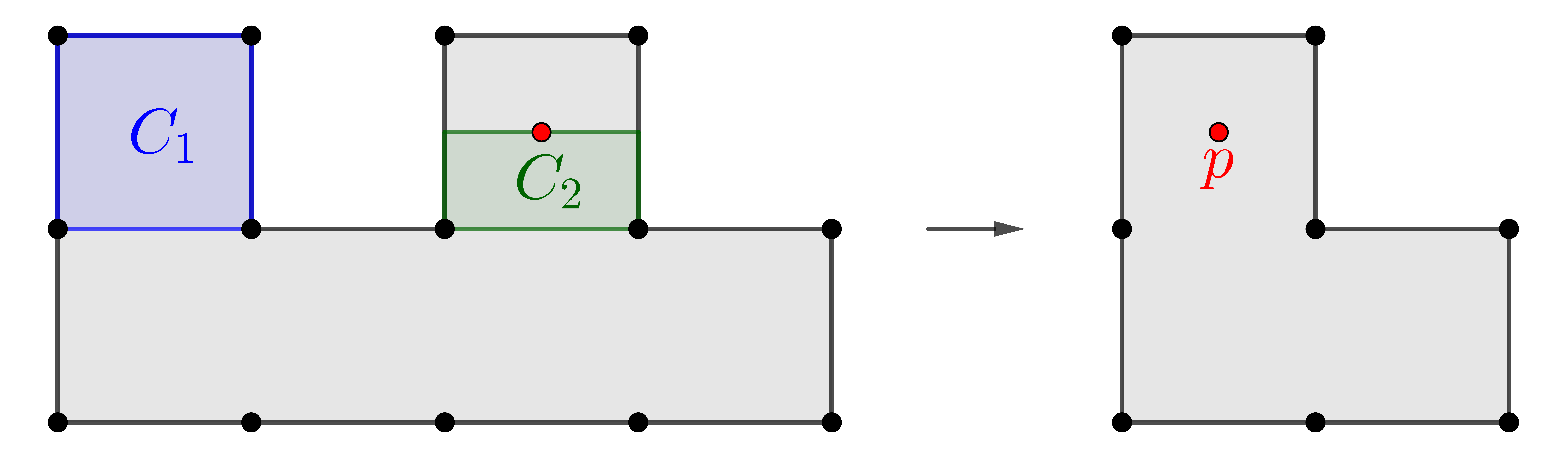}
\caption{  A degree 2 cover of a surface in $\cH(2,0)$ where the marked point $p$ is not a branch point, and exactly one preimage of  $p$ is marked.  }
\label{F:SubtleCyls}
\end{figure}
\end{ex}

\begin{ex}\label{E:WeirdH0}
If $(X,\omega)\in \cH(0)$, the quotient by the two torsion subgroup is not a good map, even though every cylinder is contained in the  preimage of its image; see Figure \ref{F:2quotient}. This is an extremely special example. 
\begin{figure}[h!]
\includegraphics[width=0.4\linewidth]{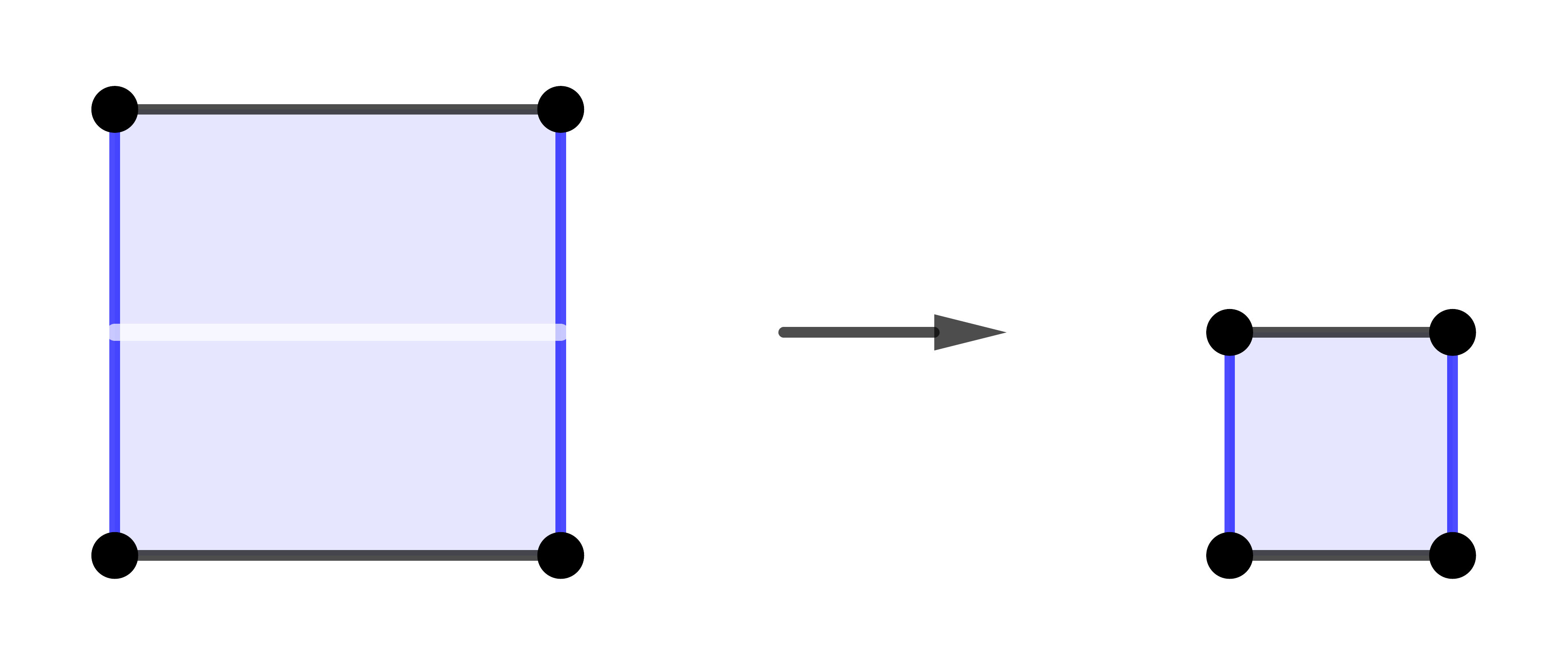}
\caption{  Example \ref{E:WeirdH0}. The preimage of the horizontal cylinder on the right is not a cylinder according to Definition \ref{D:CylAndBoundary}.  }
\label{F:2quotient}
\end{figure}
\end{ex}

\begin{rem}\label{R:Good}
Consider a translation cover $f: (X, \omega) \rightarrow (X', \omega')$, and recall from Definition \ref{D:Covering} that each zero or marked point of $(X, \omega)$ must map to a zero or marked point of $(X', \omega')$, and that every marked point on $(X', \omega')$ must have a preimage that is a marked point or a singularity. A cylinder $C$ on $(X,\omega)$ can fail to be the preimage of its image in two ways. 
\begin{enumerate}
\item\label{R:Good:MarkedPoint} The image $f(C)$ of $C$  contains a marked point. In this case,  $f^{-1}(f(C))$ contains a marked point or zero, proving $f^{-1}(f(C))\neq C$. 
\item\label{R:Good:MultipleComponents}  $f(C)$ is a cylinder on $(X', \omega')$, but the open set $f^{-1}(f(C))$ has more than one connected component. (The fact that $f(C)$ is a cylinder implies that $f^{-1}(f(C))$ is a subset of the union of cylinders parallel to $C$. The open set $f^{-1}(f(C))$  could fail to be a union of cylinders or could be a union of more than one cylinder.)  
\end{enumerate}
Consequently, if a map $f$ is not good, then any sufficiently small deformation of $f$ is also not good. 
\end{rem} 

\begin{rem}\label{R:Good2}
Equivalently, $f: (X, \omega) \rightarrow (X', \omega')$ is good if the preimage of each cylinder on  $(X', \omega')$ is a single cylinder on $(X,\omega)$.
\end{rem} 

In general, few translation covers are good, but nonetheless good and optimal covers will play a central role in the remainder of this paper. If the only good cover is the identity map, then the identity map is optimal, but in general it is not clear if optimal maps exist. 

We will now give some cases where the identity is the optimal map.

\begin{lem}\label{L:IsGood}
Suppose that $(X,\omega)$ contains a pair of cylinders $H, V$ whose core curves intersect exactly once. Then the identity is the optimal map for $(X,\omega)$. 
\end{lem}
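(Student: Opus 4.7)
I plan to show that every good translation cover $f : (X,\omega) \to (X',\omega')$ is a degree-one map, from which the optimality of the identity is immediate: the identity is trivially good, and for any other good map $g$ the relation $\mathrm{id} = g^{-1} \circ g$ exhibits $g$ as a factor of the identity.

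Let $d = \deg(f)$. Because $f$ is good, $f^{-1}(f(H)) = H$, and by Remark \ref{R:Good} cylinders contain no marked points in their interior, so $f|_H$ is unramified; thus $f|_H : H \to f(H)$ is an unramified degree $d$ covering of open cylinders. Since translation covers preserve the height coordinate inside cylinders, the preimage of the core curve $\gamma_{f(H)}$ of $f(H)$ is exactly the core curve $\gamma_H$ of $H$, and $f|_{\gamma_H} : \gamma_H \to \gamma_{f(H)}$ is an unramified degree $d$ cover of circles. The same holds with $V$ in place of $H$.

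The key step is then a short counting argument. Let $p$ be the unique point of $\gamma_H \cap \gamma_V$, and set $q = f(p) \in \gamma_{f(H)} \cap \gamma_{f(V)}$. From the unramified degree $d$ cover $f|_{\gamma_H}$ together with $f^{-1}(\gamma_{f(H)}) = \gamma_H$, one reads off $|f^{-1}(q)| = d$. On the other hand, any preimage of $q$ lies in $f^{-1}(\gamma_{f(H)}) \cap f^{-1}(\gamma_{f(V)}) = \gamma_H \cap \gamma_V = \{p\}$, so $|f^{-1}(q)| \le 1$. Hence $d = 1$ and $f$ is an isomorphism. The only point needing attention is the observation that cylinders contain no marked points in their interior, which ensures $f|_H$ is genuinely unramified and the preimage counts come out exactly $d$ on the nose.
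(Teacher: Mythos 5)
Your proof is correct and uses essentially the same argument as the paper's: both observe that a good map of degree $d$ restricts to a degree-$d$ covering on core curves, so the single intersection point of $\gamma_H$ and $\gamma_V$ must account for $d$ preimages of its image, forcing $d=1$. You phrase this as counting preimages of one point while the paper phrases it as counting intersection points of the image core curves and lifting, but the underlying count is identical.
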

\begin{proof}
Suppose $g:(X,\omega)\to (Y,\eta)$ is good. Suppose the core curves  of $g(H)$ and $g(V)$ intersect $k$ times. Then, since $g$ is good, the core curves of $H$ and $V$ intersect $k\cdot d$ times, where $d$ is the degree of $g$. Hence $d=1$ and $g$ is the identity. 
\end{proof}

\begin{lem}\label{L:GoodInH(0^n)}
For any surface in $\cH(0^n)$, the only good map is the identity. 
\end{lem}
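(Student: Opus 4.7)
The plan is to apply Lemma \ref{L:IsGood} directly. Any $(X, \omega) \in \cH(0^n)$ is a flat torus, so I will write $(X, \omega) = \bC/\Lambda$ for a rank-two lattice $\Lambda \subset \bC$, and fix a $\bZ$-basis $\{v_1, v_2\}$ of $\Lambda$. Both directions are rational for $\Lambda$, so in each of them the straight-line flow foliates $X$ by parallel closed geodesics of length $|v_1|$ and $|v_2|$ respectively.

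Next I would exhibit a cylinder $H$ on $(X, \omega)$ whose core curve is in the direction of $v_1$: when $n \geq 1$ this can be taken as any maximal strip between consecutive parallel closed geodesics through marked points, and when $n = 0$ one simply removes a single closed geodesic in direction $v_1$ from $X$. In either case the resulting open set is a maximal isometric embedding of some $\bR/(c\bZ) \times (0,h)$ into $X$, and its core curve represents the homology class $v_1 \in H_1(X;\bZ) \cong \Lambda$. Similarly I would construct a cylinder $V$ with core curve representing $v_2$.

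Since $\{v_1, v_2\}$ is a $\bZ$-basis of $\Lambda$, the algebraic intersection $v_1 \cdot v_2$ on $X$ equals $\pm 1$; as the core curves of $H$ and $V$ are straight lines in distinct directions, all intersections are transverse, so the geometric intersection number is $1$. Lemma \ref{L:IsGood} then immediately gives that the identity is the optimal map for $(X, \omega)$, and as its proof shows, any good map $g$ must then have degree $1$, hence be the identity. I do not anticipate a real obstacle here: the entire argument is a direct application of Lemma \ref{L:IsGood} once the basis-direction cylinders are identified.
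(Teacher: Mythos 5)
Your proof is correct and follows the same approach as the paper, which simply states that the lemma follows immediately from Lemma~\ref{L:IsGood}. You spell out the standard fact that on a flat torus $\bC/\Lambda$ with at least one marked point there exist cylinders in the directions of a lattice basis $\{v_1,v_2\}$ whose core curves intersect exactly once (since they are geodesics in distinct directions, their transverse intersections all carry the same sign, so the geometric count equals $|v_1\cdot v_2|=1$); this is exactly the hypothesis of Lemma~\ref{L:IsGood}. The only minor quibble is your $n=0$ aside: the stratum notation $\cH(0^n)$ in this paper always carries $n\ge 1$ marked points (a cylinder must have singular or marked points on its boundary), so that case need not be considered.
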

\begin{proof}
This follows immediately from Lemma \ref{L:IsGood}. 
\end{proof}

\begin{lem}\label{L:DisconnectingUnderOptimalMap}
Suppose that $\bfC$ is a subequivalence class of cylinders on a surface $(X, \omega)$ in an invariant subvariety $\cM$. Suppose that $f: (X, \omega) \ra (Y, \eta)$ is a good map. Then $\Col_{\bfC}(X, \omega)$ is disconnected if and only if $\Col_{f(\bfC)}\left( f(X, \omega) \right)$ is disconnected.
\end{lem}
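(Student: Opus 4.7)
The plan is to apply Lemma \ref{L:ColExists} and conclude by a degree count. Since $f$ is good, $f^{-1}(f(C)) = C$ for every cylinder $C$ on $(X,\omega)$, and it follows quickly (noting $f^{-1}(\overline{f(C)}) = \overline{C}$) that $f^{-1}(f(\overline{\bfC})) = \overline{\bfC}$. Hence Lemma \ref{L:ColExists} produces a covering map
\[
\Col_{\bfC}(f)\colon X' := \Col_{\bfC}(X,\omega) \longrightarrow Y' := \Col_{f(\bfC)}(Y,\eta)
\]
of the same degree $d = \deg(f)$.

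If $Y'$ is disconnected, the preimages under $\Col_{\bfC}(f)$ of its connected components are nonempty, disjoint, clopen subsets of $X'$, and so $X'$ is disconnected; this gives one direction.

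Conversely, suppose $X'$ is disconnected while, toward a contradiction, $Y'$ is connected. Each component of $X'$ has clopen image in $Y'$ and hence surjects onto $Y'$; writing $d_i$ for the degree of $\Col_{\bfC}(f)$ restricted to the $i$-th component, one has $\sum_i d_i = d$, and since there are at least two components each $d_i \leq d - 1$. Now pick any $C \in \bfC$. Goodness gives $f^{-1}(f(C)) = C$, so $f|_C\colon C \to f(C)$ is a degree-$d$ cover of cylinders, restricting to a degree-$d$ unramified cover of core circles $\gamma_C \to \gamma_{f(C)}$. After collapse, $\gamma_C$ is a connected circle in $X'$ and therefore lies in a single component $X'_{i_0}$. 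For any point $y$ in $\gamma_{f(C)}$ away from singularities, the $d$ preimages of $y$ along $\gamma_C$ all lie in $X'_{i_0}$, so the fiber of $\Col_{\bfC}(f)|_{X'_{i_0}}$ over $y$ has at least $d$ points, contradicting $d_{i_0} \leq d - 1$.

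I anticipate no serious obstacle: once Lemma \ref{L:ColExists} and the observation that goodness forces each $C \in \bfC$ to cover $f(C)$ with full degree $d$ are in hand, the rest is a formal covering-degree count. The subequivalence-class and invariant-subvariety hypotheses enter only to ensure that $X'$ and $Y'$ are bona fide translation surfaces in the boundary in the WYSIWYG sense.
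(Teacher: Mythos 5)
Your proof is correct, and it takes a modestly different route from the paper's. Both arguments first invoke Lemma~\ref{L:ColExists} to get a covering $\Col_{\bfC}(f)$ of the same degree, and both use the observation that every component of a covering of a connected base is clopen in the domain and hence surjects. The difference is in how the contradiction is produced when $\Col_{\bfC}(X,\omega)$ is disconnected but the image is connected: the paper picks a cylinder $C$ on the \emph{collapsed} surface $\Col_{\bfC}(X,\omega)$ and asserts that $C$ is the full preimage of its image, which amounts to knowing that $\Col_{\bfC}(f)$ is itself good --- a fact that is only justified in the subsequent Lemma~\ref{L:DegeneratingOptimalMap} (and under the stronger $\cM$-good hypothesis). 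You instead pick a cylinder $C\in\bfC$ on the \emph{uncollapsed} surface, where goodness of $f$ applies verbatim, and run a degree count along the collapsed core circle: since $f|_{\gamma_C}$ is a degree-$d$ cover of circles and $\gamma_C$ collapses to a connected set in a single component, that component must have covering degree at least $d$, contradicting the fact that each of the at least two components has degree strictly less than $d$. This sidesteps the issue of whether the collapsed map is good and makes the argument more self-contained. The only thing I would spell out is the passage from $f^{-1}(f(C))=C$ for every cylinder $C$ to $f^{-1}(f(\overline{\bfC}))=\overline{\bfC}$: the point is that if $x\in f^{-1}(\overline{f(C)})$, then any neighborhood of $x$ meets $f^{-1}(f(C))=C$ because $f$ is open, so $x\in\overline{C}$; together with the reverse inclusion from continuity this gives $f^{-1}(\overline{f(C)})=\overline{C}$, and since distinct cylinders in $\bfC$ have distinct (disjoint) images under a good map, the equality for $\overline{\bfC}$ follows by taking unions.
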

  Actually, the proof only requires that the standard deformation of $\bfC$ remains in $\cM$, a weaker assumption than that $\bfC$ is a subequivalence class. We state it as is since we will only apply it to subequivalence classes. Note also that the assumption that $f$ is good is (more than) enough for Lemma \ref{L:ColExists} to apply.  
\begin{proof}
Suppose to a contradiction that $\Col_{f(\bfC)}\left( f(X, \omega) \right)$ is connected and $\Col_{\bfC}(X, \omega)$ is disconnected. Let $C$ be a cylinder on $\Col_{\bfC}(X, \omega)$. By assumption, $C$ is the preimage of its image, but that contradicts the fact that the other component of $\Col_{\bfC}(X, \omega)$ surjects onto $\Col_{f(\bfC)}\left( f(X, \omega) \right)$.

 This establishes one implication in the claim; the other is trivial.
\end{proof}

To avoid  technical annoyances, it is convenient to make the following definitions. 

\begin{defn}
Given $(X, \omega)$ in an invariant subvariety $\cM$, and a half-translation covering map $f$ defined on $(X,\omega)$, say that $f$ is $\cM$-generic if the map $f$ can be deformed to every nearby surface in $\cM$. Equivalently, $\cM$ is contained in a full locus of covers of a component of a stratum associated to $f$. 
\end{defn} 

\begin{ex}
Let $\cM=\cH$ be a component of a stratum of Abelian differentials of genus at least 2, and let $(X,\omega)\in \cM$ be square tiled, so there is a map $f$ from $(X,\omega)$ to a torus. This $f$ is not $\cM$-generic. 
\end{ex}

\begin{rem}
If $(X, \omega)$ has orbit closure $\cM$, then it is easy to check that every map $f$ defined on $(X,\omega)$ is $\cM$ generic.
\end{rem}

\begin{defn}\label{D:MGoodAndOptimal}
If $(X,\omega)\in \cM$, a map defined on $(X,\omega)$ will be called $\cM$-good if it is $\cM$-generic and the deformations of the map are good on all deformations of $(X, \omega)$ in $\cM$.

If $(X,\omega)\in \cM$, a map defined on $(X,\omega)$ will be called $\cM$-optimal if it is $\cM$-good and any  other $\cM$-good map is a factor of it. 
\end{defn}

\begin{rem}
If the orbit of $(X,\omega)$ is dense in $\cM$, then a map is good if and only if it is $\cM$-good, and it is optimal if and only if it is $\cM$-optimal. Thus, for most surfaces in $\cM$, the notions in Definition \ref{D:GoodAndOptimal} are equivalent to their $\cM$ adapted versions in Definition \ref{D:MGoodAndOptimal}. 

The difference between $\cM$-optimal and optimal could be omitted entirely if we made sufficiently strong genericity assumptions on all our surfaces. 
\end{rem}

\begin{ex}
There are many examples of surfaces $(X,\omega)$ contained in invariant subvarieties $\cN$ such that $(X,\omega)$ has an $\cN$-generic good map that isn't $\cN$-good. (For example, consider a surface $(X,\omega)$ whose orbit closure $\cM$ is geminal, and for which the optimal map $\pi$ has degree greater than one. Let $\cN$ denote the locus of all covers of surfaces in the stratum of $\pi(X,\omega)$; so by construction $\pi$ is $\cN$-generic, and $\cN$ is a full locus of covers of a component of a stratum. If $\pi$ were $\cN$-good, it would be possible to show that every cylinder on every surface in $\cN$ is free, contradicting \cite[Theorem 1.5]{MirWri2}. One can take $(X,\omega)$ to be the Eierlegende-Wollmilchsau.)
%
%
\end{ex}

\begin{lem}\label{L:DegeneratingOptimalMap}
Suppose that $\bfC$ is a subequivalence class of cylinders on a surface $(X, \omega)$ contained in an invariant subvariety $\cM$.   
  If $\pi$ is a good map on $(X, \omega)$, then $\Col_{\bfC}(\pi)$ is an good map on $\Col_{\bfC}(X, \omega)$. Moreover if $\pi$ is $\cM$-good and $\bfC$ remains a subequivalence class on deformations of $(X, \omega)$ in $\cM$ and $\bfC$ consists of $\cM$-generic cylinders, then $\Col_{\bfC}(\pi)$ is $\cM_{\bfC}$-good. 

\end{lem}

Recall that $\Col_{\bfC}(\pi)$ was defined in Lemma \ref{L:ColExists} following \cite[Lemma 2.2]{ApisaWrightDiamonds}.

\begin{proof}
By assumption, $\pi$ may be deformed to any surface in $\cM$. By definition, $\displaystyle{\Col_{\bfC}(X,\omega) = \lim_{t\to -\infty} a_t^\bfC(X,\omega)}$. Let $\pi_t$ denote the map arising from deforming $\pi$ to $a_t^\bfC(X,\omega)$. 

Any cylinder $C$ on $\Col_{\bfC}(X, \omega)$ is a limit of cylinders  $C_t$ on $(X_t, \omega_t)$, as in   \cite[Lemma 2.15]{MirWri}. If $C$ is not the preimage of its image under $\Col_{\bfC}(\pi)$, then the same must be true for $\pi_t$ for large enough $t$, contradicting the fact that $\pi_t$ is good. This shows that $\Col_{\bfC}(\pi)$ is good.

To see that $\Col_{\bfC}(\pi)$ is $\cM_\bfC$-good rather than merely good, it suffices to note that a neighborhood of $\Col_\bfC(X,\omega)$ can be obtained via the same construction. (In much greater generality, this follows from \cite[Proposition 2.6]{MirWri}, together with the main results of \cite{MirWri, ChenWright}.)
%
\end{proof}

\begin{lem}\label{L:DisconnectedDegenerationMaps}
Suppose that $\bfC$ is a subequivalence class of cylinders on a surface $(X, \omega)$ in a geminal invariant subvariety $\cM$. If $\Col_{\bfC}(X, \omega)$ is disconnected, then the $\cM_\bfC$-optimal map on $\Col_{\bfC}(X, \omega)$ is the identity and the $\cM$-optimal map on $(X, \omega)$ is the identity.  
\end{lem}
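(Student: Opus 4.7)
The plan is to establish the claim about the collapsed surface first, then derive the statement about $(X,\omega)$ from it using Lemma \ref{L:DegeneratingOptimalMap}.

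For the first claim, I would let $f : \Col_\bfC(X,\omega) \to Z$ be an arbitrary $\cM_\bfC$-good map. By Lemma \ref{L:Disconnect}, write $\Col_\bfC(X,\omega) = Y_1 \sqcup Y_2$ where $Y_i$ lies in a component $\cH_i$ of a stratum of Abelian differentials, and the projection $\pi_i : \cM_\bfC \to \cH_i$ is a local isomorphism. The first step is to argue that $f$ splits as a disjoint union $f_1 \sqcup f_2$ with $f_i : Y_i \to Z_i$ mapping onto distinct components $Z_i$ of $Z$. If instead the images $f(Y_1)$ and $f(Y_2)$ were contained in a common component, then for any cylinder $C \subset Y_1$ the surjective restriction $f|_{Y_2}$ would contribute additional preimages of $f(C)$ inside $Y_2$, forcing $f^{-1}(f(C)) \supsetneq C$ and contradicting goodness.

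Next, since $\pi_i$ is a local isomorphism and $f$ is $\cM_\bfC$-generic, each deformation of $Y_i$ in $\cH_i$ lifts to a deformation in $\cM_\bfC$, and the restriction to the $i$-th factor gives a deformation of $f_i$; this shows $f_i$ is $\cH_i$-generic, and since goodness of $f$ restricts to goodness of each $f_i$, also $\cH_i$-good. Now I invoke Corollary \ref{C:MinimalCover} (if $\cH_i$ is not a stratum of flat tori) or Lemma \ref{L:GoodInH(0^n)} (if it is, so $\cH_i = \cH(0^n)$) to conclude that on a generic surface in $\cH_i$ the only good translation cover is the identity. Because the degree of a translation covering is locally constant, this forces $\deg f_i = 1$, so $f_i$ is an isomorphism, hence the identity. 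As every $\cM_\bfC$-good map equals the identity, the identity is the $\cM_\bfC$-optimal map.

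For the second claim, let $g$ be any $\cM$-good map on $(X,\omega)$. By Lemma \ref{L:DegeneratingOptimalMap}, $\Col_\bfC(g)$ is $\cM_\bfC$-good on $\Col_\bfC(X,\omega)$, so by the first claim it is the identity and in particular has degree one. Lemma \ref{L:ColExists} guarantees that the cylinder collapse preserves the degree of a covering map, so $g$ itself has degree one and is therefore the identity.

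The main obstacle is the splitting step $f = f_1 \sqcup f_2$ in the first claim, where I must carefully translate goodness of $f$ into a constraint incompatible with $f(Y_1)$ and $f(Y_2)$ sharing a component. Everything else follows from the structural information supplied by Lemma \ref{L:Disconnect}, the classification of good covers of generic surfaces in strata of Abelian differentials, and the local constancy of covering degree.
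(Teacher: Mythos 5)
Your proof is correct and follows essentially the same route as the paper: invoke Lemma \ref{L:Disconnect} to place $\Col_\bfC(X,\omega)$ in a product $\cH_1\times\cH_2$, appeal to Corollary \ref{C:MinimalCover} (higher rank) or Lemma \ref{L:GoodInH(0^n)} (rank one) to kill good maps on each factor, rule out a map identifying the two components using the requirement that each cylinder be the full preimage of its image, and then deduce the statement for $(X,\omega)$ via Lemma \ref{L:DegeneratingOptimalMap}. You spell out the splitting $f=f_1\sqcup f_2$ and the local-constancy-of-degree step more explicitly than the paper does, but the ideas are the same.
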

\begin{proof}
By Lemma \ref{L:Disconnect}, $\cM_{\bfC}$ is a subset of $\cH_1 \times \cH_2$, where $\cH_1$ and $\cH_2$ are components of strata of Abelian differentials. Moreover, the projection from $\cM_{\bfC}$ to $\cH_i$ is a local diffeomorphism for $i \in \{1, 2\}$ and $\cH_1$ and $\cH_2$ have the same rank and dimension.

Recall that, for each  $i \in \{1, 2\}$, the identity is the only good map for generic surfaces in $\cH_i$, by Lemma \ref{L:GoodInH(0^n)} in the rank 1 case and  Corollary \ref{C:MinimalCover} in the higher rank case.  

Hence, any $\cM_{\bfC}$-good map for $\Col_{\bfC}(X, \omega)$ would have to be degree 1 on each component. If such a map were not the identity, it would have to identify the two components, contradicting the requirement that each cylinder is equal to the preimage of its image. It follows that the identity is $\cM_{\bfC}$-optimal for $\Col_{\bfC}(X, \omega)$.

 It follows immediately that the $\cM$-optimal map on $(X, \omega)$ must be the identity since otherwise $\Col_{\bfC}(X, \omega)$ would admit a non-identity $\cM$-good map by Lemma \ref{L:DegeneratingOptimalMap}, contradicting the fact that the identity map is $\cM_{\bfC}$-optimal for $\Col_{\bfC}(X, \omega)$. 
\end{proof}

\section{Diamonds}\label{S:Diamond}

Our approach to proving Theorem \ref{T:geminal} will be inductive. Diamonds will be the primary tool that powers this inductive approach. In this section, we recall from our previous paper \cite{ApisaWrightDiamonds} the definition of diamonds and summarize the results about them that we will use. The main result of this section is Proposition \ref{P:DiamondsAreCool}, which adapts some of the  general main results of \cite{ApisaWrightDiamonds} to our  more specific context.  

\begin{defn}\label{D:Diamond}
Say that $((X,\omega), \cM, \bfC_1, \bfC_2)$ is a \emph{diamond} if $\cM$ is an invariant subvariety and $(X,\omega)\in \cM$ has two collections of cylinders $\bfC_1$ and $\bfC_2$ such that 
\begin{enumerate}
\item $\bfC_1$ and $\bfC_2$ are disjoint and do not share any boundary saddle connections,
\item the standard dilations of each $\bfC_i$ remain in $\cM$, and
\item\label{I:Collapse}  the collapses of each $\bfC_i$  cause the surface to degenerate.
\end{enumerate}
\end{defn}

Condition \eqref{I:Collapse}  holds if and only if $\overline\bfC_i$ contains a saddle connection perpendicular to its core curves. Since most surface do not have any pairs of perpendicular saddle connections, it will be helpful to modify this condition. 

Given a collection of parallel cylinders $\bfC$ on $(X,\omega)$ endowed with a choice of direction not parallel to these cylinders, we can define the \emph{cylinder collapse} $\Col_\bfC(X,\omega)$ to be the result of applying a cylinder deformation of $\bfC$ to reduce the area of $\bfC$ to zero, while not changing the foliation in the given direction. If there is a saddle connection in this direction contained in $\overline\bfC$ this will degenerate the surface. 

This notation $\Col_\bfC(X,\omega)$ risks some confusion but is very convenient. The risk of confusion is that the notation has one meaning when $\bfC$ is not endowed with a choice of direction (namely a perpendicular collapse), and has a different meaning when a direction is implicitly specified (namely a collapse in the given direction). In the latter case $\Col_\bfC(X,\omega)$ can be obtained by first shearing $\bfC$ so the given direction becomes perpendicular to the core curves of $\bfC$, and then performing the perpendicular collapse. 

We may modify Definition \ref{D:Diamond}, by, for each $i$, specifying a direction in which $\overline\bfC_i$ contains a saddle connection, and use the collapses in this direction instead of the perpendicular collapses. In this case, we will say that $((X,\omega), \cM, \bfC_1, \bfC_2)$ is a \emph{skew diamond}, it being implicit that the $\bfC_i$ are each equipped with a choice of direction.

\begin{rem}
All results that we know of that hold for diamonds also hold for skew diamonds. One can typically immediately reduce from the case of skew diamonds to the case of diamonds by shearing the $\bfC_i$; and also the proofs work equally well in the skew case. In fact, the only reason we have previously used diamonds rather than skew diamonds was to simplify the definition, to avoid the (implicit) choice of directions.
 \end{rem}

 The most basic result about diamonds is the following result, which is called the Diamond Lemma \cite[Lemma 2.3]{ApisaWrightDiamonds}. Here we will denote by $\Col_{\bfC_i}(\bfC_{i+1})$ the collection of cylinders on $\Col_{\bfC_i}(X,\omega)$ arising from $\bfC_{i+1}$, as in \cite[Section 1.1]{ApisaWrightDiamonds}.  

\begin{lem}\label{L:diamond}
Suppose that $f_i$ is a (half)-translation cover whose domain is $\Col_{\bfC_i}(X, \omega)$. Suppose that 
$$\overline{\Col_{\bfC_i}(\bfC_{i+1})}=f_i^{-1}\left(f_i\left( \overline{\Col_{\bfC_i}(\bfC_{i+1}) }\right)\right)$$

and that 
$$\Col_{\ColOne(\bfC_2)}(f_1) = \Col_{\ColTwo(\bfC_1)}(f_2).$$ Then $(X,\omega)$ admits a covering map $f$ to a quadratic differential, with $\overline\bfC_{i}=f^{-1}(f(\overline\bfC_{i}))$, and $f_i = \Col_{\bfC_{i}}(f)$.
\end{lem}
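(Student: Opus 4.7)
The plan is to construct $f$ on $(X,\omega)$ by gluing together partial maps obtained from $f_1$ and $f_2$ on the open sets $(X,\omega)\setminus\overline{\bfC_1}$ and $(X,\omega)\setminus\overline{\bfC_2}$, respectively. The underlying geometric fact driving the argument is that the cylinder collapse $\Col_{\bfC_i}$ is an isometry on the complement of $\overline{\bfC_i}$ in $(X,\omega)$: this open set sits naturally as a subset of $\Col_{\bfC_i}(X,\omega)$, namely the complement of the locus where $\bfC_i$ has been collapsed. Since $\bfC_1\cap\bfC_2=\emptyset$ and they share no boundary saddle connections, the two open sets together cover $X$, so agreement on the overlap will yield $f$ globally.

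First, for each $i\in\{1,2\}$ I would use the identification just described to pull $f_i$ back to a map
\[
\tilde f_i : (X,\omega)\setminus\overline{\bfC_i} \longrightarrow \text{codomain of } f_i.
\]
Each $\tilde f_i$ is locally a half-translation covering, and the cleanness hypothesis $\overline{\Col_{\bfC_i}(\bfC_{i+1})} = f_i^{-1}(f_i(\overline{\Col_{\bfC_i}(\bfC_{i+1})}))$ says precisely that the cylinders of $\bfC_{i+1}$, viewed inside $(X,\omega)\setminus\overline{\bfC_i}$, are saturated under $\tilde f_i$.

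Next, I would check that $\tilde f_1$ and $\tilde f_2$ agree on the overlap $U := (X,\omega)\setminus(\overline{\bfC_1}\cup\overline{\bfC_2})$. Because $\bfC_1$ and $\bfC_2$ are disjoint with no common boundary saddle connections, the two iterated collapses commute and produce the same doubly-collapsed surface: there is a canonical identification of $\Col_{\Col_{\bfC_1}(\bfC_2)}\Col_{\bfC_1}(X,\omega)$ with $\Col_{\Col_{\bfC_2}(\bfC_1)}\Col_{\bfC_2}(X,\omega)$, and $U$ embeds as an open subset of this doubly-collapsed surface. The restriction of $\tilde f_i$ to $U$ factors through the corresponding iterated collapse, and on the doubly-collapsed surface the two resulting maps coincide by the hypothesis $\Col_{\Col_{\bfC_1}(\bfC_2)}(f_1) = \Col_{\Col_{\bfC_2}(\bfC_1)}(f_2)$. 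Consequently $\tilde f_1|_U = \tilde f_2|_U$, and the partial maps glue to a single map $f:(X,\omega)\to(Q,q)$, where $(Q,q)$ is obtained by the analogous gluing of the codomains of $f_1$ and $f_2$.

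Finally, I would verify the three conclusions. That $f$ is a half-translation covering to a quadratic differential follows from the local structure of $\tilde f_1$ and $\tilde f_2$, once one checks that the metric completion at points of $\overline{\bfC_1}\cap\overline{\bfC_2}$ (a finite set of zeros) agrees on both sides; this is again forced by the agreement on the double collapse. The identity $\bfC_i=f^{-1}(f(\bfC_i))$ reduces to the cleanness hypothesis applied via $\tilde f_{i+1}$, which is defined on $\bfC_i$. The identity $f_i=\Col_{\bfC_i}(f)$ holds by construction, since collapsing $\bfC_i$ in the glued map returns precisely the piece contributed by $f_i$. I expect the main obstacle to be making the identifications in the previous paragraph rigorous: one must carefully track how saddle connections and cylinder boundaries are relabeled under each collapse, and verify that the order of collapse does not matter precisely because $\bfC_1$ and $\bfC_2$ share no boundary saddle connections. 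Once the doubly-collapsed surface is identified canonically from both sides, the rest of the argument is essentially bookkeeping.
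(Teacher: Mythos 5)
This lemma is quoted in the paper from \cite[Lemma 2.3]{ApisaWrightDiamonds}; the present paper gives no proof of its own, so there is nothing in-text to compare against. Your strategy---identify $(X,\omega)\setminus\overline{\bfC_i}$ isometrically with an open subset of $\Col_{\bfC_i}(X,\omega)$, restrict $f_i$ there, verify agreement on the common overlap via the double collapse, and glue---is the natural one and almost certainly mirrors the cited proof. The outline is sound, and you correctly identify where the real work lies.

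Two places where you are thinner than you can afford to be. First, the codomain gluing is invoked only by analogy. To make it honest: with $(Q_i,q_i)$ the codomain of $f_i$ and $\bfD_{i+1}:=f_i\bigl(\Col_{\bfC_i}(\bfC_{i+1})\bigr)$, the compatibility hypothesis identifies $\Col_{\bfD_2}(Q_1,q_1)$ with $\Col_{\bfD_1}(Q_2,q_2)$, and $(Q,q)$ is assembled by gluing $(Q_1,q_1)$ minus its degenerate locus to $(Q_2,q_2)$ minus its degenerate locus over this common double collapse. That the two degenerate loci in the double collapse can be opened up independently (so the result is again a quadratic differential) uses both that $\bfC_1,\bfC_2$ share no boundary saddle connections and the cleanness hypotheses; you should say so. Second, you defer the check that the glued $f$ is a half-translation covering across the finite set $\overline{\bfC_1}\cap\overline{\bfC_2}$ to the phrase ``forced by the agreement on the double collapse,'' but the local degree and cone angle matching at those shared zeros deserves to be spelled out, since those are exactly the points not covered by either chart. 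It is also worth observing that the cleanness hypothesis is not merely one of the conclusions reflected back as an input: it is precisely the saturation condition that makes the expressions $\Col_{\Col_{\bfC_i}(\bfC_{i+1})}(f_i)$ in the compatibility hypothesis well-defined via Lemma \ref{L:ColExists}, so without it the second hypothesis would not even parse.
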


\begin{defn}\label{D:CP}
We will say that a cover of translation or half-translation surfaces satisfies \emph{Assumption CP} (for Cylinder Pre-image) if the pre-image of every cylinder is a union of cylinders. (Because of our conventions and definitions, if the preimage of a cylinder $C$ consists of cylinders, then all these cylinders must have the same height as $C$.) 
\end{defn}



\begin{rem}\label{R:OptimalDiamond}
Every good map satisfies Assumption CP. (Good maps are defined in Definition \ref{D:GoodAndOptimal}.) Moreover, if $((X,\omega), \cM, \bfC_1, \bfC_2)$ is a diamond and $f_i$ is a good translation cover whose domain is $\Col_{\bfC_i}(X, \omega)$, then $\Col_{\bfC_i}(\bfC_{i+1})=f_i^{-1}(f_i\left( \Col_{\bfC_i}(\bfC_{i+1}) \right))$ is automatically true, since for a good cover every cylinder is the preimage of its image. In particular, the first displayed assumption of Lemma \ref{L:diamond} holds.  
\end{rem}




\noindent In the sequel we will mainly use the following type of diamond: 

\begin{defn}\label{D:GenericDiamond}
A diamond will be called a \emph{generic} if 
\begin{enumerate}
\item\label{E:genericSE} each $\bfC_i$ is a subequivalence class of generic cylinders, and  
\item\label{E:one} $\cM_{\bfC_i}$ has dimension exactly one less than $\cM$ for each $i \in \{1, 2\}$.
\end{enumerate}
\end{defn} 

Subequivalence classes and generic cylinders are defined in Definition \ref{D:SE}.


 The following result says that generic diamonds abound \cite[Lemma 3.31]{ApisaWrightDiamonds}.

\begin{lem}\label{L:GenericDiamond}
Let $\cM$ be an invariant subvariety of rank at least 2. Then there exists a surface $(X,\omega)\in \cM$ with collections $\bfC_1, \bfC_2$ of cylinders that form a generic diamond.  

Moreover, up to shearing $\bfC_i$, $(X, \omega)$ may be assumed to be any surface  on which all parallel saddle connections are $\cM$-parallel, and $\bfC_1$ may be any subequivalence class. 
\end{lem}


\noindent The following result allows for the determination of an invariant subvariety given a special kind of diamond \cite[Lemma 8.31]{ApisaWrightDiamonds}.

\begin{lem}\label{L:IntroFull}
Suppose that $\left( (X, \omega), \cM, \bfC_1, \bfC_2 \right)$ is a generic diamond and that $\MOne$ and $\MTwo$  are full loci of covers of strata of Abelian differentials that satisfy Assumption CP. Then $\cM$ is a full locus of covers of a stratum of Abelian differentials. If $f$ (resp $f_i$) denotes the cover on $(X, \omega)$ (resp. $\Col_{\bfC_i}(X, \omega)$), then $\Col_{\bfC_i}(f) = f_i$.
\end{lem}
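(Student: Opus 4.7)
The plan is to invoke the Diamond Lemma~\ref{L:diamond} to produce a cover $f$ on $(X,\omega)$ extending $f_1$ and $f_2$, verify that its target is Abelian, and then use a dimension count to identify $\cM$ with the full locus of covers. First I would verify the two hypotheses of the Diamond Lemma. The key observation is that in a full locus of covers of a stratum of Abelian differentials satisfying Assumption CP, every subequivalence class of cylinders on the cover is exactly the preimage of a single cylinder on the base: each base cylinder can be independently deformed in its stratum, CP forces the preimage to be a union of cover cylinders which must deform in unison, and minimality of subequivalence classes then pins them down as these preimages. Applied to the subequivalence class $\Col_{\bfC_i}(\bfC_{i+1})$ of $\cM_{\bfC_i}$, this immediately yields the first hypothesis
\[
\overline{\Col_{\bfC_i}(\bfC_{i+1})} = f_i^{-1}\bigl(f_i(\overline{\Col_{\bfC_i}(\bfC_{i+1})})\bigr).
\]
For the second hypothesis, the commutativity of disjoint cylinder collapses identifies both iterated collapses as maps out of $\Col_{\bfC_1,\bfC_2}(X,\omega)$, and the uniqueness of the cover structure on $\cM_{\bfC_1,\bfC_2}$ (a codimension-one boundary of both $\cM_{\bfC_1}$ and $\cM_{\bfC_2}$ which inherits a cover structure from either side) forces the two composites to agree.

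The Diamond Lemma then produces a half-translation cover $f:(X,\omega)\to(Y,q)$ with $\Col_{\bfC_i}(f)=f_i$ and $\bfC_i = f^{-1}(f(\bfC_i))$. I would next argue $(Y,q)$ is the square of an Abelian differential. Since $(X,\omega)$ is Abelian, $f$ factors through the holonomy double cover $\wt Y \to Y$, and $(Y,q)$ is Abelian if and only if $\wt Y$ is disconnected. Both degenerations $\Col_{f(\bfC_i)}(Y,q) = (Y_i,\eta_i^2)$ are Abelian, so their holonomy double covers are disconnected; tracking this through the diamond forces $\wt Y$ itself to be disconnected, since the alternative (that $\wt Y$ is connected but both preimages of $f(\bfC_i)$ are separating collections of cylinders) is incompatible with the disjointness condition of the diamond and the deck involution structure on $\wt Y$. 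Hence $q=\eta^2$ with $\eta$ Abelian, and after a sign choice we have $f^*\eta=\omega$, so $f$ is a translation cover.

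Finally, let $\cN := \overline{\GL \cdot (Y,\eta)}$ and let $\cM^*$ be the full locus of covers of $\cN$ with the topological data of $f$. Applying the Diamond Lemma to surfaces in a neighborhood of $(X,\omega)$ in $\cM$ (its hypotheses being open conditions) produces the cover $f$ nearby, yielding $\cM \subseteq \cM^*$ locally. Let $\cH$ and $\cH_i$ denote the stratum components containing $(Y,\eta)$ and $(Y_i,\eta_i)$ respectively. The dimension chain
\[
\dim \cM \leq \dim \cM^* = \dim \cN \leq \dim \cH = \dim \cH_i + 1 = \dim \cM_{\bfC_i} + 1 = \dim \cM
\]
forces equality throughout, giving $\cN=\cH$ (a component of a stratum of Abelian differentials) and $\cM=\cM^*$, as desired. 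The main obstacle is the Abelianness step, which requires carefully tracking the $\bZ/2$-holonomy representation through the two codimension-one degenerations; the remaining steps are essentially mechanical consequences of the Diamond Lemma combined with basic dimension counting.
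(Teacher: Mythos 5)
The paper itself does not prove this lemma; it is cited directly from \cite[Lemma 8.31]{ApisaWrightDiamonds}. So there is no ``paper proof'' to compare against, and I will assess your argument on its own.

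Your overall strategy---invoke the Diamond Lemma to build $f$ from $f_1$ and $f_2$, show the codomain is Abelian, then run a dimension count---is the natural one, and the paper's surrounding use of the Diamond Lemma suggests it is close to the actual argument. Your verification of the first Diamond Lemma hypothesis (CP plus the full-locus structure forces subequivalence classes on the cover to be precisely preimages of base cylinders, hence the closure is the preimage of its image) is essentially correct.

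The serious gap is the second Diamond Lemma hypothesis,
\[
\Col_{\ColOne(\bfC_2)}(f_1) = \Col_{\ColTwo(\bfC_1)}(f_2).
\]
You dispense with this by asserting that $\MOneTwo$ ``inherits a cover structure from either side'' and that this structure is unique. But a full-locus-of-covers structure on an invariant subvariety is not in general unique: $\MOneTwo$ could, for example, be a locus of degree-four cyclic covers of tori, which admits both a degree-two and a degree-four cover map, each exhibiting it as a full locus of covers satisfying Assumption CP. Nothing you have said rules out $f_1$ degenerating to one of these and $f_2$ degenerating to the other, in which case the two iterated collapses simply do not agree and the Diamond Lemma does not apply. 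The lemma does not assume $\cM$ is geminal, nor that the $f_i$ are optimal maps, so no rigidity of that kind is available for free. This compatibility is exactly the content that a proof of the lemma must supply (or it must be an unstated hypothesis), and your argument does not supply it. Secondarily, in the final dimension chain you write $\dim\cH = \dim\cH_i + 1$ as if it were automatic, but a single cylinder collapse can drop the dimension of a stratum by more than one; you would need to use that $f(\bfC_i)$ is a single cylinder whose collapse is codimension one in $\cH$, which is a consequence of the structure you are building rather than a free fact. The Abelianness step is plausible but also sketched too loosely to count as a proof; it needs an actual tracking of the $\bZ/2$ holonomy class through the two degenerations rather than the appeal to ``incompatibility'' you give.
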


\noindent The following result is the main mechanism that makes an inductive argument possible. 

\begin{prop}\label{P:DiamondsAreCool}
Suppose that $((X, \omega), \cM, \bfC_1, \bfC_2)$ is a generic diamond where $\cM$ is geminal and $\ColOneTwoX$ is connected. Suppose that for $i \in \{1, 2\}$, $\cM_{\bfC_i}$ is one of the following: a stratum of Abelian differentials, an Abelian double, or a quadratic double. Then $\cM$ is also a stratum of Abelian differentials, an Abelian double, or a quadratic double. 

Moreover, if $\MOne$ and $\MTwo$ are both quadratic doubles, so is $\cM$.  
\end{prop}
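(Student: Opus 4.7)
The plan is to produce, via the Diamond Lemma (Lemma \ref{L:diamond}), a canonical good covering map on $(X,\omega)$ and then read off the structure of $\cM$ from it. Each of the three allowed types for $\cM_{\bfC_i}$ is a full locus of covers of a component of a stratum of Abelian or quadratic differentials through a natural good map: the identity for an Abelian stratum, the degree two Abelian cover for an Abelian double, and the holonomy double cover for a quadratic double. Let $f_i$ denote this natural map on $\Col_{\bfC_i}(X,\omega)$; by Remark \ref{R:OptimalDiamond} each $f_i$ satisfies Assumption CP, so the preimage hypotheses of the Diamond Lemma are automatic.

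The main step is to verify the compatibility $\Col_{\Col_{\bfC_1}(\bfC_2)}(f_1) = \Col_{\Col_{\bfC_2}(\bfC_1)}(f_2)$ as good maps on $\Col_{\bfC_1,\bfC_2}(X,\omega)$. By Lemma \ref{L:DegeneratingOptimalMap} both sides are good, and the boundary orbit closure $\cM_{\bfC_1,\bfC_2}$ is geminal by two applications of Lemma \ref{L:GeminalBoundary}. The canonicity of these natural maps---encoded for Abelian strata and Abelian doubles by Theorem \ref{T:MinimalCover} and Corollary \ref{C:MinimalCover}, and for quadratic doubles by the uniqueness of the holonomy double cover---then forces the two degenerated good maps to coincide. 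I expect this to be the main obstacle, especially in the mixed cases where the $\cM_{\bfC_i}$ are of different types and one must reconcile Abelian and quadratic targets; the fact that $\ColOneTwoX$ is connected, together with the rigidity of good maps, will be essential.

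Once compatibility is in hand, the Diamond Lemma yields a covering map $f:(X,\omega)\to(Y,\eta)$ with $\Col_{\bfC_i}(f)=f_i$ and $\bfC_i = f^{-1}(f(\bfC_i))$. When both $\cM_{\bfC_i}$ have Abelian targets, Lemma \ref{L:IntroFull} applies and expresses $\cM$ as a full locus of covers of a stratum of Abelian differentials; since $\cM$ is geminal and the degree of $f$ agrees with the common degree of the $f_i$, which is one or two, $\cM$ is accordingly an Abelian stratum or an Abelian double. When both $\cM_{\bfC_i}$ are quadratic doubles, the target $(Y,\eta)$ must be a quadratic differential, and the canonicity of the holonomy representation (to $\bZ/2$) identifies $f$ as the holonomy double cover of $(Y,\eta)$, establishing the moreover clause. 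A final bookkeeping step is to check that the degree one or two conclusion for $f$, combined with the geminal hypothesis on $\cM$, rules out any remaining configurations outside the three claimed types.
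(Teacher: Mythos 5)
Your plan — feed the ``natural'' good maps $f_i$ into the Diamond Lemma, then read off the structure of $\cM$ — runs into two serious problems, the first of which is fatal for the mixed cases.

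The compatibility hypothesis of the Diamond Lemma, $\Col_{\ColOne(\bfC_2)}(f_1)=\Col_{\ColTwo(\bfC_1)}(f_2)$, is an equality of maps, so in particular of their degrees and of the type (Abelian vs.\ quadratic) of their codomains. If $\MOne$ is a component of a stratum of Abelian differentials and $\MTwo$ is a quadratic double, then $f_1$ has degree $1$ while $f_2$ has degree $2$; the two degenerated maps therefore \emph{cannot} coincide, and the Diamond Lemma simply does not apply with the maps you have chosen. Similarly when $\MOne$ is an Abelian double and $\MTwo$ is a quadratic double, the codomain of $\Col_{\ColOne(\bfC_2)}(f_1)$ is a surface in a stratum of Abelian differentials while the codomain of $\Col_{\ColTwo(\bfC_1)}(f_2)$ is generically a quadratic differential that is not a square, and again the two maps cannot match. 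The paper sidesteps this entirely: rather than applying the Diamond Lemma directly, it invokes the classification results from \cite{ApisaWrightDiamonds} (Proposition 5.1, Theorems 7.1 and 10.1), which classify what $\cM$ can be for each combination of boundary types and which have already done the (much more delicate) Diamond Lemma bookkeeping over \emph{all} compatible diagonal maps, not just a preselected pair.

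The second gap is that even in the favorable cases where your compatibility could hold, concluding ``$\cM$ is an Abelian stratum or Abelian double'' from ``$\cM$ is a full locus of covers of a stratum of Abelian differentials of degree one or two'' is not a bookkeeping step. A degree-two full locus of covers fails to be an Abelian double unless both preimages of every marked point downstairs are marked upstairs, and a degree-one full locus of covers fails to be a stratum if there are residual marked points. Ruling these out is precisely where the geminal hypothesis does real work in the paper, via Lemma \ref{L:NotAllFree} and the device of sliding a free marked point into one cylinder of a twin pair to break the twin relation (and, in Case 2, via Sublemma \ref{SL:Pempty} and Lemma \ref{L:FixedTwins}). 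None of this appears in your proposal, so even your ``both Abelian'' branch is incomplete. In short, the heavy lifting in the paper's proof is (a) citing the right prior classification of diamonds to handle the mixed cases you cannot reach with a single pair of natural $f_i$, and (b) using the geminal condition to eliminate the extra possibilities those classification theorems allow; your proposal supplies neither.
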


The reader may wish to begin by taking Proposition \ref{P:DiamondsAreCool} as a black box. The proof follows from results in \cite{ApisaWrightDiamonds}, where a similar result is proved without the assumption that $\cM$ is geminal. Without the geminal assumption, there are some additional possibilities for $\cM$, with explicit but sometimes lengthy descriptions. (There are even more possibilities without the assumption that $\ColOneTwoX$ is connected.) To prove Proposition \ref{P:DiamondsAreCool}, we need to show that the additional possibilities are not geminal.

In the sequel, as in Definition \ref{D:F}, we will let $\For(X, \omega)$ denote a translation surface $(X, \omega)$ once all marked points on it are forgotten. Similarly, if $(X, \omega)$ has dense orbit in an invariant subvariety $\cM$, we will let $\For(\cM)$ denote the orbit closure of $\For(X, \omega)$. 

\begin{proof}
We will proceed in three cases.

\bold{Case 1: At least one of $\MOne$ and $\MTwo$ is a component of a stratum of Abelian differentials.} Without loss of generality, assume that $\MOne$ is a component of a stratum of Abelian differentials.

We begin with two observations that can be found in \cite[Proposition 5.1]{ApisaWrightDiamonds}. First, if $\MTwo$ is a also a component of a stratum of Abelian differentials, then so is $\cM$. Second, it is not possible for $\MTwo$ to be an Abelian double. 

This leaves the case where $\MTwo$ is a quadratic double. Let $\cH$ be the component of the stratum of Abelian differentials containing $\cM$. By \cite[Proposition 5.1]{ApisaWrightDiamonds}, one of the following occurs: 
\begin{enumerate}
\item $\cM$ is a component of a stratum of Abelian differentials or 
\item $\For(\cM) = \For(\cH)$ is a hyperelliptic component and there is at most one free marked point on surfaces in $\cM$ with the remaining marked points being a collection of either one marked point fixed or two marked points exchanged by the hyperelliptic involution. 
\item $\For(\cM)$ is a codimension one hyperelliptic locus in $\For(\cH)$ and there is at most one marked point on surfaces in $\cM$, which is free.
\end{enumerate}

Our analysis will make use of the following. 

\begin{lem}\label{L:NotAllFree}
Any geminal subvariety that is not a component of a stratum of Abelian differentials contains a surface with a pair of twin cylinders. 
\end{lem}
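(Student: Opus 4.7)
The proof will be a direct reduction to the Mirzakhani-Wright classification of free invariant subvarieties, stated in the introduction (Section \ref{SS:Fake}) as \cite[Theorem 1.5]{MirWri2}: every free invariant subvariety of translation surfaces is a connected component of a stratum of Abelian differentials.

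The plan is as follows. Suppose, for contradiction, that $\cM$ is a geminal subvariety that is not a component of a stratum of Abelian differentials and yet no surface in $\cM$ has a pair of twin cylinders. I will argue that then every cylinder on every surface of $\cM$ is free. Indeed, given any cylinder $C$ on any $(X,\omega)\in\cM$, the definition of geminal forces one of two alternatives: either every cylinder deformation of $C$ remains in $\cM$ (so $C$ is free), or $C$ possesses a twin $C'$. The second alternative is ruled out by hypothesis, so $C$ must be free. Since this holds for every $(X,\omega)$ and every cylinder, $\cM$ is a free invariant subvariety in the sense of Section \ref{SS:Fake}.

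Applying \cite[Theorem 1.5]{MirWri2} then yields that $\cM$ is a connected component of a stratum of Abelian differentials, contradicting the assumption on $\cM$. This contradiction forces the existence of a surface in $\cM$ containing a pair of twin cylinders. The entire argument is just an unpacking of definitions followed by a single invocation of Mirzakhani-Wright, so there is no substantive obstacle; the subtlety has been absorbed into the cited classification theorem.
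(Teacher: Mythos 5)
Your proof is correct and follows the same route as the paper: by the definition of geminal, if no cylinder has a twin then every cylinder is free, and then \cite[Theorem 1.5]{MirWri2} forces $\cM$ to be a component of a stratum of Abelian differentials, which is the desired contradiction.
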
 
Note that Abelian doubles can never be components of strata, but quadratic doubles can be hyperelliptic connected components of strata of Abelian differentials.
\begin{proof}
Otherwise every cylinder on every surface would be free, and \cite[Theorem 1.5]{MirWri2} would give that the  subvariety is in fact a component of a stratum of Abelian differentials. 
\end{proof}

Suppose first that $\cM$ is described by the third possibility above. If the surfaces in $\cM$ do not have free marked points, then $\cM$ is a quadratic double, as desired. Suppose therefore, in order to derive a contradiction, that the surfaces in $\cM$ contain a free marked point. Since $\For(\cM)$ is codimension 1, $\cM$ is not a stratum, and hence Lemma \ref{L:NotAllFree} gives the existence of a pair of twin cylinders on some $(X,\omega)\in \cM$. Moving the free marked point into such a pair on $(X,\omega)$ alters the height of one cylinder, but not its twin. (Note that a free marked point cannot lie on the boundary of a pair of twin cylinders.) This contradicts the assumption that $\cM$ is geminal. 


It remains to consider the second case listed above, i.e. where $\For(\cM) = \For(\cH)$. If there is no free marked point, then $\cM$ is a quadratic double, as desired. So suppose in order to derive a contradiction that $p$ is a free marked point on $(X, \omega)$. Let $\For'(X, \omega)$ denote $(X, \omega)$ with $p$ forgotten and let $\For'(\cM)$ denote its orbit closure. 

The remaining one or two marked points show that $\For'(\cM)$ is not a stratum. So, since $\For'(\cM)$ is a quadratic double, by Lemma \ref{L:NotAllFree} it contains a surface with a pair of twin cylinders. As before, moving $p$ into one of these cylinders alters its height, but not that of its twin, contradicting the assumption that $\cM$ is geminal. 

\bold{Case 2: At least one of $\MOne$ and $\MTwo$ is an Abelian double, and neither is a component of a stratum.} Without loss of generality, assume $\MOne$ is an Abelian double.  

If $\MTwo$ is also an Abelian double, $\cM$ is a full locus of double covers by Lemma \ref{L:IntroFull}, since Assumption CP holds for all Abelian doubles. To show that $\cM$ is an Abelian double we must show that if $f: (X, \omega) \ra (Y, \eta)$ is the double cover, then for any marked point $p$ on $(Y, \eta)$, both preimages of $p$ under $f$ are marked on $(X, \omega)$. Suppose in order to find a contradiction that, for some marked point $p$, one preimage of $p$ is marked but not the other.  

By Lemma \ref{L:NotAllFree}, we may assume that there is cylinder $C$ on $(Y, \eta)$ that has two preimages on $(X, \omega)$. Moving the marked preimage of $p$ into a preimage of $C$ alters the height of one cylinder, but not its twin. This  contradicts the assumption that $\cM$ is geminal.

Therefore, suppose that $\MTwo$ is a quadratic double. Since $\ColOneTwoX$ is connected, Apisa-Wright \cite[Theorem 10.1]{ApisaWrightDiamonds}, implies that $\cM$ is an Abelian double, a quadratic double, or that $\cM$ is a full locus of covers of a codimension one locus $\cN$ in a component of a stratum of Abelian differentials $\cH$, where $\For(\cN) = \For(\cH)$ and $\For(\cH)$ is a hyperelliptic component of rank at least two. It suffices to rule out the final case. Suppose, therefore, in order to derive a contradiction, that $\cM$ is not a quadratic double and that it is described by the final case. We have the following,  
\begin{itemize} 
\item surfaces in $\cM$ have a pair of marked points $Q$ that project, under the cover, to a pair of points $Q'$ exchanged by the hyperelliptic involution, by  \cite[Theorem 10.60 (3b-3) and Lemma 10.62]{ApisaWrightDiamonds},
\item the set $P'$ of remaining marked points on surfaces in $\cN$  is either empty or consists of a single free point, and 
\item if $P'$ is empty, then $\cM$ is contained in a quadratic double of a stratum $\cQ$ of surfaces with four poles\footnote{There is a possibility in \cite[Theorem 10.60 (3b-1)]{ApisaWrightDiamonds} with only two poles, but it is not relevant here, since in that case there are no marked points and, when $\ColOneTwoX$ is connected, there must be marked points on $(X, \omega)$.}, and more precisely $\For(\cM)$ is the set of holonomy double covers of surfaces in a codimension one hyperelliptic locus in $\For(\cQ)$, by  \cite[Theorem 10.60 (3b-1)]{ApisaWrightDiamonds}. 
\end{itemize}

We begin with the following, which will allow us to make use of the last point above. 


\begin{sublem}\label{SL:Pempty}
  $P'$ is empty.
\end{sublem}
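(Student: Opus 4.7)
The plan is to mirror the argument at the end of Case 1. Suppose for contradiction that $P = \{p\}$ is nonempty.

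First I would verify that $\cM$ is not a component of a stratum of Abelian differentials. In the third possibility under consideration, $\cM$ is a full locus of covers of $\cN$, which is a proper codimension one locus in $\cH$; consequently $\cM$ cannot be a stratum component. By Lemma~\ref{L:NotAllFree}, there then exists $(X,\omega) \in \cM$ carrying a pair of twin cylinders $C, C'$, with $h(C) = h(C')$.

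Next, using the freeness of $p$, I would move $p$ to a generic interior point of $C$, producing a surface $(X',\omega') \in \cM$. On $(X',\omega')$, the cylinder $C'$ is unchanged, while $C$ is split by $p$ into two cylinders $C_1, C_2$ of distinct heights summing to $h(C) = h(C')$; in particular neither $C_i$ is isometric to $C'$. Moreover, no other cylinder parallel to $C'$ can be a new twin of $C'$ on $(X',\omega')$: by Lemma~\ref{L:StableTwins}, twinning is characterized by conditions on core-curve cohomology classes in the tangent space of $\cM$, and these conditions are preserved in period coordinates under the rel deformation moving $p$. Thus any such twin would already be a twin on $(X,\omega)$, contradicting the uniqueness of $C$ as the twin of $C'$. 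So $C'$ has no twin on $(X',\omega')$, and by geminality it must be free there. Invoking Lemma~\ref{L:StableTwins} once more, the freeness of $C'$ persists back to $(X,\omega)$, contradicting the fact that $C$ and $C'$ are twins.

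The only subtle point, handled uniformly by Lemma~\ref{L:StableTwins}, is the stability of the twin/free dichotomy under the rel deformation moving $p$: the core-curve cohomology classes and the tangent space to $\cM$ are all unchanged in period coordinates along this deformation, so the characterizing conditions for twinning and freeness transfer between $(X,\omega)$ and $(X',\omega')$, closing the argument.
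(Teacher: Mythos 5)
Your proof is correct, and it takes a genuinely different route from what the paper does for this particular sublemma, though it is a direct adaptation of an argument the paper uses elsewhere in the same proof. The paper's proof of Sublemma~\ref{SL:Pempty} works in the quotient $\cF((X,\omega)/T_0)$: using that every cylinder there is fixed by the hyperelliptic involution, it places both points of $Q'$ in one cylinder, moves $\pi(P)$ into the same cylinder, and reads the failure of geminality off the preimage of that cylinder in $(X,\omega)$, as illustrated in Figure~\ref{F:PQnotGeminal}. Your argument instead stays entirely upstairs in $\cM$: Lemma~\ref{L:NotAllFree} produces a surface with twins $C, C'$, and moving the free point $p$ into the interior of $C$ splits it into two cylinders of strictly smaller height, neither of which can be isometric to $C'$; the stability of the cohomological twinning criterion under the rel deformation (Lemma~\ref{L:StableTwins}, with the tangent space and $\gamma'$ unchanged in period coordinates) then rules out any other twin, so $C'$ would have to be free, contradicting $\gamma' \notin T\cM$. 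This is exactly the mechanism the paper already deploys in Case~1 and in the first part of Case~2 of the proof of Proposition~\ref{P:DiamondsAreCool}, so your proof is more uniform across the cases, whereas the paper's choice for this sublemma highlights the concrete quotient picture. One minor simplification: the detour ``$C'$ is free on $(X',\omega')$, transport freeness back to $(X,\omega)$'' is unnecessary — once $C'$ has no twin on $(X',\omega')$, geminality forces $\gamma' \in T\cM$ there, which already contradicts $\gamma' \notin T\cM$ on $(X,\omega)$, since both the tangent space and the class $\gamma'$ are unchanged under the rel deformation.
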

\begin{proof}
Suppose not. Let $T_0$ denote the translation involution on $(X, \omega)$ whose quotient is a surface in $\cN$. Since every cylinder on $\cF((X,\omega)/T_0)$ is fixed by the hyperelliptic involution, we can assume both points of $Q'$ are in the same cylinder of $\cF((X,\omega)/T_0)$. Move $P$ into this cylinder, as in Figure \ref{F:PQnotGeminal}. Considering the different possibilities for the preimage of this cylinder on $(X,\omega)$ shows that $\cM$ is not geminal. 
\begin{figure}[h]
\includegraphics[width=0.3\linewidth]{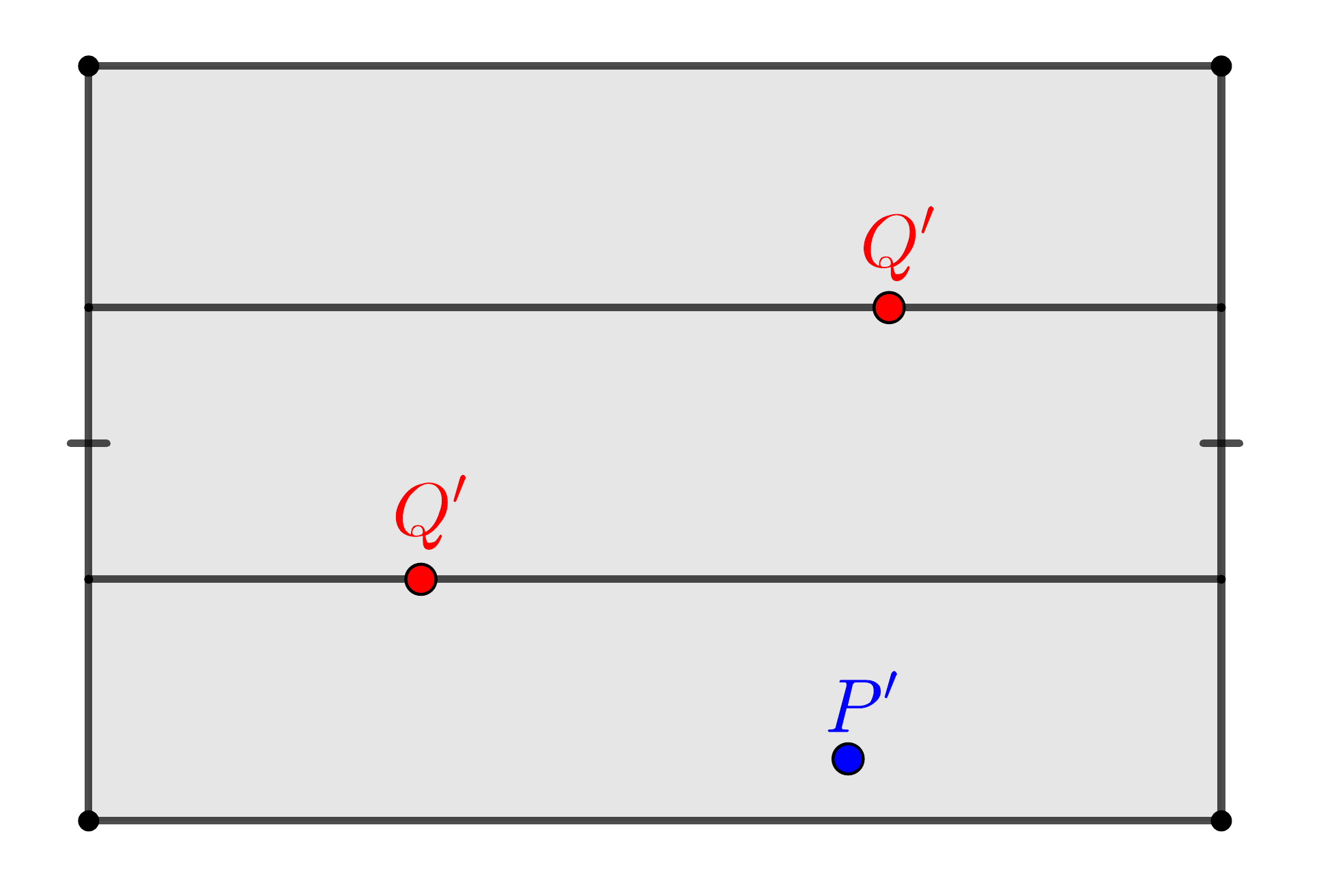}
\caption{The proof of Sublemma \ref{SL:Pempty}.}
\label{F:PQnotGeminal}
\end{figure}  
\end{proof}

Therefore, $\cM$ is contained in a quadratic double of a component $\cQ$ of a stratum with at least four poles; moreover, $\For(\cM)$ corresponds to a codimension one hyperelliptic locus in $\For(\cQ)$. 

\begin{lem}\label{L:FixedTwins}
Let $\cQ\neq \cQ(-1^4)$ be a component of a stratum of quadratic differentials with at least four poles and no marked points, and let $\cL$ be a nonempty component of the hyperelliptic locus in $\cQ$. Then $\cL$ contains a surface whose holonomy double cover, considered as a surface without marked points, has a pair of twin cylinders each of which is fixed by the holonomy involution.
\end{lem}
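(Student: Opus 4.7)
\textbf{Proof plan for Lemma \ref{L:FixedTwins}.}

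The plan is to find an explicit horizontally periodic surface $(Y,q) \in \cL$ on which there exists a horizontal cylinder $C$ such that (a) $\iota(C) \neq C$, where $\iota$ denotes the hyperelliptic involution, and (b) $C$ has an odd number of odd-order singularities on one of its two boundary components. Condition (b) guarantees, via the standard computation of the holonomy around the core curve (namely, $\rho(\gamma_C) = \sum_{s \text{ on one side}} \rho(s) \in \bZ/2$, where $\rho(s)=1$ precisely when $s$ has odd order), that $C$ lifts to a single cylinder $\widetilde C$ on the holonomy double cover $(X,\omega)$ which is fixed by $\tau$. Then $\widetilde C$ and $\widetilde{C'} := \pi^{-1}(\iota(C))$ are two distinct cylinders on $(X,\omega)$, each fixed by $\tau$, isometric and parallel, and they are forced to be deformed together because the hyperelliptic structure on $(Y,q)$ exchanges $C$ and $\iota(C)$. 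This makes them twins in the geminal subvariety generated by $(X,\omega)$.

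First, I would use that $\cL$ is nonempty and of positive dimension (since $\cQ \neq \cQ(-1^4)$, the hyperelliptic locus cannot be zero-dimensional, as the pillowcase is essentially the only rigid hyperelliptic configuration) to pass, via the $\mathrm{GL}(2,\bR)$-action and Minsky--Smillie--Weiss density results, to a horizontally periodic $(Y,q) \in \cL$. By further cylinder deformation inside $\cL$, I would arrange that the horizontal decomposition has at least two distinct cylinders --- generic horizontal decompositions in positive-dimensional invariant subvarieties have this property.

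Second, I would use the structure of $\iota$ on the horizontal cylinder decomposition. The involution $\iota$ acts as an involution on the (finite) set of horizontal cylinders. Pairs of cylinders not fixed by $\iota$ are exactly of the form $\{C, \iota(C)\}$. I would show that unless $\cQ = \cQ(-1^4)$, one can exhibit (possibly after a controlled modification inside $\cL$, such as splitting zeros or swapping adjacent cylinders via twist/shear) a horizontal cylinder $C$ with $\iota(C) \neq C$. The point is that $\iota$-fixed cylinders are rigid in their combinatorial structure (their core curves pass through Weierstrass points or are permuted coherently), and if \emph{every} horizontal cylinder were $\iota$-fixed, then the quotient surface $(Y,q)/\iota$ would have to have an extremely restricted cylinder decomposition, ultimately forcing $\cQ$ to be the pillowcase stratum $\cQ(-1^4)$.

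Third, given the pair $\{C, \iota(C)\}$, I would ensure by a further deformation inside $\cL$ (leveraging the freedom allowed by $\cQ \neq \cQ(-1^4)$, which supplies either extra poles or nonzero-order singularities beyond the four pillowcase poles) that the singularities on one side of $C$ have odd total parity. Concretely, the boundaries of $C$ contain singularities from $\cQ$, and with at least five singularities (or zeros with odd order) in the stratum, one can always redistribute the cylinder decomposition so that the odd-order singularity count on one side of $C$ is odd.

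The main obstacle will be the second step: ruling out the possibility that all horizontal cylinders are $\iota$-invariant while only assuming $\cQ \neq \cQ(-1^4)$. This is essentially a combinatorial/topological rigidity statement about hyperelliptic flat surfaces, and handling it uniformly across the possible strata (different genera of $Y$, different orders of zeros, different numbers of poles $\geq 4$) will require a careful case analysis or a slick combinatorial argument exploiting the involution's action on cylinder graphs.
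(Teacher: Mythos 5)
Your plan takes a genuinely different route from the paper, and it has two unresolved gaps that the paper's argument avoids entirely. The paper's proof works in the quotient $(Q_0,q_0)=(Q,q)/\iota$ rather than on $(Q,q)$ itself: since a simple pole can never be a fixed point of $\iota$ (at a fixed point $\iota^*q=q$ forces the local order to be even), the $\geq 4$ poles of $q$ descend to $\geq 2$ poles of $q_0$ at unramified points $z_1,z_2$; after a deformation there is a saddle connection joining $z_1$ to $z_2$, bounding an envelope $C_0$; because the core curve of $C_0$ cuts off a disk containing only the unramified points $z_1,z_2$ and no branch points, it has trivial monodromy and hence $C_0$ lifts to two distinct envelopes $E_1,E_2$ on $(Q,q)$ exchanged by $\iota$. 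Each $E_i$, being an envelope bounded by a saddle connection between two simple poles, lifts to a single $J$-fixed cylinder on the holonomy double cover via the ``unfolding'' of that pole-to-pole boundary. This is a different lifting mechanism than yours: the core curves of the $E_i$ have \emph{even} holonomy monodromy, yet they still lift to single $J$-fixed cylinders. In particular your condition (b) (odd number of odd-order singularities on one boundary) is both unnecessary and more restrictive than what is needed.

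The gap in your plan is the one you yourself flag: step (a), producing a horizontal cylinder $C$ with $\iota(C)\neq C$, is exactly what the quotient construction gives for free, and your sketch for ruling out the all-$\iota$-invariant case (``a careful case analysis or a slick combinatorial argument'') is left entirely open. Step (b), arranging the odd-parity condition, is similarly asserted without an argument (``one can always redistribute''); there is no reason it can be imposed simultaneously with $\iota(C)\neq C$ in general. Notice also that your plan never actually invokes the hypothesis that $\cQ$ has at least four poles, whereas the paper's argument uses it in an essential way to guarantee two unramified poles on $(Q_0,q_0)$; a proof that does not use a stated hypothesis should prompt you to revisit whether your claimed steps really go through. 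Your ``twins'' endgame (that $\widetilde C$ and $\pi^{-1}(\iota(C))$ are forced to move together) is fine, but the construction feeding into it is not established. The fix is to pass to the quotient and use envelopes, as above.
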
 

\begin{proof}
Let $(Q, q)\in \cQ$ be hyperelliptic, and let $(Q_0, q_0)$ be its quotient by the hyperelliptic involution. 

Note that a pole cannot be fixed by the hyperelliptic involution. Hence, $(Q_0, q_0)$ has a pair of poles $z_1, z_2$ over which the map $(Q, q) \to (Q_0, q_0)$ is not ramified. Deforming $(Q_0, q_0)$, we can assume without loss of generality that there is a saddle connection joining $z_1$ to $z_2$. (For example, if the two poles in question are moved close together, while the location on $\bP^1$ of the remaining poles and zeros are fixed, there will be such a saddle connection.)
There is a cylinder  $C_0$ with this saddle connection in its boundary; in \cite{ApisaWrightDiamonds}  we call this type of cylinder an envelope.

Since $(Q,q)\notin \cQ(-1^4)$, the envelope $C_0$ lifts to a pair of envelopes on $\cF(Q,q)$. The preimages of these two cylinders on the holonomy double cover give the desired cylinders.
\end{proof}

Let $C_1$ and $C_2$ be a pair of twins given by Lemma \ref{L:FixedTwins}. Let $J$ denote the holonomy involution on $(X, \omega)$. Since the marked points on $(X, \omega)$ are $J$-invariant we can move them into $C_1$, as in Figure \ref{F:FixedTwins}. This creates a cylinder that is not free and that does not have a twin,  contradicting the fact that $\cM$ is geminal.
\begin{figure}[h]
\includegraphics[width=0.6\linewidth]{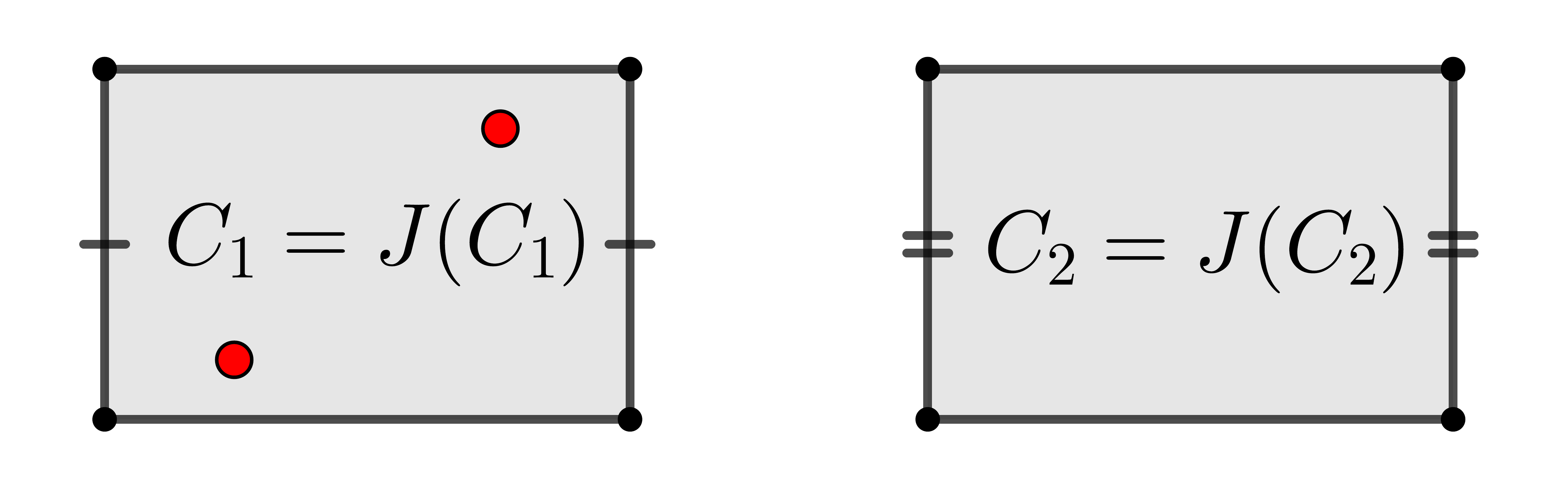}
\caption{The conclusion of Case 2 in the proof of Proposition \ref{P:DiamondsAreCool}.}
\label{F:FixedTwins}
\end{figure}

\bold{Case 3: Both $\MOne$ and $\MTwo$ are quadratic doubles.}  By Apisa-Wright \cite[Theorem 7.1]{ApisaWrightDiamonds}, since $\ColOneTwoX$ is connected, $\cM$ is a quadratic double. 
\end{proof}

\section{Geminal  subvarieties of rank one}\label{S:GeminalRank1}

This section classifies geminal invariant subvarieties of rank one. The main result is Proposition \ref{P:geminalrk1}. 

By Lemma \ref{L:GeminalField}, any geminal invariant subvariety $\cM$ has $\bk(\cM) = \mathbb{Q}$. By Lemma \ref{L:R1Arithmetic}, it follows that every rank one geminal invariant subvariety is a locus of torus covers. For this reason it will be useful to begin our investigation of rank one geminal invariant subvarieties by analyzing the geminal invariant subvarieties in strata of flat tori with marked points. 

The simplest examples of geminal subvarieties in strata of flat tori are given by the strata themselves. The next simplest are Abelian and quadratic doubles. A slightly less obvious example is the following.

\begin{defn}\label{D:TxT}
For any positive integer $n$, an invariant subvariety $\cM$ in $\cH(0^n)$ is called a \emph{$T \times T$ locus} if it is a full locus of regular covers of a stratum (necessarily $\cH(0^m)$ for some $m$) with deck group $\bZ/2\bZ \times \bZ / 2\bZ$, where at least 3 of the 4 points in the fiber over each marked point are marked. 
\end{defn}

\begin{rem}\label{R:TxTCP}
The condition on marked points is equivalent to the $\bZ/2\bZ \times \bZ / 2\bZ$ covers satisfying Assumption CP, which was defined in Definition \ref{D:CP}.  Indeed, if there are two unmarked points $p_1, p_2$ in the fiber of a marked point, then there is cylinder whose core curve contains $p_1$, $p_2$ which shows that the cover does not satisfy Assumption CP; see Figure \ref{F:NotCPp1p2}. 
\begin{figure}[h!]
\includegraphics[width=.4\linewidth]{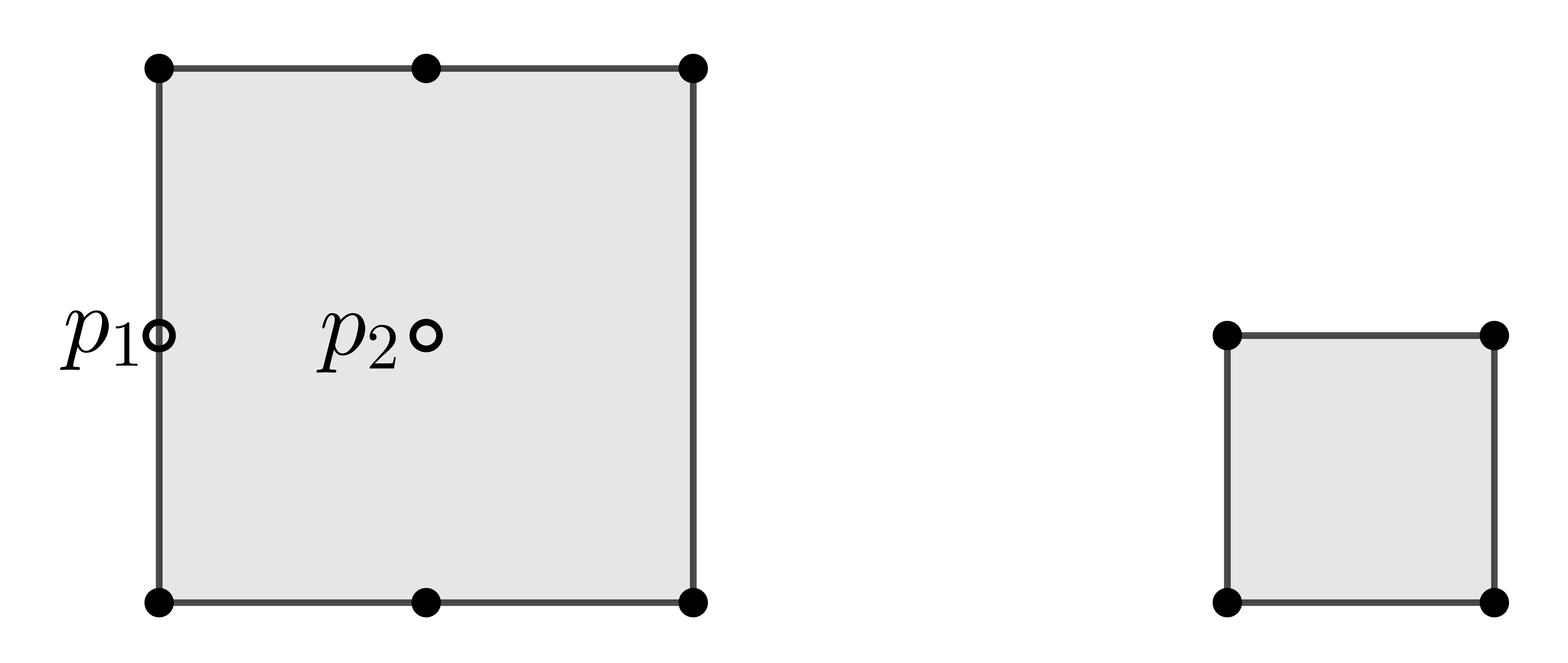}
\caption{A cover not satisfying Assumption CP.}
\label{F:NotCPp1p2}
\end{figure}
Conversely, if the marked point condition is satisfied, then in every cylinder direction on the cover there are two isometric cylinders that are exchanged by an element of the deck group. 
\end{rem}

The following result says that every rank one geminal subvariety is a full locus of covers of one of the examples just discussed. Recall that optimal maps are defined in Definitions \ref{D:GoodAndOptimal} and \ref{D:MGoodAndOptimal}, and $h$-geminal is defined in Definition \ref{D:hGeminal}. Subequivalence classes and generic cylinders are defined in Definition \ref{D:SE}, and here subequivalence classes consist of either a free cylinder or a pair of twins. 

\begin{prop}\label{P:geminalrk1}
Let $\cM$ be a geminal orbit closure of rank 1. 
\begin{enumerate}
\item\label{I:geminalrk1:WhatIsM} $\cM$ is one of the following:
\begin{enumerate}
\item\label{I:geminalrk1:ComponentOrDouble} A connected component of a stratum of Abelian differentials or an Abelian or quadratic double. If $\cM$ is not $h$-geminal then it is described by this case. 
\item\label{I:geminalrk1:CoverOfQuadDouble} A  full locus of covers of a quadratic double, where the quadratic double is contained in $\cH(0^n)$ for some $n$. 
\item\label{I:geminalrk1:CoverOfTxT} A full locus of covers of a $T \times T$ locus. 
\end{enumerate}
\item\label{I:geminalrk1:PiOpt}  Any surface in $\cM$ has an $\cM$-optimal map $\pi_{opt}$, and:
\begin{enumerate}
\item\label{I:geminalrk1:ComponentOrDouble:PiOpt} In Case \eqref{I:geminalrk1:ComponentOrDouble}, $\pi_{opt}$ is the identity. 
\item\label{I:geminalrk1:CoverOfQuadDouble:PiOpt} In  Case \eqref{I:geminalrk1:CoverOfQuadDouble}, $\pi_{opt}$ is the covering map to surfaces in the quadratic double, and is moreover the minimal degree map to a torus.
\item\label{I:geminalrk1:CoverOfTxT:PiOpt} In Case \eqref{I:geminalrk1:CoverOfTxT}, $\pi_{opt}$ is the covering map to surfaces in the $T\times T$ locus, and is moreover the minimal degree map to a torus.
\end{enumerate}
\item\label{I:geminalrk1:TwoCylinders} If the degree of the optimal map is greater than one, then each subequivalence class contains exactly two cylinders. 
\item\label{I:geminalrk1:PiOptDegeneratesToPiOpt} If $\bfC$ is a subequivalence class of $\cM$-generic cylinders on a surface $(X,\omega)$ in $\cM$, then $\Col_{\bfC}(\pi_{opt})$ is the $\cM_{\bfC}$-optimal map for $\Col_{\bfC}(X, \omega)$. 
\end{enumerate}
\end{prop}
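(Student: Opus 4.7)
By Lemma~\ref{L:GeminalField}, $\bk(\cM)=\bQ$, so Lemma~\ref{L:R1Arithmetic} yields that $\cM$ is a locus of torus covers. Let $\pi_{abs}:(X,\omega)\to(T,\omega_T)$ be the minimal-degree torus cover; this extends to an $\cM$-generic map on all of $\cM$, with image an invariant subvariety $\cN\subseteq \cH(0^n)$.

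The central step is to classify rank one geminal invariant subvarieties of $\cH(0^n)$. By Lemma~\ref{L:R1Deformations}, the tangent space of any such $\cN$ is its rel kernel plus $GL(2,\bR)$-directions, so $\cN$ is cut out by linear relations on the relative positions of the marked points of $T$. The geminal condition forces any twin cylinder pair on a surface in $\cN$ to arise from a symmetry of the marked-point configuration; analyzing the possible symmetry groups and using that twin subequivalence classes have size exactly two, I would conclude the only possibilities are the stratum itself (no nontrivial symmetry), a quadratic double (a single $\bZ/2$ involution), or a $T\times T$ locus (two commuting involutions). Larger or non-involutive symmetries would produce subequivalence classes of size greater than two, violating geminality.

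Next I would analyze the cover $\pi_{abs}:\cM\to\cN$ in each of the three cases for $\cN$. If $\pi_{abs}$ has degree one then $\cM=\cN$: if $\cN$ is $\cH(0^n)$ or a quadratic double this puts $\cM$ in case~(1)(a), and if $\cN$ is a $T\times T$ locus then $\cM$ falls under case~(1)(c) as a trivial self-cover. If the degree of $\pi_{abs}$ exceeds one, the covering group must be compatible with the geminal structure of $\cN$; this forces either a single $\bZ/2$ action (giving an Abelian or quadratic double, case~(1)(a)) or a more general covering extending the symmetries of $\cN$ (cases~(1)(b) and (1)(c)). In cases (b) and (c), $\pi_{opt}=\pi_{abs}$ because $\pi_{abs}$ is good by construction and any further identification would merge distinct twin subequivalence classes; in case (a), no nontrivial $\cM$-good map exists so $\pi_{opt}$ is the identity. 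Part (3) is immediate from geminality once $\pi_{opt}$ has degree greater than one, and part (4) follows from Lemma~\ref{L:DegeneratingOptimalMap}: $\Col_{\bfC}(\pi_{opt})$ is $\cM_{\bfC}$-good, and since $\cM_{\bfC}$ remains rank one and geminal by Lemma~\ref{L:GeminalBoundary}, applying the classification to $\cM_{\bfC}$ and using the dimension-drop hypothesis forces $\Col_{\bfC}(\pi_{opt})$ to be $\cM_{\bfC}$-optimal rather than a proper factor.

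The main obstacle I anticipate is the classification inside $\cH(0^n)$: a careful combinatorial argument is needed ruling out symmetry groups beyond the trivial group, $\bZ/2$, and $\bZ/2\times\bZ/2$ on marked-point configurations of the torus. Secondarily, distinguishing case (a) (identity optimal) from cases (b)/(c) ($\pi_{abs}$ optimal) requires tracking when the covering group of $\pi_{abs}$ genuinely extends the symmetries of $\cN$ rather than merely realizing a single $\bZ/2$ quotient from above.
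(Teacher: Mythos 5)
Your plan implicitly assumes that the minimal torus cover $\pi_{abs}$ is a good map on $\cM$ (in the sense of Definition \ref{D:GoodAndOptimal}), so that the image $\cN\subset\cH(0^n)$ inherits the geminal property and $\cM$ can be reconstructed from $\cN$ by analyzing the deck group. This is true when $\cM$ is $h$-geminal — i.e. when every twin pair has homologous core curves — because then parallel cylinders on any surface in $\cM$ are mutually homologous and the surface is built by stacking cylinders, which forces $\pi_{abs}$ to be good (this is the content of Lemma \ref{L:HGeminalPrelude}/Lemma \ref{L:HGeminalOptimal}). But it fails when $\cM$ has a pair of isometric twins with non-homologous core curves. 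In that case one finds a vertical cylinder $V_1$ intersecting a horizontal cylinder $C_1$ exactly once, and Lemma \ref{L:IsGood} then forces the identity to be optimal, so $\pi_{abs}$ (which has degree $>1$ unless $\cM\subset\cH(0^n)$) is not good. Your step ``if the degree of $\pi_{abs}$ exceeds one, the covering group must be compatible with the geminal structure of $\cN$'' is precisely what breaks: in the non-$h$-geminal case the geminal structure does not push down along $\pi_{abs}$, and $\cM$ turns out to be a quadratic (or Abelian) double of a higher-genus stratum of quadratic differentials such as $\cQ(2,-1^2,0^k)$, not a full locus of covers of anything inside $\cH(0^n)$. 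The paper therefore splits the proof into an $h$-geminal branch (Lemma \ref{L:HGeminalOptimal}, where your outline essentially works) and a non-$h$-geminal branch (Lemma \ref{L:nhGeminal}), and the latter requires a genuinely different argument: one shows the surface is covered by two non-homologous twins $C_1,C_2$, and then runs a combinatorial analysis of ``big'' cylinders (crossing both $C_i$) and ``little'' cylinders via Sublemmas \ref{SL:BigIntersection}--\ref{SL:CyclicOrder2}, concluding $\cM$ is a double. Your proposal does not contain this argument and cannot be patched within your framework.

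Two secondary issues. First, the central step you flag as a concern — the classification of geminal subvarieties of $\cH(0^n)$ (Proposition \ref{P:0n}) — is indeed the hardest ingredient, and the paper proves it by a fairly delicate induction on rel using the cyclic order of subequivalence classes along a transverse leaf; your heuristic that ``larger or non-involutive symmetries would produce subequivalence classes of size greater than two'' would need to become that induction to be rigorous, and your list of outcomes there is missing the Abelian double possibility. Second, your argument for part \eqref{I:geminalrk1:PiOptDegeneratesToPiOpt}, which invokes ``applying the classification to $\cM_{\bfC}$,'' is circular as stated unless you explicitly set up the entire proposition as an induction (on rel, say); in the paper this is handled via Lemma \ref{L:HGeminalPrelude} in the $h$-geminal case and via explicit induction inside Lemma \ref{L:nhGeminal} in the other case.
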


%
%

\begin{rem}
The Eierlegende-Wollmilchsau is an example of a surface described by Case \eqref{I:geminalrk1:CoverOfTxT} where the degree of the optimal map is greater than one. The image of this surface under the optimal map is a torus $E$ with four marked points, all of which differ by two-torsion. Let $T_1$ and $T_2$ be two distinct maps that are not the identity and that preserve the marked points on $E$. Let $P$ be any collection of marked points with the property that if $p$ belongs to $P$ then at least two points in $\{T_1(p), T_2(p), T_1T_2(p)\}$ are also marked. It is easy to see that marking all the preimages of $P$ on the Eierlegende-Wollmilchsau produces a geminal orbit closure with arbitrarily large rel and whose image under the optimal map is a $T \times T$ locus that is not a quadratic double.
\end{rem}

\begin{rem}
Regarding \eqref{I:geminalrk1:CoverOfQuadDouble:PiOpt} and \eqref{I:geminalrk1:CoverOfTxT:PiOpt}, one may wish to keep in mind that some orbit closures may be full loci of covers in more than one way. In \eqref{I:geminalrk1:CoverOfQuadDouble:PiOpt} and \eqref{I:geminalrk1:CoverOfTxT:PiOpt}, $\pi_{opt}$ is the minimal degree map to a torus and this map   realizes $\cM$ as a locus of covers in the indicated way. We do not  claim however that there is no other way to realize $\cM$ as a locus of covers in the indicated way.
\end{rem}

A corollary of Proposition \ref{P:geminalrk1}, which will actually be an ingredient in its proof, is the following.

\begin{prop}\label{P:0n}
If $\cM$ is a geminal subvariety of $\cH(0^n)$ then $\cM$ is one of the following: a stratum, an Abelian double, a quadratic double, or a $T \times T$ locus.
\end{prop}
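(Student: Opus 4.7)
The plan is to directly classify $\cM$ using the rank-one structure of $\cH(0^n)$ and the geminal condition on cylinders, translating the cylinder twin pairings into a global symmetry of each $(X,\omega)\in\cM$.

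First, Lemma~\ref{L:GeminalField} gives $\bk(\cM)=\bQ$, so at any $(X,\omega)\in\cM$ the tangent space is a $\bC$-subspace of $H^1(X,\Sigma;\bC)$ cut out by $\bQ$-linear equations. Combined with rank one (the projection to $H^1(X;\bC)=\bC^2$ is surjective), this pins the constraints defining $\cM$ down to $\bQ$-linear relations among the relative and absolute periods; in particular, the marked points satisfy fixed $\bQ$-linear relations modulo the period lattice of the torus.

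Next, in a generic direction the $n$ marked points lie on $n$ distinct horizontal leaves, producing exactly $n$ cylinders, and the geminal condition forces each one to be free or to have a unique twin. Since this must hold in every direction, I would use Lemma~\ref{L:StableTwins} to track persistence of twin pairings and Lemma~\ref{L:GeminalTwinAdjacency} to match their adjacency across directions, and argue that the pairing is induced by a finite group $\Gamma$ of symmetries of $(X,\omega)$ whose generators are either translations of $X$ by $2$-torsion points or the involution $z\mapsto-z$ that negates $\omega$. The pair structure of twins forces $\Gamma$ to act on the set of horizontal cylinders with orbits of size at most $2$, which restricts $\Gamma$ to be trivial, $\bZ/2$ (generated by a translation or an involution), or $(\bZ/2)^2$.

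Matching these four possibilities to the four cases in the conclusion then gives the classification: the trivial group yields $\cM=\cH(0^n)$, a $\bZ/2$ translation yields an Abelian double, the involution $z\mapsto-z$ yields a quadratic double (with $(X,\omega)$ the holonomy double cover of a genus zero quadratic differential with four poles and some marked zeros), and $(\bZ/2)^2$ yields a $T\times T$ locus, with the marked-point hypothesis of Definition~\ref{D:TxT} equivalent to Assumption~CP via Remark~\ref{R:TxTCP}. The main obstacle is the deduction of a global symmetry from direction-by-direction pairings: one must show that the pairings are consistent across all rational directions and glue to an actual automorphism of $(X,\omega)$, using $\bQ$-definability together with the constraint that twins persist under small deformations in $\cM$. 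Corollary~\ref{C:DisconnectR1} supplies the needed structure in the degenerate case when collapsing a twin pair disconnects the surface, reducing that case to the connected analysis.
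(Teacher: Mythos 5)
Your high-level idea---interpreting the twin pairing as arising from a finite group $\Gamma$ of affine symmetries (two-torsion translations and $z\mapsto -z$) and then matching $\Gamma$ to the four cases---is a reasonable conceptual summary of what the answer looks like, but the proposal leaves a genuine gap exactly where the paper does almost all of its work. You acknowledge that ``the main obstacle is the deduction of a global symmetry from direction-by-direction pairings,'' and then propose to handle it ``using $\bQ$-definability together with the constraint that twins persist under small deformations.'' Those two ingredients do not by themselves produce a map $T\colon X\to X$. A priori, the cylinder $C'$ paired with $C$ in one periodic direction need not be the image of $C$ under any affine automorphism, and the pairings in two different periodic directions need not come from the same map. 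Lemma~\ref{L:StableTwins} gives a cohomological characterization of twins, and Lemma~\ref{L:GeminalTwinAdjacency} constrains adjacency, but these are local/combinatorial statements and do not manufacture an automorphism; neither does Corollary~\ref{C:DisconnectR1} except in the disconnected-degeneration sub-case, and even there the diagonal/anti-diagonal dichotomy is itself something the paper proves by a careful picture, not by abstract $\bQ$-definability.

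The paper's actual route is quite different in mechanism: it runs an induction on the rel $k-1$ of $\cM$, takes a surface that is horizontally and vertically periodic with $k$ horizontal subequivalence classes, and uses Lemma~\ref{L:GeminalTwinAdjacency} to reduce the cyclic order along a vertical leaf to five explicit patterns. The first pattern gives a stratum by \cite[Lemma 5.4]{MirWri2}; the pattern $1,\dots,k,1,\dots,k$ is handled by Lemma~\ref{L:TorTxT} (which only at that point exhibits the translation involutions $T_1,T_2$ concretely, and then runs a delicate marked-point argument via Sublemmas~\ref{SL:TxTConstraint} and \ref{SL:TxTConstraint2}); and the remaining three patterns are shown to give a quadratic double by a diamond argument (Proposition~\ref{P:DiamondsAreCool}) for $k>2$ and an explicit $k=2$ case analysis. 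So the symmetries are read off from a concrete normal form for the surface obtained after the combinatorial reduction, not deduced abstractly. Your sketch skips this reduction entirely; without it, the claim that $\Gamma$ exists, and the further claim that $\Gamma$ is one of the four listed groups (rather than, say, some larger group with an apparently order-$\le 2$ action on cylinders, or with inconsistent marked-point constraints), is unsubstantiated. As written, this is a gap rather than an alternative proof.
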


There is a very limited amount of overlap between different possibilities in Proposition \ref{P:0n}.

\begin{lem}\label{L:Types}
The only geminal subvarieties of $\cH(0^n)$ that are described simultaneously by two of the possibilities listed in Proposition \ref{P:0n} are the following:
\begin{enumerate}
\item The quadratic double of $\cQ(-1^4)$ with one preimage of a pole marked is the stratum $\cH(0)$.
\item The quadratic double of $\cQ(-1^4)$ with two preimages of poles marked is an Abelian double of $\cH(0)$.
\item The quadratic double of $\cQ(-1^4)$ with three or four preimages of poles marked is a $T \times T$ locus in $\cH(0^3)$ or $\cH(0^4)$. 
\item The quadratic double of $\cQ(-1^4, 0)$ with no preimages of poles marked is the stratum $\cH(0,0)$. 
\end{enumerate}
\end{lem}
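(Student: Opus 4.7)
The plan is to systematically compare each of the six pairs of types from Proposition \ref{P:0n} using two tools: dimension formulas for each type inside the ambient $\cH(0^n)$ it inhabits, and an explicit description of the marked-torus structure of each surface in the locus. The relevant dimensions are $\dim\cH(0^n)=n+1$; an Abelian double of $\cH(0^j)$ sits in $\cH(0^{2j})$ with dimension $j+1$, since Riemann--Hurwitz forces the degree-two cover between tori to be unbranched and the covering condition requires every preimage of a marked point to be marked; a $T\times T$ locus over $\cH(0^{m'})$ has dimension $m'+1$ and sits in $\cH(0^n)$ for some $3m'\leq n \leq 4m'$; and by Riemann--Hurwitz the only quadratic doubles whose cover is a torus have base $\cQ(-1^4,0^k)$ for some $k\geq 0$, giving a locus in $\cH(0^{2k+m})$ of dimension $k+2$, where $m\in\{0,1,2,3,4\}$ counts the marked pole preimages.

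Matching dimensions and ambient strata across pairs cuts the candidates to a finite list. The pairs stratum vs.\ Abelian double, stratum vs.\ $T\times T$, and Abelian double vs.\ $T\times T$ all yield equations with no solution for $n, m'\geq 1$ and hence no overlap. Stratum vs.\ quadratic double yields $m=1-k$, producing exactly Case 1 ($k=0,m=1$) and Case 4 ($k=1,m=0$). Abelian double vs.\ quadratic double forces $m=2$ for arbitrary $k\geq 0$. Quadratic double vs.\ $T\times T$ forces $m'=k+1$ and $k+3\leq m \leq 2k+4$, giving Case 3 ($k=0, m\in\{3,4\}$) together with the single remaining candidate $(k,m)=(1,4)$.

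It then remains to verify equality for Cases 1--4 and non-equality for the surviving candidates ($k\geq 1, m=2$) and $(k,m)=(1,4)$. The former verifications are direct: Case 1 holds because every torus is a double cover of a $\cQ(-1^4)$ surface via its hyperelliptic involution with the four 2-torsion points as pole preimages, so marking one pole preimage and translating recovers $\cH(0)$; Case 2 holds because both loci equal the subvariety of $\cH(0^2)$ where the two marked points differ by a non-trivial 2-torsion element; Case 3 holds because the four 2-torsion points of the cover torus form exactly the fiber of the $\bZ/2\times\bZ/2$ cover by the 2-torsion subgroup, so the marked-point descriptions coincide; and Case 4 holds because any torus with marked points $(p_1,p_2)$ is realized as the holonomy double cover of a $\cQ(-1^4,0)$ surface by choosing origin $(p_1+p_2)/2$ and quotienting by $z\mapsto -z$. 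The main obstacle is the two non-equality checks. For Abelian vs.\ quadratic double with $k\geq 1$, a generic Abelian double surface has its $2k+2$ marked points split into $k+1$ pairs all differing by a single 2-torsion element $\tau_0$, whereas a generic quadratic double surface has two marked points at 2-torsion positions together with $k$ further pairs related by the hyperelliptic involution; identifying the two structures forces the otherwise free hyperelliptic-paired points to lie at 4-torsion, which fails generically. For $(k,m)=(1,4)$, the six marked points of a quadratic double surface descend under the unique $\bZ/2\times\bZ/2$ quotient by 2-torsion to three distinct base points — the single image of the 2-torsion subgroup together with the two images of the hyperelliptic-paired marked points — so the locus cannot be realized as a $T\times T$ cover of a base with only two marked points.
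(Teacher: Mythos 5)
Your proof is correct, but it takes a genuinely different route from the paper's. The paper first notes (without detailed proof) that no two of the three types \emph{stratum}, \emph{Abelian double}, and \emph{$T \times T$ locus} can coincide, and then handles overlaps with a quadratic double of $\cQ(-1^4, 0^m)$ by a single geometric observation: on any such surface, all of the $2m$ marked points that are not fixed by the holonomy involution can be pushed arbitrarily close together while fixing the underlying unmarked torus, which is impossible if the same locus is an Abelian double or $T\times T$ locus (where disjoint pairs of marked points are rigidly separated by a fixed $2$-torsion translation). This forces $m=0$ in one stroke and immediately reads off Cases $2$--$3$; Cases $1$ and $4$ are then handled by a short direct inspection of the stratum overlap. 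Your approach instead encodes each type by a dimension formula and ambient-stratum constraint, which has the virtue of making explicit the exclusion the paper merely asserts (stratum vs.\ Abelian double, stratum vs.\ $T\times T$, Abelian double vs.\ $T\times T$), but at the cost of producing spurious numerical candidates that then have to be killed by hand: the family $(k,m)=(k,2)$ with $k \geq 1$ for the Abelian/quadratic pair, and $(k,m)=(1,4)$ for the $T\times T$/quadratic pair. In effect, the dimension count alone cannot see that the marked points inside a quadratic double of $\cQ(-1^4,0^k)$ with $k\geq 1$ can collide, while those of an Abelian double or $T\times T$ locus cannot; your post-hoc non-equality arguments supply exactly that geometric information which the paper's moving-marked-points trick gives directly. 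One small imprecision in your non-equality check for the Abelian vs.\ quadratic pair with $k\geq 1$: you state that the free hyperelliptic-paired points are forced to $4$-torsion, but if the two $2$-torsion marked points $0$ and $\sigma$ are identified as the $\tau_0$-pair (which they must be, since the $\tau_0$-partner of $0$ is a $2$-torsion marked point), then each free point $p_i$ must be paired with $p_i+\sigma$, which generically is not any of the remaining marked points $-p_i$ or $\pm p_j$; the $4$-torsion case ($p_i+\sigma = -p_i$) is just one of several equally impossible sub-cases, all of which are codimension-positive. The conclusion is the same, so this is a completable gap rather than an error.
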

\begin{proof}
Let $\cM$ be a subvariety of $\cH(0^n)$ where $n$ is a positive integer. We begin by observing that it is impossible for $\cM$ to be simultaneously more than one of the following: a stratum, an Abelian double, or a $T \times T$ locus. 

Therefore, suppose that $\cM$ is a quadratic double. Since $\cM$ contains flat tori it follows that $\cM$ is a quadratic double of $\cQ(-1^4, 0^m)$ for some integer $m$. 

\bold{If $\cM$ is also a stratum:} By definition of a quadratic double, if the preimage of a pole is marked then any other marked points cannot be free. Since $\cM$ is also a stratum, there aren't any other marked points and we get that $\cM$ is the first possibility.  Similarly, if no preimages of poles are marked then $m = 1$ and $\cM$ is the final possibility. 

\bold{If $\cM$ is also a $T\times T$ locus:} On surfaces in $\cM$ all marked points that are not fixed points of the holonomy involution can be moved arbitrarily close together while fixing the underlying unmarked torus. Hence if $\cM$ is an Abelian double or a $T \times T$ locus, only fixed points of the holonomy involution are marked so $m = 0$ and $\cM$ is the third possibility.

\bold{If $\cM$ is also an Abelian double:} A similar analysis shows $\cM$ is the second possibility. 
\end{proof}

\subsection{Proof of Proposition \ref{P:0n}}\label{SS:CQLR1} 

Suppose that $\cM$ is a geminal subvariety contained in $\cH(0^n)$ where $n$ is a positive integer. Let $k-1$ denote the rel of $\cM$.  We will proceed by induction on $k$, with the base case being $k=1$. 

By  \cite[Theorem 1.10]{Wcyl}, there is a surface $(X, \omega)$ in $\cM$ that is horizontally and vertically periodic with $k$ subequivalence classes of horizontal cylinders. Label the subequivalence classes of horizontal cylinders by $\{1, \hdots, k\}$, and let $\bfC_i$ denote the subequivalence class labelled by $i$. 

Consider the cyclic order in which the subequivalence classes of horizontal cylinders appear along a leaf of the vertical foliation. Since subequivalence classes contain at most two elements, each label appears at most twice in the cyclic ordering. Moreover, by Lemma \ref{L:GeminalTwinAdjacency}, if a cylinder labeled $i$ is adjacent to a cylinder labeled $j$, then every cylinder labeled $i$ is adjacent to a cylinder labeled $j$. The only cyclic orderings satisfying these two conditions are, up to re-indexing:

\begin{itemize}
\item $1, 2, \ldots, k$,
\item $1,2, \ldots, k-1, k, 1, 2, \ldots, k-1, k$,
\item $1,2, \ldots, k-1, k, k, k-1, \ldots, 2, 1$, 
\item $1,2, \ldots, k-1, k, k-1, \ldots, 2, 1$, 
\item $1,2, \ldots, k-1, k, k-1, \ldots, 2$, for $k>2$.
\end{itemize}

\begin{rem}\label{R:LastCase} 
We will want to consider the cyclic order $1,2$ as an instance of the first case and not the final case, which is why we require $k>2$ in the final case. Note that up to re-indexing, $1,2,1$ is the same cyclic order as $1,1,2$ or $1, 2, 2$.  
\end{rem}

  If, for every surface $(X,\omega)\in \cM$ as above, the cyclic order is the first one above, then \cite[Theorem 1.5]{MirWri2} gives  that $\cM$ is a stratum. So we can assume that the cyclic ordering is not the first one listed.   We now establish the base of our induction.


\begin{lem}\label{L:R1Base}
When $k=1$, then $\cM$ is a quadratic double of $\cQ(-1^4)$ with two, three, or four preimages of poles marked.

In particular, Proposition \ref{P:0n} holds when $k=1$.
\end{lem}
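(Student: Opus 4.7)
The plan is to exploit the twin structure in both the horizontal and vertical cylinder decompositions of the chosen surface $(X,\omega)$, combined with $\bk(\cM)=\bQ$ and $\rel=0$, to pin down the marked points as $2$-torsion on the underlying flat torus, and then identify $\cM$ as a quadratic double of $\cQ(-1^4)$.

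With $k=1$, $\cM$ has $\rel=0$ and complex dimension $2$. Excluding the first cyclic ordering, $(X,\omega)$ has a single subequivalence class of horizontal cylinders consisting of exactly two cylinders $C_1,C_2$ that form a twin pair; equality of heights forces each height to be half the total vertical extent of the torus. Normalizing the lattice to $\bZ+\tau\bZ$ and placing one horizontal boundary at $y=0$, every marked point has $y$-coordinate in $\{0,\tfrac12\Im(\tau)\}$ modulo $\Im(\tau)$. By Lemma \ref{L:GeminalField} we have $\bk(\cM)=\bQ$, so combined with $\rel=0$ every marked point is $\bQ$-linear in the absolute periods, i.e.\ has the form $a+b\tau$ with $a,b\in\bQ$; the horizontal analysis then gives $b\in\{0,\tfrac12\}$ modulo $\bZ$.

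Next, analyze the vertical cylinder decomposition, which is also periodic by \cite[Theorem 1.10]{Wcyl}. Rel zero forces at most one subequivalence class of vertical cylinders, and geminality forces any subequivalence class to have at most two cylinders (a twin pair by definition). Hence the vertical decomposition consists of either one cylinder (the whole surface) or two vertical twin cylinders. In the first case every $a_i$ agrees, and in the second, equality of widths forces $a_i\in\{0,\tfrac12\}$ modulo $\bZ$. Either way, after translation the marked points lie among the four $2$-torsion points $\{0,\tfrac12,\tfrac{\tau}{2},\tfrac{1+\tau}{2}\}$ of the torus.

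Therefore $\cM$ is contained in the locus $\cN$ of tori in $\cH(0^n)$ with $n$ specified $2$-torsion points marked; both are irreducible $GL(2,\bR)$-invariant subvarieties of complex dimension $2$, so $\cM=\cN$. The quotient of a flat torus by the hyperelliptic involution $z\mapsto -z$ is the pillowcase $\cQ(-1^4)$, with the four $2$-torsion fixed points mapping to the four simple poles; hence $\cN$ is precisely a component of the quadratic double of $\cQ(-1^4)$ with those $n$ preimages of poles marked. Finally, $n=1$ would leave only the single $y$-level $y=0$, producing the excluded first cyclic ordering, while trivially $n\leq 4$. The main obstacle is constraining the marked points to be $2$-torsion in both coordinate directions; this is resolved by the rel-zero plus geminal argument applied to the vertical decomposition. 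Thus $n\in\{2,3,4\}$ and $\cM$ is as claimed.
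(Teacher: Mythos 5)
Your proof is correct and proceeds by the same underlying strategy as the paper's: exploit the twin structure in both the horizontal and vertical cylinder decompositions to pin the marked points to the $2$-torsion of the underlying torus, and then conclude that $\cM$ is the quadratic double of $\cQ(-1^4)$. The paper's proof is essentially a one-figure argument (Figure \ref{F:kEquals2}), reading off from the picture that the two horizontal twins have equal height and that the one or two vertical cylinders each cross each horizontal cylinder once, hence marked points sit at $2$-torsion; you instead make this rigid step explicit via $\bk(\cM)=\bQ$ (Lemma \ref{L:GeminalField}) plus rel $=0$ to get rational period coordinates, then close with a dimension count $\dim\cM=\dim\cN=2$. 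The paper skips the field-of-definition and dimension-count steps because the geometric picture already determines the subvariety; your version is a bit longer but self-contained. One small point worth making explicit in your write-up: the claim that rel zero forces at most one subequivalence class of vertical cylinders deserves a sentence of justification (e.g.\ two subequivalence classes $\bfD_1,\bfD_2$ would give standard shears whose difference, suitably scaled, is a nonzero rel vector), since you use it crucially to bound the number of vertical cylinders by two.
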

\begin{proof}
When $k=1$, the horizontal direction on $(X, \omega)$ has exactly two cylinders - call them $C$ and $C'$ - which are isometric. We can  assume the vertical direction has either one or two cylinders, each of which crosses each of $C$ and $C'$ only once, as in Figure \ref{F:kEquals2}. 

\begin{figure}[h!]
\includegraphics[width=.4\linewidth]{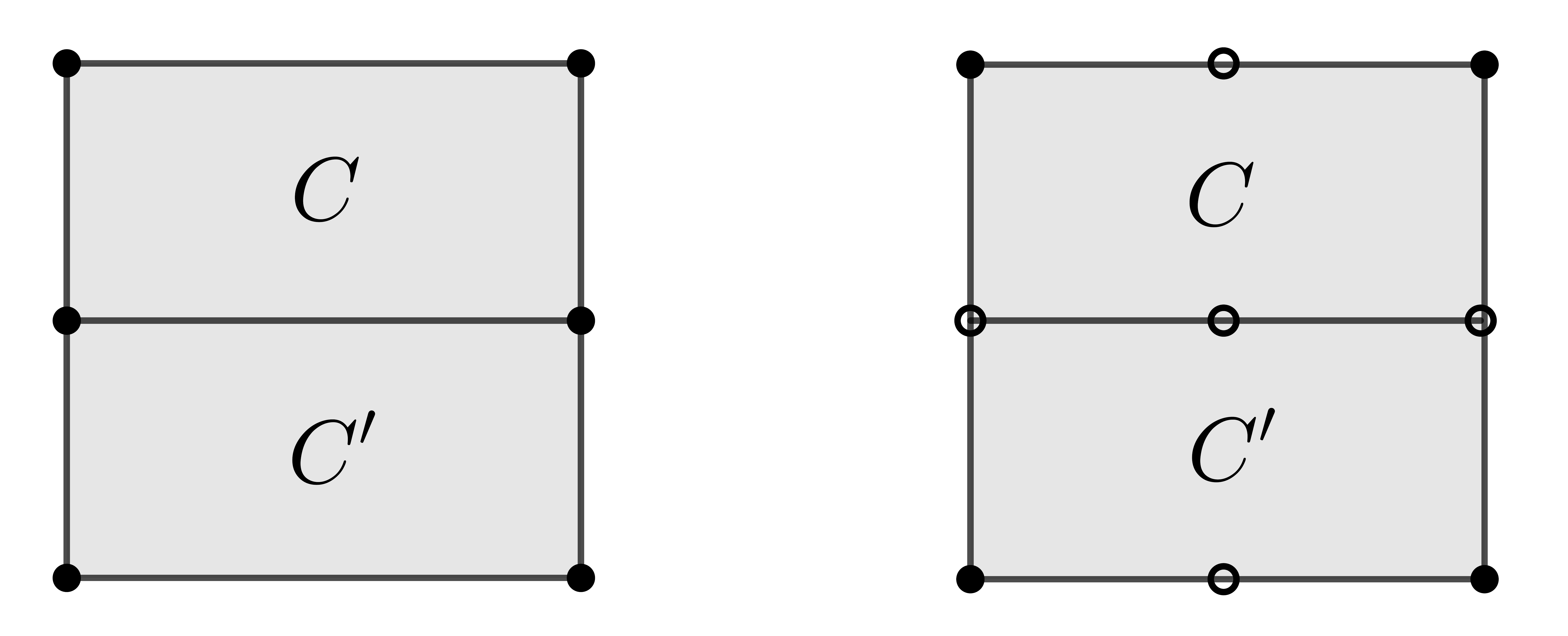}
\caption{The proof of Lemma \ref{L:R1Base}. Left: the case of one vertical cylinder. Right: The case of two vertical cylinders. }
\label{F:kEquals2}
\end{figure}

If the vertical direction has only one cylinder, $\cM$ is contained in $\cH(0,0)$ and is simultaneously an Abelian double of $\cH(0)$ and a quadratic double of $\cQ(-1^4)$ with two preimages of poles marked.  If the vertical direction contains two cylinders, then $\cM$ is a quadratic double of $\cQ(-1^4)$ with two, three or four preimages of poles marked.
\end{proof}

\begin{lem}\label{L:TorTxT}
If the cyclic order is $1, \hdots, k, 1 \hdots, k$. Then $\cM$ is an Abelian double or a $T \times T$ locus. 
\end{lem}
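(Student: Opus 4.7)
The plan is to extract a translation involution from the horizontal twin pattern, realize $\cM$ as a full locus of torus covers, constrain the deck group using the geminal hypothesis, and then handle the two cases that arise.

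First I would construct the involution. Because the horizontal cyclic order is $1, 2, \ldots, k, 1, 2, \ldots, k$, twin horizontal cylinders are shifted by half the vertical period along any vertical leaf. Since $(X, \omega)$ is a torus and $\omega$-preserving automorphisms of a torus are translations, the pairing of twins extends to a translation involution $\tau : X \to X$, which is translation by a $2$-torsion element. By Lemma \ref{L:GeminalField} we have $\bk(\cM) = \bQ$, so Lemma \ref{L:R1Arithmetic} realizes $\cM$ as a full locus of torus covers: there is a translation cover $\pi : (X, \omega) \to T$ with $\Lambda_X \subseteq \Lambda_T$, and $\cM$ is the full locus of $G$-covers over the corresponding stratum of marked tori, where $G := \Lambda_T / \Lambda_X$. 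The involution $\tau$ gives a nontrivial element of $G$, so $|G| \geq 2$.

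To further constrain $G$, I would write $G \cong \bZ/a \oplus \bZ/b$ with $a \mid b$ via Smith normal form, choose the corresponding basis $f_1, f_2$ of $\Lambda_T$ (so that $\Lambda_X = \bZ(a f_1) \oplus \bZ(b f_2)$), and observe that each primitive $f_i$ gives rise to a cylinder on $T$ whose preimage on $(X, \omega)$ consists of $|G|/k_i$ cylinders forming a single $G$-orbit, where $k_i$ is the order of the image of $f_i$ in $G$. Geminal-ness forces this orbit size to be at most $2$, so $b \leq 2$ and $a \leq 2$, whence $G \in \{\bZ/2,\; \bZ/2 \oplus \bZ/2\}$.

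Finally I would handle the two cases. If $G = \bZ/2$, then $\pi$ is a degree $2$ unramified cover. The horizontal twin pattern forces $\tau$ to preserve the set of horizontal lines through marked points of $(X, \omega)$, and the rel dimension count together with the marked point enumeration then gives that every marked point on $T$ has both preimages marked on $(X,\omega)$ (as in the analysis underlying Lemma \ref{L:R1Base}), so $\cM$ is an Abelian double. If $G = \bZ/2 \oplus \bZ/2$, then $\pi$ has degree $4$, and I would verify that at least $3$ of $4$ preimages of each marked point on $T$ are marked: if only $2$ or fewer were marked, then as in Remark \ref{R:TxTCP} there would be a cylinder on $(X, \omega)$ whose core curve passes through the unmarked preimages, forcing its preimage height to be twice that of the corresponding cylinder on $T$, which then implies that the covering factors through an intermediate $\bZ/2$-quotient, contradicting $G$ being the full deck group of the cover. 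Hence $\cM$ is a $T \times T$ locus. The main obstacle is the deck group analysis via Smith normal form, specifically verifying that the cylinder orbit sizes are precisely what geminal-ness restricts; a secondary difficulty is ruling out the ``two or fewer preimages marked'' configurations in the $\bZ/2 \oplus \bZ/2$ case.
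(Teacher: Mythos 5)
The paper's proof is an induction on $k$, degenerating $\bfC_i$'s and using the classification of lower rel cases together with two sublemmas (SL:TxTConstraint, SL:TxTConstraint2) about which marked points must lie in the same $\langle T_1, T_2 \rangle$-orbit. Your proposal instead argues directly by producing an involution $\tau$, a torus quotient $\pi : (X,\omega) \to T$, and a deck group $G$. This is a genuinely different route, but it contains an unfilled gap that is essentially the content of the lemma.

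The central problem is the sentence ``Lemma \ref{L:R1Arithmetic} realizes $\cM$ as a full locus of torus covers: \ldots and $\cM$ is the full locus of $G$-covers over the corresponding stratum of marked tori.'' Lemma \ref{L:R1Arithmetic} only says each surface in a rank-one subvariety with rational field of definition \emph{is} a torus cover -- for $\cM \subset \cH(0^n)$ this is vacuous, since every such surface is already a torus. It gives you no information about whether $\cM$ is a \emph{full} locus of covers with a fixed deck group $G$, i.e.\ whether the covering map $\pi$ is $\cM$-generic and whether every deformation of the base lifts into $\cM$. That structural statement is precisely what the lemma is asking you to prove (an Abelian double and a $T\times T$ locus are both particular full loci of covers), so asserting it at the outset is nearly circular. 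Establishing it requires some real work: you need to show that $\tau$, as reconstructed from the combinatorial twin pairing, remains a translation symmetry preserving marked points on every nearby surface in $\cM$ (this already uses the twin widths and adjacencies persisting, and something like Lemma \ref{L:GeminalTwinAdjacency}), and then a dimension count to conclude fullness. None of this is in the proposal.

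There is a secondary problem in the $\bZ/2 \oplus \bZ/2$ case. You argue that if only two of the four preimages of a marked point on $T$ were marked, then the resulting tall cylinder would force the cover to factor through an intermediate $\bZ/2$-quotient, contradicting $G$ being the full deck group. This inference does not follow: failure of Assumption CP for $\pi$ does not yield a new intermediate quotient. The paper instead shows (Sublemmas \ref{SL:TxTConstraint} and \ref{SL:TxTConstraint2}) that a bad marking pattern lets you move marked points so that one twin's height changes while the other's does not, contradicting geminality directly; that is the argument you would need here, and it does not obviously fall out of the deck-group picture.

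The Smith normal form idea for bounding $G$ is a nice observation, and if you had the ``full locus'' structure in hand it would give a clean bound. But as written, the proposal assumes that structure rather than deriving it, so the key step is missing.
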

\begin{proof}
Proceed by induction on $k$. The base case is Lemma \ref{L:R1Base} combined with the observations in Lemma \ref{L:Types} that the relevant quadratic doubles of $\cQ(-1^4)$ are simultaneously Abelian doubles or $T \times T$ loci. 

Suppose now that $k > 1$. After replacing $(X,\omega)$ with a version where each $\bfC_i$ has been sheared, we may assume that each $\bfC_i$ contains a cylinder that contains a vertical saddle connection. 

By the induction hypothesis, for each $i$, the surface $\Col_{\bfC_i}(X, \omega)$ belongs to an Abelian double or $T \times T$ locus. Since each $\Col_{\bfC_j}(\bfC_i)$, for $i \ne j$, contains a cylinder that contains a vertical saddle connection, it follows that there is one (resp. two) vertical cylinder(s) that covers $\Col_{\bfC_j}(X, \omega)$ when $\cM_{\bfC_j}$ is an Abelian double (resp. $T \times T$ locus). The surface $(X, \omega)$ is also covered by the same number (one or two) of vertical cylinders, as in Figure \ref{F:123123}.

\begin{figure}[h!]
\includegraphics[width=.2\linewidth]{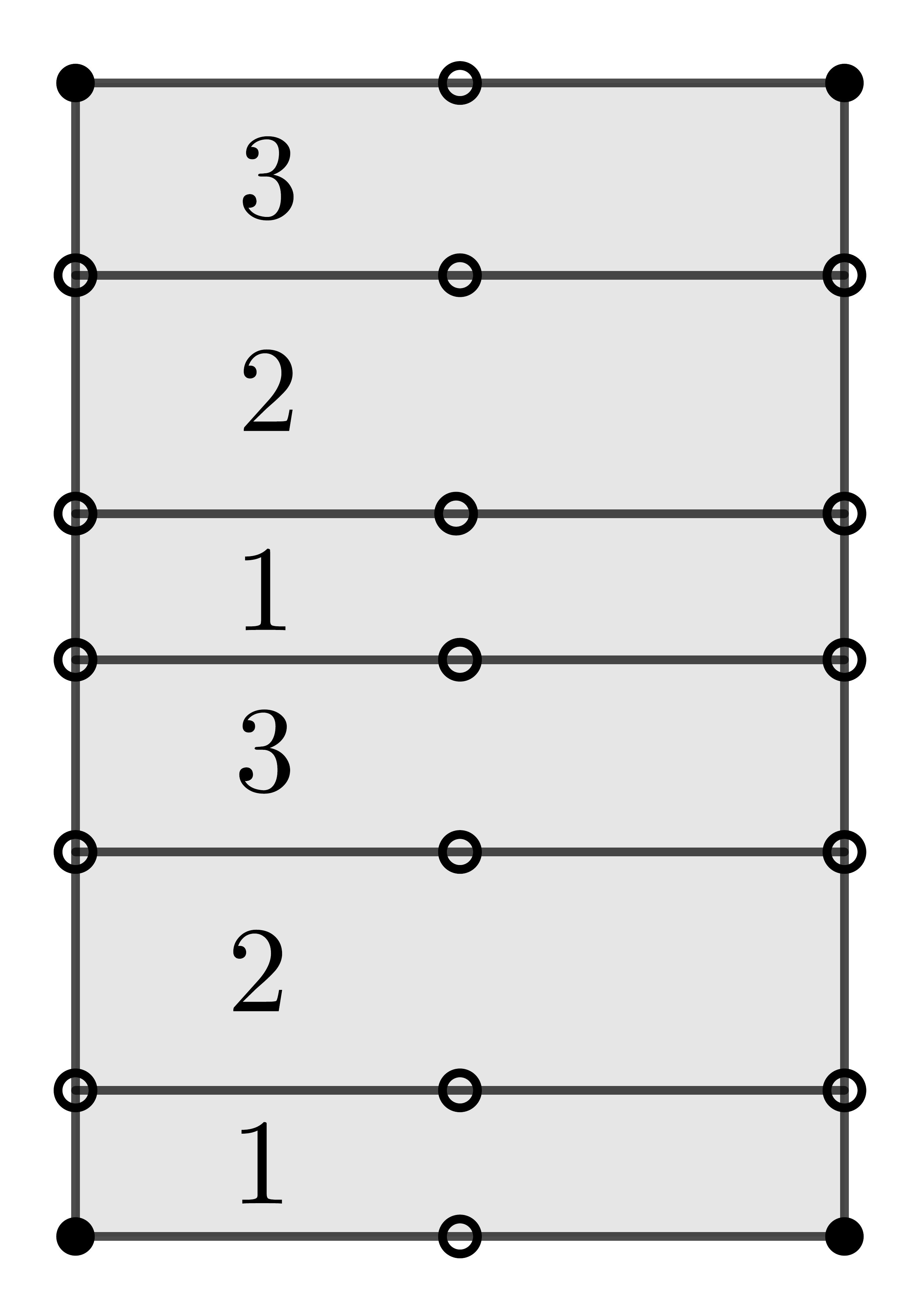}
\caption{The proof of Lemma \ref{L:TorTxT} when $k=3$.}
\label{F:123123}
\end{figure}

In the case that there is a single vertical cylinder that covers $(X, \omega)$ it is easy to see that $\cM$ is an Abelian double. 

Suppose therefore that $(X, \omega)$ is covered by two vertical cylinders. Let $T_1$ be the translation involution that fixes each vertical cylinder but doesn't fix any horizontal cylinder, and let $T_2$ be the translation involution that fixes each horizontal cylinder and exchanges the two vertical cylinder, as in Figure \ref{F:T1T2}.

\begin{figure}[h!]
\includegraphics[width=.35\linewidth]{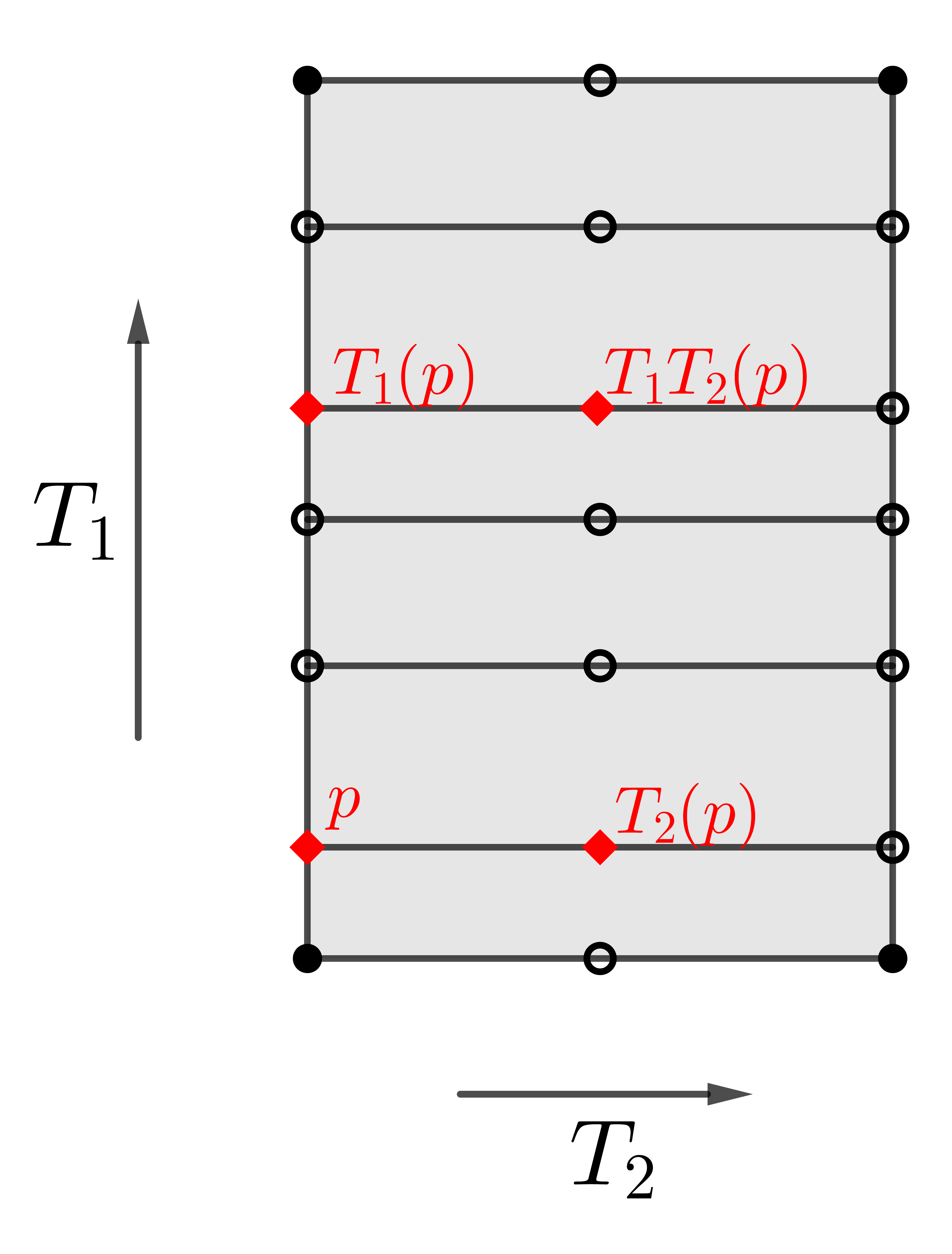}
\caption{The proof of Lemma \ref{L:TorTxT} when $k=3$.}
\label{F:T1T2}
\end{figure}

Suppose now that $\cM$ is not an Abelian double. We need to show that $\cM$ is a $T \times T$ locus. We will require the following two sublemmas.

\begin{sublem}\label{SL:TxTConstraint}
For any marked point $q$ on $(X, \omega)$ at least one of $T_1T_2(q)$ and $T_2(q)$ is marked. Moreover, at least one of $T_1(q)$ and $T_1T_2(q)$ is marked.
\end{sublem}
\begin{proof}
The second claim is immediate since $T_1$ exchanges horizontal twin cylinders. In particular, if $q$ lies on the top boundary of a horizontal cylinder $C$, then in order for $T_1(C)$ to be a cylinder, either $T_1(q)$ or $T_1T_2(q)$ must be a marked point lying on the top boundary of $T_1(C)$. We now turn to the first claim. 

We begin by showing that there is some marked point for which the first claim holds. Suppose to a contradiction that this is not the case. In particular, suppose that for every marked point $p$, neither $T_1T_2(p)$ nor $T_2(p)$ are marked. This would imply that for every marked point $p$, $T_1(p)$ is marked, which, given the supposition, implies that $\cM$ is an Abelian double. We may therefore suppose that there is some marked point $p$ such that at least one of $T_1T_2(p)$ or $T_2(p)$ is marked. 

We are now ready to prove the first claim. Suppose to a contradiction that there is a point $q$ such that neither $T_1T_2(q)$ nor $T_2(q)$ is marked. We have already seen that in this case $T_1(q)$ must be marked. Moreover, it is obvious that $q \ne p$ where $p$ is the point produced by the preceding paragraph. Then it is possible to move $\{q, T_1(q)\}$ into one vertical cylinder, contradicting the fact that the two vertical cylinders are twins. This shows that for every marked point $q$ on $(X, \omega)$ either $T_1T_2(q)$ or $T_2(q)$ is marked. 
\end{proof}

\begin{sublem}\label{SL:TxTConstraint2}
There is some marked point $q$ on $(X, \omega)$ so that at least two points in $\{ T_1(q), T_2(q), T_1T_2(q) \}$ are marked.
\end{sublem}
\begin{proof}
Suppose to a contradiction that the claim does not hold. By Sublemma \ref{SL:TxTConstraint}, for every marked point $p$ the only point in $$\{T_1(p), T_2(p), T_1T_2(p)\}$$ that is marked is $T_1T_2(p)$. In particular, this means that $\cM$ is an Abelian double contrary to our hypotheses. 
\end{proof}

In light of Sublemma \ref{SL:TxTConstraint2}, we fix a marked point $q$ so that at least two points in $\{ T_1(q), T_2(q), T_1T_2(q) \}$ are marked. If every marked point had this property then $\cM$ would be a $T \times T$ locus as desired. Therefore, suppose to a contradiction that there is some marked point $p$ so that the only point in $\{T_1(p), T_2(p), T_1T_2(p)\}$ that is marked is $T_1T_2(p)$. 

Suppose that $\bfC_1$ and $\bfC_2$ are the two subequivalence classes of cylinders that contain $p$ and $T_1 T_2(p)$ in their boundary. Now we collide all other marked points with the $\langle T_1, T_2 \rangle$ orbit of $q$, i.e. we form $\Col_{\bfC_3, \hdots, \bfC_k}(X, \omega)$. Note that $\cM_{\bfC_3, \hdots, \bfC_k}$ remains geminal by Lemma \ref{L:GeminalBoundary}. Without loss of generality the underlying unmarked translation surface is a square torus. The marked points can then be moved as in Figure \ref{F:NotGeminal}, showing that $\cM$ is not geminal and giving a contradiction. 
\begin{figure}[h]
\includegraphics[width=.3\linewidth]{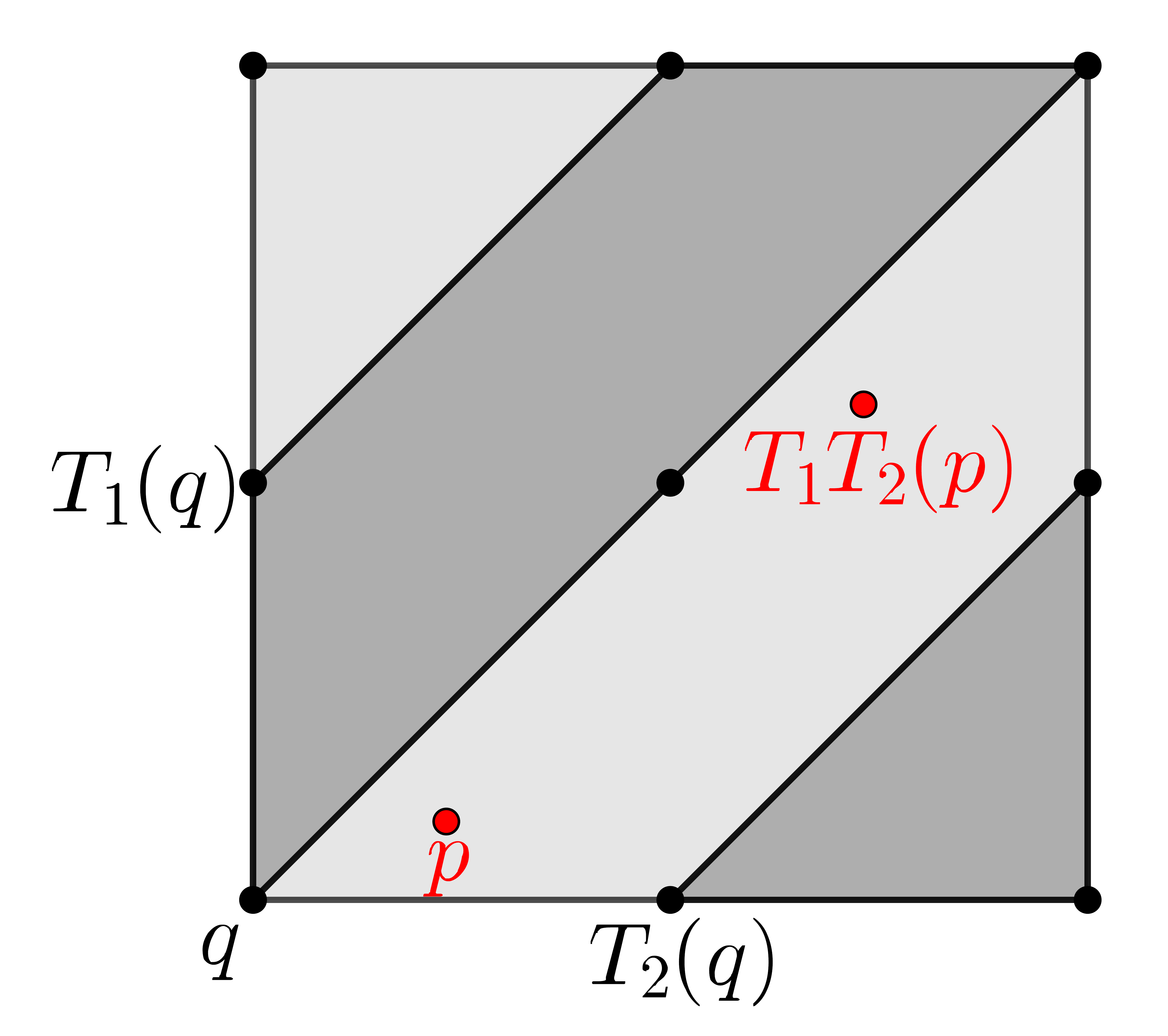}
\caption{The proof of Lemma \ref{L:TorTxT}. The slope 1 direction shows this does not describe a geminal orbit closure.}
\label{F:NotGeminal}
\end{figure}
\end{proof}

The proof will be complete once we establish the following. 

\begin{lem}
If $k > 1$ and the cyclic order is one of the last three, then $\cM$ is a quadratic double. 
\end{lem}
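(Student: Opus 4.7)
The plan is to induct on $k \geq 2$. At each step, I form a generic skew diamond using the first and last horizontal subequivalence classes $\bfC_1$ and $\bfC_k$ and apply Proposition \ref{P:DiamondsAreCool}. In each of the three remaining orderings, $\bfC_1$ and $\bfC_k$ are cyclically non-adjacent---and therefore share no boundary saddle connections---whenever $k \geq 3$; after a preliminary individual shear giving each a vertical saddle connection on its boundary, they form a generic skew diamond. Erasing label $1$ or label $k$ from each listed ordering yields (up to relabeling and rotation) another listed ordering with $k-1$ labels, or the ordering $1, 1$ covered by Lemma \ref{L:R1Base}: for instance, erasing label $k$ from $1, 2, \ldots, k, k, \ldots, 2, 1$ or from $1, 2, \ldots, k, \ldots, 2, 1$ yields $1, 2, \ldots, k-1, k-1, \ldots, 2, 1$; erasing label $1$ from the second ordering yields the second ordering with $k-1$ labels; and erasing either label from $1, 2, \ldots, k, \ldots, 2$ yields the second ordering with $k-1$ labels. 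By the induction hypothesis and Lemma \ref{L:R1Base}, both $\cM_{\bfC_1}$ and $\cM_{\bfC_k}$ are quadratic doubles. Since $(X, \omega)$ is a flat torus, $\Col_{\bfC_1, \bfC_k}(X, \omega)$ is again a connected flat torus (with some marked points merged), so Proposition \ref{P:DiamondsAreCool} applies and yields that $\cM$ is a quadratic double.

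The corner cases are the two orderings of length $k = 2$, namely $1, 2, 2, 1$ and $1, 2, 1$, in which the two horizontal subequivalence classes are necessarily cyclically adjacent. The plan here is to use a mixed diamond with one horizontal and one vertical subequivalence class: by the $GL(2, \bR)$-invariance of $\cM$, the vertical direction on a suitable $(X, \omega)$ is also periodic and carries an analogous geminal cyclic ordering (which must again be one of the cases already analyzed), and since the boundary saddle connections of a horizontal and a vertical subequivalence class are perpendicular, they automatically share no saddle connections. Alternatively, one can argue directly from the boundary: collapsing either horizontal subequivalence class yields $\cM_{\bfC_i}$ with cyclic order $1, 1$, a quadratic double by Lemma \ref{L:R1Base} carrying a holonomy involution $\tau$. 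The single rel direction of $\cM$ must then be $\tau$-equivariant---otherwise it would alter the heights of the twin cylinders in $\bfC_1$ unequally, contradicting the geminal hypothesis---so $\tau$ extends to an involution on $\cM$ realizing $\cM$ as a quadratic double.

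The main obstacle is these $k = 2$ corner cases: the absence of two non-adjacent horizontal subequivalence classes precludes the direct diamond approach, and one must either analyze the vertical cyclic ordering to set up a mixed diamond or establish by a direct argument that the geminal constraint forces the unique rel deformation of $\cM$ to preserve the holonomy involution known on the boundary.
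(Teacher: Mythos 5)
Your inductive step is sound and close to the paper's: the paper uses $(\bfC_1, \bfC_3)$ where you use $(\bfC_1, \bfC_k)$, but both choices give non-adjacent subequivalence classes when $k \geq 3$, the collapse of $\bfC_k$ does indeed land back in one of the listed orderings with $k-1$ labels, and $\Col_{\bfC_1, \bfC_k}(X, \omega)$ is connected since it is a marked torus. Your accounting of which orderings degenerate to which is correct (after allowing cyclic rotation and relabeling, e.g.\ $1,2,3,3,2$ is the same as $1,2,3,2,1$).

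The gap is exactly where you flagged it: the $k=2$ base case. Neither of your two suggested fixes works. The ``mixed diamond'' using one horizontal and one vertical subequivalence class fails because Definition \ref{D:Diamond} requires $\bfC_1$ and $\bfC_2$ to be \emph{disjoint}, not merely to share no boundary saddle connections; on a flat torus a horizontal cylinder and a vertical cylinder always intersect, so disjointness fails and no such diamond exists. Your alternative ``$\tau$-equivariance'' argument is also insufficient: the involution $\tau$ lives on the degenerate surface $\Col_{\bfC_i}(X,\omega)$, and asserting that the single rel direction of $\cM$ is $\tau$-equivariant does not produce an involution on the full surface $(X,\omega)$. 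The tangent space to $\cM$ being stable under some symmetry of a boundary stratum is a statement about period coordinates, while being a quadratic double is the stronger geometric claim that every $(X,\omega)\in\cM$ admits a half-translation involution whose quotient lies in a fixed stratum of quadratic differentials; one does not follow from the other without further work. In fact the paper's proof of the $k=2$ case occupies the bulk of the lemma and goes through a detailed case analysis keyed to the number of marked points on $\Col_{\bfC_2}(X,\omega)$ and whether $\Col_{\bfC_2}(\bfC_2)$ consists of one or two saddle connections (Cases 1, 2a, 2b, 2c), ruling out bad configurations of marked points by explicit computations with vertical cylinder widths. That combinatorial analysis is the real content and your sketch elides it.
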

\begin{proof}
Proceed by induction on $k$. We give the inductive step before the base case, since the inductive step is short. Without loss of generality assume that a standard shear has been applied to each horizontal subequivalence class so that each horizontal subequivalence class contains a cylinder containing a vertical saddle connection. We will suppose moreover that there is a cylinder in $\bfC_1$ and a cylinder in $\bfC_2$ that contain vertical saddle connections, called $\sigma_1$ and $\sigma_2$ respectively, so that an endpoint of $\sigma_1$ coincides with an endpoint of $\sigma_2$.

\begin{rem}\label{R:23cyclic} 
  It may be helpful to keep in mind Remark \ref{R:LastCase} and note that the last three cyclic orderings for $k=3$ are $1,2,3,3,2,1$ and $1,2,3,2,1$ and $1,2,3,2$. Degenerating $\bfC_3$ results in $1,2,2,1$ and $1,2,2,1$ and $1,2,2$ respectively, and degenerating $\bfC_1$ results in $2,3,3,2$ and $2,3,2$ and $2,3,2$ respectively.  In particular, the prohibited $k=2$ case of the fifth cyclic ordering does not appear here. 
 
\end{rem}

Notice that when $k > 2$ that $\left( (X, \omega), \cM, \bfC_1, \bfC_3 \right)$ is a generic diamond where $\ColOneThreeX$ is connected and $\MOne$ and $\MThree$ are quadratic doubles. It follows in this case by Proposition \ref{P:DiamondsAreCool} that $\cM$ is a quadratic double.

Therefore, it remains to establish the $k = 2$ base case. In this case, the cyclic ordering is either $1,2,1$ or $1,2,2,1$. 
By Lemma \ref{L:R1Base}, $\ColTwoX$ is a torus  consisting of two horizontal cylinders and that has two, three, or four points marked, any two of which differ by two-torsion.

\bold{Case 1: $\ColTwoX$ has two marked points.} Figure \ref{F:H(0^n)EasyPartOfHardBaseCase} (left) illustrates $\ColTwoX$. In this case, $\ColTwo(\bfC_2)$ consists of one saddle connection and so $\bfC_2$ consists of one or two simple cylinders, i.e. cylinders both of whose boundaries consist of a single saddle connection.  We are done by Figure \ref{F:H(0^n)EasyPartOfHardBaseCase}. 

\begin{figure}[h!]
\includegraphics[width=0.7\linewidth]{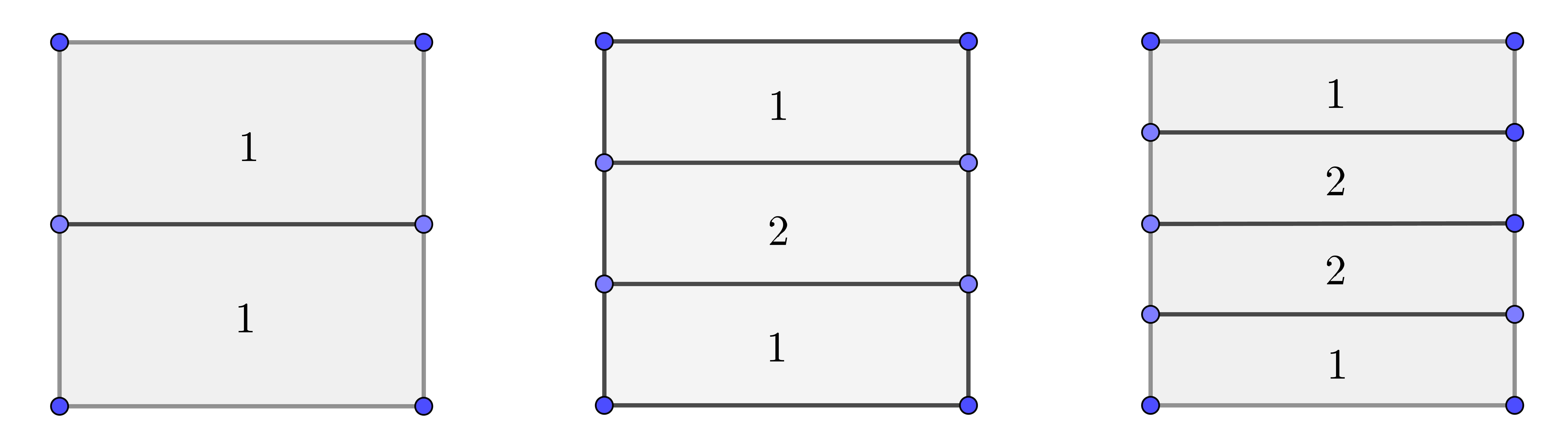}
\caption{Case 1. Left: $\ColTwoX$. Middle and right: the two possibilities for $(X,\omega)$.}
\label{F:H(0^n)EasyPartOfHardBaseCase}
\end{figure}

\bold{Case 2: $\ColTwoX$ has more than two marked points.} If there are two cylinders in $\bfC_2$ they share boundary saddle connections. Therefore $\ColTwo(\bfC_2)$ consists of either one or two saddle connections. 

\bold{Case 2a: $\ColTwo(\bfC_2)$ consists of a single saddle connection.}
If $\ColTwo(\bfC_2)$ consists of a single saddle connection then the cylinders in $\bfC_2$ were simple and the holonomy involution on $\ColTwoX$ extends to $(X, \omega)$ and preserves marked points; see Figure \ref{F:H(0^n)EasyPart2OfHardCase}.

\begin{figure}[h!]
\includegraphics[width=0.5\linewidth]{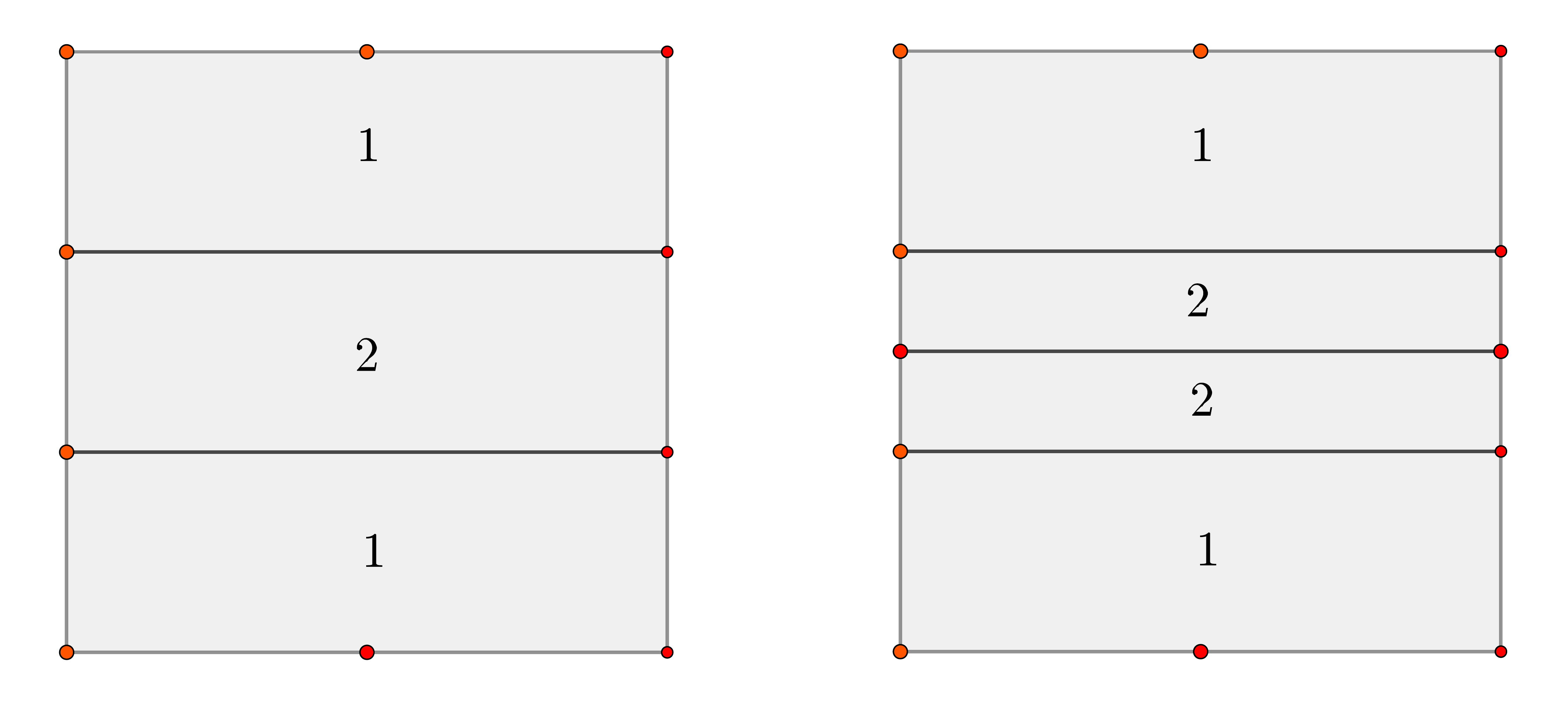}
\caption{The two possibilities for the surface $(X, \omega)$ in Case 2a.   }
\label{F:H(0^n)EasyPart2OfHardCase}
\end{figure}

\bold{Case 2b: $\ColTwo(\bfC_2)$ consists of two saddle connections and $\bfC_2$ has two cylinders.} In this case the original surface $(X, \omega)$ is depicted in Figure \ref{F:H(0^n)HardPart1HardCase}.

\begin{figure}[h!]
\includegraphics[width=0.4\linewidth]{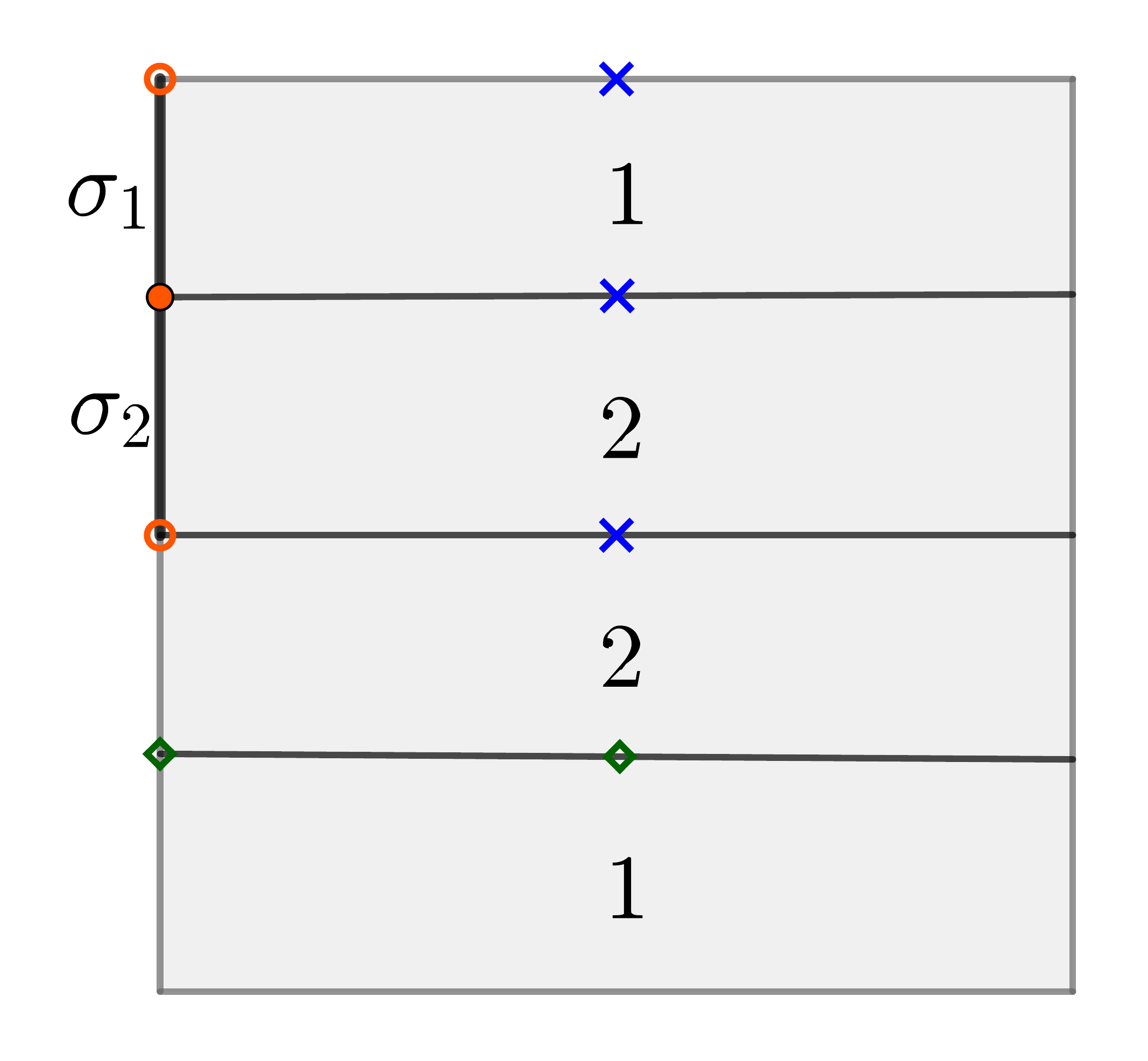}
\caption{The unfilled dots and the filled-in dot are marked. At least one of the two unfilled diamonds must be marked. 
Any remaining marked points must be at a location labelled with an ``x".
}
\label{F:H(0^n)HardPart1HardCase}
\end{figure}

By applying the standard shear in opposite directions to $\bfC_1$ and $\bfC_2$ we can move the marked points while fixing the underlying flat torus as in Figure \ref{F:H(0^n)Hardest2}. 

\begin{figure}[h!]
\includegraphics[width=0.4\linewidth]{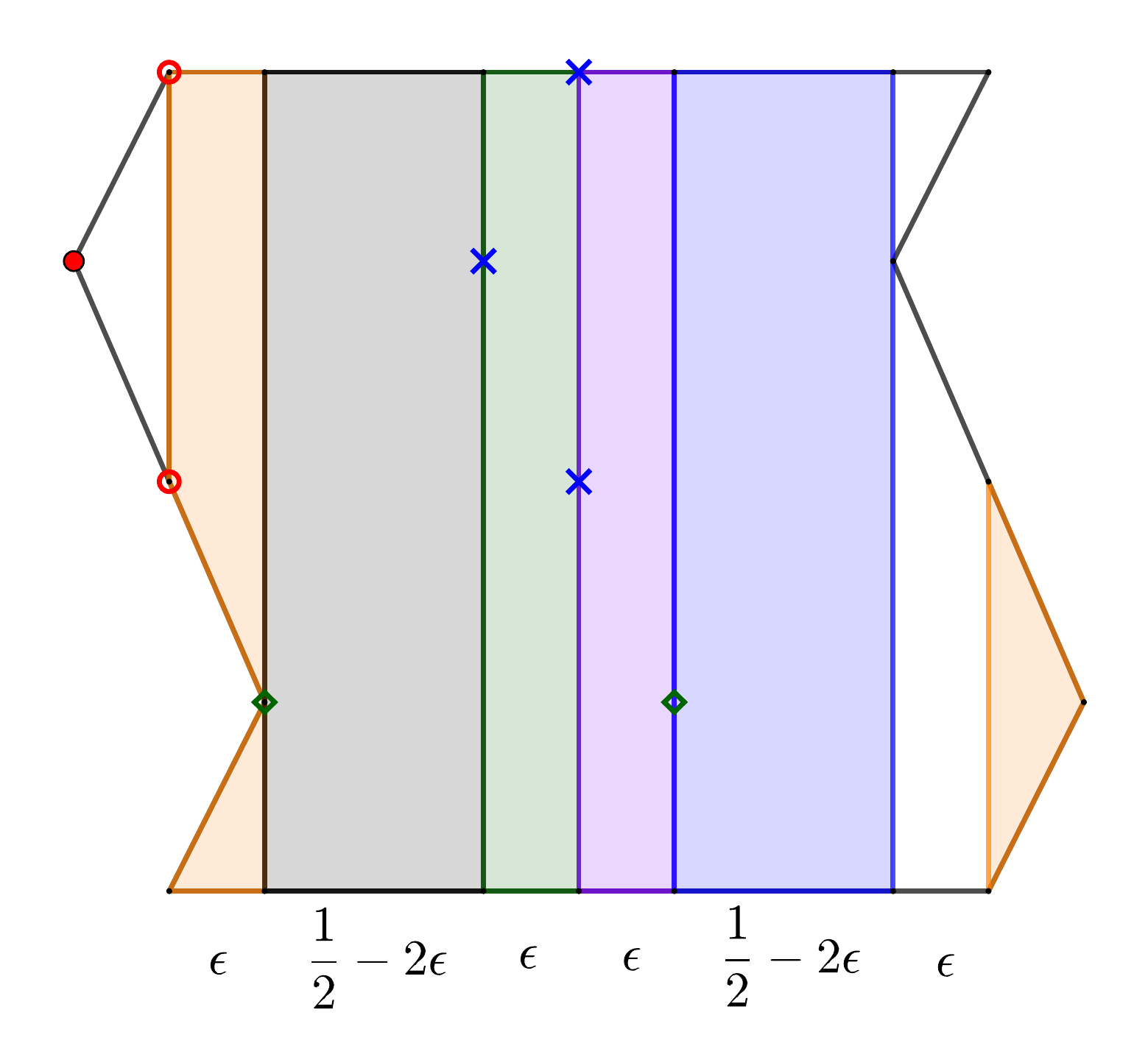}
\caption{The labels at the bottom of the figure indicate the widths of the six (subsets of) vertical  cylinders, where $\epsilon \in (0, \frac{1}{4})$.}
\label{F:H(0^n)Hardest2}
\end{figure}

For simplicity we assume without loss of generality that the horizontal cylinders on $(X, \omega)$ have circumference one. If the rightmost of the two points in Figure \ref{F:H(0^n)Hardest2} labelled with diamonds is marked then there is one vertical cylinder of width $\epsilon$ (passing through $\sigma_1$ and $\sigma_2$) and another of width $\frac{1}{2} - 2 \epsilon$ (passing to the right of the rightmost diamond) on $(X, \omega)$.  Since the vertical direction is covered by two subequivalence classes of vertical cylinders, each of which consists of one or two cylinders of equal width, we see that the sum of the widths of all subequivalence classes of vertical cylinders is at most 
\[ 2\cdot \epsilon + 2 \cdot \left( \frac{1}{2} - 2\epsilon \right) = 1 - 2\epsilon < 1\]
which contradicts the fact that the vertical subequivalence classes cover the surface.

Therefore, the rightmost diamond is not marked and so the leftmost diamond is marked. Thus there is a second vertical cylinder of width $\epsilon$, which passes to the left of the leftmost diamond. This must be the twin of the vertical cylinder passing through the $\sigma_i$.  The remainder of the surface is either covered by two vertical cylinders of width $\frac{1}{2} - \epsilon$ or one of width $1 - 2\epsilon$. 
Thus the surface is as illustrated in Figure \ref{F:H(0^n)Finale}  with 0, 1, or 2 of the points labelled with an ``x" marked. (Actually, by our Case 2 assumption, 1 or 2 of these points are marked.) 
We see that $\cM$ is a quadratic double. 

\begin{figure}[h]
\includegraphics[width=0.4\linewidth]{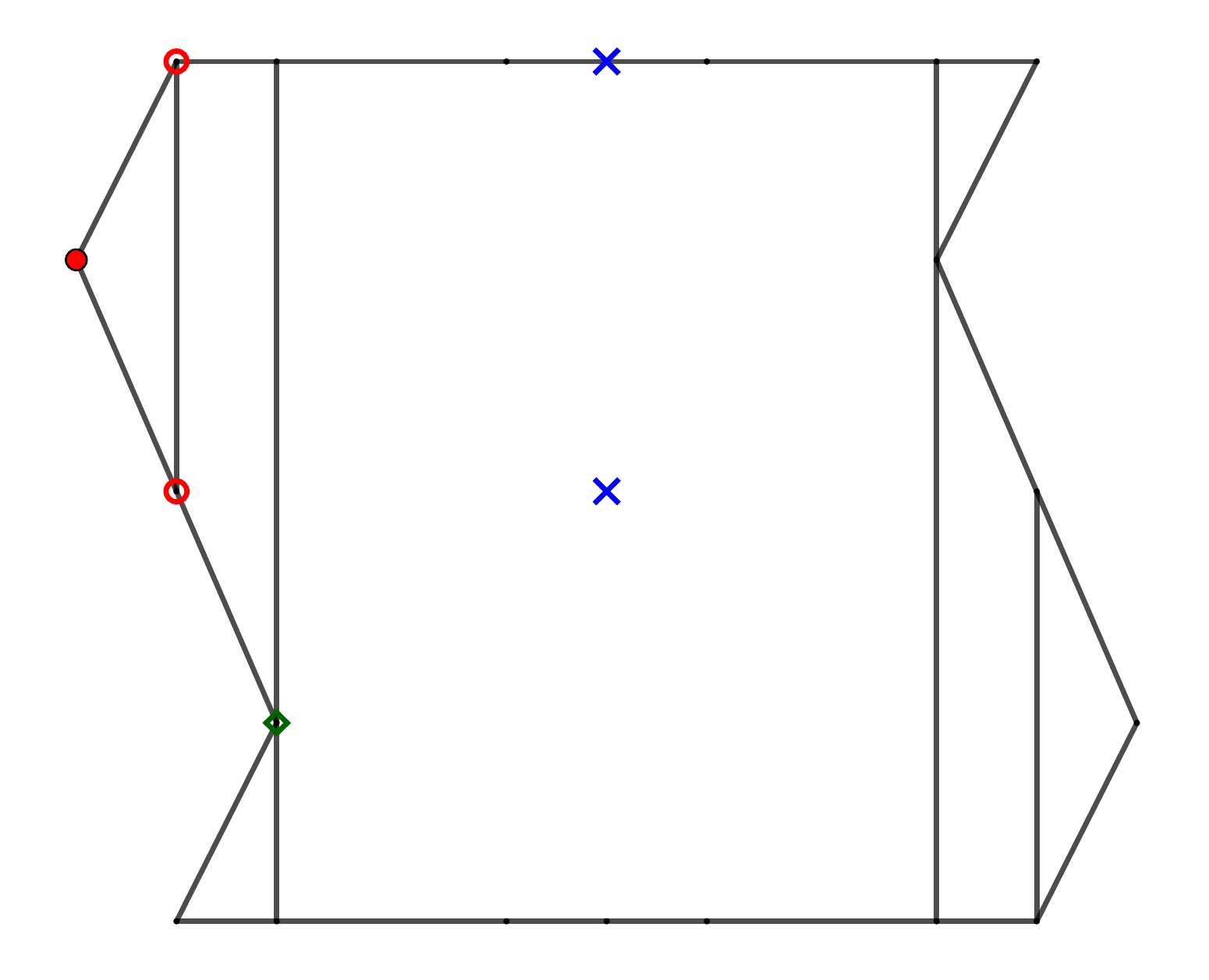}
\caption{ One or both of the points labelled with an ``x" are marked. }
\label{F:H(0^n)Finale}
\end{figure}

\bold{Case 2c: $\ColTwo(\bfC_2)$ consists of two saddle connections and $\bfC_2$ has one cylinder.}

Because $\ColTwo(\bfC_2)$ consists of two saddle connections, the one cylinder in $\bfC_2$ has either three or four points marked on its boundary. We get that $\cM$ is the orbit closure of the surface illustrated in Figure \ref{F:0nLast}, which is easily seen to not be geminal by analysing the vertical cylinders as follows. 
\begin{figure}[h]
\includegraphics[width=0.4\linewidth]{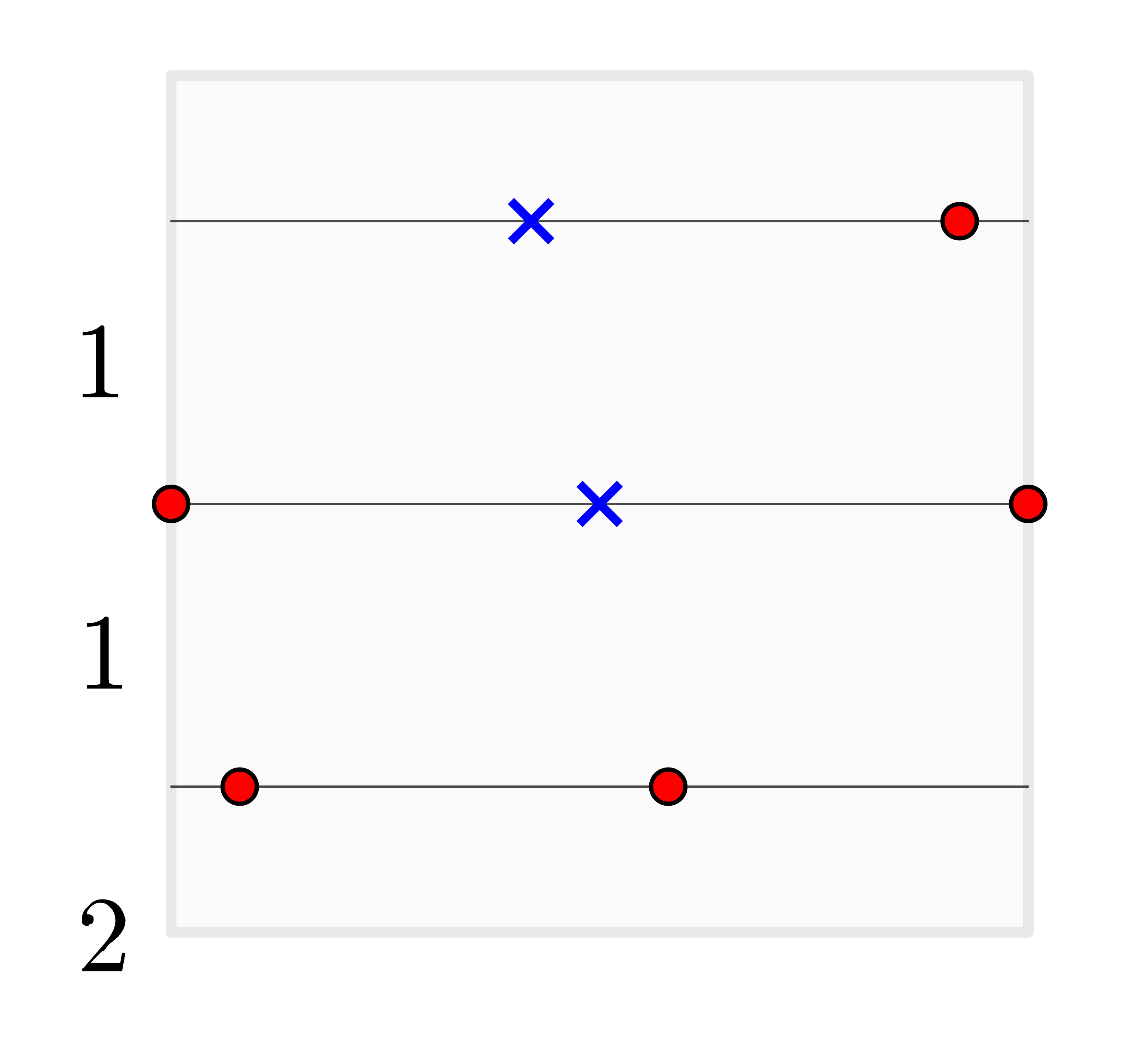}
\caption{The points with an ``x" may or may not be marked.}
\label{F:0nLast}
\end{figure}
Since there can be at most four vertical cylinders, we get that neither ``x" is marked. In that case there are four vertical cylinders, but two of them aren't free and aren't isometric to any other. 
\end{proof}

\subsection{Classification of rank one $h$-geminal subvarieties}

\begin{lem}\label{L:HGeminalPrelude}
Suppose that the orbit closure $\cM$ of $(X, \omega)$ is rank one and that, for any surface in $\cM$, parallel cylinders have homologous core curves. Then $(X, \omega)$ is a torus cover and the minimal degree map $\pi_{abs}$ to a torus is optimal.

Moreover, for any subequivalence class $\bfC$ of generic cylinders on $(X, \omega)$, $\Col_{\bfC}(\pi_{abs})$ is the optimal map for the connected surface $\Col_{\bfC}(X, \omega)$.
\end{lem}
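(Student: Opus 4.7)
The plan is to proceed in three phases: (i) establish arithmeticity to deduce torus-cover structure, (ii) prove goodness of $\pi_{abs}$ (the main obstacle), and (iii) upgrade to optimality and handle the moreover statement.

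\textbf{Arithmeticity and torus covers.} Apply the Cylinder Deformation Theorem (Theorem \ref{T:CDT}) to any equivalence class $\bfC$ of parallel cylinders on $(X,\omega)$. Under the hypothesis that parallel cylinders share a common homology class $\gamma$, the standard shear $\sigma_{\bfC}$ lies in $T_{(X,\omega)}\cM$, is rational, and has absolute projection $(\sum h_i)\gamma^{\ast} \neq 0$. Exactly as in the proof of Lemma \ref{L:GeminalField}, \cite[Theorem 5.1]{Wfield} then implies $T_{(X,\omega)}\cM$ is defined over $\bQ$, so $\bk(\cM) = \bQ$. Combined with rank one, Lemma \ref{L:R1Arithmetic} yields that $\cM$ is a locus of torus covers; in particular $(X,\omega)$ is a torus cover and $\pi_{abs}: (X,\omega) \to \bC/\Lambda_{abs}$ is defined.

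\textbf{Goodness of $\pi_{abs}$: the main obstacle.} I would rule out both failure modes of Remark \ref{R:Good}. Mode \eqref{R:Good:MarkedPoint} is immediate: a singular or marked preimage of a marked point in the interior of $\pi_{abs}(C)$ would have to sit in the interior of some cylinder of $(X,\omega)$ by local covering-map considerations, contradicting our conventions. For mode \eqref{R:Good:MultipleComponents}, suppose distinct cylinders $C \neq C'$ satisfy $\pi_{abs}(C) = \pi_{abs}(C')$. They are parallel, and by hypothesis their core curves are equal in $H_{1}(X;\bZ)$; pushing forward via $\pi_{abs \ast}$ forces them to cover the common image cylinder with the same degree, hence they are isometric. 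The delicate step is the contradiction: $C$ and $C'$ must lie in the same $\cM$-subequivalence class (the homology equality says that the only shear tangent directions realising them in absolute cohomology coincide), and the monodromy of $\pi_{abs}$ interchanging preimage components of any target cylinder must, by applying the hypothesis in every direction together with the rank-one rigidity of Lemma \ref{L:R1Deformations}, fix every cylinder-core-curve class in $H_{1}(X;\bQ)$. Since cylinder classes from enough directions generate (the relevant part of) $H_{1}(X;\bQ)$, this monodromy acts trivially on absolute rational homology, so by a Torelli-type observation it is the identity, contradicting $C\neq C'$.

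\textbf{Optimality and the moreover statement.} Once $\pi_{abs}$ is good, optimality is formal: for any translation cover $g: (X,\omega) \to (Y,\eta)$, the target $Y$ is again a torus cover, and the double inclusion $\Lambda_{abs}(Y) \subseteq \Lambda_{abs}(X) \subseteq \Lambda_{abs}(Y)$ (the first from pulling back $\eta$-periods along $g$, the second from the composite torus map $\pi_{abs}^{Y} \circ g: X \to \bC/\Lambda_{abs}(Y)$) forces $\Lambda_{abs}(X) = \Lambda_{abs}(Y)$ and $\pi_{abs}^{X} = \pi_{abs}^{Y} \circ g$, realising $g$ as a factor of $\pi_{abs}$ in the sense of Definition \ref{D:GoodAndOptimal}. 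The moreover statement follows because the lemma's hypothesis passes to boundary orbit closures (parallel homologous cylinders degenerate to parallel homologous cylinders, as in the proof of Lemma \ref{L:HGeminalBoundary}); the first part of the lemma applied to the connected surface $\Col_{\bfC}(X,\omega)$ shows its $\pi_{abs}$ is optimal, and this $\pi_{abs}$ coincides with the collapse $\Col_{\bfC}(\pi_{abs})$ supplied by Lemma \ref{L:ColExists}, because cylinder collapse preserves absolute periods and hence the target torus and the induced map.
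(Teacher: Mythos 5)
Your proposal mirrors the paper's high-level structure (arithmeticity, goodness of $\pi_{abs}$, optimality, the moreover statement) but the substance of the middle two phases is different from the paper's and has genuine gaps.

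\textbf{Goodness.} The paper proves goodness by a direct period computation: since all horizontal cylinders have homologous core curves, cutting along them decomposes $(X,\omega)$ into pieces with exactly two boundary circles glued in a cyclic order, so every absolute period has imaginary part in $H\bZ$ where $H$ is the total cylinder height; this immediately forces each horizontal cylinder to be the full preimage of its image. Your attempt to rule out the two failure modes of Remark \ref{R:Good} instead does not close. For Mode \eqref{R:Good:MarkedPoint} you assert the contradiction is ``immediate,'' but it is not: a marked point of the target torus lying in $\pi_{abs}(C)$ need not lift to a marked point inside $C$ (it could lift to a non-marked preimage inside $C$ with a marked preimage elsewhere), so some positive argument is required. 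For Mode \eqref{R:Good:MultipleComponents} you invoke ``the monodromy of $\pi_{abs}$ interchanging preimage components,'' which presupposes a deck transformation; but the punctured cover of $E$ minus its marked points is a cover of a surface with free fundamental group and need not be normal, so such a deck transformation need not exist. The subsequent Torelli-type step (trivial on $H_1(X;\bQ)$ implies trivial automorphism) is also not justified for arbitrary torus covers with marked points.

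\textbf{Optimality.} Your lattice argument is incorrect. The inclusion $\Lambda_{abs}(Y)\subseteq\Lambda_{abs}(X)$ does not hold in general: for a degree-$d$ unbranched cover of tori one only gets $d\cdot\Lambda_{abs}(Y)\subseteq\Lambda_{abs}(X)$ (via transfer), and indeed $\Lambda_{abs}(X)$ is typically a proper sublattice of $\Lambda_{abs}(Y)$. Also notice you argue for arbitrary translation covers $g$, but optimality only requires factoring \emph{good} covers through $\pi_{abs}$, and the paper's argument uses the goodness of $g$ essentially: from $g$ good one deduces that $(X',\omega')$ satisfies the same homology hypothesis, hence $\pi_{abs}'$ is good, hence $\pi_{abs}'\circ g$ is good; then the universal property produces an induced good map $g'$ of tori, and Lemma \ref{L:GoodInH(0^n)} forces $g'$ to be the identity. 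Without restricting to good $g$ one would be asserting that every translation cover factors through $\pi_{abs}$, which is a strictly stronger and generally false claim (e.g.\ for a square-tiled surface of genus $\ge 2$ with a map to a torus).

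\textbf{Moreover.} Asserting that ``parallel homologous cylinders degenerate to parallel homologous cylinders'' does not give what is needed, because the potential obstruction is the \emph{creation} of new parallel pairs in $\Col_{\bfC}(X,\omega)$ that were not parallel upstairs. The paper handles this with a dedicated sublemma, using that the full equivalence class in a given direction covers the torus-cover surface, so two non-homologous, non-parallel cylinder directions would produce disjoint core curves covering the surface twice---a contradiction. You also do not supply the connectivity of $\Col_{\bfC}(X,\omega)$, which the paper derives from Lemma \ref{L:DisconnectedDegenerationMaps} together with goodness of $\pi_{abs}$. These are real gaps, not just omissions of routine verification.
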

\begin{rem}
We will apply Lemma \ref{L:HGeminalPrelude} only in the case that $\cM$ is geminal, but the fact that the proof applies more broadly allows us to connect to some previous work: since the hypotheses of the lemma are satisfied by Shimura-Teichm\"uller curves (see M\"oller \cite{M5} for a definition), Lemma \ref{L:HGeminalPrelude} (or more specifically the weaker statement that $\pi_{abs}$ is good) recovers a result of Aulicino-Norton \cite[Theorem 3.4]{AulicinoNorton}, which itself is a strengthening of a result of M\"oller \cite[Lemma 4.17]{M5}. Lemma \ref{L:HGeminalPrelude}  extends these results to a broader class of translation surfaces. Our approach is fundamentally different than the approaches of Aulicino-Norton  and  M\"oller. See Warning \ref{W:OptNeqOpt} before comparing to \cite{M5} or \cite{AulicinoNorton}.
\end{rem}
\begin{proof}
By \cite[Theorem 1.9]{Wcyl}, $\mathbf{k}(\cM) = \mathbb{Q}$. Since $\cM$ is rank one, it is a locus of torus covers by Lemma \ref{L:R1Arithmetic}. Let $\pi_{abs}$ denote the minimal degree map from $(X, \omega)$ to a torus. Suppose without loss of generality that $(X, \omega)$ is horizontally periodic.

Since parallel cylinders have homologous core curves, if we cut the core curves of each horizontal cylinder, we obtain a collection of sub-surfaces, each with exactly two boundary components. The surface $(X,\omega)$ is formed by gluing these subsurfaces together in a cyclic order. Equivalently, the surface is obtained by gluing the horizontal cylinders together in a cyclic order. The top of each cylinder is glued to the bottom of the cylinder above it along a collection of saddle connections; this gluing is reminiscent of an interval exchange transformation.

Let $H$ be the sum of the heights of the horizontal cylinders. Because the surface is formed by gluing the horizontal cylinders together in a cyclic order, every absolute period has an imaginary part that is an integer multiple of $H$. This shows that each individual horizontal cylinder is the preimage of its image under $\pi_{abs}$. Since this argument holds in every periodic direction it follows that $\pi_{abs}$ is good.

Now we will show that $\pi_{abs}$ is optimal, i.e. that any other good map factors through it. Suppose that $g: (X, \omega) \rightarrow (X', \omega')$ is a good map. It follows that parallel cylinders on $(X', \omega')$ have homologous core curves and so $\pi_{abs}' \circ g$ is a good map, where $\pi_{abs}'$ is the quotient by absolute periods for $(X', \omega')$. Since any translation cover from $(X, \omega)$ to a flat torus factors through $\pi_{abs}$, we have the commutative diagram of Figure \ref{F:Commutative}.
\begin{figure}[h!]
\begin{tikzcd}
  (X,\omega) \arrow[r, "\pi_{abs}"] \arrow[d, "g"]
    & (E, \eta) \arrow[d, "g'" ] \\
 (X', \omega') \arrow[r, "\pi_{abs}'"]
&(E',\eta') \end{tikzcd}
\caption{The proof of Lemma \ref{L:HGeminalPrelude}}
\label{F:Commutative}
\end{figure}
where $g': (E ,\eta) \rightarrow (E', \eta')$ is a good map since $\pi_{abs}' \circ g$ is. (Note that it is immediate from the definition of ``good map" that compositions of good maps are good and that factors of good maps are good as well.)  The map $g'$ has degree one by Lemma \ref{L:GoodInH(0^n)}. Therefore, $g'$ is the identity and so $g$ is a factor of $\pi_{abs}$ as desired. 

Suppose now that $\bfC$ is a subequivalence class of generic cylinders on $(X, \omega)$. By Lemma \ref{L:DisconnectingUnderOptimalMap}, $\Col_{\bfC}(X, \omega)$ is connected since $\pi_{abs}$ is good and $\Col_{\pi_{abs}(\bfC)}\left( \pi_{abs}\left(X, \omega \right) \right)$ is connected. 

\begin{sublem}
Parallel cylinders on $\Col_{\bfC}(X, \omega)$ have homologous core curves. 
\end{sublem}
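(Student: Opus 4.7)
The strategy is a degeneration argument, analogous to the proof of Lemma~\ref{L:HGeminalBoundary}, that transfers the hypothesis from surfaces in $\cM$ to $\Col_\bfC(X, \omega) \in \cM_\bfC$. Given parallel cylinders $C, C'$ on $\Col_\bfC(X, \omega)$ with core curves $\gamma_C, \gamma_{C'}$, the plan is to exhibit them as limits of parallel cylinders on the smooth approximating surfaces $(X_t, \omega_t) := a_t^\bfC(X, \omega) \in \cM$ as $t \to -\infty$, invoke the hypothesis of Lemma~\ref{L:HGeminalPrelude} on each $(X_t, \omega_t)$ to conclude $[\gamma_{C(t)}] = [\gamma_{C'(t)}]$ in $H_1(X_t)$, and then descend via the fact that homologous cylinders degenerate to homologous cylinders, exactly as in Lemma~\ref{L:HGeminalBoundary}.

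For the first step, \cite[Lemma 2.15]{MirWri} and the WYSIWYG compactification yield cylinders $C(t), C'(t)$ on $(X_t, \omega_t)$ converging to $C, C'$. For the crucial matter of realizing these as \emph{parallel} cylinders on the smooth surfaces, rotate $(X, \omega)$ so that $\bfC$ is horizontal; then $a_t^\bfC$ merely scales the heights of cylinders in $\bfC$, leaving all other cylinders fixed. When $C, C'$ are parallel to $\bfC$ (horizontal after rotation), they arise from horizontal cylinders on $(X, \omega)$ that lie outside $\bfC$ and persist as parallel cylinders on every $(X_t, \omega_t)$, so the hypothesis directly gives $[\gamma_{C(t)}] = [\gamma_{C'(t)}]$. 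For $C, C'$ in a direction not parallel to $\bfC$, the homology classes of persistent cylinders are locally constant in $t$ while the holonomies vary continuously and converge to a common limit direction, so a suitable $t$ close to $-\infty$ can be selected at which $C(t), C'(t)$ are exactly parallel and the hypothesis again applies.

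The main obstacle will be the non-horizontal case: generic continuous deformations preserve approximate rather than exact parallelism, so the ``direction matching'' step requires care. As an alternative should this prove problematic, I would fall back on the cyclic decomposition structure derived from the hypothesis in the proof of Lemma~\ref{L:HGeminalPrelude}, namely that cutting along all parallel core curves in any direction decomposes the surface into subsurfaces each with exactly two boundary components, glued cyclically. One shows that this structure is preserved by $\Col_\bfC$ in every direction: cylinders in $\bfC$ are removed from the cyclic structure in their own direction (with adjacent subsurfaces merging across the collapsed core curves), while cylinders in other directions persist (possibly merging through the collapsed $\bfC$) and inherit an analogous cyclic decomposition. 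Since the cyclic decomposition is equivalent to ``parallel implies homologous,'' this route yields the sublemma without needing to match directions on approximating surfaces.
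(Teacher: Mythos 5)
Your primary strategy has a genuine gap, and it is precisely at the point you flag. After rotating so that $\bfC$ is horizontal, the dilation $a_t^{\bfC}$ preserves all horizontal periods while scaling the $\bfC$-portion of vertical periods, so the real parts of the core-curve holonomies of $D_1(t)$ and $D_2(t)$ are \emph{constant} in $t$ while the imaginary parts move toward their limits. If the common limit direction is neither horizontal nor vertical, the two ratios (real part)/(imaginary part) agree only in the limit $t\to-\infty$ and there is no reason they should coincide at any finite $t$; generically they will not. So the claim ``a suitable $t$ close to $-\infty$ can be selected at which $C(t), C'(t)$ are exactly parallel'' is unjustified, and I expect it is simply false. (The horizontal and vertical cases you handle correctly, but these do not cover the general situation and you cannot reduce to them by a shear or rotation, since the collapse direction is tied to $\bfC$, not to the pair $D_1,D_2$.)

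The fallback does not repair this. Its crux---``cylinders in other directions persist (possibly merging through the collapsed $\bfC$) and inherit an analogous cyclic decomposition''---is exactly the sublemma re-stated: the cyclic decomposition in a given direction is \emph{equivalent} to the parallel-implies-homologous property in that direction, so ``inheriting'' the decomposition is the thing to be proved, not a tool to prove it. Moreover, the homology of the degenerate surface is not the same as that of $(X,\omega)$, so ``the decomposition is preserved'' needs a real argument, which the proposal does not supply.

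The paper's proof avoids direction-matching entirely and is genuinely different in mechanism. It argues by contradiction: if $D_1,D_2$ are parallel but non-homologous on $\Col_{\bfC}(X,\omega)$, it lifts them (after a standard dilation in $\bfC$) to cylinders $D_1',D_2'$ on a nearby surface in $\cM$ with \emph{disjoint core curves}; these lifts are then non-homologous, hence, by the hypothesis of Lemma~\ref{L:HGeminalPrelude}, \emph{non-parallel}. It then exploits the torus-cover structure and intersection theory: the cylinders parallel to $D_1'$ tile the surface and are all homologous to $D_1'$, so they have zero algebraic (hence, being transverse, zero geometric) intersection with the core curve of $D_2'$; but then the core curve of $D_2'$ lies inside a single $D_1'$-direction cylinder without crossing its core, forcing $D_2'$ to be parallel to $D_1'$, a contradiction. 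The two ingredients---lifting to disjoint \emph{non}-parallel cylinders and the torus-cover/intersection-number argument---are exactly what your proposal is missing.
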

\begin{proof}
Suppose in order to deduce a contradiction that this is not the case. Let $D_1$ and $D_2$ be two cylinders that are parallel on $\Col_{\bfC}(X, \omega)$ but whose core curves are not homologous. After perhaps applying the standard dilation to $\bfC$ (in order to assume that the surface is sufficiently close to the boundary), we may assume that there are cylinders $D_1'$ and $D_2'$ on $(X, \omega)$ with disjoint core curves and so that $\Col_{\bfC}(D_i') = D_i$. Since $D_1$ and $D_2$ are not homologous the same holds for $D_1'$ and $D_2'$ (as in the proof of Lemma \ref{L:HGeminalBoundary}), implying that $D_1'$ and $D_2'$ are not parallel to each other. 

  Since $(X, \omega)$ is a torus cover, it is covered by the cylinders parallel to $D_1'$, and also by the cylinders parallel to $D_2'$. Since parallel cylinders are homologous, and since the core curves of $D_1'$ and $D_2'$ do not intersect, no cylinder parallel to $D_1'$ intersects a cylinder parallel to $D_2'$, which is a contradiction.  
\end{proof}


Because of our previous arguments, the sublemma implies that the quotient by the absolute period lattice is the optimal map for $\Col_{\bfC}(X, \omega)$. 

By Lemma \ref{L:DegeneratingOptimalMap}, $\Col_{\bfC}(\pi_{abs})$ is a good map. Since $\Col_{\bfC}(\pi_{abs})$ is a map to a torus, it has the quotient by the absolute period lattice of $\Col_{\bfC}(X, \omega)$ as a factor. However, the quotient by the absolute period lattice is the optimal map and hence $\Col_{\bfC}(\pi_{abs})$ must be the optimal map as desired.
\end{proof}

\begin{lem}\label{L:HGeminalOptimal}
Suppose that the orbit closure of $(X, \omega)$ is a rank one $h$-geminal  subvariety $\cM$.
\begin{enumerate}
\item\label{I:R1HGeminal1} The minimal degree map $\pi_{abs}$ from $(X, \omega)$ to a torus is the  optimal map.
\item\label{I:R1HGeminal2} The orbit closure $\cM_{abs}$ of $\pi_{abs}(X, \omega)$ is one of the following: a stratum of flat tori, an Abelian double, a quadratic double, or a $T \times T$ locus.
\item\label{I:R1HGeminal3} The optimal map is the identity when $\cM_{abs}$ is a stratum or Abelian double. Moreover, when the optimal map has degree greater than one, each subequivalence class contains exactly two cylinders.
\item\label{I:R1HGeminal3.5} The optimal map is the identity when $\cM$ is an Abelian or quadratic double. 
\item\label{I:R1HGeminal4} For any subequivalence class $\bfC$ of generic cylinders on $(X, \omega)$, $\Col_{\bfC}(\pi_{abs})$ is the optimal map for $\Col_{\bfC}(X, \omega)$. 
\end{enumerate} 
\end{lem}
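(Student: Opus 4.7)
The plan is to apply Lemma \ref{L:HGeminalPrelude} to obtain items (1) and (5), use Proposition \ref{P:0n} for item (2), and derive items (3) and (3.5) by case analysis on the classification of $\cM_{abs}$.

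The first and main step is to verify the hypothesis of Lemma \ref{L:HGeminalPrelude}: on every surface in $\cM$, any two parallel cylinders have homologous core curves. This is automatic for cylinders inside a common sub-equivalence class by the definition of $h$-geminal, so the work is in handling parallel cylinders $C_1, C_2$ lying in distinct sub-equivalence classes. The plan is this: by the $h$-geminal condition combined with Lemma \ref{L:StableTwins}, for each cylinder $C$ the image $p(\gamma_C^\ast)$ of its core curve's Poincar\'e dual under projection to absolute cohomology lies in $p(T_{(X,\omega)}\cM)\otimes\bQ$, because a free cylinder's sub-equivalence class contributes $\gamma_C^\ast$ and a twin pair of homologous cylinders contributes $2\gamma_C^\ast$ in absolute cohomology. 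Since $\cM$ has rank one, these projections span a single complex one-dimensional subspace, forcing $[\gamma_{C_1}]$ and $[\gamma_{C_2}]$ to be proportional in $H_1(X;\bQ)$. Upgrading proportionality to an actual equality of homology classes is the main obstacle; the plan is to rule out non-unit multipliers by arguing that they would force $C_1$ and $C_2$ to have distinct circumferences with conflicting preimage structure under $\pi_{abs}$, from which a marked-point slide would produce a cylinder that is neither free nor has a homologous twin, contradicting $h$-geminality.

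Once the hypothesis of Lemma \ref{L:HGeminalPrelude} is established, item (1) and the connected subcase of item (5) follow directly, while the disconnected subcase of item (5) is handled separately by Lemma \ref{L:DisconnectedDegenerationMaps}, which forces both $\pi_{abs}$ and $\Col_{\bfC}(\pi_{abs})$ to be identity maps. For item (2), the good map $\pi_{abs}$ provides a bijection between cylinders on $(X,\omega)$ and those on the torus image $(E,\eta) = \pi_{abs}(X,\omega)$, allowing the $h$-geminal structure on $\cM$ to descend to a geminal structure on $\cM_{abs} \subset \cH(0^n)$: each cylinder on $(E,\eta)$ is free or has a homologous twin according to whether its unique preimage on $(X,\omega)$ is free or lies in a twin pair. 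Proposition \ref{P:0n} then classifies $\cM_{abs}$ as a stratum, an Abelian double, a quadratic double, or a $T\times T$ locus.

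For items (3) and (3.5), I would proceed case-by-case on the type of $\cM_{abs}$. When $\cM_{abs}$ is a stratum, every cylinder on $(E,\eta)$ is free; the good property of $\pi_{abs}$ together with Assumption CP then forces every cylinder on $(X,\omega)$ to be free, so \cite[Theorem 1.5]{MirWri2} gives that $\cM$ is itself a stratum and hence $\pi_{abs}$ is the identity. An analogous argument, tracking that twin pairs on $(E,\eta)$ lift to twin pairs on $(X,\omega)$, handles the Abelian double case in item (3) and both cases in item (3.5). The parenthetical in item (3), asserting that each sub-equivalence class contains exactly two cylinders when $\pi_{abs}$ has degree greater than one, reduces to verifying that in the remaining possibilities ($\cM_{abs}$ a quadratic double or $T\times T$ locus) together with the degree hypothesis, every cylinder on $(X,\omega)$ must belong to a twin pair; this will follow from examining the deck-group action on the cover together with Lemma \ref{L:NotAllFree} applied to $\cM$.
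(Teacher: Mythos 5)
The overall plan---verify the hypothesis of Lemma~\ref{L:HGeminalPrelude}, apply it and Proposition~\ref{P:0n}, then case-analyze---matches the paper's proof, but there are real gaps at the crucial step and in the case analysis.

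The main problem is in your treatment of Sublemma-level claim that parallel cylinders have homologous core curves. You correctly derive that for parallel cylinders $C_1, C_2$ in distinct sub-equivalence classes, $[\gamma_{C_1}]$ and $[\gamma_{C_2}]$ are proportional in $H_1(X;\bQ)$, and you identify upgrading proportionality to equality as ``the main obstacle.'' But no elaborate workaround is needed: a non-separating simple closed curve is a primitive class in $H_1(X;\bZ)$, so two such curves with proportional (nonzero) classes are in fact homologous. The paper invokes this in one sentence. Your proposed ``marked-point slide'' argument is both unnecessary and unverifiable as stated---it presupposes a fairly detailed picture of how cylinders of different circumferences interact with $\pi_{abs}$ and marked points, and circularly leans on $\pi_{abs}$ while verifying the hypothesis that makes $\pi_{abs}$ the optimal map. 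The failure to notice the primitivity fact is the key gap.

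The case analysis for items (3) and (3.5) is also too thin. For item (3), your stratum case relies on ``goodness plus Assumption CP forces every cylinder on $(X,\omega)$ to be free''; this requires an argument that deformations of the image cylinder in $\cM_{abs}$ actually lift to cylinder deformations of the preimage in $\cM$, which is not automatic from goodness alone. The paper instead finds a surface in $\cM_{abs}$ covered by a single horizontal cylinder, pulls it back via goodness to a one-horizontal-cylinder surface in $\cM$, shears to produce a transverse cylinder crossing once, and applies Lemma~\ref{L:IsGood}---a cleaner route. For the second clause of item (3) (exactly two cylinders per sub-equivalence class when $\deg\pi_{abs}>1$), appealing to Lemma~\ref{L:NotAllFree} only produces \emph{some} surface with \emph{some} twin pair, not the universal statement that every cylinder lies in a two-element sub-equivalence class; the paper instead uses the contrapositive of its one-cylinder sublemma. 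Finally, your treatment of (3.5) is essentially ``analogous argument,'' which does not engage with the content: for the quadratic double case one must observe that the only rank-one quadratic doubles that are not loci of flat tori are doubles of $\cQ(2,-1^2,0^k)$ or $\cQ(2,2,0^k)$, and then rule these out by showing they are not $h$-geminal. Nothing in your sketch addresses this.
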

\begin{rem}\label{R:HGeminalOptimal}
This shows that when $\cM$ is $h$-geminal, Proposition \ref{P:geminalrk1} holds. In particular, Proposition \ref{P:geminalrk1} (\ref{I:geminalrk1:WhatIsM}) is implied by Lemma \ref{L:HGeminalOptimal} (\ref{I:R1HGeminal2}) and (\ref{I:R1HGeminal3}); Proposition \ref{P:geminalrk1} (\ref{I:geminalrk1:PiOpt}) is implied by Lemma \ref{L:HGeminalOptimal} (\ref{I:R1HGeminal1}) and (\ref{I:R1HGeminal3.5}); Proposition \ref{P:geminalrk1} (\ref{I:geminalrk1:TwoCylinders}) is implied by Lemma \ref{L:HGeminalOptimal} (\ref{I:R1HGeminal3}); and Proposition \ref{P:geminalrk1} (\ref{I:geminalrk1:PiOptDegeneratesToPiOpt}) is implied by Lemma \ref{L:HGeminalOptimal} (\ref{I:R1HGeminal4}). 
\end{rem}

\begin{proof}
Let $(X, \omega)$ be a horizontally periodic surface in $\cM$.

\begin{sublem}\label{SL:homologous} 
The core curves of any two horizontal cylinders $C$ and $C'$ are homologous. 
\end{sublem}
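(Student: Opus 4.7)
The plan is to show that every horizontal cylinder core curve is a nonzero real scalar multiple of a single class in $H_1(X;\bR)$, and then upgrade this proportionality to equality using the fact that simple closed non-separating curves represent primitive integer homology classes.

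First I would invoke Lemma \ref{L:R1Deformations} together with the standard computation of the $\GL$-derivatives to identify $p(T_{(X,\omega)}\cM)=\bC[\omega]+\bC[\bar\omega]$, whose intersection with $H^1(X;\bR)$ is the real two-dimensional subspace $V:=\bR[\Re\omega]+\bR[\Im\omega]$. By the Cylinder Deformation Theorem \ref{T:CDT}, for every subequivalence class $\bfC$ of horizontal cylinders the standard shear $\sigma_\bfC$ lies in $T_{(X,\omega)}\cM$. Because $\cM$ is $h$-geminal, $\bfC$ is either a single free cylinder or a pair of twins with homologous core curves, so $p(\sigma_\bfC)$ is a positive real multiple of the Poincar\'e dual $[\gamma^*]$ of the core curve $\gamma$ of any cylinder in $\bfC$. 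In particular $[\gamma^*]\in V$ for every horizontal cylinder.

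Next I would pin down $[\gamma^*]$ via cup products. Using the relations $[\Re\omega]\cup[\Im\omega]=\Area(X,\omega)=:A$, $[\Re\omega]\cup[\Re\omega]=[\Im\omega]\cup[\Im\omega]=0$, and the Poincar\'e duality formula $[\gamma^*]\cup[\eta]=\int_\gamma\eta$, combined with the horizontality evaluations $\int_\gamma\Im\omega=0$ and $\int_\gamma\Re\omega=w_\gamma>0$ (where $w_\gamma$ is the circumference), a short linear algebra computation yields $[\gamma^*]=-(w_\gamma/A)[\Im\omega]$. Applying the inverse of Poincar\'e duality then gives $[\gamma]=-(w_\gamma/A)\xi$, where $\xi\in H_1(X;\bR)$ is Poincar\'e dual to $[\Im\omega]$ and is independent of the cylinder. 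Moreover $\xi\neq 0$ since $[\Im\omega]\cup[\Re\omega]=-A\neq 0$ prevents $[\Im\omega]$ from being exact.

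Hence every horizontal cylinder core curve is a nonzero real multiple of the fixed class $\xi$, and so none is separating in $X$. A classical fact in surface topology says that a non-separating simple closed curve on a closed orientable surface represents a primitive integer homology class (one produces a transverse arc meeting it in a single point). Two primitive integer classes that are positive real scalar multiples of each other must be equal, so all horizontal core curves coincide in $H_1(X;\bZ)$. The main obstacle is executing the cup product computation with careful sign conventions for Poincar\'e duality and the intersection pairing; a pleasant byproduct is that the argument also forces all horizontal cylinders to share a common circumference.
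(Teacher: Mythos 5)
Your proof is correct and follows essentially the same route as the paper's: use $h$-geminality to see that $p(\sigma_\bfC)$ is a positive multiple of the Poincar\'e dual of the core curve, use the rank-one hypothesis (via Lemma \ref{L:R1Deformations}) to force these classes into a common line, and then pass from proportionality to equality by primitivity of non-separating simple closed curves. The main difference is that where the paper cites \cite[Theorem 1.5]{MirWri} to get that $p(\sigma)$ and $p(\sigma')$ are proportional, you instead carry out the cup-product computation (using $\int_\gamma\Im\omega=0$) to locate $p(\gamma^*)$ explicitly in $\bR[\Im\omega]$ --- this makes the step self-contained, and also makes the positivity of the proportionality constant (which the paper's last sentence tacitly uses) completely explicit.
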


\begin{proof}
Recall the notion of standard shear, defined before Theorem \ref{T:CDT}. 

Let $\sigma$ and $\sigma'$ be the standards shears in the subequivalence classes of $C$ and $C'$ respectively. Since $\cM$ is $h$-geminal, $p(\sigma)$ is proportional to the Poincare dual (in absolute cohomology) of the core curve of $C$, and the analogous statement holds for $p(\sigma')$. (Recall from Section \ref{SS:RankRelAndCovers} that $p$ is the projection from relative to absolute cohomology.)  

We now recall that, since $\cM$ has rank 1,  $p(\sigma)$ and $p(\sigma')$ are proportional. Indeed, this can be derived from Lemma \ref{L:R1Deformations}, and is a special case of \cite[Theorem 1.5]{MirWri}, which is recalled in \cite[Theorem 3.12]{ApisaWrightDiamonds}.

Hence we get that the core curves of $C$ and $C'$ are proportional in absolute homology. Since two non-separating simple closed curves define proportional homology classes if and only if they are homologous, this gives the result. 
\end{proof}

By Sublemma \ref{SL:homologous}, the hypotheses of Lemma \ref{L:HGeminalPrelude} hold and so \eqref{I:R1HGeminal1} and \eqref{I:R1HGeminal4} are immediate. Since $\pi_{abs}$ is good, $\cM_{abs}$ is geminal in a stratum of tori. By Proposition \ref{P:0n}, $\cM_{abs}$ must be one of the following: a stratum, an Abelian double, a quadratic double, or a $T \times T$ locus (this shows (\ref{I:R1HGeminal2})). It remains to show (\ref{I:R1HGeminal3}) and (\ref{I:R1HGeminal3.5}).

\begin{sublem}\label{SL:HGeminalRulingOut}
Suppose that $\cM_{abs}$ contains a surface where at least one subequivalence class of cylinders has only one cylinder. Then the optimal map is the identity.
\end{sublem}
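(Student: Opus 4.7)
The plan is to pull back the singleton subequivalence class via the good map $\pi_{abs}$ (established in part (\ref{I:R1HGeminal1}) of the lemma) and then use the classification from Proposition \ref{P:0n} to force $\cM_{abs}$ to be a stratum of flat tori, after which Mirzakhani--Wright and $h$-geminality together force $(X,\omega)$ to be a flat torus.

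First, let $\{C\}$ be a singleton subequivalence class on $(Y,\eta)\in\cM_{abs}$, and let $(X,\omega)\in\cM$ project to $(Y,\eta)$. Since $\pi_{abs}$ is good, $\tilde C:=\pi_{abs}^{-1}(C)$ is a single cylinder on $(X,\omega)$, sharing the height of $C$. The standard shear $\sigma_C$ lies in $T_{(Y,\eta)}\cM_{abs}$, and its pullback $\pi_{abs}^*(\sigma_C)$ is the standard shear $\sigma_{\tilde C}$ of $\tilde C$ alone. So $\sigma_{\tilde C}\in T_{(X,\omega)}\cM$, and Lemma \ref{L:StableTwins} identifies $\tilde C$ as a free cylinder in $\cM$.

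Next, I would argue that the hypothesis forces $\cM_{abs}$ to be a stratum $\cH(0^m)$. By Proposition \ref{P:0n} the alternatives are an Abelian double, a quadratic double, or a $T\times T$ locus. For a quadratic double of a component of $\cQ(-1^4,0^k)$ contained in $\cH(0^n)$ this is clean: every cylinder core curve on the genus zero base encloses a set of singularities summing to $-2$, and since $\kappa_i\in\{-1,0\}$ this set must consist of exactly two poles, yielding trivial monodromy in the holonomy double cover. Hence every cylinder lifts to a twin pair and no singleton subequivalence classes exist. The Abelian double and $T\times T$ cases are the real obstacle, because a deck translation parallel to a cylinder direction can produce a genuine singleton in $\cM_{abs}$. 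To rule these out I would collapse $\tilde C$ and apply Lemma \ref{L:DegeneratingOptimalMap} to transfer the question to the boundary $\cM_{\tilde C}$, arguing that the deck structure of an Abelian double or $T\times T$ locus cannot be consistent with a persistently-free cylinder over a lower-complexity boundary stratum; this is the step I expect to require the most care, and it may well proceed by an induction on rel (as in the overall structure of Proposition \ref{P:0n}).

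Granted $\cM_{abs}=\cH(0^m)$, every cylinder downstairs is free. Applying the pullback argument of the first paragraph to every cylinder shows that every cylinder on every surface of $\cM$ is free. By \cite[Theorem 1.5]{MirWri2}, $\cM$ is a component of a stratum of Abelian differentials. Since $\cM$ has rank $1$, this stratum has genus $1$, so it has the form $\cH(0^{m'})$ — any other rank $1$ description is excluded, and for genus $\geq 2$ one could exhibit horizontally periodic surfaces with non-homologous parallel cylinders, violating $h$-geminality. Thus $(X,\omega)$ is itself a flat torus, the minimal degree cover $\pi_{abs}:(X,\omega)\to\bC/\Lambda_{abs}(X,\omega)$ has degree one, and so $\pi_{abs}$ is the identity.
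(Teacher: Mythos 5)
Your opening step is sound: pulling back the singleton class via the good map $\pi_{abs}$ does give a free cylinder $\tilde C$ upstairs, and the argument that $\tilde C$ cannot have a twin is correct. But the remainder of the proposal rests on a claim that is false, and the route it takes would not reach the conclusion.

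You argue that the existence of a singleton forces $\cM_{abs}$ to be the stratum $\cH(0^m)$, and you acknowledge that Abelian doubles and $T\times T$ loci are ``the real obstacle,'' proposing to rule them out by a boundary/induction argument. That step cannot succeed, because Abelian doubles of $\cH(0^m)$ genuinely \emph{do} contain surfaces with singleton subequivalence classes: for a degree-two cover $\bC/\Lambda\to\bC/\Lambda'$ with $[\Lambda':\Lambda]=2$, a cylinder on the base whose core curve corresponds to a primitive vector $v\in\Lambda'$ with $v\notin\Lambda$ lifts to a single cylinder wrapping twice, and this lifted cylinder is free. So the hypothesis of the sublemma is compatible with $\cM_{abs}$ being an Abelian double (indeed, the paper invokes the sublemma precisely in the case that $\cM_{abs}$ is a stratum \emph{or an Abelian double} --- see the sentence immediately after the sublemma). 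The conclusion of the sublemma is not that $\cM_{abs}$ is a stratum; it is that $\pi_{abs}$ has degree one, i.e.\ $(X,\omega)$ is itself a torus and $\cM=\cM_{abs}$, with $\cM_{abs}$ allowed to be any of the rank-one possibilities. Trying to reduce to the stratum case is proving a stronger and false intermediate statement.

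The paper's proof avoids the issue entirely and is much shorter. From the singleton class, shear so there are no vertical saddle connections, then dilate all the other horizontal subequivalence classes to zero height (this doesn't degenerate the surface, by the choice of shear), producing a surface in $\cM_{abs}$ covered by a single horizontal cylinder. Since $\pi_{abs}$ is good, the corresponding surface $(Y,\eta)\in\cM$ is also covered by a single horizontal cylinder. After a further shear, there is a vertical cylinder meeting it exactly once, and Lemma \ref{L:IsGood} (which says a surface with two cylinders whose core curves meet once has no non-identity good cover) immediately forces $\pi_{abs}$ to have degree one. No case analysis on $\cM_{abs}$ is needed. I'd recommend abandoning the classification detour and using the single-cylinder reduction plus Lemma \ref{L:IsGood} directly.
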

Note that the assumption is automatically true if $\cM_{abs}$ is a stratum or Abelian double. Therefore, Sublemma \ref{SL:HGeminalRulingOut} implies (\ref{I:R1HGeminal3}).

The proof will use the following general remark, which is illustrated in Figure \ref{F:ColOntoC} and does not use any of our ambient assumptions. 

\begin{figure}[h]
\includegraphics[width=\linewidth]{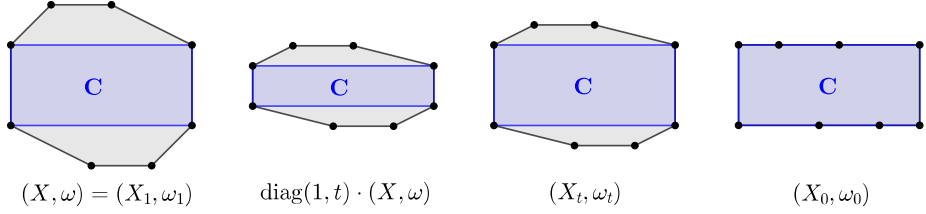}
\caption{  The procedure described in Remark \ref{R:CollapseOntoC}.  }
\label{F:ColOntoC}
\end{figure}

\begin{rem}\label{R:CollapseOntoC}
Let $\bfC$ be a collection of cylinders on a surface $(X,\omega)$ in an arbitrary orbit closure $\cM$. Assume that the standard deformation of $\bfC$ stays in $\cM$, which is to say $\sigma_\bfC \in T_{(X,\omega)} \cM$. Assume that $\bfC$ is horizontal and that the complement of $\bfC$ does not contain any horizontal saddle connections (this can always be arranged using the $GL(2,\bR)$ action). We now describe a general construction to ``collapse $(X,\omega)$ onto $\bfC$", resulting in a new surface $(X_0, \omega_0)\in \cM$ which is horizontally periodic and whose horizontal cylinders exactly correspond to the cylinders in $\bfC$. To describe this construction, first define the surface $(X_t, \omega_t), t\in (0,1]$  via the following two step process. First scale the vertical direction by a factor of $t$ with the $GL(2,\bR)$ action, using the diagonal matrix $\operatorname{diag}(1, t)\in GL(2,\bR)$. Second, use a standard deformation of $\bfC$ to scale the vertical direction in $\bfC$ only by $1/t$ to obtain $(X_t, \omega_t)$. So $(X_1, \omega_1)=(X,\omega)$, the path $(X_t, \omega_t)$ remains in $\cM$, and the cylinders $\bfC$ have constant direction, circumference, and modulus along the path. Since the complement of $\bfC$ does not have vertical saddle connections, the surfaces $(X_t, \omega_t)$ do not degenerate as $t\to 0$, and we can define $(X_0, \omega_0)\in \cM$ to be the limit. 
This surface $(X_0, \omega_0)$ is the desired result of ``collapsing $(X,\omega)$ onto $\bfC$". 

If $(X,\omega)$ happens to be horizontally periodic, then $(X_0, \omega_0)$ can be equivalently obtained by vertically collapsing the horizontal cylinders not in $\bfC$, but the construction applies regardless of whether this is the case. 
\end{rem}

\begin{proof}
Given Remark \ref{R:CollapseOntoC}, the assumption of the lemma implies that there is a surface in $\cM_{abs}$ that is covered by a single horizontal cylinder. Because $\pi_{abs}$ is a good map, the corresponding surface, call it $(Y, \eta)$, in $\cM$ is also covered by a single horizontal cylinder. After shearing $(Y,\eta)$, we can assume that there is a vertical cylinder that crosses the horizontal exactly once. Lemma \ref{L:IsGood} now gives the result.
\end{proof}

It remains to show (\ref{I:R1HGeminal3.5}), i.e. that if $\cM$ is an Abelian or quadratic double, then the optimal map is the identity. Since $\pi_{abs}$ is the optimal map, it suffices to show that $\cM$ is a locus of tori. Suppose to a contradiction that this is not the case. 

Suppose first that $\cM$ is an Abelian double; since it has rank 1, $\cM$ is necessarily an Abelian double of a stratum of flat tori. Since $\pi_{abs}$ is the minimal degree map to a torus, it follows that the cover whose domain is $(X, \omega)$ and that arises from $\cM$ being an Abelian double must be $\pi_{abs}$. In particular, $\cM_{abs}$ must be a stratum. However, by Sublemma \ref{SL:HGeminalRulingOut} this implies that the optimal map is the identity, which is a contradiction.


Suppose now that $\cM$ is a quadratic double. Since $\cM$ does not contain flat tori, $\cM$ is a quadratic double of $\cQ(2, -1^2, 0^k)$ or $\cQ(2,2, 0^k)$ for some nonnegative integer $k$. In these cases, it is easy to see that $\cM$ is not $h$-geminal as in Figure \ref{F:D1D2Double}, a contradiction.
\end{proof}

\subsection{Classification of rank one non $h$-geminal subvarieties.}

\begin{lem}\label{L:nhGeminal}
Let $(X, \omega)$ be contained in a rank one geminal subvariety $\cM$ that is not $h$-geminal. Then the following hold: 
\begin{enumerate}
\item\label{I:R1OptimalIdentity} The $\cM$-optimal map for $(X,\omega)$ is the identity. 
\item\label{I:R1AQD} $\cM$ is an Abelian or quadratic double.
\item\label{I:R1NHFinal} If $\bfC$ is a subequivalence class of generic cylinders on $(X, \omega)$, then the identity is the $\cM_{\bfC}$-optimal map for $\Col_{\bfC}(X, \omega)$.
\end{enumerate}
\end{lem}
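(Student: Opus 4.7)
The three claims are proved together by induction on the rel dimension of $\cM$. Since $\cM$ is not $h$-geminal, by Lemma \ref{L:StableTwins} the locus of surfaces in $\cM$ possessing a pair of twin cylinders with non-homologous core curves is open. We fix such a surface $(X, \omega)\in \cM$ of dense orbit that is also horizontally and vertically periodic, with a non-homologous twin pair $C, C'$ having cores $\gamma, \gamma'$.

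For part (2), the plan is to adapt the inductive argument of Proposition \ref{P:0n}. Let $k-1$ be the rel of $\cM$, and consider the $k$ horizontal sub-equivalence classes of $(X, \omega)$. Lemma \ref{L:GeminalTwinAdjacency} restricts the cyclic order of these classes along a vertical leaf to one of the five types appearing in that proof. The ``all free'' case is excluded because in rank one it would force $\cM$ to be a component of $\cH(0^n)$, which is always $h$-geminal. For each remaining cyclic order, I would degenerate a sub-equivalence class $\bfC_i$; by Lemma \ref{L:GeminalBoundary} the boundary subvariety $\cM_{\bfC_i}$ is geminal and has strictly smaller rel, so the induction hypothesis applies when $\cM_{\bfC_i}$ is non-$h$-geminal, and Lemma \ref{L:HGeminalOptimal} applies when it is $h$-geminal. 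In each subcase one extracts a translation involution on $\Col_{\bfC_i}(X,\omega)$; by tracking the non-homologous twin $C, C'$ through the degenerations and assembling the resulting involutions, one produces a translation involution $J$ on $(X, \omega)$ with $J^*\omega = \pm\omega$ exchanging $C$ with $C'$, exhibiting $\cM$ as an Abelian double (sign $+$) or a quadratic double (sign $-$).

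For part (1), granted (2), every translation cover of $(X,\omega)$ is a factor of the maximal cover $\pi_{X_{min}}$ provided by Theorem \ref{T:MinimalCover}. The relation $J^*\omega = \pm \omega$, together with rank one, forces $(X_{min}, \omega_{min})$ to be a torus (since the absolute periods of $(X,\omega)$ all come from the $J$-invariant part). Hence the only candidates for non-identity $\cM$-good covers are $\pi_{X_{min}}$ itself, any intermediate torus cover, and in the Abelian case the quotient $X \to X/J$. None of these is $\cM$-good: each either collapses a twin pair to a single cylinder (violating the preimage-of-image condition) or, after composition with a torus isogeny, identifies a pair of non-homologous twin cores, and so fails goodness. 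Thus the identity is the unique $\cM$-good map on $(X, \omega)$, so it is $\cM$-optimal.

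For part (3), by (2) and the compatibility of the involution structure with sub-equivalence class degeneration, $\cM_{\bfC}$ is again an Abelian double, a quadratic double, or a stratum of flat tori; in each of these case-(a) possibilities the identity is the $\cM_{\bfC}$-optimal map, by Lemma \ref{L:HGeminalOptimal} in the $h$-geminal case and by the induction hypothesis in the non-$h$-geminal case applied to $\cM_{\bfC}$ (which has strictly smaller rel). Applying Lemma \ref{L:DegeneratingOptimalMap} to the $\cM$-optimal identity on $(X, \omega)$ gives that $\Col_{\bfC}(\mathrm{id}) = \mathrm{id}$ is $\cM_{\bfC}$-good, hence $\cM_{\bfC}$-optimal. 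The principal obstacle will lie in part (2): the cyclic-order analysis must be carried out in the non-torus setting, and the involutions produced by the various degenerations $\Col_{\bfC_i}(X,\omega)$ must be shown to glue consistently to a single global involution on $(X, \omega)$ with the correct sign property.
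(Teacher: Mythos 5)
Your logical order is reversed from what the proof actually requires, and this creates a genuine gap. You propose deriving part (1) from part (2), but your argument for (1) does not hold up. First, Theorem \ref{T:MinimalCover} has as a standing hypothesis that $(X,\omega)$ is \emph{not} a torus cover, and a rank-one geminal subvariety is always a locus of torus covers (Lemmas \ref{L:GeminalField} and \ref{L:R1Arithmetic}), so $\pi_{X_{min}}$ is not even defined in this setting. Second, and more seriously, the assertion that each candidate non-identity cover ``collapses a twin pair to a single cylinder'' or ``identifies a pair of non-homologous twin cores'' is not justified and is false in spirit: the whole point of Case \eqref{I:geminalrk1:CoverOfQuadDouble} and \eqref{I:geminalrk1:CoverOfTxT} of Proposition \ref{P:geminalrk1} is that a geminal subvariety can have a degree-$>1$ optimal map under which twin cylinders map to distinct cylinders, so you need a positive argument tailored to the non-$h$-geminal situation. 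The paper proves (1) \emph{first} and directly: after collapsing other horizontal cylinders onto a pair of non-homologous twins $C_1, C_2$, one shows there is a vertical cylinder $V_1 \subset C_1$ whose core curve meets that of $C_1$ exactly once, whence Lemma \ref{L:IsGood} forces the identity to be optimal. Your proof never produces such a local witness.

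The proposed approach to part (2) is also not the paper's and is left at the level of a sketch with the hard steps (extracting and gluing involutions from degenerations) explicitly deferred. Adapting the cyclic-order analysis from Proposition \ref{P:0n} is problematic because that proof leans heavily on the fact that in $\cH(0^n)$ all cylinders are simple, which fails for general torus covers. The paper instead reduces to a surface with only the two horizontal cylinders $C_1, C_2$, sets up the ``big/little'' vertical-cylinder dichotomy, proves rigidity sublemmas (intersection number one, matching cyclic order, matching distances) by exploiting the rank-one structure (Lemma \ref{L:R1Deformations}), and then handles the inductive step by finding a generic diamond $\left((X,\omega),\cM,\bfD_1,\bfD_2\right)$ with both sides Abelian or quadratic doubles by the induction hypothesis, so that Proposition \ref{P:DiamondsAreCool} directly yields the conclusion. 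This diamond mechanism is precisely what avoids the involution-gluing problem you flag as the principal obstacle. Part (3) of your proposal is essentially the paper's, but it presupposes (1) and (2).
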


Before proceeding with the proof of Lemma \ref{L:nhGeminal} we will show how it implies Proposition \ref{P:geminalrk1}. 

\begin{proof}[Proof of Proposition \ref{P:geminalrk1} assuming Lemma \ref{L:nhGeminal}:]
Let $\cM$ be a rank one geminal invariant subvariety. If $\cM$ is $h$-geminal, then Proposition \ref{P:geminalrk1} holds by Lemma \ref{L:HGeminalOptimal} (see Remark \ref{R:HGeminalOptimal}). Suppose therefore that $\cM$ is not $h$-geminal. Proposition \ref{P:geminalrk1} (\ref{I:geminalrk1:WhatIsM}) holds by Lemma \ref{L:nhGeminal} (\ref{I:R1AQD}); Proposition \ref{P:geminalrk1} (\ref{I:geminalrk1:PiOpt}) holds by Lemma \ref{L:nhGeminal} (\ref{I:R1OptimalIdentity}), which also shows that the assumptions for Proposition \ref{P:geminalrk1} (\ref{I:geminalrk1:TwoCylinders}) never hold when $\cM$ is not $h$-geminal; and Proposition \ref{P:geminalrk1} (\ref{I:geminalrk1:PiOptDegeneratesToPiOpt}) holds by Lemma \ref{L:nhGeminal} (\ref{I:R1NHFinal}). 
\end{proof}

\begin{proof}
By assumption there is a surface $(X, \omega)$ in $\cM$ that has two non-homologous  twin cylinders; call them $C_1$ and $C_2$ and assume they are horizontal. Using Remark \ref{R:CollapseOntoC} we replace $(X,\omega)$ with a surface that is entirely covered by these cylinders. Our proof will analyse certain cylinders transverse to $C_1, C_2$, eventually achieving a sufficiently precise understanding of the surface to verify the lemma.



If every horizontal saddle connection on the boundary of $C_1$ also lay on the boundary of $C_2$, then $C_1$ and $C_2$ would be homologous, contrary to our hypothesis. Therefore, there is some saddle connection $s$ that appears on the top and bottom boundary of $C_1$. Apply the standard shear to $\{C_1, C_2\}$ so that there is a vertical cylinder $V_1$ contained in $C_1$ that contains $s$ and passes through $C_1$ exactly once. If $V_1$ were free, deforming it would contradict the fact that $C_1$ and $C_2$ are twins. So $V_1$ has a twin, which we will call $V_2$. 

Since $V_1$ and $V_2$ are isometric, and $C_1$ and $C_2$ are isometric, and since $V_1$ is contained in $C_1$, we see that $V_2$ is contained in $C_2$. By Lemma \ref{L:IsGood} since $V_1$ and $C_1$ have core curves that intersect exactly once it follows that the identity is the optimal map. Since $V_1$ and $C_1$ persist and intersect once in an open neighborhood of $(X, \omega)$ it follows that the identity is the optimal map for every surface in this open neighborhood. It follows that the identity is an $\cM$-optimal map for any surface in $\cM$, which establishes Lemma \ref{L:nhGeminal} (\ref{I:R1OptimalIdentity}).

For the remainder of this proof we will let ``big cylinders" denote vertical cylinders that intersect both $C_1$ and $C_2$ and ``little cylinder" denote any vertical cylinder that is not big. 

\begin{sublem}\label{SL:BigIntersection}
A big cylinder intersects $C_1$ (and hence also $C_2$) exactly once. Similarly, a little cylinder contained in $C_i$ intersects $C_i$ exactly once. 
\end{sublem}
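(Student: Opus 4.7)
I would argue by contradiction, assuming either some big cylinder $W$ has $a\ge 2$ passes through $C_1$, or some little cylinder $L\subset C_i$ has $a\ge 2$ passes through $C_i$. Throughout, let $c_i$ denote the core curve of $C_i$, $v_i$ the core curve of $V_i$, $w$ the core curve of $W$, and $h$ the common height of $C_1,C_2$ (equal to the height of $V_1,V_2$, since each $V_i$ crosses $C_i$ once).

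The first step uses the rank-one hypothesis. The twin shears $\sigma_{\{C_1,C_2\}}=h(c_1^*+c_2^*)$ and $\sigma_{\{V_1,V_2\}}=h(v_1^*+v_2^*)$ land in $p(T_{(X,\omega)}\cM)\subset H^1(X;\bC)$, a complex $2$-dimensional subspace. A direct intersection-number computation (evaluate against $v_1$, which has intersection $1$ with $c_1+c_2$ but $0$ with $v_1+v_2$) shows these two real classes are $\bR$-linearly independent, so together with their images under multiplication by $i$ they span $p(T_{(X,\omega)}\cM)$. Consequently every real class in $p(T_{(X,\omega)}\cM)$ is of the form $\alpha(c_1^*+c_2^*)+\gamma(v_1^*+v_2^*)$ with $\alpha,\gamma\in\bR$.

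The second step obtains $a=b$ for the big-cylinder case. The shear of the subequivalence class containing $W$ (either $h_W w^*$ if $W$ is free, or $h_W(w^*+(w')^*)$ if $W$ has a twin $W'$) lies in $p(T_{(X,\omega)}\cM)$. Evaluating this real class against $v_1$ gives $0$, since $v_1$ and $w$ are vertical and disjoint; this forces the $(c_1^*+c_2^*)$-coefficient to vanish. Evaluating then against $c_1$ and $c_2$ yields $a=b$ in the free case, and $(a',b')=(b,a)$ in the twinned case, so after possibly relabelling we may again assume $a=b$. The analogue for a little cylinder $L\subset C_1$ first excludes the possibility that $L$ is free (the same evaluation would simultaneously demand $\gamma=0$ from pairing with $c_2$ and $\gamma=\mp a\ne 0$ from pairing with $c_1$), and then shows $L$'s twin $L'$ either sits inside $C_2$ or inside $C_1$, in either case with the same number of passes.

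The third step is to upgrade $a=b$ to $a=b=1$ (and to exclude $a\ge 2$ for little cylinders). Here I would invoke Lemma \ref{L:nhGeminal}\eqref{I:R1OptimalIdentity}, which has already been established: the identity is the $\cM$-optimal map on every surface in $\cM$. The plan is to show that $a\ge 2$ produces a non-trivial translation automorphism of $(X,\omega)$ which cyclically permutes the $a$ passes of $W$ (resp.\ $L$) in each of $C_1,C_2$; the quotient by this $\bZ/a$ action would be a non-identity good cover, contradicting optimality of the identity. The input is that $(X,\omega)$ is a torus cover by Lemma \ref{L:R1Arithmetic}, together with the period relation $w\equiv -a(v_1+v_2)$ (modulo classes annihilating $\{c_1,c_2,v_1,v_2\}$) extracted from step two, which should give the needed translational data on the torus quotient.

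The main obstacle is precisely this third step. The first two steps are routine intersection-number and tangent-space computations, but converting the algebraic relation into an honest geometric automorphism requires a careful analysis of the cylinder-diagram combinatorics. An alternative approach would be to pass to a degeneration $\Col_{\bfC}(X,\omega)$ for a well-chosen subequivalence class $\bfC$ (for instance $\{V_1,V_2\}$ or the twin class containing $W$), use Lemma \ref{L:DisconnectedDegenerationMaps} or \ref{L:DegeneratingOptimalMap} to inherit optimality of the identity on the degenerate surface, and then apply Lemma \ref{L:IsGood} to an exhibited pair of transverse cylinders in $\Col_{\bfC}(X,\omega)$ to contradict $a\ge 2$ directly.
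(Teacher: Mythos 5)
Your steps 1 and 2 establish a real but partial result: a big cylinder crosses $C_1$ and $C_2$ the same number $a$ of times, and a little cylinder in $C_i$ cannot be free. But the sublemma asserts $a=1$, and that is precisely your step 3, which you explicitly leave unfinished. The gap is genuine: the period relations from steps 1--2 do not by themselves yield a translation automorphism (an invariant subvariety can have isometric twin cylinders without any global translation symmetry --- quadratic doubles being the basic example), so the proposed $\bZ/a$ deck transformation is not constructed; and the degeneration alternative is not developed --- Lemma~\ref{L:IsGood} would re-prove optimality of the identity, which you already have from Lemma~\ref{L:nhGeminal}~\eqref{I:R1OptimalIdentity}, rather than contradicting $a\geq 2$.

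The paper avoids these difficulties by never separating ``$a=b$'' from ``$a=1$,'' and it treats big and little cylinders uniformly. Supposing some vertical cylinder $W\notin\{V_1,V_2\}$ meets $c_1$ at least twice, it takes a horizontal arc along $c_1$ between two intersections with the core $w$ of $W$, chosen to avoid $V_1$, and closes it up along $w$ to a loop $\gamma$ with nonzero horizontal holonomy. It then considers the cylinder deformation that horizontally stretches $\{V_1,V_2\}$ while horizontally shrinking the remaining vertical subequivalence classes, normalized so the circumferences of $C_1,C_2$ (and hence the area) are preserved. Since this deformation fixes the periods of $c_1,c_2,v_1,v_2$, Lemma~\ref{L:R1Deformations} forces it to be a rel deformation; but the real part of the period of the absolute class $[\gamma]$ strictly decreases, because the horizontal arc of $\gamma$ lies outside $V_1\cup V_2$. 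This contradicts the fact that rel deformations do not change absolute periods. Your step 1 invokes the same rank-one rigidity, but the paper's choice of test cycle $[\gamma]$ extracts the contradiction outright, making your step 3 unnecessary.
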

\begin{proof}
Suppose to a contradiction that there is some vertical cylinder $W$ that intersects $C_1$ more than once. Necessarily $W \notin \{V_1, V_2\}$. 
\begin{figure}[h!]
\includegraphics[width=0.5\linewidth]{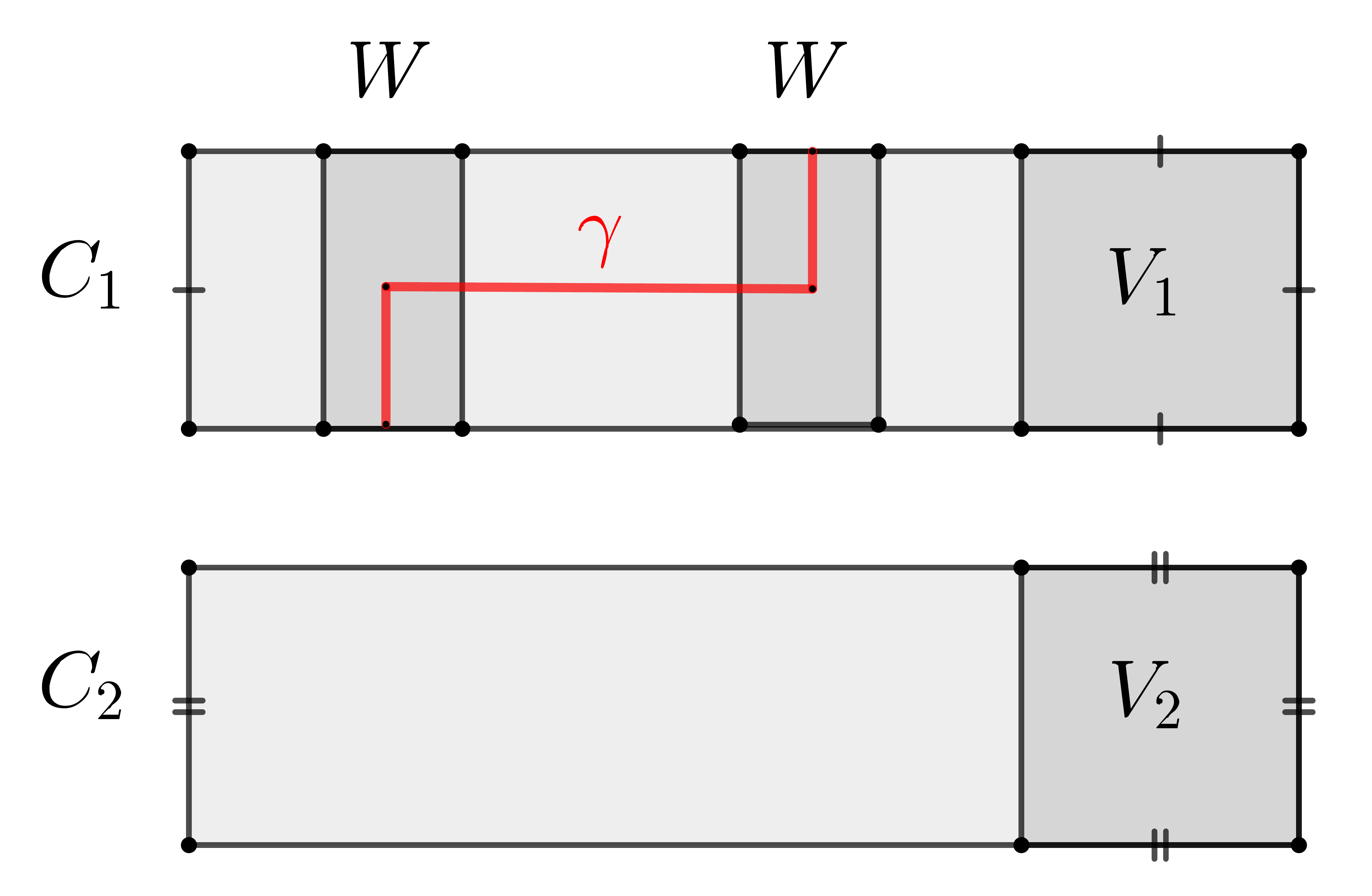}
\caption{  An illustration of (possibly only part of) $W$ and $\gamma$.  }
\label{F:Wgamma}
\end{figure}
Pick a segment of the core curve of $C_1$ that travels from the core curve of $W$ to another intersection with the core curve of $W$ without intersecting $V_1$. Complete this segment to a closed loop $\gamma$ using a segment of the core curve of $W$; see Figure \ref{F:Wgamma}. 

There is a deformation of the surface that horizontally stretches $V_1, V_2$ and then horizontally shrinks the rest of the surface to keep the area constant. By Lemma \ref{L:R1Deformations}, since this deformation does not change the holonomy of the core curves of the $C_i$ or $V_i$, 
this must be a rel deformation. But on the other hand this deformation reduces the real part of the period of the absolute homology class $[\gamma]$, giving a contradiction. 
\end{proof}

\begin{sublem}\label{SL:CyclicOrder1}
If $W_1$ and $W_2$ are big cylinders then the cyclic order of $W_1, W_2, V_1$ in $C_1$ is equal to the cyclic order of $W_1, W_2, V_2$ in $C_2.$
\end{sublem}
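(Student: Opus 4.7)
I would argue by contradiction using Lemma~\ref{L:GeminalTwinAdjacency} together with a boundary degeneration argument based on Lemma~\ref{L:GeminalBoundary}. Assume the two cyclic orders disagree. Identifying the top of $C_1$ with the bottom of $C_2$ via the outward gluings (away from the $V_i$ segments) produces a common boundary circle on which the segments $\sigma^{top,1}, \sigma^{bot,2}, \tau_1^{top,1}, \tau_2^{top,1}$ (corresponding to $V_1, V_2, W_1, W_2$) all lie; the cyclic-order disagreement translates to the statement that $\{V_1, V_2\}$ separates $\{W_1, W_2\}$ on this boundary circle.

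First I would apply Lemma~\ref{L:GeminalTwinAdjacency} to the twin pair $\{V_1, V_2\}$ together with an appropriate cylinder deformation in $T_{(X,\omega)}\cM$ (for instance $\sigma_{\{C_1, C_2\}}$, possibly combined with a rel deformation) to force a simultaneous collision: push until the relevant endpoints of $\sigma^{top,1}$ and $\tau_1^{top,1}$ coincide on the $C_1$ side, and simultaneously the corresponding endpoints of $\sigma^{bot,2}$ and $\tau_2^{top,1}$ coincide on the $C_2$ side. By Lemma~\ref{L:GeminalBoundary} the resulting boundary orbit closure is geminal, and the simultaneous asymmetric collisions force $W_1$ and $W_2$ to lie in a common subequivalence class, i.e., $\{W_1, W_2\}$ is a twin pair.

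Next, apply Lemma~\ref{L:GeminalTwinAdjacency} to the newly-established twin pair $\{W_1, W_2\}$. The separating arrangement implies that the subequivalence class $\{V_1, V_2\}$ is adjacent to $W_1$ in $C_1$ on the side facing $W_2$; by Lemma~\ref{L:GeminalTwinAdjacency} the analogous side of $W_2$ in $C_2$ must also be adjacent to some cylinder in $\{V_1, V_2\}$. By the separating configuration this adjacent cylinder can be neither $V_1$ (which is inside $C_1$) nor $V_2$ (which by the separation lies on the opposite side of $W_2$ in $C_2$), giving a contradiction.

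The main obstacle is the precise verification that the cylinder deformations used in the first step really do produce the required asymmetric simultaneous collision. This requires careful bookkeeping of the positions of zeros on both boundary circles of the pair $(C_1, C_2)$ as each deformation is applied, exploiting the fact that $\sigma_{\{C_1, C_2\}}$ shifts the top of each $C_i$ relative to its bottom while preserving cyclic orders within a single boundary component of $C_i$. One may need to supplement this shear with rel deformations—which are available in $T_{(X,\omega)}\cM$ by the presence of the two distinct subequivalence classes $\{V_1, V_2\}$ and $\{C_1, C_2\}$—in order to produce the collision at exactly the right pair of endpoints.
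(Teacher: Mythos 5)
Your proposal takes a genuinely different route than the paper, and unfortunately it has gaps that I don't see how to fill. The paper's argument is short and is driven by Lemma~\ref{L:R1Deformations}: assuming the cyclic orders disagree, one forms a closed loop $\gamma$ from a subarc of the core curve of $C_1$ running from $W_1$ to $W_2$ (without meeting $V_1$), a subarc of the core curve of $C_2$ running from $W_2$ back to $W_1$ (without meeting $V_2$), and connecting pieces of the core curves of $W_1$ and $W_2$. The same cylinder deformation used in Sublemma~\ref{SL:BigIntersection} -- horizontally stretch $V_1,V_2$ and shrink the rest to preserve area -- leaves the periods of the core curves of $C_i$ and $V_i$ unchanged, so by Lemma~\ref{L:R1Deformations} it must be rel. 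But when the cyclic orders disagree, both subarcs are traversed in the same horizontal direction when $\gamma$ is oriented consistently, so their contributions to $\Re \int_\gamma\omega$ add rather than cancel, and the deformation changes the absolute period of $[\gamma]$. Contradiction.

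Your approach has three problems. First, the ``common boundary circle on which $\{V_1,V_2\}$ separates $\{W_1,W_2\}$'' is not a well-defined object here: the two cyclic orders live on the core curves of two different cylinders $C_1$ and $C_2$, and there is no canonical way to merge them into one circle (the top of $C_1$ is glued to a mixture of the bottom of $C_1$ and the bottom of $C_2$). Second, and more seriously, the step where the ``asymmetric simultaneous collisions force $W_1$ and $W_2$ to lie in a common subequivalence class'' is unsupported. The number of saddle connections between $W_1$ and $W_2$ along $C_1$ need not match the number along $C_2$, and even when collisions can be arranged, degenerating to a boundary surface where the images of $W_1,W_2$ are twins says nothing about whether $W_1,W_2$ are twins on $(X,\omega)$; indeed they may well both be free, as the statement places no twinning hypothesis on them. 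Third, the concluding step misuses Lemma~\ref{L:GeminalTwinAdjacency}: that lemma only detects \emph{whether} a twin shares a boundary saddle connection with some cylinder in a given subequivalence class, not \emph{on which side} the adjacency occurs, and it also presupposes that $\{V_1,V_2\}$ is adjacent to $W_1$, which need not hold if other big cylinders sit between them. The period argument via Lemma~\ref{L:R1Deformations} avoids all of these difficulties and is the intended mechanism.
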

For example, if in $C_1$ one can pass from $W_1$ to $W_2$ by moving to the right without passing through $V_1$, then in $C_2$ the analogous statement is true. 
\begin{proof}
Suppose not. To be concrete, assume there is a subarc $\gamma_1$ of the core curve of $C_1$  that passes from $W_1$  to $W_2$  by traveling left to right without passing through $V_1$; and suppose in $C_2$ there is a subarc $\gamma_2$ of the core curve of $C_2$ that passes from $W_2$ to $W_1$ by traveling left to right without passing through $V_2$. These subarcs can be connected to form a closed loop  $\gamma$  by attaching part of the core curves of $W_1$ and $W_2$, as in Figure \ref{F:W1W2order}. 
\begin{figure}[h]
\includegraphics[width=0.5\linewidth]{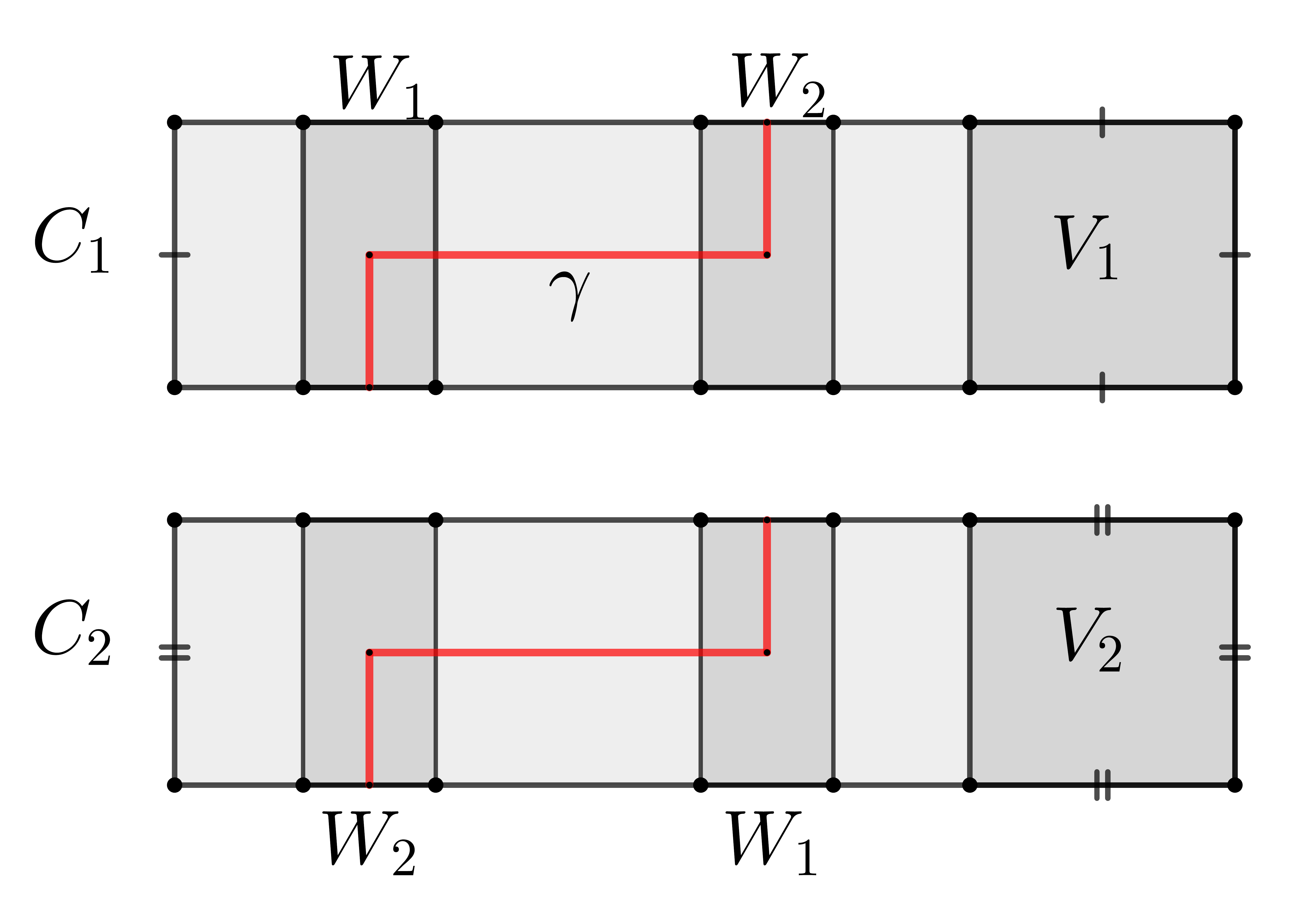}
\caption{The proof of Sublemma \ref{SL:CyclicOrder1}.}
\label{F:W1W2order}
\end{figure}
The same  deformation considered in the last sublemma gives a contradiction, since it must be rel but changes the period of $\gamma$.
\end{proof}

\begin{sublem}\label{SL:SameDist}
The distance between two big cylinders $W_1$ and $W_2$, measured along a segment of the core curve of $C_1$ not intersecting $V_1$, is the same as the corresponding distance in $C_2$. 
\end{sublem}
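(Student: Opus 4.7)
The plan is to mimic the period-obstruction arguments used in the two preceding sublemmas. By Sublemma \ref{SL:CyclicOrder1}, the cyclic position of $W_1, W_2$ relative to $V_1$ in $C_1$ agrees with that of $W_1, W_2$ relative to $V_2$ in $C_2$. So, given a segment $\gamma_1$ of the core curve of $C_1$ from $W_1$ to $W_2$ that avoids $V_1$, there is a corresponding segment $\gamma_2$ of the core curve of $C_2$ from $W_2$ back to $W_1$ that avoids $V_2$. Joining $\gamma_1$ and $\gamma_2$ by vertical subsegments of the core curves of $W_1$ and $W_2$ yields a closed loop $\gamma$ whose horizontal holonomy equals $d_1 - d_2$, where $d_i$ is the length of $\gamma_i$.

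Next, I would invoke the same deformation used in the proof of Sublemma \ref{SL:BigIntersection}: horizontally stretch $V_1$ and $V_2$ by a common factor while horizontally shrinking the complementary region so as to preserve the common circumference of $C_1$ and $C_2$. This deformation does not affect the holonomy of the core curves of the $C_i$ or $V_i$, so by Lemma \ref{L:R1Deformations} and rank $1$, it is a rel deformation, hence fixes the period of every absolute homology class — in particular $[\gamma]$.

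On the other hand, since $\gamma_1$ and $\gamma_2$ are disjoint from $V_1$ and $V_2$ respectively, each lies entirely in the region being horizontally rescaled by some common factor $r \neq 1$. Thus the horizontal holonomy of $\gamma$ transforms as $d_1 - d_2 \mapsto r(d_1 - d_2)$, and invariance under the deformation forces $d_1 = d_2$, as required.

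I do not anticipate a substantive obstacle: the argument is parallel to Sublemmas \ref{SL:BigIntersection} and \ref{SL:CyclicOrder1}, and the only point requiring care is verifying that $\gamma_1$ and $\gamma_2$ both lie in the ``shrinking'' region so that their lengths rescale by the same factor — which is precisely guaranteed by the hypothesis that they avoid $V_1$ and $V_2$.
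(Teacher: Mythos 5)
Your proof is correct and matches the paper's approach: the paper simply refers to Figure \ref{F:W1W2dist} and says the argument is "almost identical to the previous two proofs," and your construction of the loop $\gamma$ (horizontal arcs in $C_1$ and $C_2$ joined by vertical arcs through $W_1$ and $W_2$, giving real part $d_1-d_2$) together with the same rel-deformation obstruction is exactly what the figure depicts.
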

\begin{proof}
The proof is almost identical to the previous two proofs, except one uses the curve $\gamma$ shown in Figure \ref{F:W1W2dist}. 
\begin{figure}[h]
\includegraphics[width=0.5\linewidth]{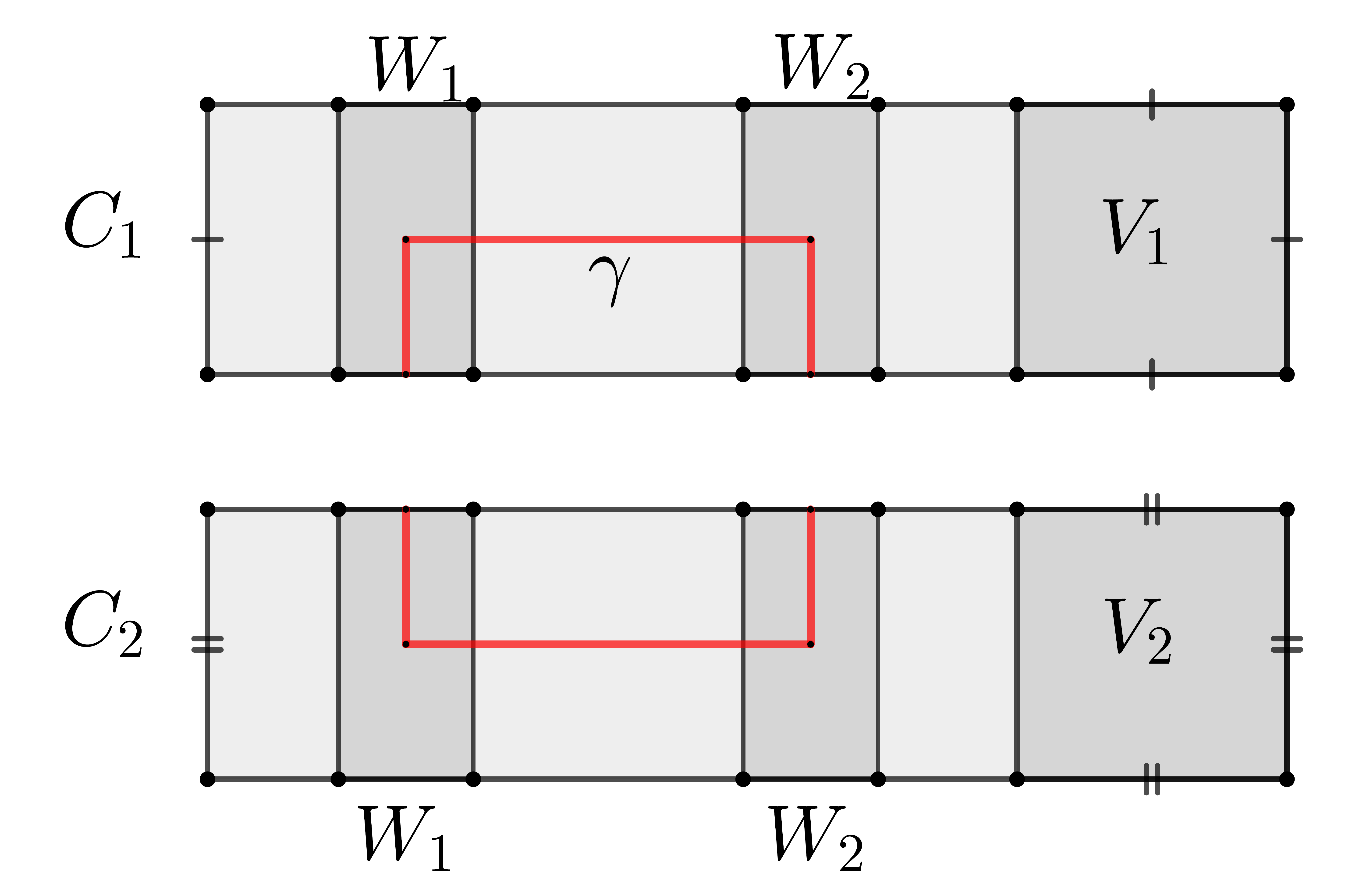}
\caption{The proof of Sublemma \ref{SL:SameDist}.}
\label{F:W1W2dist}
\end{figure}
\end{proof}

  Equipped with these sublemmas, we will complete the proof by showing that $\cM$ is an Abelian or quadratic double and that, if $\bfC$ is a subequivalence class of generic cylinders on $(X, \omega)$, then the identity is the $\cM_{\bfC}$-optimal map for $\Col_{\bfC}(X, \omega)$. We will show this via induction on the rel of $\cM$, which we denote by $k$. Note that $k>0$, since if $k=0$ the surface would consist entirely of $V_1$ and $V_2$, and would hence be disconnected. Thus $k=1$ will be the base case.


So suppose $k = 1$.  There are two vertical equivalence classes of cylinders - $\bfD_1$ and $\bfD_2$. We can assume that $\bfD_1 = \{V_1, V_2\}$ and that $\bfD_2$ contains either one or two big cylinders. 

By Sublemmas \ref{SL:BigIntersection} and \ref{SL:CyclicOrder1}, $\cM$ is a quadratic double of $\cQ(2,-1^2)$, as depicted in Figure \ref{F:D1D2Double}. Notice that when there are no marked points, $\cM$ is simultaneously an Abelian double.

\begin{figure}[h]
\includegraphics[width=0.5\linewidth]{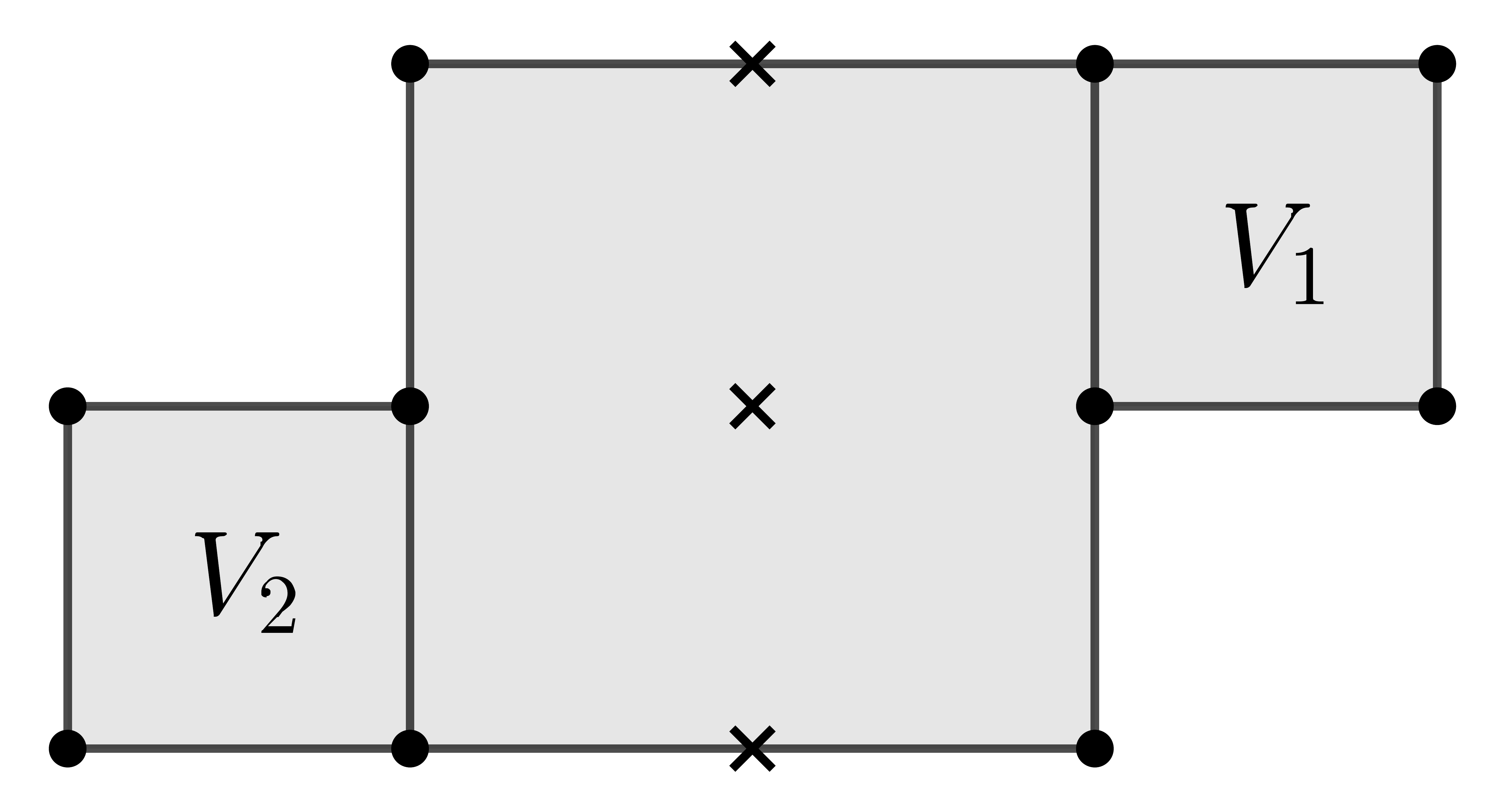}
\caption{A double of $\cQ(2,-1^2)$. }
\label{F:D1D2Double}
\end{figure}

If $\bfC$ is a subequivalence class of cylinders on $(X, \omega)$ then, because $\cM$ is a quadratic double of $\cQ(2,-1^2)$,  either $\Col_{\bfC}(X, \omega)$ is disconnected or a flat torus with marked points. In either case, the optimal map for $\Col_{\bfC}(X, \omega)$ is also the identity by Lemmas \ref{L:DisconnectedDegenerationMaps} and \ref{L:GoodInH(0^n)}. This establishes the base case.



We require some additional analysis for the inductive step. By Sublemma \ref{SL:BigIntersection}, each little cylinder intersects $C_1$ or $C_2$ exactly once. Therefore, any pair of adjacent little cylinders $A_1, B_1$ in $C_1$ are separated by a marked point. Lemma \ref{L:GeminalTwinAdjacency} gives that the corresponding twins $A_2$ and $B_2$ in $C_2$ also are separated by a marked point. We can consider the deformation that increases the width of the $A_i$ at unit speed and decreases the width of the $B_i$ at unit speed. If $A_1$ is to the left of $B_1$ and $A_2$ is to the left of $B_2$, then this moves the pair of marked points equal amounts in the same direction. If $A_1$ is to the left of $B_1$ and $A_2$ is to the right of $B_2$, then this moves the pair of marked points equal amounts in opposite directions. It is now helpful to forget about cylinders for a moment, and summarize by saying that the pair of marked points can be moved equally in the same direction or equally in opposite directions. Such deformations can be continued arbitrarily, as long as the pair of marked points don't hit any other marked points or singularities. 

Since every marked point not in a big cylinder separates a pair of adjacent little cylinders, we may use such deformations to move all marked points into big cylinders. This means in particular that, by performing such deformations, we may assume that no little cylinders are adjacent to each other in $C_1$ or $C_2$, so each little cylinder has a big cylinder on either side. We continue our analysis with this assumption.  

\begin{sublem}\label{SL:CyclicOrder2}
If $A_i\subset C_i$ is a little cylinder with big cylinder $L$ to the left and $R$ to the right, then the twin $A_{i+1}$ of $A_i$ also has $L$ to the left and $R$ to the right. 
\end{sublem}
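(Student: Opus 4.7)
The plan is to locate $A_{i+1}$ inside $C_{i+1}$ by combining the preserved cyclic order (Sublemma \ref{SL:CyclicOrder1}) and preserved inter-big-cylinder distances (Sublemma \ref{SL:SameDist}) with a twin-pair deformation argument. Without loss of generality take $i=1$, so $A_1\subset C_1$ is flanked by big cylinders $L$ (left) and $R$ (right), and we want to pin down the twin $A_2\subset C_2$.

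First, Sublemma \ref{SL:CyclicOrder1} implies that the cyclic order of $L, R, V_2$ in $C_2$ matches that of $L, R, V_1$ in $C_1$. Since the paragraph preceding the sublemma has arranged that every little cylinder is flanked on both sides by big cylinders, there is at most one little cylinder between any two consecutive big cylinders in $C_2$. Sublemma \ref{SL:SameDist} then gives that the horizontal distance from $L$ to $R$ in $C_2$ not crossing $V_2$ equals the corresponding distance in $C_1$, namely $\frac{1}{2}(\mathrm{width}(L)+\mathrm{width}(R)) + \mathrm{width}(A_1)$. Since $\mathrm{width}(A_1)>0$, this distance strictly exceeds $\frac{1}{2}(\mathrm{width}(L)+\mathrm{width}(R))$, and so there must exist exactly one little cylinder $B_2$ between $L$ and $R$ in $C_2$, necessarily of width $\mathrm{width}(A_1)$.

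Next, I claim $B_2=A_2$. Consider the cylinder deformation that widens the twin pair $\{A_1,A_2\}$ by $\delta>0$ while fixing all other cylinders; by the twin property this deformation stays in $\cM$. Sublemmas \ref{SL:CyclicOrder1} and \ref{SL:SameDist} apply to the deformed surface, so the distance from $L$ to $R$ in $C_2$ not via $V_2$ must increase by exactly $\delta$. Since $L$ and $R$ are unchanged, the only contribution comes from the unique little cylinder $B_2$ between them, so $\mathrm{width}(B_2)$ must increase by $\delta$. As the deformation alters only the widths of $A_1$ and $A_2$, we conclude $B_2\in\{A_1,A_2\}$, and since $B_2\subset C_2$, $B_2=A_2$.

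Combining this with the preserved cyclic order from Sublemma \ref{SL:CyclicOrder1} shows that $A_2$ lies between $L$ and $R$ in $C_2$ with $L$ on the left and $R$ on the right, as required. The main obstacle is the twin-identification step, since width and adjacency alone leave open the possibility that another little cylinder of the same width occupies the gap; the rel deformation of the twin pair $\{A_1,A_2\}$ is what rules this out.
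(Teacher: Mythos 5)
Your proof is correct and takes essentially the same route as the paper's: both rely on deforming the twin pair $\{A_1,A_2\}$ and invoking Sublemma \ref{SL:SameDist} to force a contradiction if $A_2$ were placed elsewhere. The one point you leave implicit is why $L$ and $R$ remain \emph{consecutive} big cylinders in $C_2$ (i.e.\ why no big cylinder $W$ sits between them in $C_2$), which you need before concluding the gap is filled by exactly one little cylinder $B_2$ of width $\mathrm{width}(A_1)$. This does follow: for any big cylinder $W$, applying Sublemma \ref{SL:CyclicOrder1} to the pairs $\{L,W\}$, $\{W,R\}$, $\{L,R\}$ pins down the full cyclic order of $L,W,R,V_i$ in each $C_i$, so $W$ lies between $L$ and $R$ (not via $V_2$) in $C_2$ only if it does so in $C_1$ — which it cannot, since $A_1$ alone separates them there. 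The paper instead rules out extraneous subequivalence classes between $L$ and $R$ by a second application of Sublemma \ref{SL:SameDist} (``if any other subequivalence class was between $L$ and $R$ in exactly one of $C_1$ or $C_2$\ldots''); either route is fine, but you should state which you are using.
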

\begin{proof}
If $A_{i+1}\subset C_{i+1}$ was not in between $L$ and $R$, then by deforming $\{A_1, A_2\}$ we could contradict Sublemma \ref{SL:SameDist}. Similarly if any other subequivalence class was between $L$ and $R$ in exactly one of $C_1$ or $C_2$, we could contradict Sublemma \ref{SL:SameDist}. 
\end{proof}

We will now focus on showing that $\cM$ is an Abelian or quadratic double. That is, we will establish Lemma \ref{L:nhGeminal} (\ref{I:R1AQD}) now and then return to proving (\ref{I:R1NHFinal}). 

  \bold{Case 1: There are at least 2 subequivalence classes of little cylinders.} If $\bfD_1$ and $\bfD_2$ are two distinct subequivalence classes of little cylinders, then $\left( (X, \omega), \cM, \bfD_1, \bfD_2 \right)$ is a generic diamond where $\Col_{\bfD_1, \bfD_2}(X, \omega)$ is connected and, by the induction hypothesis, $\cM_{\bfD_1}$ and $\cM_{\bfD_2}$ are Abelian or quadratic doubles. By Proposition \ref{P:DiamondsAreCool}, $\cM$ is also an Abelian or quadratic double. 

  \bold{Case 2: There is only 1 subequivalence class of little cylinders.} 
This subequivalence class is necessarily the one containing $V_1$ and $V_2$, which we will denote $\bfD_1$. 

  \bold{Case 2a: There is a subequivalence class $\bfD_2$ of big cylinders that do not share any boundary saddle connections with the cylinders in $\bfD_1$.}   
Then $\left( (X, \omega), \cM, \bfD_1, \bfD_2 \right)$ is a generic diamond where, by the induction hypothesis, $\cM_{\bfD_1}$ and $\cM_{\bfD_2}$ are Abelian or quadratic doubles. Since $k>1$, and there are $k+1$ subequivalence classes of vertical cylinders, and since we have assumed there is only one little subequivalence class, there is at least one big subequivalence class besides $\bfD_2$, and so we see that  $\Col_{\bfD_1, \bfD_2}(X, \omega)$ is connected. By Proposition \ref{P:DiamondsAreCool}, $\cM$ is also an Abelian or quadratic double. 

  \bold{Case 2b: There does not exist a subequivalence as in Case 2a.} Label the big cylinders to the right and left of $V_1$ by $R$ and $L$ respectively. We may suppose now, by Lemma \ref{L:GeminalTwinAdjacency} and Sublemma \ref{SL:CyclicOrder2}, that $R$ and $L$ are the only big cylinders, and $\bfD_1=\{V_1, V_2\}$ are the only small cylinders.

Therefore, there are four vertical cylinders - two little and two big. Since $k>1$, the two big cylinders are both free. By Sublemmas \ref{SL:BigIntersection}, \ref{SL:CyclicOrder1}, and \ref{SL:CyclicOrder2}  it follows that  $\For(X, \omega)$ is a surface in a double of $\cQ(2, -1^2)$, and the only marked points on $(X,\omega)$ are either one free point $p$ or a pair of points $\{p, p'\}$ whose only constraint is that they differ by the translation involution. See Figure \ref{F:NonHGeminalEnd}.
\begin{figure}[h]
\includegraphics[width=0.5\linewidth]{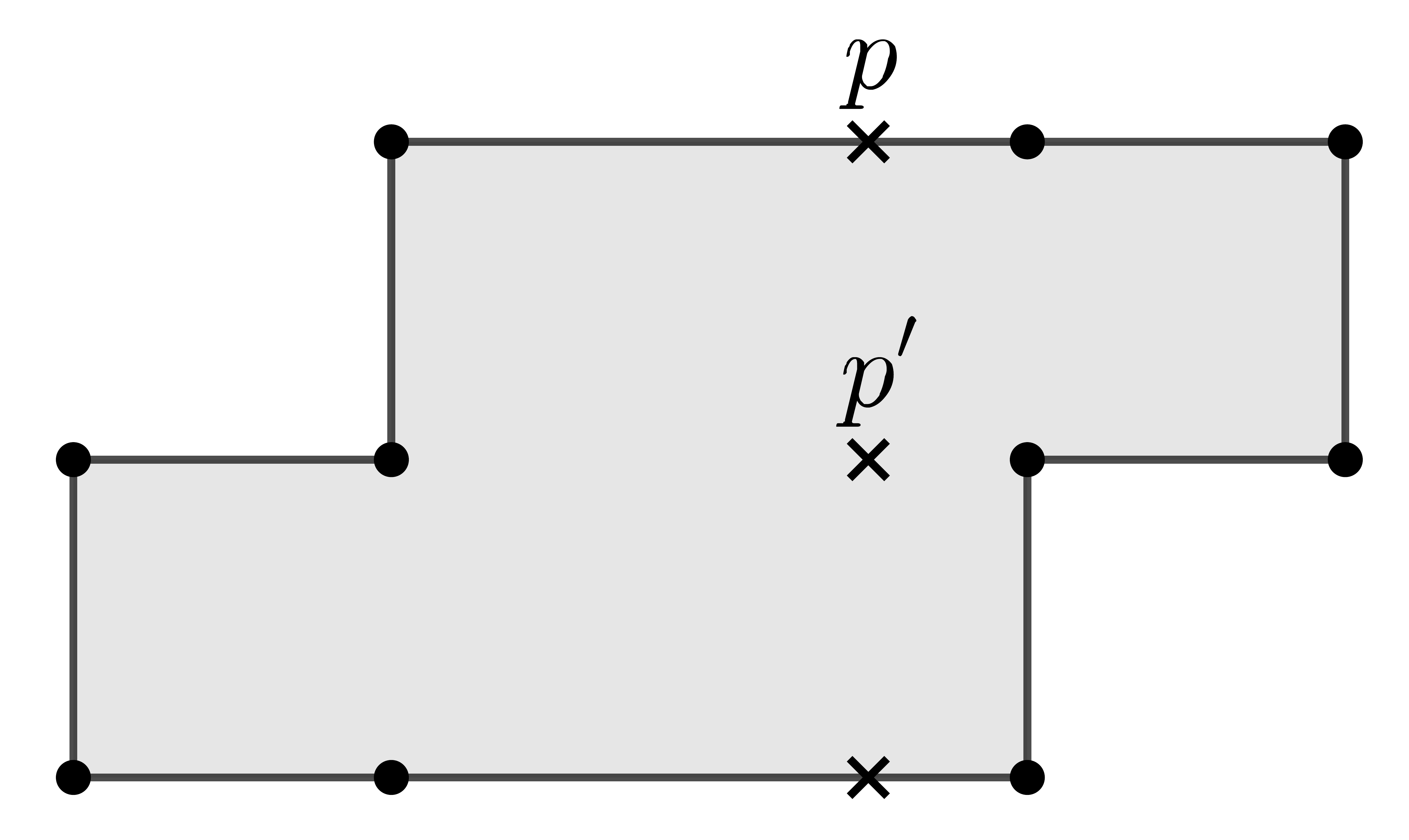}
\caption{}
\label{F:NonHGeminalEnd}
\end{figure}
In the latter case, $\cM$ is an Abelian double and, in the former, it is clear that $\cM$ is not geminal as can be seen by moving the marked point into one little cylinder but not the other. 

  The above case analysis shows that  $\cM$ is an Abelian or quadratic double, concluding the proof of Lemma \ref{L:nhGeminal} (\ref{I:R1AQD}). We now complete the proof of Lemma \ref{L:nhGeminal} by proving Lemma \ref{L:nhGeminal} (\ref{I:R1NHFinal}).  We will now let $(X, \omega)$ denote an arbitrary surface in $\cM$. Let $\bfC$ be a subequivalence class of cylinders on $(X, \omega)$. If $\Col_{\bfC}(X, \omega)$ is disconnected, then the $\cM_{\bfC}$-optimal map on $\Col_{\bfC}(X, \omega)$ is also the identity by Lemma \ref{L:DisconnectedDegenerationMaps}. Suppose therefore that $\Col_{\bfC}(X, \omega)$ is connected. Then $\cM_{\bfC}$ remains an Abelian or quadratic double. If $\cM_{\bfC}$ is not $h$-geminal then the identity is the $\cM_{\bfC}$-optimal map by the induction hypothesis; if it is $h$-geminal then the identity is the $\cM_{\bfC}$-optimal map by Lemma \ref{L:HGeminalOptimal} (\ref{I:R1HGeminal3.5}). 
\end{proof}

\section{Geminal subvarieties that degenerate to $T \times T$ loci}\label{S:TT}

The main result of this section is the following. Recall that $T\times T$ loci were defined in Definition \ref{D:TxT} and by definition exist only in genus one, and generic diamonds were defined in Definition \ref{D:GenericDiamond}. 

\begin{prop}\label{P:TxTDiamond}
Suppose that $\cM$ is geminal of rank at least two. Suppose that $\left( (X, \omega), \cM, \bfC_1, \bfC_2 \right)$ is a generic diamond. If $\MOne$ is a $T \times T$ locus, then $\MTwo$ cannot be any of the following: a component of a stratum of Abelian differentials, an Abelian double, a quadratic double, or a locus in a stratum of tori. 
\end{prop}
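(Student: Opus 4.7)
The plan is to argue by contradiction: assume $\MOne$ is a $T\times T$ locus and $\MTwo$ is one of the four listed types, and derive a contradiction in each case. I first reduce to four essential subcases. By Lemma~\ref{L:GeminalBoundary}, $\MTwo$ is geminal, and if $\MTwo$ is a locus in a stratum of tori then Proposition~\ref{P:0n} implies $\MTwo$ is itself a stratum, an Abelian double, a quadratic double, or a $T\times T$ locus. Hence it suffices to handle the four subcases where $\MTwo$ is a stratum of Abelian differentials, an Abelian double, a quadratic double, or a $T\times T$ locus.

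The main tool I will use is Lemma~\ref{L:IntroFull}. Since $\MOne$ is a $T\times T$ locus, it is a full locus of degree-$4$ covers of a torus stratum (a stratum of Abelian differentials) satisfying Assumption~CP by Remark~\ref{R:TxTCP}. In the subcases where $\MTwo$ is also a full locus of covers of an Abelian stratum satisfying CP --- namely a stratum (degree $1$), an Abelian double (degree $2$), or a $T\times T$ locus (degree $4$) --- Lemma~\ref{L:IntroFull} shows that $\cM$ is itself a full locus of covers of a stratum of Abelian differentials $\mathcal{H}_0$, and degree-preservation under collapse (Lemma~\ref{L:ColExists}) forces $\deg f = \deg f_1 = 4 = \deg f_2$. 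This immediately contradicts $\deg f_2 \in \{1,2\}$. For the $T\times T$-with-$T\times T$ subcase the degrees match, but then $\cM\to\mathcal{H}_0$ is a regular $\bZ/2\times\bZ/2$-cover (the boundary cover is regular of order equal to degree, and the deck group is locally constant), and because $\cM$ has rank at least $2$, so does $\mathcal{H}_0$. The kernel of the induced monodromy representation $H_1(\,\cdot\,;\bZ/2)\to(\bZ/2)^2$ is then a non-trivial subspace containing primitive homology classes, which by the density of cylinder directions can be realized as the core curve of a cylinder on some surface in $\mathcal{H}_0$. Such a cylinder has four preimages on $\cM$ freely permuted by $\bZ/2\times\bZ/2$, forming a subequivalence class of size $4$, contradicting the geminality of $\cM$.

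The main obstacle is the remaining subcase where $\MTwo$ is a quadratic double, in which Lemma~\ref{L:IntroFull} does not directly apply because the base of $\MTwo$'s cover is quadratic rather than Abelian. My plan here is to apply Lemma~\ref{L:diamond} directly, combining the degree-$4$ Abelian cover $f_1$ on $\ColOneX$ with the degree-$2$ holonomy double cover $f_2$ on $\ColTwoX$. The crux is to verify the compatibility hypothesis of Lemma~\ref{L:diamond} on the common further degeneration $\ColOneTwoX$, which requires analyzing how the $\bZ/2\times\bZ/2$ translation automorphisms on $\MOne$ (which preserve $\omega$) interact with the holonomy involution $J$ on $\MTwo$ (which satisfies $J^*\omega=-\omega$). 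Because $J$ lies outside the $\bZ/2\times\bZ/2$ action, the combined symmetry group on $\ColOneTwoX$ has order at least $8$. I expect that exploiting this enlarged symmetry, together with a cylinder-orbit analysis analogous to the one used in the $T\times T$-with-$T\times T$ subcase (producing subequivalence classes of size at least $4$ on $\cM$), will yield the required contradiction with geminality.
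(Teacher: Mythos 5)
Your approach is genuinely different from the paper's, which argues almost entirely through the common degeneration $\MOneTwo$: the paper first shows $\MOneTwo$ is a $T\times T$ locus, then rules out the stratum and Abelian-double cases via Lemma~\ref{L:Types}, the quadratic-double case via Corollary~\ref{C:NoRank2JandTT}, and the $T\times T$ case via Corollary~\ref{C:NoRankTwoTxT}. Your degree-mismatch argument via Lemma~\ref{L:IntroFull} and Lemma~\ref{L:ColExists} for the stratum and Abelian-double subcases is a clean and efficient alternative (one should, however, actually verify that Abelian doubles satisfy Assumption~CP, which is true but you assert without proof). For the $T\times T$-with-$T\times T$ subcase, your homology/monodromy sketch is in the same spirit as the paper's Lemma~\ref{L:Z2xZ2covers}, but the step ``by the density of cylinder directions can be realized as the core curve of a cylinder'' is not a precise argument: which mod-$2$ homology classes are represented by cylinder core curves is exactly the delicate point, and the paper handles it by first reducing to $\cH(2)$ (via the boundary structure and an argument that $m=0$) and then exhaustively classifying $\pi_1^{orb}(\cH(2))$-orbits of pairs in $H^1(X',\bZ/2)$.

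The quadratic-double subcase, however, is a genuine gap, and your plan for it cannot work as outlined. You propose to apply the Diamond Lemma (Lemma~\ref{L:diamond}) directly to the degree-$4$ cover $f_1$ on $\ColOneX$ and the degree-$2$ holonomy double cover $f_2$ on $\ColTwoX$. But the compatibility hypothesis of Lemma~\ref{L:diamond} requires $\Col_{\ColOne(\bfC_2)}(f_1) = \Col_{\ColTwo(\bfC_1)}(f_2)$, and by Lemma~\ref{L:ColExists} the left side has degree $4$ while the right side has degree $2$ --- they can never be equal. So the Diamond Lemma is not applicable with these maps, and the ``enlarged symmetry group of order at least $8$'' heuristic does not on its own produce the desired oversized subequivalence class. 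This is precisely the hard case: the paper resolves it with Lemma~\ref{L:NoRank2JandTT} (and its consequence Corollary~\ref{C:NoRank2JandTT}), which in turn invokes \cite[Lemma 8.24]{ApisaWrightDiamonds} to show that in this mixed $T\times T$/quadratic-double diamond the ambient subvariety is a locus of degree-$2$ covers of the Prym locus in $\cH(4)$ without marked points, and then explicitly exhibits a parallel family of at least three equal-height cylinders. That structural input is essential, and no version of your outline recovers it.
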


We will defer a proof of this result to the end of the section. Part of the proof will be based on the following simpler lemma.

\begin{lem}\label{L:Z2xZ2covers}
Let $\cM$ be a full locus of unbranched $\bZ/2 \times \bZ/2$ covers of surfaces in $\cH(2)$. Then $\cM$ is not geminal. In particular, there is an equivalence class consisting of four generically isometric cylinders.
\end{lem}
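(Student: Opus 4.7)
The strategy is to exhibit a single $\cM$-subequivalence class consisting of four pairwise isometric cylinders on some surface in $\cM$, which contradicts geminality since every subequivalence class in a geminal subvariety has at most two elements.

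Fix $(Y_0,\eta_0)\in\cM$ covering $(X_0,\omega_0)\in\cH(2)$ via $\pi$, with deck group $G=\bZ/2\times\bZ/2$ and surjective characteristic homomorphism $\phi:H_1(X_0;\bZ)\to G$. The mod~$2$ reduction of $\phi$ has a $2$-dimensional kernel $K\subset H_1(X_0;\bZ/2)$, which contains non-zero elements since $H_1(X_0;\bZ/2)$ is $4$-dimensional.

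The first step is to arrange, by varying $(X_0,\omega_0)$ within the $\cH(2)$-projection of $\cM$, that there is a cylinder $C$ on $(X_0,\omega_0)$ whose core curve $\gamma$ satisfies $[\gamma]\in K\setminus\{0\}$ modulo~$2$. This is possible because, as the base surface varies within $\cH(2)$, every primitive class in $H_1(X_0;\bZ)$ can be realized as the core curve of some cylinder via the transitivity of the mapping class group action on primitive homology classes, combined with the transitivity of $\mathrm{Sp}_4(\bZ/2)$ on non-zero vectors guaranteed by Witt's theorem.

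Given such $C$, the condition $\phi([\gamma])=0$ ensures $\pi^{-1}(C)$ consists of four disjoint cylinders $\tilde C_1,\tilde C_2,\tilde C_3,\tilde C_4$, freely and transitively permuted by $G$, and hence pairwise isometric. The deck group action persists on all deformations within $\cM$, so the four cylinders are generically isometric and in particular $\cM$-parallel. To see that they form a single subequivalence class, identify $T_{(Y_0,\eta_0)}\cM$ with the pullback $\pi^*H^1(X_0,\Sigma;\bC)$; every tangent vector to $\cM$ is then $G$-invariant. Since $G$ acts simply transitively on $\{\tilde C_1,\tilde C_2,\tilde C_3,\tilde C_4\}$, the only $G$-invariant subsets are empty and the full set, so the standard shear of any proper non-empty subset is not in $T\cM$; on the other hand, the shear of all four together is the pullback of the shear of $C$, which lies in $T_{(X_0,\omega_0)}\cH(2)$. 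Hence $\{\tilde C_1,\tilde C_2,\tilde C_3,\tilde C_4\}$ is a single subequivalence class of size four.

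The main obstacle is the first step: producing a cylinder whose core curve lies in $K$. The subtlety is that both the underlying genus~$2$ surface and the kernel $K$ vary with the choice of point in $\cM$, so some care is needed to simultaneously control both. One could alternatively bypass this with an explicit computation on a small square-tiled or L-shaped example in $\cH(2)$, since the lemma only asserts the existence of such a class somewhere in $\cM$.
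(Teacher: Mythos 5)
The overall strategy is correct (produce a subequivalence class of four cylinders to contradict geminality), and the argument in the second half---that the four lifts form a single subequivalence class because $T\cM$ is the $G$-invariant part of cohomology, so the standard shear of a proper nonempty $G$-orbit-breaking subset cannot lie in $T\cM$---is essentially sound, and in fact matches the paper's conclusion, though the paper only needs the weaker observation that the four cylinders are $\cM$-equivalent. The genuine gap is in the first step.

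Your claim that ``as the base surface varies within $\cH(2)$, every primitive class in $H_1(X_0;\bZ)$ can be realized as the core curve of some cylinder'' is false, and the appeal to transitivity of $\mathrm{Sp}_4(\bZ/2)$ is misplaced. The monodromy available is that of $\pi_1^{orb}(\cH(2))$, which acts on $H_1(X_0;\bZ/2)$ through the stabilizer $\mathrm{Sym}(5)\subset\mathrm{Sym}(6)\cong\mathrm{Sp}_4(\bZ/2)$ of the Weierstrass point $w_0$ supporting the zero of $\omega$. In the Weierstrass-point coordinates used in the paper's proof, where the nonzero elements of $H_1(X_0;\bZ/2)$ are the classes $w_i-w_j$ for $0\leq j<i\leq 5$, the monodromy therefore has two orbits on nonzero vectors, of sizes $5$ and $10$. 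Furthermore the core curve of any cylinder on a surface in $\cH(2)$ passes through two Weierstrass points distinct from $w_0$ (since the order-$2$ cone point $w_0$ cannot lie in the interior of a flat cylinder), so cylinders realize exactly the size-$10$ orbit; the five classes $w_i-w_0$ are never realized. Thus neither the transitivity argument nor the identification of realizable classes is correct as stated. You acknowledge that $K$ varies with the point of $\cM$, but the proposed fix (a single explicit example) also needs care: different full loci $\cM$ correspond to different $\pi_1^{orb}(\cH(2))$-orbits of pairs of covering classes, and there are five such orbits to handle.

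The gap can be filled by a short mod-$2$ computation: the $2$-dimensional subspace $K$ always contains a realizable class, since if $w_a-w_0$ and $w_b-w_0$ both lie in $K$ then so does their sum $w_a-w_b$, and the relation $\sum_{i=1}^{5}(w_i-w_0)=0$ rules out all three nonzero elements of $K$ having the form $w_i-w_0$. But this is not in the proposal. The paper takes a more explicit route: it parameterizes $H_1(X';\bZ/2)$ by Weierstrass points, enumerates representatives of the five monodromy orbits on pairs of $1$-cochains specifying the cover, and for each representative directly names a cylinder through two Weierstrass points (neither $w_0$) whose core curve has trivial intersection pairing with both cochains, hence lifts to four $\cM$-equivalent cylinders.
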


We use Lemma \ref{L:Z2xZ2covers} only to deduce the following. 

\begin{cor}\label{C:NoRankTwoTxT}
Suppose that $\cM$ is geminal of rank at least two. Suppose that $\left( (X, \omega), \cM, \bfC_1, \bfC_2 \right)$ is a generic diamond. Then the $\cM_{\bfC_i}$ cannot both be full loci of covers of $T \times T$  loci.
\end{cor}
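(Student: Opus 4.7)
The plan is to reach a contradiction by using the Diamond Lemma to combine the $T \times T$ covering structures on the two boundaries into a single covering map on $(X,\omega)$, and then observe that this forces $\cM$ to have rank one. Throughout I argue by contradiction, so assume both $\cM_{\bfC_i}$ are full loci of covers of $T \times T$ loci. Since $T \times T$ loci lie in $\cH(0^n)$ and thus have rank one, so do the $\cM_{\bfC_i}$; in particular the $\cM_{\bfC_i}$-optimal maps from Proposition \ref{P:geminalrk1}(2c) are the $\bZ/2\times \bZ/2$-flavored covers to $T \times T$ surfaces, and by part (2c) these coincide with the minimal degree maps to tori. Call these maps $\pi_i$ on $\Col_{\bfC_i}(X,\omega)$; each has degree at least four.

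Before applying the Diamond Lemma, I would dispose of disconnectedness. If $\Col_{\bfC_i}(X,\omega)$ were disconnected, Lemma \ref{L:DisconnectedDegenerationMaps} would force the $\cM_{\bfC_i}$-optimal map to be the identity, contradicting $\deg(\pi_i) \geq 4$; and the same reasoning, applied to $\Col_{\bfC_1, \bfC_2}(X,\omega)$ viewed as an iterated collapse, combined with Lemma \ref{L:DegeneratingOptimalMap} (so that $\Col_{\Col_{\bfC_i}(\bfC_{i+1})}(\pi_i)$ is a good map of positive degree), shows $\Col_{\bfC_1, \bfC_2}(X,\omega)$ is also connected.

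Now I would verify the Diamond Lemma hypotheses. By Proposition \ref{P:geminalrk1}(4), both $\Col_{\Col_{\bfC_1}(\bfC_2)}(\pi_1)$ and $\Col_{\Col_{\bfC_2}(\bfC_1)}(\pi_2)$ are $\cM_{\bfC_1, \bfC_2}$-optimal for $\Col_{\bfC_1, \bfC_2}(X,\omega)$; since this surface is connected, its optimal map is unique up to an automorphism of the target torus, and after absorbing such an automorphism into $\pi_2$ the two iterated collapses agree. Assumption CP on the $\pi_i$ is automatic since optimal maps are good (Remark \ref{R:OptimalDiamond}). Lemma \ref{L:diamond} then produces a covering $f\colon (X,\omega) \to (Z,\zeta)$ with $\Col_{\bfC_i}(f) = \pi_i$. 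Since the $\pi_i$ are $\cM_{\bfC_i}$-good, $f$ is $\cM$-generic, and an application of Lemma \ref{L:IntroFull} (adapted to covers of a $T \times T$ locus rather than of a stratum, which is what the diamond framework delivers) shows that $\cM$ is a full locus of covers of the invariant subvariety containing $(Z,\zeta)$. But this target subvariety is contained in a $T \times T$ locus and hence has rank one, so $\cM$ has rank one, contradicting the hypothesis that $\cM$ has rank at least two.

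The main obstacle is the compatibility step: establishing that the two iterated collapses of $\pi_1$ and $\pi_2$ can be made literally equal, rather than merely equal up to a deck transformation of the target torus. Optimality alone determines the map only up to automorphisms of $(Z,\zeta)$, so I expect to need to carefully normalize the $\pi_i$ (for instance, by specifying basepoints and tracking how marked points in the $T\times T$ fiber structure are permuted under the $\bZ/2\times \bZ/2$ action) before invoking Lemma \ref{L:diamond}. A secondary technical point is ensuring that the target of $f$ really lies in a $T \times T$ locus rather than in a smaller-rank cover of it; this should follow from the fact that the two $\pi_i$ have $T \times T$ targets of the same rank and dimension, but it warrants explicit verification.
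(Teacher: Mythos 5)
Your proof has a genuine gap in the final step, and the gap is fatal to the whole strategy. You conclude that the orbit closure of $(Z,\zeta)$ is ``contained in a $T\times T$ locus and hence has rank one.'' This is false. A $T\times T$ locus lives in $\cH(0^n)$, i.e.\ it consists of flat tori. But $(Z,\zeta)$ is \emph{not} a torus: the cylinder $\bfC_1' = \pi(\bfC_1)$ is present on $(Z,\zeta)$, and collapsing it yields a surface in a $T\times T$ locus (genus one), so $(Z,\zeta)$ itself has genus at least two. What you actually know is that the \emph{degenerations} of the orbit closure of $(Z,\zeta)$ land in $T\times T$ loci, not that the orbit closure itself is contained in one. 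Consequently $\cM$ is a full locus of covers of a rank-two subvariety, not a rank-one one, and your contradiction evaporates. (Your ``secondary technical point'' is in fact the fatal one; it does not ``follow from the fact that the two $\pi_i$ have $T\times T$ targets of the same rank and dimension,'' because the target of $f$ is a genuine desingularization of those tori, not an interpolation between them.)

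The paper's proof avoids this by composing $\pi_i$ with the degree-four covers $f_i$ coming from the $\bZ/2\times\bZ/2$ structure on the $T\times T$ loci, so that $f_i\circ\pi_i$ maps to a \emph{stratum} of tori; then Lemma~\ref{L:IntroFull} applies as stated, yielding that $\cM$ is a full locus of covers of a genus-two stratum $\cH(2,0^m)$. The paper shows $m=0$, applies the Diamond Lemma to produce the intermediate cover $\pi\colon(X,\omega)\to(Z,\zeta)$, shows the residual degree-four cover $f\colon(Z,\zeta)\to(Y,\eta)\in\cH(2)$ has deck group $\bZ/2\times\bZ/2$ and is unbranched, and only then derives a contradiction from Lemma~\ref{L:Z2xZ2covers}: such a cover always has an equivalence class of four generically isometric cylinders, which violates the geminal hypothesis. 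Your proposal never uses Lemma~\ref{L:Z2xZ2covers} at all, even though the paper explicitly states immediately before the corollary that this lemma is its entire purpose. The rank-based shortcut you hoped for does not exist; the actual obstruction is that $\bZ/2\times\bZ/2$ covers of $\cH(2)$ cannot be geminal, and that requires a genuinely separate computation (the Weierstrass-point/$H_1(X',\bZ/2\bZ)$ analysis in the proof of Lemma~\ref{L:Z2xZ2covers}).
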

\begin{proof}[Proof of Corollary \ref{C:NoRankTwoTxT} assuming Lemma \ref{L:Z2xZ2covers}]
Suppose not. Let $\cM$ be the smallest dimensional counterexample to the claim. 

By Proposition \ref{P:geminalrk1} \eqref{I:geminalrk1:CoverOfTxT:PiOpt}, for each surface in $\cM_{\bfC_i}$ there is an $\cM_{\bfC_i}$-optimal map to a $T \times T$ locus. We will denote these covers by $$\pi_i: \Col_{\bfC_i}(X, \omega) \ra (Z_i, \zeta_i).$$ By Remark \ref{R:TxTCP}, a $T \times T$ locus is a full locus of covers satisfying Assumption CP, via the degree 4 covers arising as the quotient of  the $\bZ/2\times \bZ/2$ group of translation symmetries. We will denote these covers by $f_i: (Z_i, \zeta_i) \ra (Y_i, \eta_i)$. Therefore, $\cM_{\bfC_i}$ is a full locus of covers satisfying Assumption CP, via the map $f_i\circ \pi_i$. 

Hence  Lemma \ref{L:IntroFull} implies that $\cM$ is a full locus of covers of a stratum $\cH$ of Abelian differentials.  These covers are precisely $\pi_{X_{min}}$ by Corollary \ref{C:MinimalCover}. The stratum $\cH$ must have a codimension one boundary component that is a stratum of genus 1 surfaces. This implies that $\cH = \cH(2, 0^m)$ for some nonnegative integer $m$.

 By definition, each $\pi_i$ is an optimal map. Thus Proposition \ref{P:geminalrk1} \eqref{I:geminalrk1:PiOptDegeneratesToPiOpt} gives that both $\Col_{\ColOne(\bfC_2)}(\pi_1)$ and $\Col_{\ColTwo(\bfC_1)}(\pi_2)$ are optimal maps for $\ColOneTwoX$. Since optimal maps are unique, we can conclude that    $\Col_{\ColOne(\bfC_2)}(\pi_1) = \Col_{\ColTwo(\bfC_1)}(\pi_2)$. Therefore by the Diamond Lemma (Lemma \ref{L:diamond}, see also Remark \ref{R:OptimalDiamond}), there is a map $\pi: (X, \omega) \ra (Z, \zeta)$ for which $\Col_{\bfC_i}(\pi) = \pi_i$. Since every translation cover is a factor of $\pi_{X_{min}}$ (Theorem \ref{T:MinimalCover}), $\pi$ is a factor of $\pi_{X_{min}}$ and hence that there is a degree four map $f: (Z, \zeta) \ra (Y, \eta)$ such that $f \circ \pi = \pi_{X_{min}}$. This implies that $(Y, \eta)\in  \cH(2, 0^m)$.


\begin{sublem}
$m=0$.
\end{sublem}

\begin{proof} 
The outline of the proof is that, if there are $m>0$ marked points on $(Y, \eta)$, we can  move a marked point to a zero to get a smaller dimensional counterexample, contradicting our assumption on $\cM$.

To realize this outline, we first observe that $$\overline{\pi_{X_{min}}(\bfC_i)}\subset (Y,\eta) \in \cH(2, 0^m)$$ does not contain any marked points. Indeed, our genericity assumptions imply that $\pi_{X_{min}}(\bfC_i)$ is simple, and if it contained a marked point in its boundary then degenerating it would give a surface in $\cH(2, 0^{m-1})$, contrary to our assumption that this degeneration has genus 1. 

Given a  marked point $p$ on $(Y,\eta)$ and a saddle connection $\gamma_p$ from the zero of $(Y,\eta)$ to $p$ that lies in the complement of $\pi_{X_{min}}(\bfC_1 \cup \bfC_2)$, we will let $\Col_{\gamma_p}$ denote the degeneration that fixes the rest of the surface while moving $p$ along $\gamma_p$ to the other endpoint of $\gamma_p$. We will call this collapsing $\gamma_p$ to a point, and  let $\Col_{\pi_{X_{min}}^{-1}(\gamma_p)}$ (resp. $\Col_{f^{-1}(\gamma_p)}$) denote the corresponding degeneration of $(X, \omega)$ (resp. $(Z, \zeta)$). We note that $\Col_{\pi_{X_{min}}^{-1}(\gamma_p)}$ commutes with $\Col_{\bfC_i}$. 

First we note that $\Col_{f^{-1}(\gamma_p)}(Z, \zeta)$ is not disconnected since $$\Col_{f^{-1}(\gamma_p)} \Col_{\pi(\bfC_i)}(Z, \zeta)$$ is connected. (Recall that $\Col_{\pi(\bfC_i)}(Z, \zeta)$ has genus $1$, so it does not admit disconnected degenerations.) 

Next we note that there is a covering map $$\Col_{\pi_{X_{min}}^{-1}(\gamma_p)}(X, \omega) \ra \Col_{f^{-1}(\gamma_p)}(Z, \zeta)$$ for which the preimage of the image of each cylinder in $\Col_{\pi_{X_{min}}^{-1}(\gamma_p)}(\bfC_1)$ is a single cylinder (this follows from the fact that $C$ is the preimage of its image under $\pi$ for any cylinder $C$ in $\bfC_1 \cup \bfC_2$; this in turn follows from the fact that $\Col_{\bfC_i}(\pi) = \pi_i$, which is a good map). This shows that $\Col_{\pi_{X_{min}}^{-1}(\gamma_p)}(X, \omega)$ is connected.

Letting $\cN$ be the orbit closure of $\Col_{\pi_{X_{min}}^{-1}(\gamma_p)}(X, \omega)$, we have that $\cN$ is a locus of connected surfaces of smaller dimension than $\cM$ and so 
$$\left( \Col_{\pi_{X_{min}}^{-1}(\gamma_p)}(X, \omega), \cN, \Col_{\pi_{X_{min}}^{-1}(\gamma_p)}(\bfC_1), \Col_{\pi_{X_{min}}^{-1}(\gamma_p)}(\bfC_2) \right)$$ is a diamond that forms a smaller dimensional counterexample to the claim. This is a contradiction, so we get $m=0$. 
\end{proof}

Let $\cM'$ be the orbit closure of $(Z, \zeta)$. We now have that $\cM'$ is a full locus of covers of $\cH(2)$ via the degree 4 cover $f$. For convenience we define $\bfC_i' = \pi(\bfC_i)$ and note that $\left( (Z, \zeta), \cM', \bfC_1', \bfC_2' \right)$ is a generic diamond. 

\begin{sublem}
The deck group of $f$ is $\bZ/2\times  \bZ/2$.
\end{sublem}

\begin{proof}Since $\Col_{\bfC_1'}(Z, \zeta), \Col_{\bfC_2'}(Z, \zeta)$ and $\Col_{\bfC_1', \bfC_2'}(Z, \zeta)$ are $T\times T$ loci, for every involution $\gamma\in \bZ/2\times \bZ/2$, we can find involutions $T_i^\gamma$  on $\Col_{\bfC_i'}(Z,\zeta)$ that agree one $\Col_{\bfC_1', \bfC_2'}(Z, \zeta)$ in the sense that 
$$\Col_{\Col_{\bfC_1'}(\bfC_2')} T_1^\gamma = \Col_{\Col_{\bfC_2'}(\bfC_1')} T_2^\gamma.$$
Keeping in mind that all degree two covering maps are the quotient by an involution, the Diamond Lemma thus gives the existence of an involution $T^\gamma$ on $(Z,\zeta)$ with $\Col_{\bfC_i'}(T^\gamma) = T^\gamma_i$. 

This gives a $\bZ/2\times \bZ/2$ group of symmetries on $(Z,\zeta)$, and the quotient by this group is the degree four map $f$. 
\end{proof}

By our definition of translation covers, $f$ can be branched at most over the zero of $(Y,\eta)\in \cH(2)$. However, a regular cover of a closed surface with abelian deck group cannot be branched over a single point, because the loop around that point is null-homologous in the surface with the single point removed. Hence, we get that $f$ is unbranched.

Lemma \ref{L:Z2xZ2covers} now implies that $(Z ,\zeta)$ has no marked points and contains an equivalence class of four generically isometric cylinders. Because of the lack of marked points, each cylinder on a surface in $\cM'$ lifts to a collection of cylinders of the same height on a surface in $\cM$, so this contradicts the fact that $\cM$ is geminal and concludes the proof.
\end{proof}

\begin{proof}[Proof of Lemma \ref{L:Z2xZ2covers}]
Let $(X',\omega') \in \cH(2)$. An unbranched $\bZ/2 \times \bZ/2$ cover of $X'$  is specified by a homomorphism from $\pi_1(X')$ to $\bZ/2 \times \bZ/2$, which is equivalent to specifying two nonzero distinct elements of $H^1(X', \bZ/2\bZ)$. There are finitely many choices for these two nonzero elements, and, roughly speaking, our proof will simply  check that each of them doesn't give a geminal orbit closure. It will be helpful to keep in mind that, by Poincare duality, $H^1(X', \bZ/2\bZ)$ is isomorphic to $H_1(X', \bZ/2\bZ)$. 

Consider the $\bZ$-module

$$  W = \left( \bZ (w_1-w_0) \oplus \cdots \oplus \bZ (w_5-w_0) \right) / V,$$
where the $w_i$ are formal variables and 
\[ V =  \span_{\bZ}\left( \left\{ 2w_i - 2w_j \right\}_{i,j \in \{0, \hdots, 5 \}} \cup \left\{\sum_{i=1}^5  (w_i - w_0) \right\} \right).  \]
We will recall an isomorphism $\phi$ from this $\bZ$-module to $H_1(X', \bZ/2\bZ)$ as follows.  

Associate to each $w_i$ a Weierstrass point of $X'$, which we will also denote $w_i$.  

 Notice that every nonzero element of $  W   $ may be written uniquely as $w_i - w_j$ where $i > j$ are elements of $\{0, \hdots, 5\}$. ($W$ is a $\bZ/2\bZ$ vector space of dimension 4, and hence contains $2^4-1=15$ non-zero elements.) For each $i>j$, associate to $w_i - w_j$ any path $\gamma_{i,j}$ from $w_i$ to $w_j$, and define  $$\phi(w_i - w_j) = \gamma_{i,j} - J(\gamma_{i,j}) \in H_1(X', \bZ)/2H_1(X', \bZ),$$ where $J$ is the hyperelliptic involution. Notice that $\gamma_{i,j} - J(\gamma_{i,j})$ is a closed curve.  
 
 Since $\gamma_{i,j}$ is well-defined up to concatenating by a closed curve $\gamma_i$ based at $w_i$ and a closed curve $\gamma_j$ based at $w_j$, and since 
\[ (  \gamma_{i,j} + \gamma_i + \gamma_j) - J(  \gamma_{i,j} + \gamma_i + \gamma_j) = \phi(w_i - w_j) + 2(\gamma_i + \gamma_j), \]
$\phi(w_i - w_j)$ is well defined. From this point it is easy to check that $\phi$ is an isomorphism. 

We may assume $w_0$ is the zero of $\omega'$. A loop in $\cH(2)$ acts by permutations on $\{w_0, \hdots, w_5\}$ that fix $w_0$. This induces an action of $\pi_1^{orb}\left( \cH(2) \right)$ on $H_1(X', \bZ/2\bZ)$ using the isomorphism above. The action agrees with the composition of the natural map from $\pi_1^{orb}$ to the mapping class group and the natural map from the mapping class group to $\mathrm{Sp}_4(\bZ/2\bZ)$, which acts on $H_1(X', \bZ/2\bZ)$. 

Given $1$-cochains $v_i \in H^1(X', \bZ/2\bZ)$ for $i \in \{1, 2\}$ which specify a degree four cover contained in $\cM$, $\cM$ also contains the degree four cover specified by $(g \cdot v_1, g \cdot v_2)$ for any $g \in \pi_1^{orb}(\cH(2))$. 

Note that the action of $\pi_1^{orb}(\cH(2))$ fixes $w_0$ and acts by a surjective homomorphism to $\mathrm{Sym}(5)$ on $\{w_1, \hdots, w_5\}$; the surjectivity can be seen for example by using Dehn twists in the horizontal and vertical direction on an $L$-shaped surface in $\cH(2)$. Each Dehn twist in a cylinder produces a transposition in $\mathrm{Sym}(5)$. Up to re-indexing the Weierstrass points, the transpositions produced by the horizontal and vertical cylinders on an $L$-shaped surfaces are $\{ (1 2), (2 3), (3 4), (4 5)\}$, which generate $\mathrm{Sym}(5)$.

Therefore, representatives of each orbit of $\pi_1^{orb}(\cH(2))$ on sets of two distinct nonzero elements of $H^1(X', \bZ/2\bZ)$ are given by the following list
\[ \left( w_1 - w_0, w_2 - w_0 \right),\quad \left( w_1 - w_0, w_2 - w_1 \right),\quad \left( w_1 - w_0, w_3 - w_2 \right), \] 
\[ \left( w_2 - w_1, w_3 - w_2 \right),\quad \left( w_2 - w_1, w_3 - w_4 \right). \]
  (This list can be understood by considering whether $w_0$ appears in zero, one, or two of the two nonzero elements, and how many $w_i, i>0$ appear in both nonzero elements.)  
The isomorphism from $H_1(X', \bZ/2\bZ)$ to $H^1(X', \bZ/2\bZ)$ maps a chain $\gamma$ to intersection number with $\gamma$ modulo $2$. The number of intersections between nonzero classes $w_i - w_j$ and $w_k - w_\ell$ (modulo $2$) is $1$ if $\{i, j\} \cap \{k, \ell\}$ is a singleton and zero otherwise. 

It is easy to see that there is an open dense set of $\cH(2)$ with a cylinder between any two Weierstrass points (excepting $w_0$). (In fact this open dense set is all of $\cH(2)$, but we will not need this.) Without loss of generality suppose that $(X',\omega')$ belongs to this open dense set. 

For the first four entries in the above list, we see that a cylinder passing through $w_4$ and $w_5$ must lift to four $\cM$-equivalent cylinders, hence these covers are not geminal. For the final entry, we see that a cylinder passing through $w_2$ and $w_1$  must lift to four $\cM$-equivalent cylinders and again these covers cannot be geminal. 
\end{proof}


\begin{lem}\label{L:NoRank2JandTT}
Suppose that $( (X, \omega), \cM, \bfC_1, \bfC_2)$ is a generic diamond where $\MOne$ is a $T \times T$ locus and  $\MTwo$ is a quadratic double. Then the surfaces in $\cM$ don't have marked points, and $\cM$ contains surfaces with an equivalence class of at least 3 cylinders that  have the same height on all nearby surface in $\cM$. 
\end{lem}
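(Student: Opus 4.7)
My plan is to analyze the common degeneration $\cM_{\bfC_1, \bfC_2}$ and use the Diamond Lemma to extend covering structures to $(X, \omega)$. First I would show that $\cM_{\bfC_1, \bfC_2}$ is simultaneously a $T \times T$ locus and a quadratic double. By Lemma \ref{L:GeminalBoundary}, $\cM_{\bfC_1, \bfC_2}$ is geminal, and assuming connectedness it lies in $\cH(0^m)$ for some $m$. Using Lemma \ref{L:DegeneratingOptimalMap}, the good $\bZ/2 \times \bZ/2$ cover inherited from $\cM_{\bfC_1}$ and the good holonomy double cover inherited from $\cM_{\bfC_2}$ both descend to $\cM_{\bfC_1, \bfC_2}$. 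Applying Proposition \ref{P:0n} and Lemma \ref{L:Types} then forces $\cM_{\bfC_1, \bfC_2}$ to be a quadratic double of $\cQ(-1^4)$ with three or four marked preimages of poles. On this common degeneration we therefore have a commuting action of the $\bZ/2 \times \bZ/2$ translation group $G$ (from the $T \times T$ structure) and the holonomy involution $J$, together generating an order-8 symmetry group.

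Next I would apply the Diamond Lemma (Lemma \ref{L:diamond}) to extend a $\bZ/2 \times \bZ/2$ covering map to $(X, \omega)$. The $T \times T$ cover is already present on $\Col_{\bfC_1}(X, \omega)$. The key step is to produce a matching $\bZ/2 \times \bZ/2$ cover on $\Col_{\bfC_2}(X, \omega)$ whose restriction to the common degeneration agrees with the $T \times T$ cover there. This matching cover is built by combining the holonomy involution with additional symmetries extending the $G$-action from the boundary, with compatibility on the common degeneration guaranteed by the commutativity of $G$ and $J$. The Diamond Lemma then produces a $\bZ/2 \times \bZ/2$ covering map $f \colon (X, \omega) \to (Z, \zeta)$ satisfying Assumption CP, making $\cM$ a full locus of $\bZ/2 \times \bZ/2$ covers.

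With this extended $\bZ/2 \times \bZ/2$ structure on $\cM$, both conclusions follow. For the second claim, a generic cylinder on $(X, \omega)$ has orbit of size 4 under the deck group (orbits of size less than 2 are ruled out by Assumption CP, and for generic cylinder directions the stabilizer is trivial), giving four mutually isometric cylinders in a single $\cM$-equivalence class whose heights remain linked on all nearby surfaces by the deck-group symmetry. This provides the required equivalence class of at least three cylinders having the same height on nearby surfaces. For the first claim, the absence of marked points follows from analyzing the target $(Z, \zeta)$ of the cover together with the geminal and rank-$\geq 2$ assumptions: any marked points on $(X, \omega)$ would have to come in full $\bZ/2 \times \bZ/2$-orbits of sizes dictated by Assumption CP, and the constraints imposed by the cover structure combined with the geminal condition on $\cM$ rule out such orbits.

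The principal technical obstacle is producing the compatible $\bZ/2 \times \bZ/2$ cover on $\Col_{\bfC_2}(X, \omega)$, because $\cM_{\bfC_2}$ a priori carries only the holonomy $\bZ/2$ symmetry. Constructing the additional $\bZ/2$ factor compatible with the boundary $T \times T$ structure requires a careful interplay between the two involutions on the common degeneration, verified through the Diamond Lemma's hypotheses. Disconnectedness of $\cM_{\bfC_1, \bfC_2}$ would also need separate treatment, likely via Corollary \ref{C:DisconnectR1} applied to the rank-one boundary.
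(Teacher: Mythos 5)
The paper's proof does not attempt to build a $\bZ/2 \times \bZ/2$ cover on $(X,\omega)$. Instead, after identifying $\MOneTwo$ as a quadratic double of $\cQ(-1^4)$ with at least three preimages of poles marked (this part agrees with your first paragraph), it invokes \cite[Lemma 8.24]{ApisaWrightDiamonds} — a result precisely tailored to diamonds whose two sides carry \emph{different} cover structures — to conclude that $\cM$ is a full locus of degree 2 covers of the Prym locus in $\cH(4)$. The structure on $(X,\omega)$ is thus a tower of two degree 2 covers, not a regular $\bZ/2 \times \bZ/2$ cover. The conclusion about marked points and the four-cylinder equivalence class is then read off from an explicit picture of such surfaces (Figure \ref{F:Deg2Prym4Cyls}) by examining the vertical cylinders for each possible gluing.

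Your proposal has a genuine gap at the Diamond Lemma step. You acknowledge needing to ``produce a matching $\bZ/2 \times \bZ/2$ cover on $\Col_{\bfC_2}(X, \omega)$'' but offer no construction, only the vague suggestion of ``combining the holonomy involution with additional symmetries extending the $G$-action from the boundary.'' This cannot work: $\MTwo$ is by hypothesis merely a quadratic double, which carries only a degree 2 covering map (the holonomy double cover). There is no reason a surface in $\MTwo$ admits a degree 4 regular cover, and if it always did, $\MTwo$ would have to be something much more special than a quadratic double. The commutativity of $G$ and $J$ on the common degeneration does not produce an extension of $G$ to $\Col_{\bfC_2}(X,\omega)$; the common degeneration is a smaller surface where more symmetries may coincidentally exist. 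The Diamond Lemma as stated requires you to already have the compatible maps $f_1$, $f_2$ on both sides of the diamond with matching collapses — it does not conjure the second one. This is exactly the situation that \cite[Lemma 8.24]{ApisaWrightDiamonds} handles: it synthesizes the mismatched degree 4 and degree 2 structures on the two sides into the correct (non-regular) degree 4 structure on the top, namely a degree 2 cover of a Prym. Without that lemma or an equivalent substitute, your argument does not produce a cover on $(X,\omega)$ at all, and the subsequent deductions (four-cylinder equivalence class, no marked points) rest on an unestablished premise. Your final paragraph also mentions treating disconnectedness via Corollary \ref{C:DisconnectR1}, but a generic diamond with a $T \times T$ side already forces $\ColOneTwoX$ to be connected (disconnected degenerations of $T\times T$ loci do not arise from generic subequivalence classes), so that detour is unnecessary.
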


The proof gives complete information on $\cM$, but we state here only what we will apply. Lemma \ref{L:NoRank2JandTT} immediately implies the following, and will also be used in a more subtle application later. 

\begin{cor}\label{C:NoRank2JandTT} 
Under the same assumptions, $\cM$ cannot be geminal. 
\end{cor}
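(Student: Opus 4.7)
The plan is to derive an immediate contradiction from Lemma \ref{L:NoRank2JandTT}. Assume toward contradiction that $\cM$ is geminal. By Lemma \ref{L:NoRank2JandTT}, there is a surface $(Y,\eta) \in \cM$ carrying an $\cM$-equivalence class $\bfC$ of at least three cylinders such that every cylinder in $\bfC$ has the same height as every other throughout a neighborhood of $(Y,\eta)$ in $\cM$.

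Since $\cM$ is geminal, $\bfC$ partitions into subequivalence classes of size one (a single free cylinder) or two (a twin pair). Because $|\bfC| \ge 3$, this partition must consist of at least two distinct subequivalence classes; pick one such subequivalence class $\bfC' \subsetneq \bfC$ and let $\bfC'' \subseteq \bfC \setminus \bfC'$ be another. Rotate $(Y,\eta)$ so that the cylinders in $\bfC$ are horizontal. By Definition \ref{D:SE} we have $\sigma_{\bfC'} \in T_{(Y,\eta)}\cM$, and complex linearity of the tangent space in period coordinates gives $i\sigma_{\bfC'} \in T_{(Y,\eta)}\cM$ as well. The straight-line path in $\cM$ determined by $i\sigma_{\bfC'}$ is the standard dilation of $\bfC'$, which multiplies the heights of all cylinders in $\bfC'$ by $e^t$ while leaving the heights of the cylinders in $\bfC''$ unchanged.

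For any small nonzero $t$, this contradicts the equal-height conclusion of Lemma \ref{L:NoRank2JandTT}, so no such geminal $\cM$ can exist. The only real content of the argument is combinatorial: three or more cylinders of a common $\cM$-stable height cannot fit into the size-one-or-two subequivalence-class structure forced by geminality. The nontrivial work has already been packaged into Lemma \ref{L:NoRank2JandTT}, so no further obstacle remains.
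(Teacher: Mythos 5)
Your proof is correct and is the natural elaboration of what the paper describes as an ``immediate'' consequence of Lemma \ref{L:NoRank2JandTT}; no separate proof is given in the paper. The core observation — that an equivalence class of three or more cylinders constrained to remain mutually isometric cannot decompose into subequivalence classes of size one or two, since a cylinder deformation supported on one such subequivalence class would break the equal-height constraint — is exactly the intended argument.
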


\begin{proof}[Proof of Lemma \ref{L:NoRank2JandTT}]
Suppose not. The base of the diamond is necessarily a quadratic double that is also type $T \times T$. By Lemma \ref{L:Types}, $\MOneTwo$ is necessarily a quadratic double of $\cQ(-1^4)$ and at least three preimages of poles are marked. (Note that this implies $\MOne$ consists of $\bZ/2\times \bZ/2$ covers of surfaces in $\cH(0,0)$.)

Let $f_1$ (resp. $f_2$) denote the quotient map, whose domain is $\ColOneX$ (resp. $\ColTwoX$), by the $\bZ/2\times \bZ/2$ action (resp. the holonomy involution).  Let $\Col(f_1)$ and $\Col(f_2)$ denote the induced maps on $\ColOneTwoX$. 

We now use redundant notation to make clear the connection to previous work. 
Let $g_1$ be the map defined on surfaces in the quadratic double of $\cQ(-1^4)$ that is the quotient by the two torsion subgroup, so the codomain is surfaces in $\cH(0)$. Let $g_2$ be the quotient by the holonomy involution, defined again on surfaces in the quadratic double of $\cQ(-1^4)$. Let $g$ be the identity map. Then $\Col(f_i)= g_i \circ g$. Now,  \cite[Lemma 8.24]{ApisaWrightDiamonds}  shows that $\cM$ is a locus of degree 2 covers of the Prym locus in $\cH(4)$, and moreover $\cM$ contains surfaces as in Figure \ref{F:Deg2Prym4Cyls}, with some gluing of the dotted horizontal edges that gives a cover of a Prym form. 

\begin{figure}[h]
\includegraphics[width=0.8\linewidth]{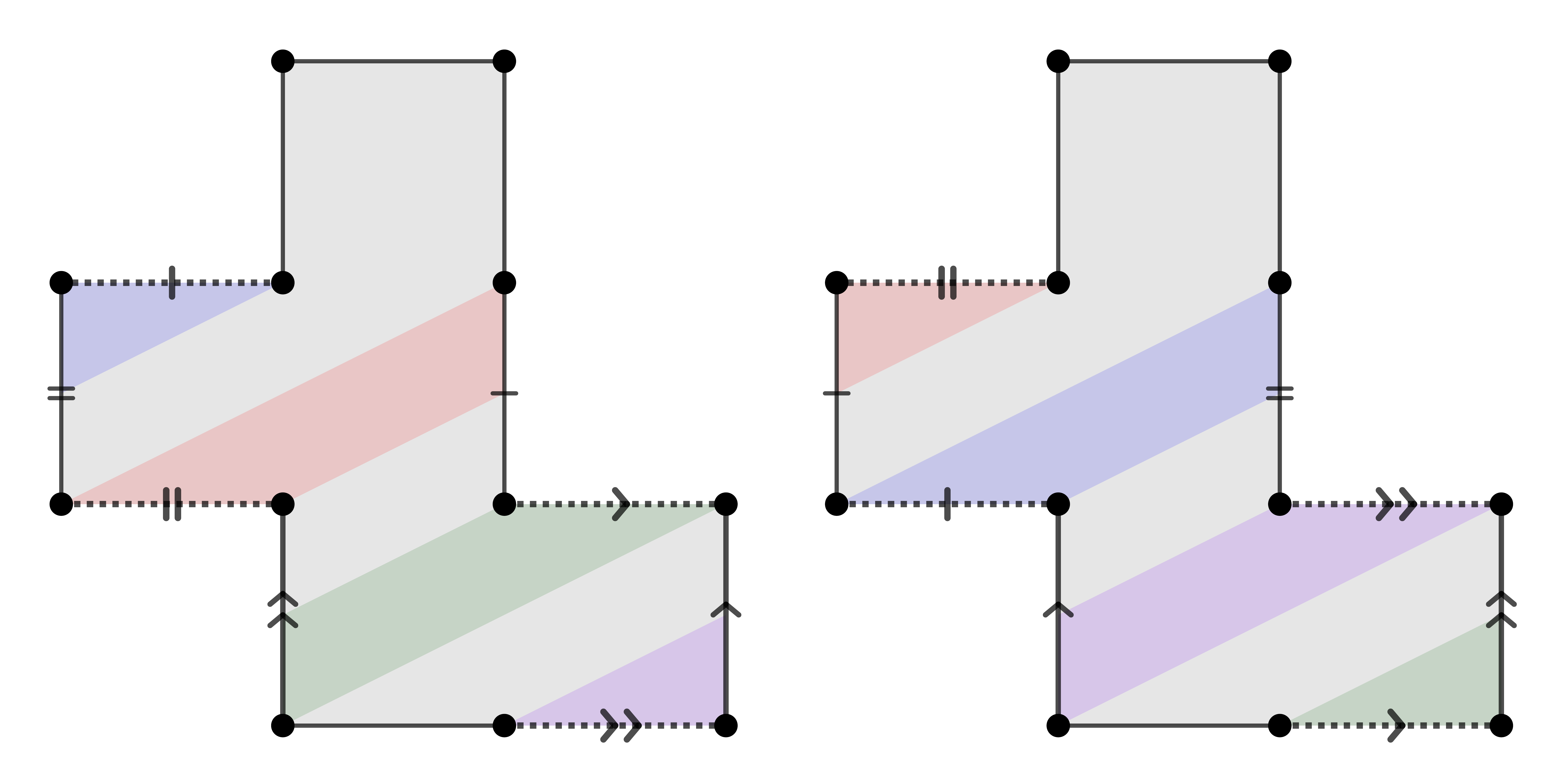}
\caption{The proof of Lemma \ref{L:NoRank2JandTT}.}
\label{F:Deg2Prym4Cyls}
\end{figure}

By considering the vertical cylinders that go through the dotted lines, it is easy to get the result for all possible gluings except possibly the one indicated in the Figure \ref{F:Deg2Prym4Cyls}. However, with that gluing, the figure illustrates a collection of four cylinders that prove the result.
\end{proof}

\begin{proof}[Proof of Proposition \ref{P:TxTDiamond}:]
Suppose not. Then $\MOneTwo$ is a $T \times T$ locus in a stratum of tori. By Lemma \ref{L:Types}, $\MTwo$ cannot be an Abelian double or a component of a stratum of Abelian differentials. By Corollary \ref{C:NoRank2JandTT}, $\MTwo$ cannot be a quadratic double. By assumption, $\MTwo$ is therefore contained in a stratum of tori. Since $\MTwo$ is not a stratum of Abelian or quadratic double,  Proposition \ref{P:0n} gives that $\MTwo$ must be a $T \times T$ locus. But this is impossible by Corollary \ref{C:NoRankTwoTxT}, so we have a contradiction. 
\end{proof}

%
%
%
%

\section{Proof of Theorem \ref{T:geminal} }\label{S:MainProof}


We will actually prove the following stronger version of Theorem \ref{T:geminal}, which is easier to prove by induction because the inductive hypothesis provides more information. Recall that optimal maps are defined in Definition \ref{D:GoodAndOptimal}, and if they exist they are always unique, and that $T\times T$ loci are defined in Definition \ref{D:TxT}. Subequivalence classes and generic cylinders are defined in Definition \ref{D:SE}, and for geminal subvarieties consist of either a single free cylinder or a pair of twins.

\begin{thm}\label{T:geminal2} 
Suppose that $\cM$ is a geminal invariant subvariety. 
\begin{enumerate}
\item\label{I:geminal2:WhatIsM} $\cM$ is one of the following:
\begin{enumerate}
\item\label{I:geminal2:ComponentOrDouble} A connected component of a stratum of Abelian differentials or an Abelian or quadratic double.
\item\label{I:geminal2:CoverOfQuadDouble} A full locus of covers of a quadratic double of a genus zero stratum. 
\item\label{I:geminal2:CoverOfTxT} A full locus of covers of a $T \times T$ locus. 
\end{enumerate}
\item\label{I:geminal2:PiOpt} Every surface  in $\cM$ has an $\cM$-optimal map $\pi_{opt}$, and:
\begin{enumerate}
\item\label{I:geminal2:ComponentOrDouble:PiOpt} In Case \eqref{I:geminal2:ComponentOrDouble}, $\pi_{opt}$ is the identity. 
\item\label{I:geminal2:CoverOfQuadDouble:PiOpt} In  Case \eqref{I:geminal2:CoverOfQuadDouble}, $\pi_{opt}$ is the covering map to surfaces in the quadratic double. When the rank is one, this covering map is the minimal degree map to a torus. 
\item\label{I:geminal2:CoverOfTxT:PiOpt} In Case \eqref{I:geminal2:CoverOfTxT}, $\pi_{opt}$ is the covering map to surfaces in the $T\times T$ locus, and is moreover the minimal degree map to a torus.
\end{enumerate}
\item\label{I:geminal2:TwoCylinders} If the degree of the optimal map is greater than one, then each subequivalence class contains exactly two cylinders. 
\item\label{I:geminal2:PiOptDegeneratesToPiOpt} If $\bfC$ is a subequivalence class of $\cM$-generic cylinders on a surface $(X, \omega)$ in  $\cM$, then $\Col_{\bfC}(\pi_{opt})$ is the $\cM_{\bfC}$-optimal map for $\Col_{\bfC}(X, \omega)$.
\end{enumerate}
\end{thm}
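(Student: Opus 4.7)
The plan is to proceed by induction on $\dim \cM$, with the base case (rank one) handled by Proposition \ref{P:geminalrk1}. Here we view a stratum of flat tori as a trivial cover of itself, considered as a $T \times T$ locus, so that the rank-one conclusion fits into the three-case structure of the theorem. For the inductive step, let $\cM$ have rank at least two, and assume the theorem holds for all geminal subvarieties of strictly smaller dimension.

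Given $(X,\omega) \in \cM$ with dense orbit, Lemma \ref{L:GenericDiamond} produces subequivalence classes $\bfC_1, \bfC_2$ forming a generic skew diamond. Each boundary subvariety $\cM_{\bfC_i}$ is geminal by Lemma \ref{L:GeminalBoundary}, has dimension one less than $\cM$, and hence satisfies the inductive conclusion: it falls into one of the three cases, carries an optimal map $\pi_i$ on $\Col_{\bfC_i}(X,\omega)$, and behaves compatibly under further degeneration by property (4).

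The classification of $\cM$ splits into subcases. If $\Col_{\bfC_1, \bfC_2}(X,\omega)$ is disconnected, Lemma \ref{L:Disconnect} forces $\cM_{\bfC_1, \bfC_2}$ into a product of two stratum components, and combining this with the geminal twin structure and Lemma \ref{L:DisconnectedDegenerationMaps} pins down $\cM$ (together with $\pi_{opt}=\mathrm{id}$). In the connected case: (i) if both $\cM_{\bfC_i}$ are in case (1), Proposition \ref{P:DiamondsAreCool} directly places $\cM$ in case (1); (ii) if some $\cM_{\bfC_i}$ is a cover of a $T \times T$ locus, Proposition \ref{P:TxTDiamond} together with Corollary \ref{C:NoRankTwoTxT} forces both to be $T \times T$ covers; (iii) otherwise both $\cM_{\bfC_i}$ are covers of quadratic doubles of genus zero strata. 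In situations (ii) and (iii), the inductive property (2) identifies $\pi_1, \pi_2$ and forces the compatibility $\Col_{\ColOne(\bfC_2)}(\pi_1) = \Col_{\ColTwo(\bfC_1)}(\pi_2)$ on the twice-collapsed surface, so the Diamond Lemma (Lemma \ref{L:diamond}) supplies $\pi_{opt}: (X, \omega) \to (Y, \eta)$ with $\Col_{\bfC_i}(\pi_{opt}) = \pi_i$; Assumption CP is automatic by Remark \ref{R:OptimalDiamond}. Lemma \ref{L:IntroFull} (in situation (iii), applied after composing with the holonomy double cover to enter the Abelian setting and then identifying the quadratic-double image) certifies that $\cM$ is a full cover of the appropriate base.

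Properties (2) and (4) are baked into the construction: (2) by the assembly of $\pi_{opt}$ from the $\pi_i$, and (4) by Lemma \ref{L:DegeneratingOptimalMap} together with uniqueness of optimal maps. Property (3) follows from the geminal dichotomy: in a degree-$d$ good cover, each cylinder downstairs pulls back to exactly $d$ isometric cylinders (Assumption CP), and being geminal forces each such collection to be either one free cylinder ($d=1$) or a twin pair ($d=2$). For non-generic $(X,\omega) \in \cM$, the $\cM$-optimal map is produced by a limiting argument using persistence of cylinders and good maps under small deformations. The main obstacles I anticipate are: (a) adapting Lemma \ref{L:IntroFull}, stated for covers of Abelian differentials, to situation (iii) where the base is a quadratic double of a genus zero stratum, which requires careful tracking of the involved double cover and verification that Assumption CP survives composition; and (b) verifying the Diamond Lemma's compatibility hypothesis, which depends on the uniqueness aspect of property (2) applied inductively to $\Col_{\bfC_1,\bfC_2}(X,\omega)$, so that the argument is genuinely inductive on the packaged stronger statement rather than on Theorem \ref{T:geminal} alone.
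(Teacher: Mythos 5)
Your proposal captures the broad architecture (induction on dimension, diamonds from Lemma \ref{L:GenericDiamond}, boundary orbit closures via Lemma \ref{L:GeminalBoundary}, the Diamond Lemma to assemble $\pi_{opt}$), but it misses the structural dichotomy that organizes the paper's inductive step, and several of your individual arguments do not work as stated.

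The paper's proof after the base case splits into two regimes: (i) there \emph{exists} a generic subequivalence class $\bfD_1$ with $\cM_{\bfD_1}$-optimal map equal to the identity (Section \ref{SS:SometimesId}), or (ii) the $\cM_{\bfC}$-optimal map has degree greater than one for \emph{every} generic subequivalence class $\bfC$ (Section \ref{SS:NeverId}). This dichotomy is not optional. In regime (i), by the inductive form of parts \eqref{I:geminal2:WhatIsM} and \eqref{I:geminal2:PiOpt}, each $\cM_{\bfC_i}$ is a stratum, a double, or a $T\times T$ locus; Proposition \ref{P:TxTDiamond} then eliminates $T\times T$ loci, and Proposition \ref{P:DiamondsAreCool} forces $\cM$ into case \eqref{I:geminal2:ComponentOrDouble}. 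In regime (ii), one must work directly with $\pi_{X_{\min}}$, show it is optimal (Corollary \ref{C:MinimalIsOptimal}), and then study its image $\cM'$ via its own diamond — and a critical subtlety is that \emph{a priori one does not know $\cM'$ is geminal}, so Proposition \ref{P:TxTDiamond} cannot be applied to $\cM'$ directly, which is why Lemma \ref{L:PiMinDegeneratesToPiOpt} needs a separate case analysis, including a non-obvious appeal to Lemma \ref{L:NoRank2JandTT}. Your proposal runs the case-by-case analysis of $\cM_{\bfC_i}$ on a single footing, and your subcase (ii) (``if some $\cM_{\bfC_i}$ is a cover of a $T\times T$ locus, Proposition \ref{P:TxTDiamond} together with Corollary \ref{C:NoRankTwoTxT} forces both to be $T\times T$ covers'') has the logic reversed: those two results together \emph{rule out} that both are $T\times T$-type in the connected setting, and in the genuine ``cover of $T\times T$ locus'' regime those propositions are not applicable to $\cM'$ until its geminality is established.

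Two further gaps. First, you never argue that the stratum underneath the quadratic double has genus zero. In the paper this is Sublemma \ref{SL:GenusZero}, proved by inductively degenerating via Sublemma \ref{SL:Boundary} to reach a quadratic double of $\cQ(2,-1^2)$, where rank-one analysis (Proposition \ref{P:geminalrk1}) gives a contradiction. Without this step you only get a full locus of covers of a quadratic double of \emph{some} stratum, which is not what the theorem asserts. Second, your justification of part \eqref{I:geminal2:TwoCylinders} is incorrect: for a good cover the preimage of a cylinder on the codomain is a \emph{single} cylinder (Remark \ref{R:Good2}), not $d$ isometric cylinders; and for a cover merely satisfying Assumption CP, the number of preimage components depends on monodromy and they need not be isometric to each other or to the image cylinder. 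The twin structure instead arises from the image orbit closure $\cM'$ already having subequivalence classes of size two, which is carried through the induction (see Corollary \ref{C:MinimalIsOptimal}). Similarly, your deferral of Lemma \ref{L:IntroFull} to ``after composing with the holonomy double cover'' is not what the paper does and papers over the fact that Lemma \ref{L:IntroFull} as stated needs both sides to be full loci of covers of Abelian strata satisfying Assumption CP; the paper instead invokes Proposition \ref{P:DiamondsAreCool} and the results of \cite{ApisaWrightDiamonds} on quadratic-double diamonds. As written, the proposal would not compile into a correct proof of Theorem \ref{T:geminal2}.
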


\begin{rem}
Recall our convention that for Abelian doubles the preimage of every marked point is marked. Similarly, for quadratic doubles every point in the preimage of a free point is marked and the preimage of a pole may or may not be marked. Recall finally that ``translation cover" has been defined in Definition \ref{D:Covering} and that it requires the preimage of any marked point on the codomain to include either a marked point or a singular point in the domain. 
\end{rem}

\begin{proof}[Proof of Theorem \ref{T:geminal2}:]
Proceed by induction on the dimension of $\cM$. Notice that the result holds when the rank of $\cM$ is one by Proposition \ref{P:geminalrk1}, keeping in mind for \eqref{I:geminal2:CoverOfQuadDouble} that if a  double of a stratum $\cQ$ of quadratic differentials consists of tori, then $\cQ$ must be genus 0. This establishes the base case and allows us to assume that the rank of $\cM$ is at least two. 
Thus, for the remainder of the paper, we make the following standing assumptions. 

\begin{ass}
$\cM$ is a geminal invariant subvariety of rank at least 2, and all  geminal subvarieties of smaller dimension satisfy Theorem \ref{T:geminal2}. 
\end{ass}

\subsection{Finding a diamond of connected surfaces} In this section we establish the following starting point for our analysis. 

\begin{lem}\label{L:ConnectedDiamonds}
Suppose that $\bfD_1$ is an $\cM$-generic  subequivalence class of cylinders on a surface $(X, \omega) \in \cM$, and that the $\cM_{\bfD_1}$-optimal map on $\Col_{\bfD_1}(X, \omega)$ is the identity. Then, possibly after changing $(X, \omega)$ to a different surface, there is a generic diamond 
$$((X, \omega), \cM, \bfC_1, \bfC_2)$$
such that the generic surface in $\cM_{\bfC_i}$ and $\MOneTwo$ is connected and has the identity as its optimal map. 
\end{lem}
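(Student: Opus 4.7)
The plan is to take $\bfC_1 := \bfD_1$, use Lemma~\ref{L:GenericDiamond} to produce a compatible second subequivalence class $\bfC_2$, and then verify the three required properties with the help of the Diamond Lemma (Lemma~\ref{L:diamond}) and the inductive hypothesis of Theorem~\ref{T:geminal2}.

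First I replace $(X,\omega)$ by a nearby surface in $\cM$ on which all parallel saddle connections are $\cM$-parallel; this is a dense open condition and preserves the hypothesis on $\bfD_1$. The second part of Lemma~\ref{L:GenericDiamond} then supplies a subequivalence class $\bfC_2$ on $(X,\omega)$ making $(\bfD_1, \bfC_2)$ into a generic skew diamond, which a mild shear of each $\bfC_i$ converts into a genuine generic diamond $((X,\omega), \cM, \bfC_1, \bfC_2)$ with $\bfC_1 = \bfD_1$.

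The condition on $\cM_{\bfC_1}$ is essentially the hypothesis; if its generic surface were disconnected, Lemma~\ref{L:Disconnect} would place $\cM_{\bfC_1}$ inside a product $\cH_1 \times \cH_2$ of strata of Abelian differentials with matching ranks, and the rigidity of this structure would allow me to replace $\bfC_1$ by a different subequivalence class on a nearby surface whose collapse is connected. For $\cM_{\bfC_1, \bfC_2}$: by Lemma~\ref{L:GeminalBoundary} it is geminal and of strictly smaller dimension than $\cM$, so the inductive hypothesis applies. I show its optimal map must be the identity by contradiction: if it were a non-trivial cover as in case~\eqref{I:geminal2:CoverOfQuadDouble} or~\eqref{I:geminal2:CoverOfTxT} of Theorem~\ref{T:geminal2}, then applying the Diamond Lemma with this cover on $\Col_{\bfC_1, \bfC_2}(X,\omega)$ and the identity on $\Col_{\bfC_1}(X,\omega)$ (which is $\cM_{\bfC_1}$-optimal by hypothesis), after verifying the required Assumption~CP preimage conditions, would produce a non-trivial $\cM_{\bfC_1}$-good map on $\Col_{\bfC_1}(X,\omega)$, contradicting the hypothesis.

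The main obstacle is verifying the analogous property for $\cM_{\bfC_2}$, since no direct hypothesis constrains $\bfC_2$. I plan to argue by iterated choice: if no $\bfC_2$ furnished by Lemma~\ref{L:GenericDiamond} produces $\cM_{\bfC_2}$ with identity optimal map, then by the inductive classification every such $\cM_{\bfC_2}$ is a full locus of covers of type~\eqref{I:geminal2:CoverOfQuadDouble} or~\eqref{I:geminal2:CoverOfTxT}. Combining this data via a further application of the Diamond Lemma, keeping the identity optimal on $\cM_{\bfC_1}$ and the identity optimal on $\cM_{\bfC_1, \bfC_2}$ (established above) as rigid anchors, forces a non-trivial global $\cM$-good map on $(X,\omega)$, which then collapses via Lemma~\ref{L:DegeneratingOptimalMap} to a non-trivial $\cM_{\bfC_1}$-good map on $\Col_{\bfC_1}(X,\omega)$, again contradicting the hypothesis. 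The flexibility to replace $(X,\omega)$ within $\cM$ (and thereby to vary $\bfC_2$), together with Lemma~\ref{L:Disconnect} and Lemma~\ref{L:DisconnectedDegenerationMaps} for handling any disconnected collapse, is what powers the iteration and secures the connectedness conclusion on all three boundary components.
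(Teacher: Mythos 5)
Your opening moves match the paper exactly: pass to a surface on which all parallel saddle connections are $\cM$-parallel, apply Lemma~\ref{L:GenericDiamond} to produce a compatible $\bfC_2$, and shear to get an honest generic diamond with $\bfC_1 = \bfD_1$. The arguments for the optimal-map conclusions, while workable, are more convoluted than needed. You repeatedly invoke the Diamond Lemma to get contradictions, but the Diamond Lemma only runs from covers on the two sides $\Col_{\bfC_i}(X,\omega)$ (that agree on the base) up to a cover on $(X,\omega)$; it cannot take a nontrivial cover on the base together with the identity on a side as input, because those do not agree after further degeneration. The direct route, which the paper takes in its Sublemma proved for this purpose, is just to apply part~\eqref{I:geminal2:PiOptDegeneratesToPiOpt} of the inductive hypothesis: since the $\cM_{\bfC_1}$-optimal map is the identity and $\bfC_2$ degenerates to a generic subequivalence class, the $\cM_{\bfC_1,\bfC_2}$-optimal map is the identity; then Lemma~\ref{L:DegeneratingOptimalMap} forces the $\cM_{\bfC_2}$-optimal map to be the identity too (a nontrivial $\cM_{\bfC_2}$-good map would degenerate, via Lemma~\ref{L:ColExists}, to a same-degree $\cM_{\bfC_1,\bfC_2}$-good map, contradicting identity-optimality). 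No iteration over choices of $\bfC_2$ is needed for this part.

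The genuine gap in your proposal is the connectedness requirement, which is the actual hard content of the lemma. Your treatment relies on the phrase ``the rigidity of this structure would allow me to replace $\bfC_1$ by a different subequivalence class on a nearby surface whose collapse is connected,'' and later on ``the flexibility to replace $(X,\omega)$\ldots secures the connectedness conclusion.'' Lemma~\ref{L:Disconnect} and Lemma~\ref{L:DisconnectedDegenerationMaps} describe what happens when a collapse is disconnected; they do not provide a mechanism for changing the diamond to force connectedness. The paper deals with this concretely: it first observes that at most one of $\Col_{\bfD_1}(X,\omega)$, $\Col_{\bfD_2}(X,\omega)$ can be disconnected, and that when $\Col_{\bfD_1,\bfD_2}(X,\omega)$ is disconnected, the side $\cM_{\bfD_1}$ must be an Abelian or quadratic double (strata and $T\times T$ loci never have disconnected generic degenerations). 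It then proves a dedicated sublemma showing a partial Dehn twist in $\bfC_2$ makes the base connected, except when $\Col_{\bfD_1}(\bfD_2)$ is a pair of complex cylinders in a quadratic double. That residual case is handled by a Masur-Zorich structure argument, finding an entirely different subequivalence class $\bfD_3$ (a simple cylinder in the appropriate component of the complement) and re-forming the diamond. Without an argument of this kind---a Dehn twist or an explicit replacement of $\bfD_2$---your proof does not close the case of a disconnected base, and so does not establish the lemma.
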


Recall that, by Lemma \ref{L:DisconnectedDegenerationMaps}, if $\Col_{\bfD_1}(X, \omega)$ is disconnected then the optimal map on $\Col_{\bfD_1}(X, \omega)$ is the identity. 

\begin{proof}

Since $\cM$ has rank at least two, we can replace $(X,\omega)$ with a deformation satisfying the 
assumptions of Lemma \ref{L:GenericDiamond} and obtain subequivalence class $\bfD_2$ that is not parallel to $\bfD_1$ and such that $$((X, \omega), \cM, \bfD_1, \bfD_2)$$ is a generic skew diamond. After shearing the $\bfD_i$, we may assume it is a diamond. 


It is easy to see that if either $\Col_{\bfD_i}(X,\omega)$ is disconnected, then so is $\Col_{\bfD_1, \bfD_2}(X, \omega)$. The case where $\Col_{\bfD_1, \bfD_2}(X, \omega)$ is connected is concluded as follows. 

\begin{sublem}\label{SL:OptimalOK}
If $((X, \omega), \cM, \bfD_1, \bfD_2)$ is a generic diamond where the identity is the $\cM_{\bfD_1}$-optimal map for $\Col_{\bfD_1}(X, \omega)$, then the identity is the optimal map for the generic surface in $\cM_{\bfD_2}$ and $\cM_{\bfD_1, \bfD_2}$.
\end{sublem}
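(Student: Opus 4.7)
The plan is to invoke the inductive hypothesis on the three boundary subvarieties $\cM_{\bfD_1}$, $\cM_{\bfD_2}$, and $\cM_{\bfD_1, \bfD_2}$. Each of these is geminal by Lemma \ref{L:GeminalBoundary} and has strictly smaller dimension than $\cM$ because the diamond is generic, so Theorem \ref{T:geminal2} applies to all three.

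First I would use part (2) of the inductive hypothesis to produce optimal maps $\pi_1$, $\pi_2$, $\pi_{12}$ on the three boundary subvarieties; by hypothesis $\pi_1 = \mathrm{id}$. The key tool is part (4) of the inductive hypothesis, which says that an optimal map commutes with the collapse of a subequivalence class of generic cylinders of codimension one. Taking $\cM_{\bfD_1}$ as the ambient subvariety and $\Col_{\bfD_1}(\bfD_2)$ as the subequivalence class, part (4) would give
$$\Col_{\Col_{\bfD_1}(\bfD_2)}(\pi_1) = \pi_{12},$$
where I am also using that iterated collapses agree, so $(\cM_{\bfD_1})_{\Col_{\bfD_1}(\bfD_2)} = \cM_{\bfD_1, \bfD_2}$. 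Since $\pi_1 = \mathrm{id}$, this forces $\pi_{12} = \mathrm{id}$. Swapping the roles of $\bfD_1$ and $\bfD_2$, the same principle gives
$$\Col_{\Col_{\bfD_2}(\bfD_1)}(\pi_2) = \pi_{12} = \mathrm{id}.$$
Because a cylinder collapse produces a cover of the same degree (Lemma \ref{L:ColExists}), $\deg(\pi_2) = \deg(\pi_{12}) = 1$, so $\pi_2 = \mathrm{id}$ as well.

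To promote these statements from the specific surfaces $\Col_{\bfD_2}(X, \omega)$ and $\Col_{\bfD_1, \bfD_2}(X, \omega)$ to generic surfaces in $\cM_{\bfD_2}$ and $\cM_{\bfD_1, \bfD_2}$, I would observe that having optimal map equal to the identity on even a single surface forces the subvariety into case \eqref{I:geminal2:ComponentOrDouble} of Theorem \ref{T:geminal2}, since in cases \eqref{I:geminal2:CoverOfQuadDouble} and \eqref{I:geminal2:CoverOfTxT} the optimal map is a non-trivial covering map of degree greater than one. Part (2a) of the theorem then guarantees the optimal map is the identity on every surface in the subvariety.

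The main obstacle I anticipate is verifying the hypotheses of part (4) of the inductive hypothesis, namely that $\Col_{\bfD_1}(\bfD_2)$ is a subequivalence class of generic cylinders in $\cM_{\bfD_1}$ whose collapse yields a codimension-one boundary. The dimension claim is automatic: $\dim \cM_{\bfD_1, \bfD_2} = \dim \cM - 2 = \dim \cM_{\bfD_1} - 1$. The subequivalence and genericity claims transfer from $\bfD_2$ in $\cM$ to $\Col_{\bfD_1}(\bfD_2)$ in $\cM_{\bfD_1}$ because parallelism, $\cM$-parallelism of boundary saddle connections, and minimality of the standard shear all survive the degeneration $\Col_{\bfD_1}$; this is implicit in the deformation theory developed in \cite{ApisaWrightDiamonds} but merits explicit justification.
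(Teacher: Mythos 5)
Your argument is essentially the one the paper uses in the connected case, but it has a genuine gap: you never address the possibility that $\Col_{\bfD_1}(X,\omega)$ or $\Col_{\bfD_2}(X,\omega)$ is disconnected. Theorem \ref{T:geminal2}, which you are invoking as an inductive hypothesis in parts (2) and (4), is a classification of geminal invariant subvarieties inside strata of connected surfaces; when $\Col_{\bfD_1}(X,\omega)$ is disconnected, $\cM_{\bfD_1}$ sits inside a product of strata and the classification statement simply does not apply to it as written. This is not a vacuous worry here: Sublemma \ref{SL:OptimalOK} is invoked later in the proof of Lemma \ref{L:ConnectedDiamonds} in a context where $\Col_{\bfD_1,\bfD_2}(X,\omega)$ is explicitly assumed disconnected, and hence one of the $\Col_{\bfD_i}(X,\omega)$ may be disconnected as well. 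The paper's proof makes an explicit case split: when $\Col_{\bfD_1}(X,\omega)$ is connected it uses part \eqref{I:geminal2:PiOptDegeneratesToPiOpt} exactly as you do, and when it is disconnected it deduces that the $\cM_{\bfD_1,\bfD_2}$-optimal map is the identity from Lemma \ref{L:DisconnectedDegenerationMaps} instead.

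There is also a secondary difference worth noting. For the step from $\pi_{12}=\mathrm{id}$ to $\pi_2=\mathrm{id}$, you apply part (4) of the inductive hypothesis to $\cM_{\bfD_2}$ and then use degree preservation (Lemma \ref{L:ColExists}); this is correct in spirit, but again secretly depends on $\Col_{\bfD_2}(X,\omega)$ being connected so that the inductive hypothesis applies to $\cM_{\bfD_2}$. The paper instead applies the more elementary Lemma \ref{L:DegeneratingOptimalMap}, which says that a good map collapses to a good map: any nontrivial $\cM_{\bfD_2}$-good map on $\Col_{\bfD_2}(X,\omega)$ would collapse to a nontrivial $\cM_{\bfD_1,\bfD_2}$-good map, contradicting $\pi_{12}=\mathrm{id}$. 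This route requires no appeal to the classification and no connectedness assumption on $\Col_{\bfD_2}(X,\omega)$, so it is both shorter and more robust. Your proposal would be correct if you patched in the disconnected case for step one and swapped in Lemma \ref{L:DegeneratingOptimalMap} for step two.
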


\begin{proof}

We begin by showing that the $\cM_{\bfD_1, \bfD_2}$-optimal map on $\Col_{\bfD_1, \bfD_2}(X, \omega)$ is the identity. Indeed, if $\Col_{\bfD_1}(X, \omega)$ is connected, this follows from part \eqref{I:geminal2:PiOptDegeneratesToPiOpt} of the induction hypothesis, and if it is disconnected it follows as in Lemma \ref{L:DisconnectedDegenerationMaps}. 

Lemma \ref{L:DegeneratingOptimalMap} now gives that the $\cM_{\bfD_2}$-optimal map on $\Col_{\bfD_2}(X,\omega)$ is the identity. 
\end{proof}

So suppose without loss of generality that $\Col_{\bfD_1, \bfD_2}(X, \omega)$ is disconnected. Note that Sublemma \ref{SL:OptimalOK} shows that the identity is the $\cM_{\bfD_i}$-optimal map for both $\Col_{\bfD_i}(X, \omega)$, so the roles of $\bfD_1$ and $\bfD_2$ may be interchanged without loss of generality.

We begin by showing that $\Col_{\bfD_i}(X, \omega)$ is connected for at least one $i \in \{1, 2\}$. To see this, suppose that $\Col_{\bfD_2}(X, \omega)$ is disconnected. By Lemma \ref{L:Disconnect}, $\cM_{\bfD_2}$ is a subset of $\cH_1 \times \cH_2$ where $\cH_1$ and $\cH_2$ are components of strata of Abelian differentials. Moreover, the projection from $\cM_{\bfD_2}$ to $\cH_j$ is a local diffeomorphism for $j \in \{1, 2\}$. It follows that $\Col_{\bfD_2}(\bfD_1)$ and hence $\bfD_1$ is a pair of simple cylinders, since  cylinders that are generic for strata are simple. Therefore, $\Col_{\bfD_1}(X, \omega)$ is connected, since (in total generality) collapsing any collection of disjoint simple cylinders cannot disconnect a surface. Thus, after perhaps re-indexing we may assume without loss of generality that $\Col_{\bfD_1}(X, \omega)$ is connected.

Because the $\cM_{\bfD_1}$-optimal map on $\Col_{\bfD_1}(X, \omega)$ is the identity, it follows from parts \eqref{I:geminal2:WhatIsM} and \eqref{I:geminal2:PiOpt} of the inductive hypothesis that  $\cM_{\bfD_1}$ is a stratum, double, or $T\times T$ locus. Since degenerations of generic subequivalence classes in strata and $T\times T$ loci are always connected, and since $\Col_{\bfD_1, \bfD_2}(X, \omega)$ is disconnected, we get that $\cM_{\bfD_1}$ is an Abelian or quadratic double. If $\Col_{\bfD_2}(X, \omega)$ is connected, we get the same statement for $\cM_{\bfD_2}$. 

It will be useful to recall the following definitions, which were used in \cite{ApisaWrightDiamonds}. A \emph{complex cylinder} is a cylinder for which each boundary consists of two distinct saddle connections of equal length. A \emph{complex envelope} is a cylinder in a quadratic differential where one boundary consists of a saddle connection joining two distinct poles and the other consists of two distinct saddle connections of equal length. 

\begin{sublem}\label{SL:ConnectedBase}
Suppose that $\left( (X, \omega), \cM, \bfC_1, \bfC_2 \right)$ is a generic diamond and that $\MOne$ is an Abelian or quadratic double. It is possible to perform a partial Dehn twist in $\bfC_2$ so that $\ColOneTwoX$ is connected, except when $\MOne$ is a quadratic double and  $\ColOne(\bfC_2)$ consists of a pair of complex cylinders.  
\end{sublem}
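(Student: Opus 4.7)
The plan is to reduce to analyzing partial Dehn twists in the double $\MOne$, then proceed by case analysis on the structure of $\Col_{\bfC_1}(\bfC_2)$ under the covering map.

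First I would observe that a partial Dehn twist in $\bfC_2$ on $(X,\omega)$ commutes with the collapse $\Col_{\bfC_1}$, so it induces a partial Dehn twist of $D := \Col_{\bfC_1}(\bfC_2)$ on $\Col_{\bfC_1}(X,\omega) \in \MOne$. By Lemma \ref{L:GeminalBoundary}, $\MOne$ is geminal, so $D$ is either a single cylinder fixed by the deck involution of the degree two cover $\pi \colon \Col_{\bfC_1}(X,\omega) \to (Y,\eta)$ defining the double structure, or a pair of twin cylinders exchanged by this involution. In either case, a partial Dehn twist of $D$ descends to a partial Dehn twist of $\pi(D)$ on the base $(Y,\eta)$, and the connectedness of $\ColOneTwoX = \Col_D(\Col_{\bfC_1}(X,\omega))$ is controlled by the identification of boundary saddle connections of $\pi(D)$ at the collapse.

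Next, I would proceed by case analysis on the structure of $\pi(D)$ in the base component of a stratum of Abelian or quadratic differentials. In the Abelian double case, $\pi(D)$ is a simple cylinder (one saddle connection on each boundary), and the partial Dehn twist parameter can be chosen so that, after collapse, the identified boundary saddle connection is non-separating in the base; the cover then remains connected because the deck involution glues together any putative halves. In the quadratic double case, $\pi(D)$ may be a simple cylinder, an envelope (one boundary a saddle connection between two poles), or a complex cylinder (two saddle connections of equal length on each boundary). For simple cylinders and envelopes, a similar Dehn twist argument achieves connectedness, using the fact that envelopes have a free boundary pole location that can be twisted through. The exceptional case is when $\pi(D)$ is a complex cylinder and therefore $D$ is a pair of twin complex cylinders: the forced equality of boundary saddle connection lengths, together with the constraint that the twins must be Dehn twisted in lockstep, confines the possible identifications to a family that always produces a disconnection.

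The hard part will be the detailed analysis of how boundary saddle connections of $\pi(D)$ get identified under collapse as the Dehn twist parameter varies, in particular distinguishing the complex cylinder case from the others. Careful bookkeeping of saddle connection lengths, endpoints, and the cover structure, in combination with the geminal hypothesis (which controls which deformations actually lie in $\MOne$), will be needed to verify that in every non-exceptional configuration there is a Dehn twist parameter making the collapse connected.
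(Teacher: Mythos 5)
Your proposal correctly identifies the right setup: a partial Dehn twist of $\bfC_2$ commutes with $\Col_{\bfC_1}$, the resulting degeneration is determined by what happens in the double $\MOne$, and (using geminality of $\MOne$) one should pass to the quotient $(Y,\eta) = \Col_{\bfC_1}(X,\omega)/J$ and do a case analysis on the cylinder $\pi(D)$ there. But at the crucial step you defer to ``careful bookkeeping,'' and this is exactly where the proof in the paper relies on a specific structural input that your proposal does not supply.

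The missing ingredient is the Masur--Zorich-based classification of when a cylinder collapse disconnects a double cover. Concretely, the paper uses (via \cite[Lemma 4.11(2), Remark 6.2]{ApisaWrightDiamonds}, resting on \cite{MZ}) that for a generic subequivalence class $\bfC$ on a surface $(Y,\eta)$ in a quadratic double with holonomy involution $J$, the collapse $\Col_{\bfC}(Y,\eta)$ is disconnected if and only if $\Col_{\bfC/J}\bigl((Y,\eta)/J\bigr)$ has \emph{trivial linear holonomy}, and moreover this can happen only when $\bfC/J$ is a complex envelope or a complex cylinder; for a complex envelope a partial Dehn twist restores nontrivial holonomy, while for a complex cylinder it cannot. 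Your proposal instead asserts vaguely that for simple cylinders and envelopes ``a similar Dehn twist argument achieves connectedness'' and that for complex cylinders the identifications are ``confined to a family that always produces a disconnection,'' but neither assertion is justified; in fact, for simple cylinders and simple envelopes the point is that collapsing \emph{never} disconnects (no twist is needed), which is a cleaner and stronger fact than what you assert. Similarly in the Abelian case, the paper uses that every subequivalence class in an Abelian double consists of two simple cylinders or one complex cylinder (\cite[Lemma 3.18]{ApisaWrightDiamonds}) and that only the complex cylinder case can disconnect, whereupon a half Dehn twist repairs it; your description (``$\pi(D)$ is a simple cylinder'') skips the complex cylinder possibility, which is precisely the one requiring the twist. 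Without the trivial-holonomy criterion and the classification of disconnection-producing cylinder types, the case analysis you propose cannot be completed as stated, so this is a genuine gap rather than a finished alternative argument.
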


\begin{proof}
Suppose that $\ColOneTwoX$ is disconnected.

In an Abelian double, every subequivalence class consists of two simple cylinders or one complex cylinder (see for instance  \cite[Lemma 3.18]{ApisaWrightDiamonds}). Since collapsing simple cylinders cannot disconnect the surface, in the case where $\MOne$ is an Abelian double, $\bfC_2$ must be a single complex cylinder. By applying a half Dehn twist to $\bfC_2$ we can arrange for $\ColOneTwoX$ to be connected. See Figure \ref{F:DisconnectingAnAbelianDouble}.
\begin{figure}[h]
\includegraphics[width=0.7\linewidth]{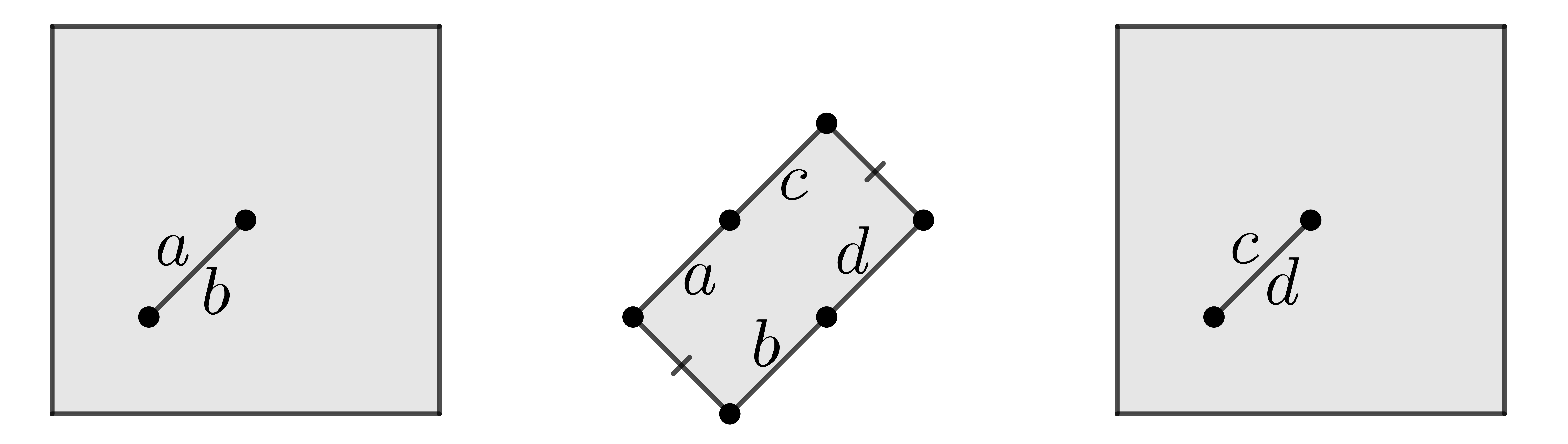}
\caption{A surface in a double of $\cH(2)$. Degenerating the complex cylinder bounded by $a,b,c$ and $d$ disconnects the surface, but not if one first performs a half Dehn twist.}
\label{F:DisconnectingAnAbelianDouble}
\end{figure}

We may therefore assume that $\MOne$ is a quadratic double.

We now summarize some claims that follow from Masur-Zorich \cite{MZ}, as in  \cite[Lemma 4.11 (2), Remark 6.2]{ApisaWrightDiamonds}. Consider a subequivalence class of cylinders $\bfC$ on a surface $(Y, \eta)$ in a quadratic double with holonomy involution $J$, and assume that $\bfC$ is generic in the sense that, for every cylinder in $\bfC$, its boundary saddle connections are generically parallel to the cylinder in the quadratic double. It is easy to see that $\Col_{\bfC}(Y, \eta)$ is disconnected if and only if $\Col_{\bfC/J}\left( (Y, \eta)/J \right)$ has trivial linear holonomy. Furthermore, $\Col_{\bfC/J}\left( (Y, \eta)/J \right)$ has trivial linear holonomy only if $\bfC/J$ is a complex envelope or a complex cylinder and, in the former case, it is possible to perform a partial Dehn twist in $\bfC/J$ so that $\Col_{\bfC/J}\left( (Y, \eta)/J \right)$ has nontrivial linear holonomy. 

Applying the previous claims to $\ColOneX$ concludes the proof.
\end{proof}

Except when $\cM_{\bfD_1}$ is a quadratic double and $\Col_{\bfD_1}(\bfD_2)$ is a pair of complex cylinders, we can apply Sublemma \ref{SL:ConnectedBase}  to change $\bfD_2$ by a partial Dehn twist, and obtain a diamond of connected surfaces. Since we know the identity is $\cM_{\bfD_1}$-optimal for $\Col_{\bfD_1}(X,\omega)$, Sublemma \ref{SL:OptimalOK} gives the desired statements about optimal maps. 

So assume $\cM_{\bfD_1}$ is a quadratic double and $\Col_{\bfD_1}(\bfD_2)$ is a pair of complex cylinders. Let $J_1$ denote the holonomy involution on $\Col_{\bfD_1}(X, \omega)$.  
We will show that there is an equivalence class $\bfD_3$ of cylinders such that $\left( (X, \omega), \cM, \bfD_1, \bfD_3 \right)$ forms a generic skew diamond with $\Col_{\bfD_1}(X, \omega)$, $\Col_{\bfD_3}(X, \omega)$, and $\Col_{\bfD_1, \bfD_3}(X, \omega)$ connected. One can then shear to get a generic diamond, and Sublemma \ref{SL:OptimalOK} will then give the desired statements about optimal maps, since the identity is $\cM_{\bfD_1}$-optimal for $\Col_{\bfD_1}(X, \omega)$.

By Masur-Zorich \cite{MZ}, as described in \cite[Theorem 4.8 (2)]{ApisaWrightDiamonds}, $\Col_{\bfD_1}(X, \omega)/J_1 - \Col_{\bfD_1}(\bfD_2)/J_1$ consists of two disjoint connected translation surfaces with boundary, where the boundary of each component consists of two saddle connections (see Figure \ref{F:ComplexCylinderDisconnects}). Moreover, two saddle connections contained on different components are generically parallel if and only if they are generically parallel to the boundary saddle connections of $\Col_{\bfD_1}(\bfD_2)/J_1$. Since $\Col_{\bfD_1}(\bfD_1)$ is not parallel to $\Col_{\bfD_1}(\bfD_2)$, it follows that $\Col_{\bfD_1}(\bfD_1)/J_1$ is contained on one component of $\Col_{\bfD_1}(X, \omega)/J_1 - \Col_{\bfD_1}(\bfD_2)/J_1$. 
\begin{figure}[h]
\includegraphics[width=0.7\linewidth]{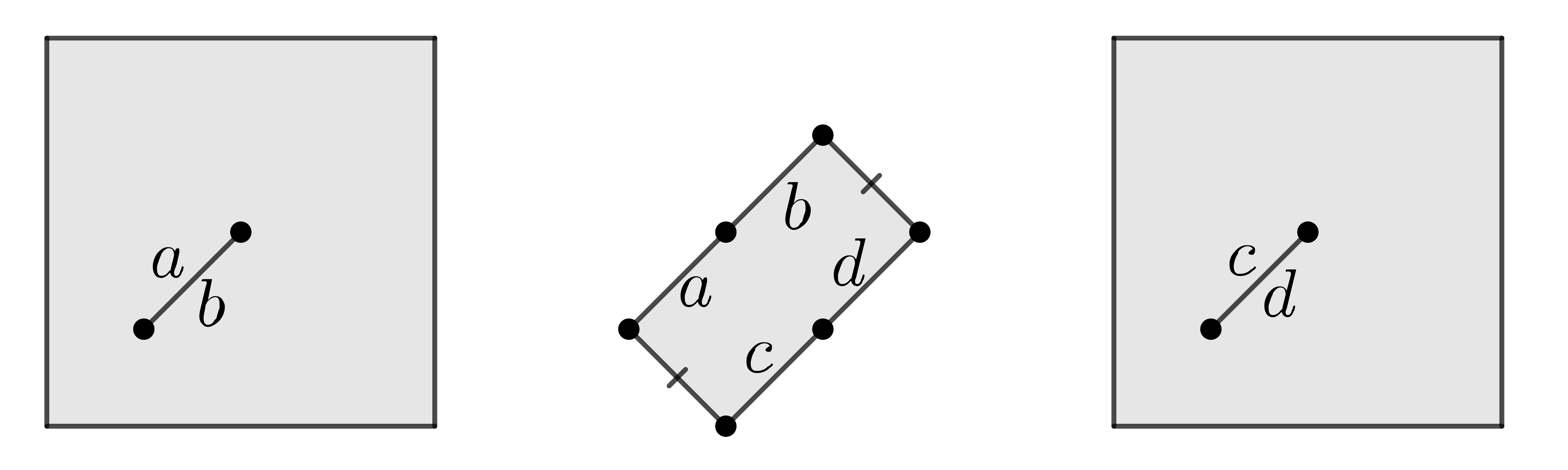}
\caption{An example of a complex cylinder on a quadratic differential. Note that some of the gluings require rotation by $\pi$.}
\label{F:ComplexCylinderDisconnects}
\end{figure}

All cylinders on a generic translation surface are simple. Each component of $\Col_{\bfD_1}(X, \omega)/J_1 - \Col_{\bfD_1}(\bfD_2)/J_1$ is a translation surface with two boundary saddle connections, which can be glued together to get a closed surface with a distinguished saddle connection. By considering a cylinder disjoint from that saddle connection, we conclude that we can find a simple cylinder on the component of $\Col_{\bfD_1}(X, \omega)/J_1 - \Col_{\bfD_1}(\bfD_2)/J_1$ not containing $\Col_{\bfD_1}(\bfD_1)/J_1$. Letting $\bfD_3$ denote the preimage of this simple cylinder we see that $\bfD_3$ is a pair of simple cylinders. Since degenerating simple cylinders cannot disconnect a surface, we see that $\left( (X, \omega), \cM, \bfD_1, \bfD_3 \right)$ forms the desired generic skew diamond. 
\end{proof}

\subsection{When $\pi_{opt}$ is the identity for some degeneration.}\label{SS:SometimesId}
We first prove Theorem \ref{T:geminal2} when there is a surface $(X, \omega)\in \cM$ with an $\cM$-generic subequivalence class of cylinders $\bfD_1$ such that the identity is the $\cM_{\bfD_1}$-optimal map on $\Col_{\bfD_1}(X,\omega)$. 

By Lemma \ref{L:ConnectedDiamonds}, there is a generic diamond $((X, \omega), \cM, \bfC_1, \bfC_2)$ such that the generic surface in $\cM_{\bfC_i}$ and $\MOneTwo$ is connected and has the identity as its optimal map.

By parts \eqref{I:geminal2:WhatIsM} and \eqref{I:geminal2:PiOpt} of the induction hypothesis, each $\cM_{\bfC_i}$ is one of the following: 
\begin{enumerate}
\item a component of a stratum of Abelian differentials, 
\item an Abelian double, a quadratic double, or 
\item a $T \times T$ locus. 
\end{enumerate} 
Since the rank of $\cM$ is at least two, Proposition \ref{P:TxTDiamond} gives that actually each $\cM_{\bfC_i}$ is one of the first two possibilities. By Proposition \ref{P:DiamondsAreCool}, $\cM$ is a stratum of Abelian differentials, an Abelian double, or a quadratic double. This proves part \eqref{I:geminal2:WhatIsM} of Theorem \ref{T:geminal2}. 

We now show that the identity is the $\cM$-optimal map on $(X, \omega)$. It is vacuously an $\cM$-good map. Moreover, there is no other $\cM$-good map $\pi'$ since otherwise, by Lemma \ref{L:DegeneratingOptimalMap}, $\ColOne(\pi')$ would be a nontrivial $\cM_{\bfC_1}$-good map on $\ColOneX$, which contradicts our assumption that the identity is $\cM_{\bfC_1}$-optimal for $\ColOneX$. Therefore, the identity is the $\cM$-optimal map on $(X,\omega)$.  This proves parts \eqref{I:geminal2:PiOpt} and \eqref{I:geminal2:TwoCylinders} of Theorem \ref{T:geminal2}. 

For any subequivalence class of cylinders $\bfC$, $\Col_{\bfC}(X, \omega)$ is either disconnected, in which case the identity is the $\cM_{\bfC}$-optimal map by Lemma \ref{L:DisconnectedDegenerationMaps}, or $\cM_{\bfC}$ is a stratum or an Abelian or quadratic double, in which case part \eqref{I:geminal2:PiOpt} of the induction hypothesis gives that the $\cM_{\bfC}$-optimal map is the identity. This proves part \eqref{I:geminal2:PiOptDegeneratesToPiOpt} of Theorem \ref{T:geminal2}.

\subsection{When $\pi_{opt}$ is never the identity for any degeneration.}\label{SS:NeverId}
It remains to prove Theorem \ref{T:geminal2} when $\Col_{\bfC}(X, \omega)$ always has an $\cM_{\bfC}$-optimal map of degree greater than one for any $\cM$-generic subequivalence class of cylinders $\bfC$, so we assume that this is the case. 

We start by observing that we no longer have to worry about disconnected degenerations. 

\begin{lem}\label{L:NoDisc}
For any generic diamond $\left( (X, \omega), \cM, \bfC_1, \bfC_2 \right),$ the surfaces $\ColOneX$, $\ColTwoX$, and $\ColOneTwoX$ are all connected. 
\end{lem}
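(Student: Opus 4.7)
The plan is to rule out each disconnection by playing the standing assumption of Section \ref{SS:NeverId} against Lemma \ref{L:DisconnectedDegenerationMaps}: the former says the $\cM_{\bfC}$-optimal map on $\Col_{\bfC}(X,\omega)$ always has degree strictly greater than one, while the latter says this optimal map must be the identity whenever $\Col_{\bfC}(X,\omega)$ is disconnected.

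First I would handle $\ColOneX$ and $\ColTwoX$. If $\ColOneX$ were disconnected, Lemma \ref{L:DisconnectedDegenerationMaps} applied to the $\cM$-generic subequivalence class $\bfC_1$ would force the $\cM_{\bfC_1}$-optimal map on $\ColOneX$ to be the identity, directly contradicting the Section \ref{SS:NeverId} hypothesis; the argument for $\ColTwoX$ is symmetric.

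For $\ColOneTwoX$ I would argue inside $\cM_{\bfC_1}$ using the induction hypothesis. Because $\bfC_1$ and $\bfC_2$ share no boundary saddle connections, $\ColOne(\bfC_2)$ is a well-defined collection of cylinders on $\ColOneX \in \cM_{\bfC_1}$; its cylinders inherit genericity from $\bfC_2$ (the boundary saddle connections remain parallel after the collapse), and since $\sigma_{\bfC_2} \in T\cM$ descends to $\sigma_{\ColOne(\bfC_2)} \in T\cM_{\bfC_1}$, the collection $\ColOne(\bfC_2)$ splits as a disjoint union of $\cM_{\bfC_1}$-subequivalence classes $\bfD_1, \ldots, \bfD_k$. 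Assuming $\ColOneTwoX$ is disconnected, collapsing the $\bfD_j$ in succession turns the connected surface $\ColOneX$ into a disconnected one, so at some intermediate stage a single-subequivalence-class collapse disconnects a connected surface $(Y,\eta)$ whose orbit closure $\cN$ lies in the boundary of $\cM_{\bfC_1}$. Lemma \ref{L:DisconnectedDegenerationMaps} applied at that stage forces the $\cN$-optimal map on $(Y,\eta)$ to be the identity. But iterating part \eqref{I:geminal2:PiOptDegeneratesToPiOpt} of the induction hypothesis along the chain of codimension-one collapses from $\ColOneX$ down to $(Y,\eta)$ identifies this optimal map with the degeneration of the $\cM_{\bfC_1}$-optimal map $\pi_1$ on $\ColOneX$; since degeneration preserves degree and the identity has degree one, $\pi_1$ has degree one, contradicting the Section \ref{SS:NeverId} assumption.

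The main obstacle is technical bookkeeping rather than any deep insight: one must verify that the descended cylinders $\ColOne(\bfC_2)$ decompose into $\cM_{\bfC_1}$-generic subequivalence classes whose successive collapses each drop dimension by exactly one, so that iterated application of part \eqref{I:geminal2:PiOptDegeneratesToPiOpt} is legitimate, and that Lemma \ref{L:DisconnectedDegenerationMaps} applies at the disconnecting step inside the appropriate boundary of $\cM_{\bfC_1}$. These preservation statements follow routinely from the Cylinder Deformation Theorem together with the Mirzakhani-Wright and Chen-Wright boundary results, but they deserve an explicit check.
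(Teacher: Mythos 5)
Your argument for $\ColOneX$ and $\ColTwoX$ matches the paper's exactly: disconnection plus Lemma~\ref{L:DisconnectedDegenerationMaps} would force the identity to be the $\cM_{\bfC_i}$-optimal map, contradicting the standing assumption of Section~\ref{SS:NeverId}.

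For $\ColOneTwoX$, however, the paper's argument is much shorter than yours: it applies part~\eqref{I:geminal2:PiOptDegeneratesToPiOpt} of the induction hypothesis a single time to $\cM_{\bfC_1}$ with the subequivalence class $\ColOne(\bfC_2)$. This immediately identifies the $\cM_{\bfC_1,\bfC_2}$-optimal map on $\ColOneTwoX$ as $\Col_{\ColOne(\bfC_2)}(\pi_1)$, which has the same degree as $\pi_1$ by Lemma~\ref{L:ColExists}, hence degree $>1$, and then Lemma~\ref{L:DisconnectedDegenerationMaps} finishes. Implicit in this single application is that $\ColOne(\bfC_2)$ is a \emph{single} subequivalence class of $\cM_{\bfC_1}$-generic cylinders and that $\cM_{\bfC_1,\bfC_2}$ has codimension exactly one in $\cM_{\bfC_1}$ --- facts that are part of the generic-diamond framework imported from \cite{ApisaWrightDiamonds} and used repeatedly in this paper without further comment (see for instance Sublemma~\ref{SL:OptimalOK} and the proof of Lemma~\ref{L:PiMinDegeneratesToPiOpt}). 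You instead allow $\ColOne(\bfC_2)$ to decompose into several $\cM_{\bfC_1}$-subequivalence classes $\bfD_1,\ldots,\bfD_k$ and run a chain of single-class collapses. That is a strictly more cautious route, and it \emph{would} be sound if you actually verified that after each collapse the remaining $\bfD_j$ stay generic, remain subequivalence classes of the new boundary subvariety, and each drop the dimension by exactly one; as written you flag these as ``routine bookkeeping'' but never check them, and they are not entirely automatic (genericity of a cylinder is not in general preserved when you collapse other cylinders on the same surface). The cleaner route, and the one the paper takes, is to use directly that a generic diamond already guarantees $k=1$, at which point a single invocation of part~\eqref{I:geminal2:PiOptDegeneratesToPiOpt} suffices and no chain is needed.
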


\begin{proof}
By assumption, $\pi_{opt}$ is not the identity for $\ColOneX$ or  $\ColTwoX$. So Lemma \ref{L:DisconnectedDegenerationMaps} gives that  these surfaces are connected.

Part \eqref{I:geminal2:PiOptDegeneratesToPiOpt} of the induction hypothesis gives that $\pi_{opt}$ is not the identity for $\ColOneTwoX$, so again Lemma \ref{L:DisconnectedDegenerationMaps}  gives that this surface is connected. 
\end{proof}

Say that $(X, \omega) \in \cM$ is \emph{strongly generic} if it has dense orbit in $\cM$ and satisfies the property that all parallel saddle connections are $\cM$-parallel; such surfaces are dense in $\cM$. 

Recall that the map $\pi_{X_{min}}$, given by Theorem \ref{T:MinimalCover}, is the highest degree translation cover with domain $(X, \omega)$ and that all other translation covers with domain $(X, \omega)$ are factors of it. Since $(X,\omega)$ has dense orbit in $\cM$  this map is $\cM$-generic.

In the following lemma, we allow $\bfC_1$ to be endowed with a choice of direction in which there is a saddle connection in $\overline\bfC_1$, as in Section \ref{S:Diamond}; that is, we allow ``non-perpendicular" cylinder collapses. 

\begin{lem}\label{L:PiMinDegeneratesToPiOpt}
Suppose that $(X, \omega) \in \cM$ is strongly generic. For any subequivalence class of generic cylinders $\bfC_1$ on $(X,\omega)$, $\Col_{\bfC_1}(\pi_{X_{min}})$ is the $\cM_{\bfC_1}$-optimal map for $\Col_{\bfC_1}(X, \omega)$. 
\end{lem}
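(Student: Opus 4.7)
The plan is to use the Diamond Lemma on a carefully chosen generic diamond containing $\bfC_1$, producing a covering $f$ of $(X,\omega)$ whose $\bfC_1$-collapse is $\pi_{opt,1}$, and then identify $f$ with $\pi_{X_{min}}$ via the universal property in Theorem \ref{T:MinimalCover}.

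Since $(X,\omega)$ is strongly generic and $\cM$ has rank at least two, the second statement of Lemma \ref{L:GenericDiamond} supplies a subequivalence class $\bfC_2$ so that $((X,\omega),\cM,\bfC_1,\bfC_2)$ is a generic skew diamond; after shearing $\bfC_2$ we may assume it is a generic diamond. By Lemma \ref{L:NoDisc}, the surfaces $\Col_{\bfC_i}(X,\omega)$ and $\Col_{\bfC_1,\bfC_2}(X,\omega)$ are all connected. The inductive hypothesis applied to the strictly smaller $\cM_{\bfC_i}$ produces an $\cM_{\bfC_i}$-optimal map $\pi_{opt,i}$ on $\Col_{\bfC_i}(X,\omega)$. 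Applying part \eqref{I:geminal2:PiOptDegeneratesToPiOpt} of the hypothesis to $\cM_{\bfC_i}$ and the subequivalence class $\Col_{\bfC_i}(\bfC_{3-i})$ then shows that both $\Col_{\Col_{\bfC_i}(\bfC_{3-i})}(\pi_{opt,i})$ coincide with the $\cM_{\bfC_1,\bfC_2}$-optimal map on $\Col_{\bfC_1,\bfC_2}(X,\omega)$, so they equal one another.

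Since each $\pi_{opt,i}$ is good, Remark \ref{R:OptimalDiamond} verifies Assumption CP, so the Diamond Lemma (Lemma \ref{L:diamond}) produces a covering $f\colon(X,\omega)\to(W,\eta)$ with $\Col_{\bfC_i}(f)=\pi_{opt,i}$ and $f^{-1}(f(\bfC_i))=\bfC_i$. The codomain is a translation surface because, by the inductive description in cases (b)/(c) of Theorem \ref{T:geminal2}, each $\pi_{opt,i}$ targets an Abelian differential. By Theorem \ref{T:MinimalCover}, $f$ is a factor of $\pi_{X_{min}}$: there is a translation cover $h\colon(W,\eta)\to(X_{min},\omega_{min})$ with $\pi_{X_{min}}=h\circ f$. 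Granted that $\Col_{\bfC_1}(\pi_{X_{min}})$ is defined, it equals $\Col_{f(\bfC_1)}(h)\circ\pi_{opt,1}$, so $\pi_{opt,1}$ is a factor of $\Col_{\bfC_1}(\pi_{X_{min}})$.

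The hard part will be showing that $\Col_{f(\bfC_1)}(h)$ is an isomorphism, which simultaneously supplies the missing verification of the hypothesis of Lemma \ref{L:ColExists} and yields $\Col_{\bfC_1}(\pi_{X_{min}})\cong\pi_{opt,1}$. The plan is to establish that $\Col_{\bfC_1}(\pi_{X_{min}})$ is itself $\cM_{\bfC_1}$-good; the $\cM_{\bfC_1}$-optimality of $\pi_{opt,1}$ will then force $\Col_{\bfC_1}(\pi_{X_{min}})$ to be a factor of $\pi_{opt,1}$, and combining with the previous factorization pins $\Col_{f(\bfC_1)}(h)$ down as an isomorphism. To obtain the required goodness, I plan to first show $\pi_{X_{min}}$ is $\cM$-good at $(X,\omega)$ and then invoke Lemma \ref{L:DegeneratingOptimalMap}. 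The $\cM$-goodness of $\pi_{X_{min}}$ amounts to $C=\pi_{X_{min}}^{-1}(\pi_{X_{min}}(C))$ for every cylinder $C$ on $(X,\omega)$; this is attacked by replaying the diamond construction above with $\bfC_1$ replaced by a subequivalence class containing $C$, so that $C=(f')^{-1}(f'(C))$ for the corresponding cover $f'$, and then using the universal property of $\pi_{X_{min}}$ together with a degree comparison (bounding $\deg\pi_{X_{min}}$ against the inductively controlled $\deg\pi_{opt,1}$) to rule out further identifications by $h'$ over $f'(C)$. This bookkeeping, which threads together information from multiple different diamonds and the universal property of $\pi_{X_{min}}$, is the technical heart of the argument.
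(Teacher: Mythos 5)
The first half of your proposal — choosing $\bfC_2$ via Lemma \ref{L:GenericDiamond}, invoking the inductive hypothesis to match the two collapsed optimal maps on $\Col_{\bfC_1,\bfC_2}(X,\omega)$, and applying the Diamond Lemma (with Remark \ref{R:OptimalDiamond}) to produce a cover $f$ on $(X,\omega)$ with $\Col_{\bfC_i}(f)=\pi_{opt,i}$ — matches the paper's proof. The divergence begins when you factor $\pi_{X_{min}}=h\circ f$ and try to show $\Col_{f(\bfC_1)}(h)$ is an isomorphism by establishing that $\pi_{X_{min}}$ is $\cM$-good. Here there is a genuine gap. The key mechanism you propose, ``a degree comparison bounding $\deg\pi_{X_{min}}$ against the inductively controlled $\deg\pi_{opt,1}$,'' cannot work: the universal property in Theorem \ref{T:MinimalCover} yields $\deg\pi_{X_{min}}\geq\deg f=\deg\pi_{opt,1}$, a \emph{lower} bound, and provides no upper bound on $\deg\pi_{X_{min}}$. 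Without an upper bound you cannot rule out $\deg h>1$, which is exactly what would be needed. Note also that ``$\pi_{X_{min}}$ is $\cM$-good'' is precisely the content of Corollary \ref{C:MinimalIsOptimal}, which the paper derives \emph{from} this lemma; your sketch of proving it independently via re-running diamonds does not resolve the same issue.

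What the paper does instead, after producing $\pi=f$ with codomain $(X',\omega')$, is prove directly that $(X',\omega')$ admits no non-trivial translation cover, so that the factoring map $h$ is forced to be trivial. This is done by analyzing the orbit closure $\cM'$ of $(X',\omega')$ as a diamond $\left((X',\omega'),\cM',\bfC_1',\bfC_2'\right)$, where by the inductive hypothesis each side is a quadratic double of a genus zero stratum or a $T\times T$ locus. Case 1 (both $T\times T$) is ruled out by Corollary \ref{C:NoRankTwoTxT}; Case 3 (mixed) is ruled out via Lemma \ref{L:NoRank2JandTT}; and in Case 2 (both quadratic doubles) one uses \cite[Theorem 7.1]{ApisaWrightDiamonds}, a genus bound from gluing a simple cylinder or simple envelope into a sphere, and Lanneau's classification of hyperelliptic components to show $\cM'$ is a quadratic double of a non-hyperelliptic stratum of genus $\leq 1$, whence Corollary \ref{C:MinimalCover} shows $(X',\omega')$ is cover-minimal. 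This structural analysis of $\cM'$ is the missing ingredient; some version of it is unavoidable, and your proposal does not contain a replacement for it.
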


\begin{proof}
Pick $\bfC_2$ to be a disjoint and non-parallel subequivalence class on $(X,\omega)$ such that $((X, \omega), \cM,  \bfC_1, \bfC_2)$ forms a generic skew diamond; this exists by Lemma \ref{L:GenericDiamond}.

 Let $\pi_i$ denote the $\cM_{\bfC_i}$-optimal map on $\Col_{\bfC_i}(X, \omega)$. 

Part \eqref{I:geminal2:PiOptDegeneratesToPiOpt} of the  induction hypothesis implies that $$\Col_{\ColOne(\bfC_2)}(\pi_1) = \Col_{\ColTwo(\bfC_1)}(\pi_2).$$  Together with  Remark \ref{R:OptimalDiamond}, this allows us to apply the Diamond Lemma (Lemma \ref{L:diamond}), and conclude that there is a translation cover $$\pi: (X, \omega) \ra (X', \omega')$$ such that $\Col_{\bfC_i}(\pi) = \pi_i$. 

It now suffices to show that $\pi=\pi_{X_{min}}$. By Theorem \ref{T:MinimalCover}, it suffices to show that $(X', \omega')$ does not admit any non-trivial translation coverings. 

We will now consider the diamond $$\left( (X', \omega'), \cM', \bfC_1', \bfC_2' \right),$$ where $\cM'$ is the orbit closure of $(X', \omega')$ and where $\bfC_i' := \pi\left( \bfC_i \right)$. 

  Since $\pi_i$ is optimal and $\cM_{\bfC_i'}'$ arises from $\cM_{\bfC_i}$ by taking quotients by $\pi_i$, we know that $\cM_{\bfC_i'}'$ is geminal and that its optimal map is the identity. However, we know even more than this, since we know $\pi_i$ was not the identity. In particular, applying parts \eqref{I:geminal2:WhatIsM} and \eqref{I:geminal2:PiOpt} of the induction hypothesis to $\cM_{\bfC_i}$ (rather than $\cM_{\bfC_i'}'$), we see that  each $\cM_{\bfC_i'}'$ is either a quadratic double of a genus zero stratum or a $T \times T$ locus. 

At present we do not know if $\pi$ is good, so we do not know that $\cM'$ is geminal. In particular, we cannot immediately apply Proposition \ref{P:TxTDiamond} to $\cM'$. We will instead proceed in three cases.

\bold{Case 1: Both $\cM_{\bfC_i'}'$ are $T \times T$ loci.} This cannot occur by Corollary \ref{C:NoRankTwoTxT}, applied to our original diamond $((X, \omega), \cM,  \bfC_1, \bfC_2)$.  

\bold{Case 2: Both $\cM_{\bfC_i'}'$ are quadratic doubles of genus zero strata.} In previous work, the authors  classified diamonds where both sides are quadratic doubles \cite[Theorem 7.1]{ApisaWrightDiamonds}. Because $\ColOneTwoX$ is connected, this result implies that $\cM'$ is a quadratic double of a component $\cQ'$ of a stratum of quadratic differentials. Let $J'$ denote the holonomy involution on $(X', \omega')$. 

By Corollary \ref{C:MinimalCover}, to prove the lemma it suffices to show that $\For(\cQ')$ is not a hyperelliptic component. 

Since $\Col_{\bfC_2'}(\bfC_1')/\Col_{\bfC_2'}(J')$ is a generic cylinder in a genus zero stratum, it is either a simple cylinder or simple envelope by Masur-Zorich \cite{MZ}, as discussed in Apisa-Wright \cite[Theorem 4.8]{ApisaWrightDiamonds}. Therefore, $\bfC_1'/J'$ is a simple cylinder or simple envelope. 

The surface $(X', \omega')/J'$  is formed by gluing $\bfC_1'/J'$ into the genus zero surface $\Col_{\bfC_1'}(X', \omega')/\Col_{\bfC_1'}(J')$. Gluing in a simple cylinder (resp. a simple envelope) to a sphere produces a surface of genus at most one (resp. zero). So $\cQ'$ is genus 0 or 1. 

Since $\cM$ has rank at least two, so does $\cQ'$. 

Lanneau classified hyperelliptic components \cite{LanneauHyp} of strata of quadratic differentials. See also \cite[Section 9.1]{ApisaWrightDiamonds} for a summary.  It follows that the only hyperelliptic component of strata of genus zero or one that  has rank at least 2 is $\cQ(1^2, -1^2, 0^n)$. The formula for the rank of a stratum is recalled in \cite[Lemma 4.4]{ApisaWrightDiamonds}.

%

So it suffices to show that  $\For(\cQ')\neq \cQ(1^2, -1^2)$.  Suppose otherwise.
Then $\For(\cQ'_{\bfC_1'/J'})$ is a hyperelliptic genus zero stratum. The only such stratum is $\cQ(-1^4)$, so we can conclude using the following. 

\begin{sublem}
Gluing in a simple envelope or simple cylinder to a surface in $\cQ(-1^4, 0^{n'})$ cannot result in a surface in  $\cQ(1^2, -1^2, 0^{n})$.
\end{sublem} 

\begin{proof}
Gluing in a simple envelope would result in another genus 0 surface. 

If a simple cylinder is glued into a surface in $\cQ(-1^4, 0^{n'})$ and the result has two poles, then the simple cylinder must be glued into a saddle connection joining two poles. But in that case the result is in $\cQ(2,-1^2, 0^n)$. 
\end{proof}

\bold{Case 3: After perhaps re-indexing, $\cM_{\bfC_1'}'$ is a $T \times T$ locus and $\cM_{\bfC_2'}'$ is a quadratic double of a genus zero stratum.} Although we don't know $\cM'$ is geminal, we can apply Lemma \ref{L:NoRank2JandTT} to find that $\cM'$ has an equivalence class with at least 3 cylinders of equal height, and that the surfaces in $\cM'$ don't have marked points. 

Because of the lack of marked points, each cylinder on a surface in $\cM'$ lifts to a collection of cylinders of the same height on a surface in $\cM$, so this contradicts the fact that $\cM$ is geminal and concludes the proof of Lemma \ref{L:PiMinDegeneratesToPiOpt}. 
\end{proof}

\begin{cor}\label{C:MinimalIsOptimal}
If $(X, \omega) \in \cM$ is strongly generic then $\pi_{X_{min}}$ is an optimal map. Moreover, every subequivalence class contains exactly two cylinders. 
\end{cor}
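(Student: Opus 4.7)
The approach is to combine Lemma \ref{L:PiMinDegeneratesToPiOpt} with the Diamond Lemma and apply the induction hypothesis to boundary degenerations. Note that under the standing assumption of this subsection, for any $\cM$-generic subequivalence class $\bfC$ the $\cM_{\bfC}$-optimal map on $\Col_{\bfC}(X,\omega)$ has degree strictly greater than one. Combined with Lemma \ref{L:PiMinDegeneratesToPiOpt} and Lemma \ref{L:ColExists}, this shows $\pi_{X_{min}}$ itself has degree greater than one.

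First I would handle the moreover statement. Given a subequivalence class $\bfC'$ on $(X,\omega)$, the second part of Lemma \ref{L:GenericDiamond} produces a subequivalence class $\bfC$ such that $((X,\omega), \cM, \bfC', \bfC)$ is a generic skew diamond. Lemma \ref{L:NoDisc} and Lemma \ref{L:GeminalBoundary} give that $\cM_{\bfC}$ is geminal with $\Col_\bfC(X,\omega)$ connected, and Lemma \ref{L:PiMinDegeneratesToPiOpt} identifies $\Col_\bfC(\pi_{X_{min}})$ as the $\cM_\bfC$-optimal map, which has degree greater than one. The inductive hypothesis \eqref{I:geminal2:TwoCylinders} applied to $\cM_\bfC$ (of strictly smaller dimension than $\cM$) then gives $|\Col_\bfC(\bfC')|=2$; since $\bfC, \bfC'$ are disjoint and non-parallel, the collapse preserves the cardinality of $\bfC'$, so $|\bfC'|=2$.

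For optimality of $\pi_{X_{min}}$, Theorem \ref{T:MinimalCover} implies any translation cover out of $(X,\omega)$ is a factor of $\pi_{X_{min}}$, so it suffices to show $\pi_{X_{min}}$ is $\cM$-good. Given a cylinder $D$ on $(X,\omega)$ in a subequivalence class $\bfC_1$, Lemma \ref{L:GenericDiamond} produces $\bfC_2$ such that $((X,\omega), \cM, \bfC_1, \bfC_2)$ is a generic (skew) diamond. Lemma \ref{L:PiMinDegeneratesToPiOpt} shows that both $\Col_{\bfC_i}(\pi_{X_{min}})$ are $\cM_{\bfC_i}$-optimal and hence good, so the hypotheses of the Diamond Lemma (Lemma \ref{L:diamond}) hold via Remark \ref{R:OptimalDiamond}. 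This produces a translation cover $\pi$ on $(X,\omega)$ with $\bfC_i=\pi^{-1}(\pi(\bfC_i))$ and $\Col_{\bfC_i}(\pi)=\Col_{\bfC_i}(\pi_{X_{min}})$; arguing exactly as in the proof of Lemma \ref{L:PiMinDegeneratesToPiOpt}, the universal property of Theorem \ref{T:MinimalCover} forces $\pi=\pi_{X_{min}}$, so $\bfC_1 = \pi_{X_{min}}^{-1}(\pi_{X_{min}}(\bfC_1))$.

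The main obstacle is the twin case $\bfC_1=\{D,D'\}$, where this is not immediately the same as $D=\pi_{X_{min}}^{-1}(\pi_{X_{min}}(D))$; one must further verify $\pi_{X_{min}}(D) \neq \pi_{X_{min}}(D')$. This is handled by observing that $\Col_{\bfC_2}(D)$ and $\Col_{\bfC_2}(D')$ remain distinct cylinders after collapsing $\bfC_2$ (which is disjoint from $\bfC_1$), while the goodness of $\Col_{\bfC_2}(\pi_{X_{min}})$ forces cylinders to be preimages of their images, so the two twins must map to distinct images under $\Col_{\bfC_2}(\pi_{X_{min}})$, and hence under $\pi_{X_{min}}$. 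Thus $\pi_{X_{min}}^{-1}(\pi_{X_{min}}(D))=\{D\}$, so $\pi_{X_{min}}$ is good at $(X,\omega)$; since non-goodness is an open condition by Remark \ref{R:Good} and the argument applies to any strongly generic surface (which form a dense subset of $\cM$), $\pi_{X_{min}}$ is $\cM$-good and therefore $\cM$-optimal.
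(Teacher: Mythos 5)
Your proof is correct, but it takes a more circuitous route than the paper's, introducing a difficulty that the paper's direct argument avoids. The paper's proof of goodness is short: for an arbitrary cylinder $B$ on $(X,\omega)$, pick a subequivalence class $\bfC$ disjoint from and not parallel to $B$; Lemma \ref{L:PiMinDegeneratesToPiOpt} says $\Col_{\bfC}(\pi_{X_{min}})$ is $\cM_{\bfC}$-optimal, hence good, so $\Col_{\bfC}(B)$ is the full preimage of its image, and this property lifts back through the collapse to $B$. There is no separate ``twin case'' to worry about in this line of reasoning, since goodness of $\Col_{\bfC}(\pi_{X_{min}})$ already handles each individual cylinder. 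Your detour re-runs the Diamond Lemma to build a cover $\pi$ and argue $\pi = \pi_{X_{min}}$; this entire step is already carried out inside the proof of Lemma \ref{L:PiMinDegeneratesToPiOpt}, and here it is redundant because the Diamond Lemma only returns the weaker statement $\bfC_1 = \pi_{X_{min}}^{-1}(\pi_{X_{min}}(\bfC_1))$ for the entire class, not for each cylinder. This is precisely why the twin case then appears to need separate handling. Your resolution of the twin case --- appealing to the goodness of $\Col_{\bfC_2}(\pi_{X_{min}})$ to conclude $D$ and $D'$ have distinct images --- is correct, but notice that this is exactly the observation the paper's direct argument is built on; if you had started from it, the Diamond Lemma passage would have been unnecessary. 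Your treatment of the ``moreover'' claim (via part \eqref{I:geminal2:TwoCylinders} of the induction hypothesis applied to $\cM_{\bfC}$) and the concluding appeal to Theorem \ref{T:MinimalCover} for the universal property match the paper, and in fact your citation of Theorem \ref{T:MinimalCover} is the accurate one (the paper cites Corollary \ref{C:MinimalCover}, which is a different statement and appears to be a slip).
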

\begin{proof}
To show $\pi_{X_{min}}$ is good, we must show that each cylinder $B$ is the full preimage of its image. Let $\bfC$ be any subequivalence class disjoint from and not parallel to $B$; this can be produced using Lemma \ref{L:GenericDiamond} or by more elementary arguments. Endow $\bfC$ with a choice of direction in which $\overline{\bfC}$ contains a saddle connection; we will use this direction to collapse $\bfC$.

By Lemma \ref{L:PiMinDegeneratesToPiOpt}, $\Col_{\bfC}(\pi_{X_{min}})$ is the $\cM_{\bfC}$-optimal map for $\Col_{\bfC}(X,\omega)$. In particular, $\Col_\bfC(B)$ is the full preimage of its image under $\Col_{\bfC}(\pi_{X_{min}})$. So we get the same statement for $B$. Since $B$ was arbitrary, we conclude that $\pi_{X_{min}}$ is good.

Additionally, from part \eqref{I:geminal2:TwoCylinders} of the induction hypothesis, this argument gives that the subequivalence class containing $B$ has exactly two cylinders. 

Corollary \ref{C:MinimalCover} gives that \emph{every} translation cover factors through $\pi_{X_{min}}$, so in particular all good covers factor through $\pi_{X_{min}}$, and we conclude that $\pi_{X_{min}}$ is optimal. 
\end{proof}

Since $\pi_{X_{min}}$ is $\cM$-generic whenever $(X, \omega)$ is strongly generic, this implies that $\cM$ is a locus of covers. Given a translation surface in $\cM$ we will let $\pi_{opt}$ denote the cover corresponding to the fact that $\cM$ is a locus of covers, which by Corollary \ref{C:MinimalIsOptimal} is $\cM$-optimal.  (If $(X,\omega)\in \cM$ is strongly generic, $\pi_{opt}=\pi_{X_{min}}$  by Corollary \ref{C:MinimalIsOptimal}, but for non-generic surfaces this may not be true.) We note that Theorem \ref{T:geminal2} \eqref{I:geminal2:TwoCylinders} and \eqref{I:geminal2:PiOptDegeneratesToPiOpt} both follow from Corollary \ref{C:MinimalIsOptimal} and Lemma \ref{L:PiMinDegeneratesToPiOpt} respectively.  (Lemma \ref{L:PiMinDegeneratesToPiOpt} has the assumption of ``strongly generic" and Theorem \ref{T:geminal2} does not, but that assumption can be obtained by perturbing, even in a way compatible with a degeneration.)  

Choose a generic diamond $((X, \omega), \cM, \bfC_1, \bfC_2)$, which must exist by Lemma \ref{L:GenericDiamond}, and recall from Lemma \ref{L:NoDisc} that $\ColOneX, \ColTwoX$ and $\ColOneTwoX$ are connected. 

Let $\pi$ denote $\pi_{opt}$. We will again consider the generic diamond $$\left( (X', \omega'), \cM', \bfC_1', \bfC_2' \right),$$ where $\cM'$ is the orbit closure of $(X', \omega') := \pi\left(X, \omega \right)$ and where $\bfC_i' := \pi\left( \bfC_i \right)$.



Since $\cM$ is geminal and $\pi$ is $\cM$-good (actually $\cM$-optimal by Corollary \ref{C:MinimalIsOptimal}), $\cM'$ is geminal.

\begin{sublem}\label{SL:QuadDouble}
 $\cM'$ is a quadratic double of a component $\cQ'$ of a stratum of quadratic differentials.
\end{sublem}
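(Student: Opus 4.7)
My plan is to adapt the three-case analysis from the proof of Lemma \ref{L:PiMinDegeneratesToPiOpt}, now using the crucial additional information that $\cM'$ is itself geminal, which follows because $\pi = \pi_{opt}$ is $\cM$-good by Corollary \ref{C:MinimalIsOptimal}, and the image of a geminal subvariety under a good cover is geminal. I will also use that translation coverings preserve rank, so $\cM'$ has rank at least two, and that the image $\Col_{\bfC_1', \bfC_2'}(X', \omega')$ of the connected surface $\ColOneTwoX$ (see Lemma \ref{L:NoDisc}) is connected.

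The first step is to identify what $\cM'_{\bfC_i'}$ can be. By Lemma \ref{L:PiMinDegeneratesToPiOpt}, $\Col_{\bfC_i}(\pi) = \Col_{\bfC_i}(\pi_{X_{min}})$ is the $\cM_{\bfC_i}$-optimal map on $\Col_{\bfC_i}(X,\omega)$, and our standing assumption in Section \ref{SS:NeverId} guarantees that this optimal map has degree greater than one. The induction hypothesis, parts \eqref{I:geminal2:WhatIsM}, \eqref{I:geminal2:CoverOfQuadDouble:PiOpt}, and \eqref{I:geminal2:CoverOfTxT:PiOpt}, then forces $\cM'_{\bfC_i'}$ to be either a quadratic double of a genus zero stratum or a $T\times T$ locus.

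I would then split into three cases exactly as in the proof of Lemma \ref{L:PiMinDegeneratesToPiOpt}. In Case 1, where both $\cM'_{\bfC_i'}$ are $T\times T$ loci, Corollary \ref{C:NoRankTwoTxT} gives an immediate contradiction, using that $\cM'$ is geminal of rank at least two. In Case 2, where both are quadratic doubles of genus zero strata, I would apply \cite[Theorem 7.1]{ApisaWrightDiamonds}, whose hypotheses hold because $\Col_{\bfC_1', \bfC_2'}(X', \omega')$ is connected, to conclude that $\cM'$ is a quadratic double of some component $\cQ'$. In Case 3, where after re-indexing $\cM'_{\bfC_1'}$ is a $T\times T$ locus and $\cM'_{\bfC_2'}$ is a quadratic double, the geminality of $\cM'$ allows Corollary \ref{C:NoRank2JandTT} to be invoked directly, giving a contradiction.

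The main obstacle is essentially bookkeeping: one must verify that the geminality of $\cM'$ really does follow from $\pi$ being $\cM$-good (i.e., that a good translation cover descends cylinder-twin structure to the base), and one must be careful that the induction hypothesis applies to $\cM'_{\bfC_i'}$ as a genuinely smaller-dimensional orbit closure. Crucially, this case analysis is noticeably cleaner than that in Lemma \ref{L:PiMinDegeneratesToPiOpt}, because the geminality of $\cM'$ replaces the subtler argument there which had to invoke the full conclusion of Lemma \ref{L:NoRank2JandTT} about the existence of an equivalence class of at least three isometric cylinders and the absence of marked points; here Corollary \ref{C:NoRank2JandTT} suffices.
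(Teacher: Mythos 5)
Your proof is correct and takes essentially the same approach as the paper. The paper's own proof is more compact: after establishing that each $\cM'_{\bfC_i'}$ is a quadratic double of a genus zero stratum or a $T\times T$ locus, it rules out the $T\times T$ possibility in one step by citing Proposition~\ref{P:TxTDiamond} (which is exactly where your Cases~1 and~3 are packaged, via Corollaries~\ref{C:NoRankTwoTxT} and~\ref{C:NoRank2JandTT}), and then concludes by citing the ``moreover'' clause of Proposition~\ref{P:DiamondsAreCool} rather than quoting \cite[Theorem~7.1]{ApisaWrightDiamonds} directly; you have simply unfolded these synthesizing propositions into their constituent cases, which is logically identical. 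Your observation that this case analysis is cleaner than the one inside Lemma~\ref{L:PiMinDegeneratesToPiOpt} is exactly right and is the point of the paper establishing geminality of $\cM'$ just before this sublemma.
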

\begin{proof}
As before, by  parts \eqref{I:geminal2:WhatIsM} and \eqref{I:geminal2:PiOpt} of the induction hypothesis, each $\cM_{\bfC_i'}'$ is either a quadratic double of a genus zero stratum or a $T \times T$ locus. Since the rank of $\cM$ is at least two, Proposition \ref{P:TxTDiamond} gives that actually each $\cM_{\bfC_i'}'$ is a quadratic double of a genus zero stratum. By Proposition \ref{P:DiamondsAreCool}, since $\ColOneTwoX$ and hence $\Col_{\bfC_1', \bfC_2'}(X', \omega')$ is connected, we get the result. 
\end{proof}

We need to show that $\cQ'$ is genus zero stratum.  

\begin{sublem}\label{SL:Boundary}
Let $\cQ$ be a connected component of a stratum of quadratic differentials of genus greater than 0. Then, unless $\cQ = \cQ(-1^2,2),$
there is a connected codimension one cylinder degeneration that is genus greater than 0 and still has non-trivial holonomy. 
\end{sublem}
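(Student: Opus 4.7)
My approach is to locate, on some $(X,q) \in \cQ$, a simple cylinder $C$ --- one whose top and bottom boundaries each consist of a single saddle connection --- whose collapse $\Col_{C}(X,q)$ is a codimension one cylinder degeneration that is connected, of genus $\geq 1$, and has non-trivial holonomy. Such a cylinder is the most elementary kind of codimension one degeneration in a stratum, so the problem reduces to verifying the three properties combinatorially.

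The first step would be to produce simple cylinders. On a horizontally periodic surface in $\cQ$, which is dense by Masur's theorem, standard constructions (see Masur--Zorich, and the cylinder analysis summarized in \cite[Section 4]{ApisaWrightDiamonds}) yield simple cylinders. The second step is to identify the effect of collapse: collapsing a simple cylinder identifies its two boundary saddle connections into one and, correspondingly, merges the singularities at their endpoints (or preserves them while reducing genus, depending on whether the core curve is separating). The resulting singularity orders, and hence the resulting stratum, are determined by how the cone angles match up across the two boundaries.

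These effects translate the three required properties into combinatorial conditions on $(X,q,C)$. \emph{Connectedness} of the collapse holds iff the core curve of $C$ is non-separating in $(X,q)$. \emph{Genus at least one} holds iff $C$ does not collapse the final non-trivial handle, which is always avoidable when $\cQ$ has genus $\geq 2$ and only marginal for genus one. \emph{Non-trivial holonomy} holds iff at least one odd-order singularity survives the collapse; that is, the collapse does not merge all odd-order singularities into ones of even order (or remove them by falling outside the meromorphic-with-at-most-simple-poles category).

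The main obstacle is the genus one case, from which the exception $\cQ(-1^2,2)$ must emerge as the unique counterexample. For $\cQ$ of genus $\geq 2$, the freedom to choose among cylinders on a suitable square-tiled model makes all three conditions simultaneously satisfiable, typically by selecting a non-separating simple cylinder disjoint from the odd-order singularities. For genus one strata, I would enumerate singularity profiles $(k_1,\ldots,k_n)$ with $\sum k_i = 0$ and for each one produce an explicit cylinder on a square-tiled example. A direct check of $\cQ(-1^2, 2)$ shows that its three singularities (two simple poles and one double zero) leave no room: every simple cylinder on a surface in $\cQ(-1^2,2)$ has boundary saddle connections either joining the two poles (so its collapse would create a double pole, leaving the category), or joining a pole to the double zero (so its collapse drops to a surface of genus zero), or joining the zero to itself (which either disconnects or drops the genus to zero). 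Hence $\cQ(-1^2, 2)$ emerges as the unique exception.
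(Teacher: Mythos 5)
Your proposal restricts attention to \emph{simple cylinders}, but this is a real gap: on a generic surface in $\cQ$ the cylinders may be envelopes, complex cylinders, complex envelopes, or half-simple cylinders, and it is not clear you can always find a simple cylinder whose collapse has all three desired properties. The paper's genus $\geq 2$ argument explicitly handles the case where a generic cylinder $C$ is \emph{not} simple or a simple envelope: it cuts along the two saddle connections bounding one side of $C$ (using the Masur--Zorich structure theory) to get a trivial-holonomy component, finds a simple cylinder there, and degenerates that. Your proposal asserts ``always avoidable when $\cQ$ has genus $\geq 2$'' without any mechanism for producing the cylinder you need.

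The genus one case is where the real work is, and your plan --- ``enumerate singularity profiles $(k_1,\ldots,k_n)$ with $\sum k_i=0$ and for each one produce an explicit cylinder on a square-tiled example'' --- is not a finite enumeration: there are infinitely many genus one strata (arbitrarily many poles and zeros), so this cannot run to completion as a case check. The paper instead uses a structural argument: since the stratum has $n\geq 2$ poles, one produces a surface with a saddle connection joining two poles and hence an \emph{envelope}; a simple envelope degenerates to give the result, while cutting out a complex envelope leaves a trivial-holonomy surface with two boundary saddle connections, which can only be a cylinder (giving $\cQ(-1^2,2)$) or a torus (giving $\cQ(-1^2,1^2)$), and the latter degenerates to the former. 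Your proposal also does not notice the near-exception $\cQ(-1^2,1^2)$, nor does it first dispense with marked points (the paper reduces to the unmarked case by degenerating a marked point into a singularity). Finally, your combinatorial dictionary (``connectedness iff non-separating core curve,'' ``genus $\geq 1$ iff not the last handle,'' ``non-trivial holonomy iff an odd singularity survives'') is stated but not justified, and the second and third criteria in particular depend on how cone angles and parities combine under the collapse, which is exactly the delicate part.
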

\begin{proof}
First observe that if $\cQ$ contains any marked points, then there is a codimension one cylinder degeneration that moves the marked point into a singularity of the flat metric. This allows us to assume that $\cQ$ has no marked points. 

Suppose first that $\cQ$ has genus at least 2. Pick a generic cylinder $C$ on a generic surface. If $C$ is simple or a simple envelope, degenerating it gives the result. Otherwise, Masur-Zorich \cite{MZ}, as discussed in Apisa-Wright \cite[Theorem 4.8]{ApisaWrightDiamonds}, shows that $C$ has two saddle connections on one side, and cutting them gives a component with trivial holonomy. We can find a simple cylinder on that component and degenerate it to get the result. 

So assume that $\cQ$ has genus 1. Recall that all genus 1 strata are connected. Suppose the stratum is $\cQ(-1^n, \kappa_1, \ldots, \kappa_s)$. If $n\geq 2$, one can show that there is a surface with a saddle connection joining a pole to a pole, and hence an envelope. (Thinking analytically, one can fix the elliptic curve, and collide two poles with each other to form a double pole. Before the collision occurs, one can find a  saddle connection joining the two poles.) If the envelope is simple, degenerating it proves the result. If it is a complex envelope, then cutting it out gives a surface of genus at most 1 with trivial holonomy and two boundary saddle connections. If this surface is a cylinder, we get $\cQ = \cQ(-1^2,2)$, and if a torus, we get $\cQ = \cQ(-1^2,1^2)$.  One can check directly that the former is a codimension one cylinder degeneration of the latter.
\end{proof}

\begin{sublem}\label{SL:GenusZero}
The stratum $\cQ'$ of which $\cM'$ is a quadratic double consists of genus zero surfaces.
\end{sublem}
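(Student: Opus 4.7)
The plan is to proceed by contradiction, assuming $\cQ'$ has genus at least $1$ and splitting into two cases depending on whether $\cQ' = \cQ(-1^2, 2)$.

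If $\cQ' = \cQ(-1^2, 2)$, the argument is a rank computation. Riemann-Hurwitz applied to the holonomy double cover (branched at the two simple poles) gives a genus $2$ cover in $\cH(1^2)$ with possible marked points, and the tangent space to $\cM'$ projects into the $2$-dimensional anti-invariant part of $H^1(X'; \bC)$ under the holonomy involution, so $\cM'$ has rank $1$. Since $\pi$ is $\cM$-good by Corollary \ref{C:MinimalIsOptimal}, pullback $\pi^*$ identifies the tangent spaces of $\cM$ and $\cM'$ compatibly with projection to absolute cohomology, so the rank of $\cM$ equals the rank of $\cM'$, which is $1$. This contradicts the standing assumption that $\cM$ has rank at least $2$.

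If $\cQ' \neq \cQ(-1^2, 2)$, I plan to apply Sublemma \ref{SL:Boundary} to produce a connected codimension one cylinder degeneration of $\cQ'$ to a stratum $\cQ''$ of positive genus with nontrivial holonomy. The corresponding subequivalence class $\bfD'$ in $\cM'$ pulls back under the good cover $\pi$ to a subequivalence class $\bfD$ in $\cM$ with $\cM_\bfD$ of dimension one less than $\cM$. By Lemma \ref{L:GeminalBoundary}, $\cM_\bfD$ is geminal, and by Lemma \ref{L:PiMinDegeneratesToPiOpt} the $\cM_\bfD$-optimal map for $\Col_\bfD(X, \omega)$ is $\Col_\bfD(\pi)$, which has degree greater than one since we are in the case where $\pi_{opt}$ is never the identity. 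The induction hypothesis then forces $\cM_\bfD$ to be a full locus of covers of either a quadratic double of a genus zero stratum or of a $T \times T$ locus, so the codomain $\cM'_{\bfD'}$ of $\Col_\bfD(\pi)$ must be one of these. But $\cM'_{\bfD'}$ is the quadratic double of $\cQ''$, hence it is not a quadratic double of a genus zero stratum (as $\cQ''$ has positive genus), and it is not a $T \times T$ locus (as a $T \times T$ locus is contained in $\cH(0^n)$, whereas the holonomy double cover of a positive genus stratum with nontrivial holonomy, having at least two branch points, has genus at least $2$ by Riemann-Hurwitz).

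The main obstacle I anticipate is the delicate case where the cylinder in $\cQ'$ produced by Sublemma \ref{SL:Boundary} is a simple envelope rather than a simple cylinder, so that $\bfD'$ consists of a single envelope fixed by the holonomy involution rather than a pair of twin cylinders. Its preimage in $\cM$ would then be a single cylinder, which conflicts with the fact that in the present ``always non-identity optimal'' case every subequivalence class in $\cM$ contains exactly two cylinders by Corollary \ref{C:MinimalIsOptimal}. I plan to address this by arguing that the absence of free cylinders in $\cM$ forces the absence of envelopes in $\cM'$ (since a free envelope in $\cM'$ would pull back to a free cylinder in $\cM$ via the good cover $\pi$), so that the cylinder in $\cQ'$ produced by Sublemma \ref{SL:Boundary} may always be chosen to lift to a pair of twins, and the argument above then goes through.
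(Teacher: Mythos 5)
Your proof is correct, but it takes a genuinely different route from the paper's, and one part of your own analysis is misread as an obstacle when it is in fact a valid contradiction.

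The paper's proof repeatedly applies Sublemma \ref{SL:Boundary} to degenerate all the way down to a boundary invariant subvariety $\cN$ that is a full locus of covers of the quadratic double of $\cQ(2, -1^2)$; observes that $\cN$ is geminal, rank one, and not $h$-geminal (because doubles of $\cQ(2,-1^2)$ have non-homologous twins); and then invokes Proposition \ref{P:geminalrk1} parts \eqref{I:geminalrk1:ComponentOrDouble} and \eqref{I:geminalrk1:ComponentOrDouble:PiOpt} to conclude the $\cN$-optimal map must be the identity, contradicting the degree-greater-than-one good map obtained by degenerating the $\cM$-optimal map. You instead split off $\cQ' = \cQ(-1^2,2)$ and handle it by a rank computation (which is fine, and could be said more simply: $\cM$ is a full locus of covers of $\cM'$, so rank $\cM$ = rank $\cM' = 1 < 2$), and in the remaining case you apply Sublemma \ref{SL:Boundary} only once, then use the genus and holonomy of $\cQ''$ to rule out both codomains allowed by the inductive form of Theorem \ref{T:geminal2} for $\cM_{\bfD}$. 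Both approaches are valid; the paper's reduces everything to a single concrete base-case rank-one subvariety and so avoids a case split and avoids invoking the full inductive statement at the intermediate step. What your version buys is that it uses only one degeneration step instead of a chain, at the cost of the case split and having to argue $\cM'_{\bfD'}$ really is the quadratic double of $\cQ''$.

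Your final paragraph mistakes a second contradiction for an obstacle. If the cylinder produced by Sublemma \ref{SL:Boundary} is a simple envelope, its lift to $\cM'$ is a single free cylinder, whose preimage under the good map $\pi$ is a single free cylinder on a strongly generic surface of $\cM$; but Corollary \ref{C:MinimalIsOptimal} says every subequivalence class on such a surface has exactly two cylinders. That is already the desired contradiction --- you do not need to re-choose the degeneration to avoid envelopes. (Relatedly, you should make explicit that the surface and subequivalence class $\bfD$ are chosen so that $(X,\omega)$ is strongly generic and $\bfD$ consists of $\cM$-generic cylinders, as required by Lemma \ref{L:PiMinDegeneratesToPiOpt}; this is achievable since both properties hold on dense sets, but the proposal passes over it.)
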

\begin{proof}
Suppose otherwise. Since $\cM$ is a locus of covers of a stratum of surfaces of genus greater than zero, by successively applying Sublemma \ref{SL:Boundary} we can find a geminal invariant subvariety $\cN$ in the boundary of $\cM$ that is a full locus of a double of $\cQ(2, -1^2)$. By  Lemma \ref{L:DisconnectingUnderOptimalMap}, $\cN$ consist of connected surfaces.

Note that $\cN$ is geminal, and since $\cQ(2, -1^2)$ has rank one, $\cN$ has rank one. Since doubles of $\cQ(2, -1^2)$ have non-homologous twin cylinders, we get that $\cN$ is not $h$-geminal. 

By parts \eqref{I:geminalrk1:ComponentOrDouble} and \eqref{I:geminalrk1:ComponentOrDouble:PiOpt} of Proposition \ref{P:geminalrk1}, the $\cN$-optimal map for surfaces in $\cN$ is the identity. However, there are surfaces in $\cN$ admitting a $\cN$-good map of degree greater than one coming from the degeneration of the $\cM$-optimal map on surfaces in $\cM$. Therefore, we have a contradiction. 
\end{proof}

 As we now summarize, the proof of Theorem \ref{T:geminal2} is complete. 
 \begin{itemize}
 \item The case when $\cM$ has rank 1 was established by Proposition \ref{P:geminalrk1}, providing a base case for an inductive argument. Hence we assumed that $\cM$ had rank at least 2 and that Theorem \ref{T:geminal2} was true for all degenerations of $\cM$. 
\item  If an appropriate degeneration had $\pi_{opt}$ of degree 1, then the proof was given in Subsection \ref{SS:SometimesId}. 
 \item The remaining case was handled in Subsection \ref{SS:NeverId}, where Sublemmas \ref{SL:QuadDouble} and \ref{SL:GenusZero} showed that $\cM$ is described by Case \eqref{I:geminal2:CoverOfQuadDouble} of Theorem \ref{T:geminal2}, Corollary \ref{C:MinimalIsOptimal} and Sublemma \ref{SL:QuadDouble} established part \eqref{I:geminal2:CoverOfQuadDouble:PiOpt} of Theorem \ref{T:geminal2}, and Lemma \ref{L:PiMinDegeneratesToPiOpt} and Corollary \ref{C:MinimalIsOptimal} established parts \eqref{I:geminal2:TwoCylinders} and \eqref{I:geminal2:PiOptDegeneratesToPiOpt} of Theorem \ref{T:geminal2}.\qedhere
 \end{itemize}
\end{proof}

\subsection{An open problem}\label{SS:Open}
We end the paper with a conjecture on geminal orbit closures. 

\begin{conj}\label{Conjecture1}
The optimal covering map for a geminal orbit closure is a cyclic cover.
\end{conj}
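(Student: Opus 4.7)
Conjecture \ref{Conjecture1} concerns only Case \eqref{I:geminal2:CoverOfQuadDouble} of Theorem \ref{T:geminal2}, since in the other two cases the $\cM$-optimal map is the identity, which is trivially a cyclic cover of order $1$. The plan is to exploit the good cover property of $\pi_{opt}$ to extract strong monodromy constraints on the remaining case.

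\textbf{Step 1: Reduction to monodromy.} By Corollary \ref{C:MinimalIsOptimal}, $\pi_{opt}$ is good. Composing $\pi_{opt}$ with the map to the genus zero quotient of the quadratic double target yields a branched cover $\Pi \colon (X,\omega) \to (Q,q)$, where $(Q,q)$ is a genus zero quadratic differential. Writing $\Sigma = \{p_1,\ldots,p_s\}$ for the set of zeros and poles of $q$, the unbranched restriction $X \setminus \Pi^{-1}(\Sigma) \to \bP^1 \setminus \Sigma$ is connected, and so is classified by a transitive monodromy representation
\[ \rho \colon \pi_1(\bP^1 \setminus \Sigma) \to S_d, \]
where $d$ is the degree of $\Pi$. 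If $\gamma_i$ denotes a small loop around $p_i$, then $\rho$ is determined by $\rho(\gamma_1),\ldots,\rho(\gamma_s)$ subject only to $\prod \rho(\gamma_i) = 1$.

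\textbf{Step 2: Constraints from goodness.} As in the proof of Theorem \ref{T:NewExamples}, every cylinder on a genus zero quadratic differential has separating core curve, and this core curve is (up to free homotopy) a product $\prod_{i \in I} \gamma_i$ for some subset $I \subset \{1,\ldots,s\}$ with $\sum_{i \in I} \kappa_i = -2$; conversely, by degenerating within the stratum every such subset $I$ is realized by a cylinder on some surface in the stratum. Because $\pi_{opt}$ is good, each such cylinder lifts to a single cylinder on $(X,\omega)$, which forces $\rho\left( \prod_{i \in I} \gamma_i \right)$ to act on $\{1,\ldots,d\}$ with a single orbit of length $d$, i.e.\ to be a $d$-cycle in $S_d$. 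Thus the conjecture reduces to the purely group-theoretic statement that any transitive representation $\rho$ sending every partial product indexed by such a subset $I$ to a $d$-cycle must have cyclic image of order $d$.

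\textbf{Step 3: The group-theoretic core, and the main obstacle.} The abelian case is tractable: any transitive abelian subgroup of $S_d$ is itself cyclic of order $d$, and the generalized Eierlegende--Wollmilchsau families of Section \ref{S:NewEW} furnish many such examples. The substantive content is to show that the image of $\rho$ is abelian. One would try to combine the constraints arising from many cylinder partitions simultaneously: comparing the cycle structure of $d$-cycles obtained from partitions differing by a swap of two indices $i$ and $j$ across the dividing curve should force commutation relations between $\rho(\gamma_i)$ and $\rho(\gamma_j)$, which then propagate to the whole image. This is the main obstacle, because the required classification of branched covers of the sphere whose monodromy around every cylinder-bounding loop is a full cycle appears to be genuinely new, and there is no standard tool that immediately rules out sporadic non-abelian examples. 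An alternative and more Teichm\"uller-theoretic strategy would be inductive: combine Lemma \ref{L:GenericDiamond} with the diamond technology of Section \ref{S:Diamond} to propagate cyclicity from boundary components of $\cM$ to $\cM$ itself, mirroring the proof of Theorem \ref{T:geminal2} with ``cyclic'' in place of ``geminal''; the key technical lemma to establish would be that cyclicity of the optimal maps on the two sides of a generic diamond forces cyclicity of the optimal map on the diamond itself, presumably via the Diamond Lemma applied to the factorizations through intermediate cyclic covers.
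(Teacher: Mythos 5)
The statement you are attempting is a conjecture, and the paper itself does not prove it; the authors reduce it to the open group-theoretic Problem stated at the end of Section~\ref{SS:Open}. Your proposal is likewise not a proof --- you honestly flag that forcing the monodromy image to be abelian is ``the main obstacle'' --- so what can be compared is the shape of the two reductions.

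Your Step~2 contains a concrete error. You assert that because $\pi_{opt}$ is good, each cylinder on the genus zero surface $(Q,q)$ lifts to a single cylinder on $(X,\omega)$, forcing $\rho\bigl(\prod_{i\in I}\gamma_i\bigr)$ to be a $d$-cycle. This is false: the composite $\Pi$ factors through the holonomy double cover $(X',\omega')\to(Q,q)$, and the core curve of a cylinder on $(Q,q)$ has trivial linear holonomy (since $\sum_{i\in I}\kappa_i=-2$ forces the number of odd-order points in $I$ to be even), so it lifts to \emph{two} twin cylinders on $(X',\omega')$. Goodness of $\pi_{opt}$ gives exactly one cylinder above each of those, so the $\Pi$-preimage of the cylinder on $(Q,q)$ is two cylinders, not one. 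The correct constraint is therefore that $\rho\bigl(\prod_{i\in I}\gamma_i\bigr)$ is a product of two disjoint $(d/2)$-cycles, which is precisely (after a change in normalization) the $(2d,2d)$-cycle condition defining a geminal representation in the paper. Your proposed group-theoretic target --- transitive representations sending every cylinder loop to a full $d$-cycle --- is therefore the wrong target, and an argument forcing abelianity from it would not carry over.

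Beyond that error, your reduction differs in level from the paper's sketch. The paper anticipates first reducing to the rank one, rel zero case via the induction machinery of Theorem~\ref{T:geminal2}, invokes Proposition~\ref{P:geminalrk1} to restrict to $h$-geminal subvarieties with nontrivial optimal map, and then passes from the pillowcase $\cQ(-1^4)$ up to the once-punctured torus, so that cylinder core curves become primitive elements of $F_2$. The paper's formulation is also more precisely aimed at the conjecture: rather than asking that the full image of $\rho$ be abelian (which is sufficient but stronger than needed), it asks whether the specific subgroup $\rho^{-1}(\mathrm{Fix}(1))$ is normal in the intermediate subgroup $K$ corresponding to $(X',\omega')$ with cyclic quotient --- which is exactly the assertion that $\pi_{opt}$ itself is a regular cover with cyclic deck group. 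Your ``alternative Teichm\"uller-theoretic strategy'' at the end --- a diamond induction mirroring Theorem~\ref{T:geminal2} with ``cyclic'' in place of ``geminal'' --- is the approach the paper anticipates but does not carry out, and the base case of such an induction is the paper's open Problem.
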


We anticipate that a corollary of Conjecture \ref{Conjecture1} would be a classification of geminal orbit closures and that the crux would be establishing the conjecture in the rank one rel zero case. The problem of classifying the rank one rel zero geminal subvarieties has a simple group theoretic formulation, which we will now sketch.


By Proposition \ref{P:geminalrk1} parts \eqref{I:geminalrk1:ComponentOrDouble} and \eqref{I:geminalrk1:ComponentOrDouble:PiOpt} it suffices to classify the $h$-geminal rank one rel zero subvarieties $\cM$ whose optimal maps have degree greater than one. Since in rank 1 rel 0, $T\times T$ loci are quadratic doubles of $\cQ(-1^4)$, we know that the optimal map has codomain  contained in such a double. Denote the optimal map 
$$\pi_{opt}: (X,\omega) \to (X', \omega').$$ 

For such subvarieties, every cylinder has a twin by Proposition \ref{P:geminalrk1} (\ref{I:geminalrk1:TwoCylinders}). Thus, the quadratic double must have at least three preimages of poles marked. 

Let $d$ denote the degree of the optimal map. Composing the optimal map with the quotient by two torsion, we get a degree $4d$ map from $(X,\omega)$ to a surface $(X'',\omega'')$ in $\cH(0)$. We can obtain from this an unbranched cover of a torus with one punctured point, so it will be helpful to let $E$ be the once punctured torus obtained from $(X'',\omega'')$ by deleting the marked point, and to let $E'$ denote the four times punctured torus obtained from  $(X', \omega')$ by deleting the preimages of poles. 

The unbranched cover of $E$ obtained from the degree $4d$ map $$(X,\omega)\to (X'', \omega'')$$ is specified by a representation $\rho: F_2 \rightarrow \mathrm{Sym}(4d)$. Here and in the remainder of this subsection we identify $F_2$ with the fundamental group of the once punctured flat torus $E$. The following is a standard result whose proof we only sketch here. 

\begin{lem}\label{L:GeneratorsCyl}
Every element of a size two generating set of $F_2 \cong \pi_1(E)$ is freely homotopic to a cylinder core curve.
\end{lem}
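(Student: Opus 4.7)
The plan is to reduce the claim to a classical fact about the once-punctured torus and then use flat geometry.

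First, since $F_2$ has rank exactly $2$, any generating set of size two is automatically a free basis. Hence it suffices to show that each element of a free basis of $\pi_1(E)$ is freely homotopic to the core curve of a cylinder on $(X'',\omega'')$. In turn, each element of a free basis is \emph{primitive}, that is, contained in some free basis. So the essential input is:

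\emph{Claim.} Every primitive element of $\pi_1(E) \cong F_2$ is freely homotopic to an essential simple closed curve on $E$.

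This is a classical theorem going back to Nielsen; an efficient modern argument is to observe that any two free bases of $F_2$ are related by an automorphism of $F_2$, and that every automorphism of $\pi_1(E)$ is induced by a self-homeomorphism of $E$ (for the once-punctured torus this is just the statement that the map from the mapping class group to $\mathrm{Out}(F_2) \cong GL(2,\bZ)$ is surjective, which follows from the classical action of $SL(2,\bZ)$ on $E$). Applying such a homeomorphism to a standard simple closed curve representing one element of a standard basis carries it to a simple closed curve representing the given primitive element.

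Having a simple closed curve $\gamma$ on $E$ in hand, I would straighten it to a closed geodesic on the flat torus $(X'',\omega'')$. Since the underlying torus is flat and $\gamma$ is essential and simple, the straightening is a closed Euclidean geodesic, hence in some rational direction. Perturbing within its free homotopy class if necessary so that the geodesic avoids the single marked point, this closed geodesic is the core curve of a cylinder in $(X'',\omega'')$ (the complement of the saddle connection through the marked point in that direction is a single flat cylinder on the once-marked torus). This completes the argument.

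The only step that is not purely elementary is the primitive-equals-simple claim; this is the potential obstacle, but it is a well-known classical fact for the once-punctured torus and can be invoked as stated.
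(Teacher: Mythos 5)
Your proof is correct and matches the paper's approach in its essentials: both rely on the transitivity of $\mathrm{Aut}(F_2)$ on elements of rank-two generating sets and the realization of $\mathrm{Out}(F_2)\cong GL(2,\bZ)$ by geometric self-maps of the once-punctured torus. The paper applies affine maps directly to carry a cylinder core curve to a cylinder core curve, while you insert the classical ``primitive implies simple closed curve'' intermediate step and then finish with the flat-geometry observation that an essential simple closed geodesic on a once-marked flat torus is the core curve of the unique cylinder in its direction; the extra detour is sound, just slightly less direct than the paper's one-line finish.
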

\begin{proof}
Let $S$ be the collection of elements of $F_2$ that are part of a size two generating set. The result follows since $\mathrm{Aut}(F_2)$ acts transitively on $S$ and since $\mathrm{Out}(F_2)$ is isomorphic to $\mathrm{SL}(2, \mathbb{Z})$, which acts on the torus by affine maps. 
%
%
\end{proof}

The assumptions that $\cM$ is geminal and that the map to $(X',\omega')$ is good (in fact optimal) is equivalent to saying that the core curves of cylinders in $F_2$ map to $(2d,2d)$-cycles under $\rho$. Since every element of a size two generating set of $F_2$ is conjugate to the core curve of a cylinder and since cycle type is the (unique) conjugacy class invariant for elements of the symmetric group, we see that every element of a size two generating set of $F_2$ maps to a $(2d,2d)$-cycle under $\rho$. Any homomorphism $\rho: F_2 \ra \mathrm{Sym}(4d)$ with this property will be called a \emph{geminal representation}.

Let $\mathrm{Fix}(1)$ denote the permutations in $\mathrm{Sym}(4d)$ that fix $1$ (here we think about $\mathrm{Sym}(4d)$ as being permutations of the set $\{1, \hdots, 4d\}$). Let $K$ be the kernel of the map $F_2 \ra \bZ/2\bZ \times \bZ/2\bZ$ given by abelianizing and then taking a mod $2$ quotient. Then the covering maps $$(X, \omega) \ra (X', \omega') \ra (X'', \omega'')$$ correspond to the inclusion of subgroups $$\rho^{-1}\left( \mathrm{Fix}(1) \right) \subseteq K \subseteq F_2.$$ Therefore, we hope that Conjecture \ref{Conjecture1} could be established if the following problem could be resolved.

\begin{prob}
Suppose that $\rho: F_2 \ra \mathrm{Sym}(4d)$ is geminal. Is $\rho^{-1}\left( \mathrm{Fix}(1) \right)$ a normal subgroup of $K$ and, if so, is $K/\rho^{-1}\left( \mathrm{Fix}(1) \right)$ cyclic? 
\end{prob}

\bibliography{mybib}{}
\bibliographystyle{amsalpha}
\end{document}